\let\uml\"
\newcommand\rurl[1]{%
  \href{http://#1}{\nolinkurl{#1}}%
}
\theoremstyle{definition}
\newtheorem{defn}{Definition}[section]
\newtheorem{rema}[defn]{Remark}
\newtheorem*{acknow}{Acknowledgments}
\theoremstyle{plain}
\newtheorem{thm}[defn]{Theorem}
\newtheorem{conj}[defn]{Conjecture}
\newtheorem{corl}[defn]{Corollary}
\newtheorem{theo}{Theorem}[subsection]
\newtheorem{lema}[theo]{Lemma}
\newtheorem{hq}[theo]{Corollary}
\newtheorem{md}[theo]{Proposition}
\theoremstyle{definition}
\newtheorem{rem}[theo]{Remark}
\title[Structure of the coinvariants space $\mathbb Z_2\otimes_{GL_4(\mathbb Z_2)} PH_*(\mathbb Z_2^4, \mathbb Z_2)$ and its application]{Structure of the space of $GL_4(\mathbb Z_2)$-coinvariants $\mathbb Z_2\otimes_{GL_4(\mathbb Z_2)} PH_*(\mathbb Z_2^4, \mathbb Z_2)$ in some generic degrees\\ and its application} 
\author{\DJ\d \abreve ng V\~o Ph\'uc}  
\address{Faculty of Education Studies, University of Khanh Hoa, Viet nam} 
\email{dangphuc150488@gmail.com}  
\thanks{\textit{{\bf Dedicated to Prof. Daciberg L. Goncalves for his seventies birthday and to Prof. Bill Singer for his 76th birthday.}}} 
\keywords{Adams spectral sequences; Primary cohomology operations; Steenrod algebra; Lambda algebra; Hit problem; Actions of groups on commutative rings; Algebraic transfer}
\subjclass[2010]{55Q45, 13A50, 55S10, 55S05, 55T15, 55R12}
\begin{document} 

\begin{abstract}
 Let $A$ denote the Steenrod algebra at the prime 2 and let $k = \mathbb Z_2.$ An open problem of homotopy theory is to determine a minimal set of $A$-generators for the polynomial ring $P_q = k[x_1, \ldots, x_q] = H^{*}(k^{q}, k)$ on $q$ generators $x_1, \ldots, x_q$ with $|x_i|= 1.$ Equivalently, one can write down explicitly a basis for the graded vector space $Q^{\otimes q} := k\otimes_{A} P_q$ in each non-negative degree $n.$ This is the content of the classical "hit problem" in literature \cite{F.P}. Based on this problem, we are interested in the $q$-th cohomological transfer $Tr_q^{A}$ of William M. Singer \cite{W.S1}, which is one of the useful tools for describing mod-2 cohomology of the algebra $A.$ This transfer is a linear map from the space of $GL_q(k)$-coinvariant $k\otimes _{GL_q(k)} P((P_q)_n^{*})$ of $Q^{\otimes q}$ to the $k$-cohomology group of the Steenrod algebra, ${\rm Ext}_{A}^{q, q+n}(k, k).$ Here $GL_q(k)$ is the general linear group of degree $q$ over the field $k,$ and $P((P_q)_n^{*})$ is the primitive part of $(P_q)^{*}_n$ under the action of $A.$ Singer conjectured that $Tr_q^{A}$ is a monomorphism, but this remains unanswered for all $q\geq 4.$ The present paper is to devoted to the investigation of this conjecture for the rank 4 case. More specifically,  basing the techniques of the hit problem of four variables, we explicitly determine the structure of $k\otimes _{GL_4(k)} P((P_4)_{n}^{*})$ in some generic degrees $n.$ Applying these results and a representation of $Tr_4^{A}$ over the lambda algebra, we notice that Singer's conjecture is true for the rank 4 transfer in those degrees $n$. Also, we give some conjectures on the dimensions of $k\otimes_{GL_q(k)} ((P_4)_n^{*})$ for the remaining degrees $n.$ As a consequence, Singer's conjecture holds for $Tr_4^{A}.$ This study and our previous results \cite{D.P11, D.P12} have been provided a panorama of the behavior of the fourth cohomological transfer.
\end{abstract} 
\maketitle
\tableofcontents


\section{Introduction}

We will adopt the following notations and conventions throughout the paper. We work at the prime 2 and write $k := \mathbb Z_2$ for the field of two elements. Let us denote by $A$ the classical, singly-graded Steenrod algebra over $k,$ which is the ring of stable operations on cohomology with $k$ coefficients generated by the Steenrod squares $Sq^{t}$ for $t\geq 0$ modulo the Adem relations. Let $P_q = k[x_1, \ldots, x_q] = H^{*}(k^{q}, k)$ be the polynomial ring on generators $x_i$ of degree $1,$ which may be considered as the symmetric power algebra on $(k^{q})^{*}$ and as an unstable left $A$-module. Here $k^{q}$ denotes a rank $q$ elementary abelian 2-group, which views as a $q$-dimensional $k$-vector space. The algebra $A$ acts upon $P_q$ by use the Cartan formula $Sq^t(fg) = \sum_{i+j = t}Sq^{i}(f)Sq^{j}(g)$ for arbitrary $f,\, g\in P_q,$ and the rule that $Sq^t(x_j)$ is $x_j$ if $t = 0,$ $x_j^{2}$ if $t = 1,$ and is 0 otherwise.  The algebra $P_q$ is equipped with the usual structure of a right module over the general linear group $GL_q(k)$ by means of substitution of variables as follows: For each $\sigma = (\sigma_{i, j})\in GL_q(k)$ and any $f(x_1, \ldots, x_q):=f\in P_q,$ one defines 
$$(f\sigma)(x_1, \ldots, x_q) = f(x_1\sigma, \ldots, x_q\sigma) =  f(\sum_{1\leq i\leq q}x_i\sigma_{i, 1}, \ldots, \sum_{1\leq i\leq q}x_i\sigma_{i, q}).$$
Consider the tensor product $Q^{\otimes q} := k\otimes_{A} P_q$. We notice that the mapping $k\to End(Q^{\otimes q})$ is a ring homomorphism in which $End(Q^{\otimes q})$ is the ring of endomorphism of $Q^{\otimes q}$ (as abelian group).  So, $Q^{\otimes q}$ has the structure of a $k$-module (or, equivalently, a $k$-vector space). Writing $(P_q)_n = H^{n}(k^{q}, k)$ for the $k$-subspace of $P_q$ generated by the homogeneous polynomials of the non-negative degree $n$ in $P_q.$ Then, we have that $(P_q)_n$ is a right $kGL_q(k)$-module for every $n\geq 0$ and that the graded space $\{(P_q)^{*}_n\}_{n\geq 0},$ where $(P_q)^{*}_n$ denotes the dual of $(P_q)_n,$ is the divided power algebra $\Gamma(a_1^{(1)}, \ldots, a_q^{(1)})$ generated by $a_1^{(1)}, \ldots, a_q^{(1)},$ in which  $a_i^{(1)}$ is the linear dual to $x_i$. Moreover, this $\Gamma(a_1^{(1)}, \ldots, a_q^{(1)})$ is a bicommutative Hopf algebra with the vector space basis $a_1^{(i_1)}\ldots a_q^{(i_q)},\, i_r\geq 0,$ for all $1\leq r\leq q,$ with multiplication $$\prod_{1\leq r\leq q}a_r^{(i_r)}\prod_{1\leq r\leq q}a_r^{(j_r)} = \prod_{1\leq r\leq q}\binom{i_r+j_r}{i_r}\prod_{1\leq r\leq q}a_r^{(i_r+j_r)}.$$ The right $A$-module structure of this algebra is described by the rule $$(a^{(n)}_i)Sq^{t} = \binom{n-t}{t}a^{(n-t)}_i = Sq_*^{t}(a^{(n)}_i)$$ and Cartan's formula for Steenrod squares on cross products. The group $GL_q(k)$ acts on $\Gamma(a_1^{(1)}, \ldots, a_q^{(1)})$ by the formula $$\sigma(\prod_{1\leq r\leq q}a_r^{(i_r)}) = (\sum_{1\leq i\leq q}\sigma_{i, 1}a_i^{(1)})^{(i_1)}(\sum_{1\leq i\leq q}\sigma_{i, 2}a_i^{(1)})^{(i_2)}\ldots (\sum_{1\leq i\leq q}\sigma_{i, q}a_i^{(1)})^{(i_q)},$$ for all $\sigma = (\sigma_{i, j})\in GL_q(k).$ So, $\Gamma(a_1^{(1)}, \ldots, a_q^{(1)})$ is a left  $kGL_q(k)$-module. Let us denote by $Q_n^{\otimes q}:= (k\otimes_{A} P_q)_n$ the $k$-vector subspace of $Q^{\otimes q}$ consisting of all the classes represented by the elements in $(P_q)_n.$ The action of $GL_q(k)$ commutes with that of the Steenrod squares $Sq^{t}$ on $P_q$, (i.e., $Sq^{t}: (P_q)_{n-t}\to (P_q)_{n}$ is a map of $kGL_q(k)$-modules, and  $Sq^{t}(f)\sigma = Sq^{t}(f\sigma)$ for every $f\in (P_q)_{n-t},\, \sigma\in GL_q(k)$) and therefore there exists an action of $GL_q(k)$ on $Q^{\otimes q}_n.$ 

Let us now recall that an interesting problem of homotopy theory is to determine the set of homotopy classes $[\mathbb S^{n+q}, \mathbb S^{n}]$ of continuous based map between spheres. It is known that for $n+q > 0,$ these sets have a natural group structure and they are abelian when $n+q > 1.$ The Freudenthal suspension theorem in \cite{H.F} showed a relationship between the groups $[\mathbb S^{n+q}, \mathbb S^{n}]$ for fixed $q$ and varying $n.$ The supension map induces a sequence:
$$ \cdots \longrightarrow [\mathbb S^{n-1+q}, \mathbb S^{n-1}]\longrightarrow [\mathbb S^{n+q}, \mathbb S^{n}]\longrightarrow [\mathbb S^{n+1+q}, \mathbb S^{n+1}]\longrightarrow \cdots$$ of group homomorphisms, and when $n+q > 1,$ these homomorphisms are isomorphisms. Then, the stable value $[\mathbb S^{n+q}, \mathbb S^{n}]$ for $n$ sufficiently large is known as the $q$-th stable homotopy group of spheres, $\pi_q.$ The cohomology of $A$ with $k$-coefficients, ${\rm Ext}_A(k, k) = \{{\rm Ext}^{q, t}_A(k, k)\}_{(q, t)\in\mathbb Z^{2},\, q\geq 0,\, t\geq 0}$ is an object of much interest in Algebraic topology. It features prominently in homotopy theory as the $E_2$-page of the Adams spectral sequence (the ASS) $E_2^{q, t} = {\rm Ext}^{q, t}_A(k, k)$ for the computation of the groups $\pi_q.$ (It is to be noted that a spectral sequence consists of a sequence of intermediate dual chain complexes called pages $E_0, E_1, E_2, \ldots,$ with differentials denoted by $d_0, d_1, d_2,\ldots,$ such that $E_{t+1}$ is the cohomology of $E_t.$) Although the bigraded algebra ${\rm Ext}_A(k, k)$ has been intensively studied by many authors like Adams \cite{J.A2}, Adem \cite{Adem}, Wall \cite{Wall}, Wang \cite{Wang}, Lin \cite{W.L}, Chen \cite{Chen}), its structure remains largely mysterious. Up to now, it remains open for every cohomological degree $q >5.$ The May spectral sequence \cite{J.M} is the best way to compute ${\rm Ext}_A(k, k)$ by hand. Another tool that is also quite efficient to study the cohomology of $A$ is the Singer cohomological "transfer" \cite{W.S1}, 
which is a linear transformation
$$ Tr_q^{A}: k\otimes _{GL_q(k)} P((P_q)_n^{*}) \to {\rm Ext}_A^{\dim k^{q}, \dim k^{q}+n}(k, k) = {\rm Ext}_A^{q, q+n}(k, k),$$
where $P((P_q)_n^{*}) := \langle \{\theta\in (P_q)_n^{*}:\, (\theta)Sq^{i} = 0,\, \mbox{for all $i > 0$}\} \rangle  = (Q_n^{\otimes q})^{*},$ the space of primitive homology classes as a representation of $GL_q(k)$ for all $n$ and the coinvariant $k\otimes _{GL_q(k)} P((P_q)_n^{*})$ is isomorphic as an $k$-vector space to $(Q_n^{\otimes q})^{GL_q(k)},$ the subspace of $GL_q(k)$-invariants of $Q^{\otimes q}.$ The Singer transfer has been studying for a long time: see Boardman \cite{J.B}, Ch\ohorn n and H\`a \cite{C.Ha}, Crossley \cite{M.C2}, H\`a \cite{Ha}, H\uhorn ng \cite{Hung}, H\uhorn ng-Qu\`ynh \cite{H.Q}, Minami \cite{N.M}, the present writer \cite{D.P4, D.P6, D.P10, D.P7, D.P11, D.P12}, Sum \cite{N.S2, N.S4}, and others. By the works of Singer himself \cite{W.S1} and Boardman \cite{J.B}, $Tr_q^{A}$ is known to be an isomorphism for $q\leq 3.$ Moreover, Singer \cite{W.S1} shows that the "total" transfer $ Tr^{A}:=\{Tr^{A}_q\}_{q\geq 0}: \{k\otimes _{GL_q(k)} P((P_q)_n^{*})\}_{q\geq 0} \longrightarrow \{{\rm Ext}_A^{q, q+n}(k, k)\}_{q\geq 0}$ is a homomorphism of bigraded algebras with respect to the product by concatenation in the domain and the usual Yoneda product for the Ext group. These works demonstrated that the cohomological transfer is highly nontrivial. Furthermore, we knew that this transfer homomorphism is induced over the $E_2$-page of the ASS by the geometrical transfer map $\Sigma^{\infty}(B(k^{q})_{+})\longrightarrow \Sigma^{\infty}(\mathbb S^{0})$ between the suspension spectrum in stable homotopy  category (see also Mitchell \cite{S.M}). Here $\mathbb S^{0} = \{0, 1\}$ with $0$ as base-point. The work of Minami \cite{N.M2} tells us that these transfers are closely related to the problem of finding permanent cycles in the ASS. In the second cohomology groups of $A$, by Mahowald \cite{Mahowald} and Lin-Mahowald \cite{L.M}, the classes $h_1h_j$ for $j\geq 3$ and $h_j^{2}$ for $0\leq j\leq 5,$ are known to be the permanent cycles in the ASS. In 2016, Hill, Hopkins, and Ravenel \cite{Hill} claimed that when $j\geq 7,$ the class $h_j^{2}$ is not a permanent cycle in the ASS. The case $j = 6$ has not been solved and has the status of a hypothesis by Snaith \cite{Snaith}. Most recently, Snaith's conjecture can be established by Akhmet'ev \cite{Akhmet}.  The question of whether these $h_j^{2}$ are the permanent cycles in the ASS or not is called \textit{Kervaire invariant problem} in literature \cite{W.B}.  This is one of the oldest unresolved issues in Differential and Algebraic topology. Return to Singer's transfer, in \cite{W.S1}, Singer sets up the following conjecture, which has not been proven or refuted for $q\geq 4.$

\begin{conj}\label{gtS}
The transfer homomorphism $Tr_q^{A}$ is a monomorphism.
\end{conj}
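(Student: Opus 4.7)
The plan is to attack Conjecture~\ref{gtS} rank by rank. Since the cases $q\leq 3$ are already settled by Singer~\cite{W.S1} and Boardman~\cite{J.B}, the only open low-rank problem is $q=4$, which is the focus of the present work. Because no uniform description of $k\otimes_{GL_4(k)}P((P_4)_n^{*})$ is known for all $n$, I would proceed degree by degree, reducing Singer's monomorphism question in each internal degree~$n$ to an explicit linear-algebra comparison between a concrete basis of the coinvariants and a list of nontrivial classes in $\mathrm{Ext}_A^{4,4+n}(k,k)$ visible through $Tr_4^{A}$. To compute the coinvariants I would pass through the dual hit problem: using admissible-monomial techniques (Peterson, Kameko, Wood, Sum, and the author's earlier work~\cite{D.P11,D.P12}), first produce a minimal set of $A$-generators of $(P_4)_n$, then dualise to obtain a basis of $P((P_4)_n^{*})=(Q_n^{\otimes 4})^{*}$, and finally take $GL_4(k)$-coinvariants; Kameko's squaring $\widetilde{Sq}^{0}_{*}$ organises most degrees into a short exact sequence whose kernel and cokernel are controlled by prior results, so that the genuinely new content in each $n$ is concentrated in a bounded number of spike-type summands resolvable by hand.

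Next, to verify injectivity of $Tr_4^{A}$ on each computed coinvariant class, I would use the chain-level representation of $Tr_q^{A}$ via the lambda algebra $\Lambda$: for each coset representative $[f]\in k\otimes_{GL_4(k)}P((P_4)_n^{*})$ one writes down a corresponding cycle $\psi_4(f)\in\Lambda^{4}$ of internal degree~$n$ and checks that its class in $H^{4,4+n}(\Lambda)\cong\mathrm{Ext}_A^{4,4+n}(k,k)$ is non-zero whenever $[f]\neq 0$. The known tables for $\mathrm{Ext}_A^{4,*}(k,k)$~\cite{W.L,Chen,Wang} provide the target against which the computed cycles $\psi_4(f)$ are matched to the indecomposables $h_ih_jh_kh_\ell$, $c_i$, $d_i$, $e_i$, $f_i$, $g_i$, $p_i$, $D_3(i),\ldots$ appearing in that bidegree.

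The main obstacle is the rapid growth of the admissible-monomial basis of $Q_n^{\otimes 4}$ with $n$, together with the non-freeness of the $GL_4(k)$-action, which makes the coinvariant quotient hard to control by pure representation theory: the weight-vector decomposition only yields an upper bound on $\dim k\otimes_{GL_4(k)}P((P_4)_n^{*})$, and one must carefully eliminate spurious invariants before the comparison with $\mathrm{Ext}_A^{4,4+n}(k,k)$ becomes sharp. In the exceptional degrees where this elimination is not automatic, I expect to isolate the obstruction as an explicit short list of candidate extra invariants and to kill them either by direct Steenrod-algebra manipulations inside $(P_4)_n^{*}$ or by showing that their $\psi_4$-images are boundaries in $\Lambda^{4}$, thereby establishing Singer's conjecture for $q=4$ in each degree within the generic range treated in the paper.
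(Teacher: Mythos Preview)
Your proposal is a strategy outline rather than a proof, which is appropriate here: the statement in question is a \emph{conjecture}, and the paper does not prove it. Singer's conjecture remains open for all $q\geq 4$; the paper only verifies it for $q=4$ in a specific list of generic degrees (Corollaries~\ref{hqc1}, \ref{hqc2}, \ref{hqct}, \ref{hqc3}, \ref{hqct2}, gathered as Corollary~\ref{hqc4}), and the full $q=4$ case is obtained only modulo three further unproved conjectures (Conjectures~\ref{gtP1}, \ref{gtP2}, \ref{gtP3}) on the dimensions of certain coinvariant spaces.

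That said, your methodological plan is essentially the one the paper follows in those degrees: compute an admissible-monomial basis of $Q_n^{\otimes 4}$ from Sum's tables, pass to $GL_4(k)$-invariants via the $\Sigma_4$-decomposition and the extra generator $\sigma_4$, dualise to get the coinvariants, and then use the Ch\ohorn n--H\`a map $\psi_4$ into the lambda algebra (Theorem~\ref{dlCH}) to identify the image of each surviving class with a named element of ${\rm Ext}_A^{4,*}(k,k)$ from Lin's tables. One refinement worth noting: the paper does not always compute the coinvariants exactly before comparing with ${\rm Ext}$; in several cases (e.g.\ inequality~\eqref{pt3} or~\eqref{pt10}) it only bounds $\dim k\otimes_{GL_4(k)}P((P_4)_n^{*})$ from above by the invariant-space calculation, exhibits an explicit $\widehat A$-annihilated element whose $\psi_4$-image is a known nonzero Ext class, and lets the resulting lower bound force equality. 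Your last paragraph about ``killing spurious invariants'' by showing their $\psi_4$-images are boundaries is not a step the paper ever needs---once the upper bound matches the number of detected Ext classes, injectivity is automatic.
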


The present study is to devoted to the investigation of this conjecture for $q = 4.$ Beyond the above methods, the mod two lambda algebra $\Lambda$ of Bousfield et al. \cite{Bousfield} can also be used as an useful tool to describe mysterious Ext groups. One can view $\Lambda$ as the $E_1$-term of the classical Adams spectral sequence converging to the 2-component of the stable homotopy groups of spheres. For the reader's convenience, let us note again that $\Lambda$ is an associative differential bigraded algebra with generators $\lambda_n\in \Lambda^{1, n}$ ($n\geq 0$) and the Adem relations
\begin{equation}\label{pt1}
\begin{array}{ll}
\medskip
\lambda_i\lambda_{2i+n+1} &= \sum_{j\geq 0}\binom{n-j-1}{j}\lambda_{i+n-j}\lambda_{2i+1+j} \\
& \quad \quad (i\geq 0,\ n\geq 0)
\end{array}
\end{equation}
with differential 
\begin{equation}\label{pt2}
\begin{array}{ll}
\medskip
d(\lambda_{n-1}) &= \sum_{j\geq 1}\binom{n-j-1}{j}\lambda_{n-j-1}\lambda_{j-1}\\
 & \quad \quad\quad  (n\geq 1),
\end{array}
\end{equation}
where $d(\lambda_0) = 0.$ We refer to \cite{Wang} for the relations \eqref{pt1} and \cite{Bousfield, Priddy} for that the differential in \eqref{pt2} is a well-defined endomorphism of $\Lambda.$ According to \cite{Wang},  there is a unique differential algebra endomorphism $\theta: \Lambda\to \Lambda$ with $\theta(\lambda_{n}) = \lambda_{2n+1}.$ This map, which is one-to-one, induces the classical $Sq^{0}$, an endomorphism of ${\rm Ext}_A(k, k)$. Indeed, Palmieri \cite{Palmieri} explained that as follows: It is well-known that the graded dual $A^{*}$ of $A$ is a commutative Hopf algebra over $k$, and as an algebra over $k,$ $A^{*}\cong k[\xi_1, \xi_2, \ldots],$ the polynomial algebra on generators $\xi_i$ of degree $2^{i}-1.$ Thence, the Frobenius map of $A^{*}$ ($A^{*}\longrightarrow A^{*},\, \xi\longmapsto \xi^{2}$) is a Hopf algebra map. Due to Priddy \cite{Priddy}, $\lambda_n$ is represented in the cobar construction by $[\xi_1^{n+1}].$ Due to May \cite{J.M2}, $Sq^{0}$ is induced by the dual to the Frobenius map and therefore, $Sq^{0}$ takes $[\xi_1^{n+1}]$ to $[\xi_1^{2(n+1)}].$ Thus, it agrees with endomorphism induced by $\theta.$  Alternatively, people have shown that $Sq^{0}$ commutes with the so-called Kameko $Sq^0$ through the cohomological transfer $Tr_q^{A}$ (see also \cite{J.B}, \cite{N.M} for discussions therein). Now for non-negative integers $j_1, \ldots, j_q$, a monomial  $\prod_{1\leq s\leq q}\lambda_{j_s}$ in $\Lambda$ is said to be \textit{an admissible of length q} if $j_s\leq 2j_{s+1}$ for all $1\leq s \leq q-1.$ The admissible monomials form an additive basis of $\Lambda$ by \cite{Bousfield, Priddy}. We denote by $\Lambda^{q, n}$ the $k$-vector subspace of $\Lambda$ generated by all the admissible monomials of length $q.$ In \cite{C.Ha}, Ch\ohorn n and H\`a gave an interesting linear  transformation $\psi_q: (P_q)_n^{*}\longrightarrow \Lambda^{q, n},$ which is determined by $\psi_q(\prod_{1\leq s\leq q}a_{s}^{(j_{s})}) =  \lambda_{j_q}$ if $q = 1,$ while $\psi_q(\prod_{1\leq s\leq q}a_{s}^{(j_{s})}) = \sum_{k\geq j_q}\psi_{q-1}(Sq_*^{k-j_q}(\prod_{1\leq s\leq q-1}a_{s}^{(j_{s})}))\lambda_k$ if $q > 1,$ for any $\prod_{1\leq s\leq q}a_{s}^{(j_{s})}$ in $(P_q)_n^{*}.$ This map can be considered as the $E_1$-level of the Singer transfer; further the authors have stated the following, which is a dual version of the one in H\uhorn ng \cite{Hung0}.
\begin{thm}\label{dlCH}
With the notation chosen, if $\zeta\in P((P_q)_n^{*}),$ then $\psi_q(\zeta)$ is a cycle in $\Lambda$ and is a representative of $Tr_q^{A}([\zeta]).$
\end{thm}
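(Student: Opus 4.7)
The plan is to proceed by induction on the rank $q$, the recursive structure of $\psi_q$ already pointing at such an argument. For the base case $q=1$, the map is $\psi_1(a_1^{(n)}) = \lambda_n$. Primitivity of $a_1^{(n)}$ means $Sq_*^{i}(a_1^{(n)}) = \binom{n-i}{i}\,a_1^{(n-i)} = 0$ for every $i\geq 1$, hence $\binom{n-i}{i} \equiv 0 \pmod 2$ for all such $i$. Comparing with formula \eqref{pt2} immediately gives $d(\lambda_n) = 0$. The representation statement at $q=1$ comes from identifying $\lambda_n$ with the cobar cocycle $[\xi_1^{n+1}]$ (following Priddy's description recalled earlier), which is precisely the cobar cochain image of the generator dual to $a_1^{(n)}$ under $Tr_1^A$.

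For the inductive step, assume the theorem for rank $q-1$ and take a primitive $\zeta \in P((P_q)_n^{*})$. Write
\[
\psi_q(\zeta) \;=\; \sum_{k\geq 0} \psi_{q-1}(\eta_k)\,\lambda_k,
\]
where $\eta_k \in (P_{q-1})_{n-k}^{*}$ is the element whose value on a monomial $\prod_{s=1}^{q-1} a_s^{(j_s)}$ is obtained by pairing $\zeta$ against the lift of that monomial multiplied by $a_q^{(k)}$, so that $\eta_k = \sum_J c_{(J,k)}\,Sq_*^{k-j_q}\bigl(\prod_{s=1}^{q-1} a_s^{(j_s)}\bigr)$ coming out of the recursion. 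Apply the Leibniz rule:
\[
d\psi_q(\zeta) \;=\; \sum_{k} d\bigl(\psi_{q-1}(\eta_k)\bigr)\lambda_k \;+\; \sum_{k} \psi_{q-1}(\eta_k)\,d(\lambda_k).
\]
Primitivity $Sq_*^{i}(\zeta) = 0$ for $i > 0$ unpacks, via the Cartan formula for $Sq_*$ on cross products, into a system of relations among the $\eta_k$'s. These relations force the restriction of the first Leibniz term (which is controlled by the inductive hypothesis through the primitivity of the appropriate linear combinations of $\eta_k$'s) to cancel against the second one after using the $\Lambda$-Adem relations \eqref{pt1} to rewrite $\psi_{q-1}(\eta_k)\,d(\lambda_k)$ as admissibles. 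The cancellation is a formal manipulation of binomial coefficients modulo 2, parallel to Priddy's verification that \eqref{pt2} is a well-defined differential.

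For the representation part of the inductive step, exploit the fact that Singer's cochain-level formula for $Tr_q^A$ in the cobar construction on $A^{*}$ is itself recursive in $q$: concatenating with a rank-one factor appends a block of the form $[\xi_1^{k+1}]$ to a cobar cocycle representing the rank-$(q-1)$ transfer applied to the appropriate slice $\eta_k$. Under Priddy's identification $\lambda_k \leftrightarrow [\xi_1^{k+1}]$, the recursion defining $\psi_q$ matches this concatenation term by term, so the inductive hypothesis applied to each $\psi_{q-1}(\eta_k)$ delivers the required cocycle representative of $Tr_q^A([\zeta])$.

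The hard part is the cancellation in the first claim: a priori, $d\psi_q(\zeta)$ contains many monomials in $\Lambda^{q+1, n}$, and it is only after rewriting through the Adem relations \eqref{pt1} that the contributions organize themselves into packages indexed by the vanishing conditions $Sq_*^{i}(\zeta) = 0$. Managing this reorganization — in particular, pairing each admissible monomial appearing in $d\psi_q(\zeta)$ with the correct primitivity equation, so that everything vanishes simultaneously — is the principal bookkeeping obstacle. Once this is handled, the representation claim drops out cleanly from the recursive cobar-level description of the Singer transfer.
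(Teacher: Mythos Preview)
The paper does not prove Theorem~\ref{dlCH}; it is quoted from Ch\ohorn n--H\`a \cite{C.Ha} (with a dual antecedent in H\uhorn ng \cite{Hung0}) and used as a black box throughout. There is therefore no proof in the paper against which to compare your attempt.

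As for the proposal itself: the base case $q=1$ is clean and correct --- primitivity of $a_1^{(n)}$ is exactly the vanishing of the binomial coefficients in \eqref{pt2}, so $d(\lambda_n)=0$. The inductive step, however, is where the substance lies, and you have not actually carried it out. You write the Leibniz expansion and assert that the primitivity equations $Sq_*^{i}(\zeta)=0$ will organize the terms so as to force cancellation after rewriting via \eqref{pt1}, but you neither identify which linear combinations of the $\eta_k$ are themselves primitive (they are not individually primitive, so the inductive hypothesis does not apply term by term to $d(\psi_{q-1}(\eta_k))$), nor do you exhibit the pairing between admissible monomials in $d\psi_q(\zeta)$ and the primitivity constraints. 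The sentence ``formal manipulation of binomial coefficients modulo $2$, parallel to Priddy's verification'' is a placeholder for an argument, not an argument. The same applies to the representation claim: you invoke a ``recursive cochain-level formula for $Tr_q^A$ in the cobar construction'' without stating it precisely or checking that it matches the recursion defining $\psi_q$. Both of these are the actual content of the Ch\ohorn n--H\`a proof, and your sketch stops exactly where the work begins.

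That said, the architecture you outline --- induction on $q$, Leibniz rule in $\Lambda$, Cartan formula for $Sq_*$ on cross products, and Priddy's identification $\lambda_k \leftrightarrow [\xi_1^{k+1}]$ --- is the correct skeleton, and is indeed how the result is established in the cited references.
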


Our motivation to write up this work is to describe the structure of the coinvariants $k\otimes _{GL_q(k)} P((P_k)_{n}^{*})$ and also to investigate Conjecture \ref{gtS} in the rank 4 case. One of the major difficulties involved in studying Singer's conjecture is that we do not determine a representation of $GL_q(k)$ (or a basis of $k\otimes_{GL_q(k)}P((P_q)_n^{*})$). So, our current goal is to make some progress towards that problem when $q = 4$. More precisely, by using the techniques in the hit problem of four variables (see the works of Sum \cite{N.S, N.S1}), we explicitly determine the structure of $k\otimes _{GL_4(k)} P((P_4)_{n}^{*})$ in some generic degrees $n.$ Applying these results and the representation in the algebra $\Lambda$ of the fourth transfer, we show that Conjecture \ref{gtS} holds for $q = 4$ and degrees given.  We also give some conjectures on the dimensions of $k\otimes_{GL_q(k)} ((P_4)_n^{*})$ for the remaining degrees $n.$ As a consequence, Singer's conjecture for the rank 4  transfer has been favored. From these results and our previous works \cite{D.P11, D.P12}, we obtain a complete picture of the behavior of $Tr_4^{A}$. Our method is completely different from that of Singer \cite{W.S1}, Boardman \cite{J.B}, Ch\ohorn n-H\`a \cite{C.Ha}, H\`a \cite{Ha}, H\uhorn ng \cite{Hung}, H\uhorn ng-Qu\`ynh \cite{H.Q} in the study of the domain of the  cohomological transfer.

\section{Outline of main results}

To motivate the statement of our main result, we review the hit problem of Frank Peterson \cite{F.P} and some known results on the graded space ${\rm Ext}_{A}^{q,*}(k, k)$ for $q\leq 4.$ Firstly, we say that a polynomial $f\in (P_q)_n$ is "hit" (or $A$-decomposable), if it satisfies a \textit{hit equation}, $f = \sum_{t > 0}Sq^{t}(f_t)$ for some $Sq^{t}\in A$ and $f_t\in (P_q)_{n-t}.$ Said differently, $f$ belongs to $kGL_q(k)$-submodule, $\widehat{A}(P_q)_n:= (P_q)_n\cap \widehat{A}P_q = (P_q)_n\cap \sum_{t > 0}{\rm Im}(Sq^{t}),$ where $\widehat{A}$ is the kernel of the epimorphism of graded $k$-algebras $A\to k$ given by $Sq^{0}\longmapsto 1$ and $Sq^{i}\longmapsto 0$ for all $i > 0.$  For instance, $x^{2}\in (P_1)_2$ is hit since $x^{2} = Sq^{1}(x).$  In general, it is not easy to determine whether a polynomial is hit, except for special cases given in Theorem \ref{dlSi} (see section four). Solving the hit problem is to determine a minimal set of $A$-generators for the polynomial ring $P_q$ in each degree $n.$ Equivalently, when $k$ has the trivial $A$ action, one can write down explicitly a minimal generating set for the "cohit" $k$-module, $${\rm Tor}^{A}_{0, n}(k, P_q) = Q_n^{\otimes q}\cong (P_q)_n/\widehat{A}(P_q)_n,\ q \geq 1,\ n\geq 0.$$ The structure of this space, which is closely related to some classical problems in homotopy theory (e.g., cobordism theory of manifolds, modular representation theory of the general linear group, stable homotopy type of classifying spaces of finite groups),  has explicitly been determined for $q\leq 4$ and all degrees $n$: see the works of Peterson \cite{F.P}, Kameko \cite{M.K} and Sum \cite{N.S, N.S1}. Due to Minami \cite{N.M}, the hit problem also plays an important role for studying permanent cycles in the ASS. Recently, the problem for $q\geq 5$ in some certain degrees is known by Sum \cite{N.S3, N.S4} and the present author \cite{D.P4, D.P5, D.P6, D.P10, D.P7, D.P12}. By these, it seems likely that an explicit description of $Q_n^{\otimes q}$ for general $q$ will appear in the near future. Hit problems for the symmetric polynomials have been investigated by the works of Janfada and Wood (see \cite{Janfada1, Janfada2, Janfada3, Janfada4}). The following Kameko maps \cite{M.K} are often used in studying the hit problem: For each $n\geq 0,$ the \textit{down Kameko map} $\overline {Sq}^0: (P_q)_{2n+q}\to (P_q)_n$ is a surjective linear map defined on monomials by $\overline {Sq}^0(f) = g$ if $f = \prod_{1\leq i\leq q}x_ig^{2}$ and $\overline {Sq}^0(f) = 0$ otherwise. This map sends hit polynomials to hit polynomials. The \textit{up Kameko map} $\varphi: (P_q)_{n}\to (P_q)_{2n+q}$ is an injective linear map defined on monomials by $\varphi(g) = \prod_{1\leq i\leq q}x_ig^{2}.$ In general, this map does not send hit polynomials to hit polynomials. However, let $\mu(n) = \mbox{min}\big\{*\in \mathbb N:\ \alpha(n + *)\leq *\big\},$ where the $\alpha$ function counts the number of ones in the binary expansion of its argument. Then, if $\mu(2n+q) = q$, then $\overline {Sq}^0$ induces an isomorphism of $kGL_q(k)$-modules $\overline {Sq}^0: Q^{\otimes q}_{2n+q}\to Q^{\otimes q}_{n}$ with inverse $\varphi.$ In \cite{F.P},  Peterson conjectured that $Q^{\otimes q}_n$ is trivial if and only if $\mu(n) > q.$ (His motivation for this was to prove that if $\mathcal M$ is a smooth manifold of dimension $n$ such that all products of length $q$ of Stiefel-Whiney classes of its nomal bundle vanish, then either $\alpha(n)\leq q$ or $\mathcal M$ is cobordant to zero.)  A number of researchers have investigated Peterson's conjecture, but at last it was solved by Wood \cite{R.W}. Additionally, in \cite{W.S2}, Singer proved another result which generalizes the Peterson conjecture and identifies a new class of monomials in $\widehat{A}(P_q)_n.$ So, from these facts, it is sufficient to solve the hit problem in degrees $n$ satisfying $\mu(n) < q.$ By this condition, a careful but straightforward computation shows that $n$ is of the "generic" form:
\begin{equation}\label{ct}
n = r(2^{s}-1) + v.2^{s},
\end{equation}
where $0\leq \mu(v) < r < q$ and $s\geq 0$. Thus, we only need to study the behavior of the algebraic transfer at these degrees. Next, we provide a brief information on the graded space ${\rm Ext}_{A}^{q, *}(k, k)$ for $q\leq 4.$ 

\begin{thm}[see Adams \cite{J.A2},  Adem \cite{Adem}, Wall \cite{Wall}, Wang \cite{Wang}, Lin \cite{W.L}]\label{dlntg}
The following hold:

\begin{enumerate}

\item[i)] ${\rm Ext}_{A}^{1, *}(k, k)$ is generated by  $h_i$ for $i\geq 0;$

\item[ii)] ${\rm Ext}_{A}^{2, *}(k, k)$ is generated by $h_ih_j$ for $j\geq i\geq 0$ and $j\neq i+1;$

\item[iii)] ${\rm Ext}_{A}^{3, *}(k, k)$ is generated by $h_ih_jh_{\ell},\, c_t$ for $t\geq 0;$ $\ell\geq j\geq i\geq 0,$ and subject only to the relations $h_ih_{i+1} = 0,\ h_ih_{i+2}^{2} = 0$ and $h_i^3 = h^2_{i-1}h_{i+1};$ 
\item[iv)] ${\rm Ext}_{A}^{4, *}(k, k)$ is generated by $h_ih_jh_{\ell}h_m,\, h_uc_v,\, d_t,\, e_t,\, f_t,\, g_{t+1},\, p_t,\, D_3(t),\, p'_t$ for $m\geq \ell\geq j\geq i\geq 0$, $u,\ v,\ t\geq 0$, and subject to the relations in iii) together with $h^2_ih^2_{i+3} = 0,\, h_{v-1}c_v = 0,\, h_{v}c_v = 0,\, h_{v+2}c_v = 0$ and $h_{v+3}c_v = 0.$
\end{enumerate}
\end{thm}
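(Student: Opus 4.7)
The theorem assembles four classical computations of low-degree Ext groups; my plan is to exploit the two complementary tools highlighted earlier in the excerpt, namely the mod two lambda algebra $\Lambda$ (which provides a chain-level model for ${\rm Ext}_A(k, k)$) and the May spectral sequence \cite{J.M}, whose $E_1$-page is an explicit polynomial-exterior algebra on generators $h_{i, j}$ with readily computable differentials.

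For (i) and (ii) I would work directly inside $\Lambda$. The admissible monomials $\lambda_{j_1}\cdots\lambda_{j_q}$ with $j_s\leq 2j_{s+1}$ form an additive basis, so in length one the only candidate cycles are the $\lambda_n$. The differential \eqref{pt2} then shows that $\lambda_n$ is a cycle modulo boundaries exactly when $n+1$ is a power of two, yielding the classes $h_i$ represented by $\lambda_{2^i-1}$ and proving (i); this matches the cobar identification $\lambda_n=[\xi_1^{n+1}]$ recalled above. For (ii), I would enumerate admissible pairs $\lambda_i\lambda_j$, compute $d$ via \eqref{pt2} and the Leibniz rule, and check that the only new relation is $h_ih_{i+1}=0$, which is the Adem relation \eqref{pt1} specialized to $i\mapsto 2^i-1$ with the bottom index $n=0$, all other binomial coefficients on the right-hand side vanishing mod 2.

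For (iii) and (iv), $\Lambda$ becomes impractical at the hand-computation level and I would pass to the May spectral sequence. The $E_\infty$-page is generated multiplicatively in low cohomological degrees by an explicit finite list of survivors: besides products of the $h_i$, the class $c_t$ arises in May filtration as a Massey product $\langle h_t, h_{t+1}^{2}, h_t\rangle$, and in degree four the classes $d_t, e_t, f_t, g_{t+1}, p_t, p'_t, D_3(t)$ each have similar named descriptions in terms of the $h_{i,j}$. My plan is to identify these survivors one by one, verify that the Adem-type relations $h_ih_{i+2}^{2}=0$ and $h_i^{3}=h_{i-1}^{2}h_{i+1}$ hold at $E_\infty$ (the first from $d_1$ of the relevant monomial, the second from a known Massey-product juggling), and then at cohomological degree four verify the additional relations $h_i^{2}h_{i+3}^{2}=0$ and $h_{v+\epsilon}c_v=0$ for $\epsilon\in\{-1,0,2,3\}$ by the same mechanism. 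Finally, I would lift these $E_\infty$-level identities to genuine identities in ${\rm Ext}_A(k,k)$ by checking that there is no nontrivial multiplicative extension in the relevant bidegrees, using a dimension count against the known Poincar\'e series of ${\rm Ext}_A^{q,*}(k,k)$ for $q\leq 4$.

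The main obstacle is the $q=4$ case: one must enumerate survivors of the May differentials carefully enough to be certain no further indecomposable generator has been missed, and the relations involving $c_v$ are delicate because $c_v$ itself carries Massey-product indeterminacy, so each product $h_{v+\epsilon}c_v$ must be resolved by chasing representatives through the cobar complex. This is exactly the difficulty Lin \cite{W.L} had to overcome, and my proposed proof offers no genuine shortcut past the combinatorial bookkeeping; it would essentially recapitulate the Adams/Adem/Wall/Wang/Lin calculations, using $\Lambda$ to double-check the low-filtration products where the May approach is least efficient.
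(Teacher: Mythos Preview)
The paper does not prove this theorem at all: it is stated as a compilation of classical results, with the attribution ``see Adams \cite{J.A2}, Adem \cite{Adem}, Wall \cite{Wall}, Wang \cite{Wang}, Lin \cite{W.L}'' serving in lieu of proof, and the statement is then used as a black box throughout Sections~2 and~3. So there is no ``paper's own proof'' to compare against; your proposal is a sketch of how the cited authors obtained these results rather than a reconstruction of anything in the present paper.

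That said, your sketch is a reasonable high-level outline of the classical approach. The lambda-algebra argument for (i) and (ii) is essentially correct, and the May spectral sequence is indeed the standard tool for (iii) and (iv). You are also candid that the $q=4$ case devolves into the same combinatorial bookkeeping Lin carried out, with no shortcut on offer. For the purposes of this paper, though, no proof is expected: the theorem is quoted background, and the appropriate ``proof'' is simply the citation.
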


We are now in a position to formulate the main results of the paper. For this let us study Conjecture \ref{gtS} in the rank $4$ case and the generic degrees of the form \eqref{ct} where 
$$  (r, v) = \left\{\begin{array}{ll}
 (3, 2^{t+1} - 1)&\ \mbox{for $t\neq 3$},\\[1mm]
 (2, 2^{t} - 1)&\ \mbox{for $t = 1$ and $t\geq 5$},\\[1mm]
\end{array}\right.$$
We refer the reader to some remarks in section three for explaining why we need to calculate these degrees. We first consider the pair $(r, v) = (3, 2^{t+1} - 1)$ for $t\neq 3.$  Thence, one has the following cases.

{\bf The case \boldmath{$t = 1$}.} Based upon an admissible basis of the $k$-vector space $Q^{\otimes 4}_{3(2^{s}-1) + 3.2^{s}}$ in \cite{N.S1}, we find that 

\begin{thm}\label{dlc1}
For a positive integer $s$, then
$$ \dim k\otimes_{GL_4(k)}P((P_4)_{3(2^{s}-1) + 3.2^{s}}^{*}) = \left\{\begin{array}{ll}
0 &\mbox{if $s = 2$},\\
1 &\mbox{if $s \neq 2$}.
\end{array}\right.$$
\end{thm}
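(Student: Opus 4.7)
The plan is to compute the $GL_4(k)$-invariant subspace of the cohit module $Q_n^{\otimes 4}$ in degree $n = 3(2^s-1) + 3\cdot 2^s = 6\cdot 2^s - 3$, since by the duality
\[ \dim k\otimes_{GL_4(k)} P((P_4)_n^{*}) \;=\; \dim (Q_n^{\otimes 4})^{GL_4(k)} \]
noted in the introduction, this computation is equivalent to the one in the statement. The starting input is Sum's explicit admissible monomial basis of $Q_n^{\otimes 4}$ from \cite{N.S1}, which provides a finite, combinatorially described basis that I would tabulate for the given degree $n$. Note that the Kameko squaring does not apply, since $n$ is odd and $(n-4)/2 \notin \mathbb Z$, so the invariants must be computed directly rather than pulled back from a lower degree.

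Next, I would use the standard fact that $GL_4(k)$ is generated by the symmetric group $\Sigma_4$ (permuting $x_1,\ldots,x_4$) together with the single transvection $\rho_1$ defined by $x_1\mapsto x_1+x_2$ and $x_i\mapsto x_i$ for $i \neq 1$. Consequently,
\[ (Q_n^{\otimes 4})^{GL_4(k)} = (Q_n^{\otimes 4})^{\Sigma_4} \cap (Q_n^{\otimes 4})^{\rho_1}. \]
The first step is then to compute the $\Sigma_4$-invariants. I would partition Sum's admissible basis into $\Sigma_4$-orbits under permutation of variables, and, using Sum's admissibility criterion combined with the hit equations, identify which orbit-sum classes are nonzero in $Q_n^{\otimes 4}$. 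This yields an explicit spanning set for $(Q_n^{\otimes 4})^{\Sigma_4}$; a small rank calculation then pins down its dimension and an explicit basis $\{[\omega_1],\ldots,[\omega_m]\}$.

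The second step is to intersect with the $\rho_1$-fixed subspace. For each $\Sigma_4$-invariant class $[\omega_j]$, I would expand $\omega_j \rho_1$ by the binomial rule and reduce the result modulo $\widehat{A}(P_4)_n$ to express $[\omega_j\rho_1]$ as a $k$-linear combination of the $[\omega_i]$'s (any non-$\Sigma_4$-symmetric terms that appear must collapse when we impose the invariance conditions, so it suffices to compute within $(Q_n^{\otimes 4})^{\Sigma_4}$ after symmetrization). Solving the resulting linear system $\sum_j c_j [\omega_j\rho_1] = \sum_j c_j [\omega_j]$ over $k$ gives the dimension of $(Q_n^{\otimes 4})^{GL_4(k)}$. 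For $s=1$ the degree $n=9$ is small and a direct exhaustive computation produces a single invariant. For $s \geq 3$, I would exhibit one explicit invariant (anticipated to be the symmetrization of a specific spike-type class stable in $s$) and show by the system above that it is unique up to scalar. For $s=2$, i.e.\ $n=21$, I would verify that every $\Sigma_4$-invariant candidate is destroyed by $\rho_1$ modulo hits, yielding dimension $0$.

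The main obstacle is Step~5: the hit-reduction of $\omega_j\rho_1$ for $s \geq 3$ involves manipulating a basis whose size grows with $s$, and rigorously justifying the uniform pattern across all $s\neq 2$ requires either a careful inductive argument, or else an explicit combinatorial book-keeping of how the admissible monomials in degree $6\cdot 2^s - 3$ stabilize as $s$ grows, together with a parallel stabilization of the action of $\rho_1$. A second, more subtle difficulty is the anomalous behavior at $s=2$: this discrepancy should arise from a specific hit relation available in degree $21$ but not in degrees $9$ or $45,93,\ldots$, and identifying this relation precisely is what distinguishes the two cases of the theorem.
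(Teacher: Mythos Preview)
Your proposal is correct and follows the paper's approach: compute $\Sigma_4$-invariants from Sum's admissible basis, then impose the transvection (the paper's $\sigma_4$). Two refinements in the paper address precisely the obstacles you flagged. First, the computation is organized by the weight-vector filtration $Q^{\otimes 4}_{n_s} \cong (Q^{\otimes 4}_{n_s})^{\omega_{(s,1)}} \oplus (Q^{\otimes 4}_{n_s})^{\omega_{(s,2)}}$ with $\omega_{(s,1)}=(3,\ldots,3,1,1)$ and $\omega_{(s,2)}=(3,\ldots,3)$; Sum's basis in each weight piece has a stable parametric description once $s\geq 4$ (with $s=3$ handled separately), so the $\Sigma_4$- and $\sigma_4$-computations become one uniform linear-algebra calculation rather than a family indexed by $s$, resolving your stabilization concern. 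Second, for $s\geq 3$ the paper only extracts the upper bound $\dim\leq 1$ from this combinatorics; the matching lower bound comes not from a direct hit-reduction but from exhibiting the $\widehat A$-annihilated element $\zeta_s = a_1^{(0)}a_2^{(2^{s+1}-1)}a_3^{(2^{s+1}-1)}a_4^{(2^{s+1}-1)}$ and noting, via the lambda-algebra representation of the transfer, that $\psi_4(\zeta_s)=\lambda_0\lambda_{2^{s+1}-1}^3$ represents $h_0h_{s+1}^3 = h_0h_s^2h_{s+2}\neq 0$ in ${\rm Ext}^{4,*}_A(k,k)$, which forces $[\zeta_s]\neq 0$ in the coinvariants. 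This Ext shortcut replaces the verification step you anticipated as difficult with a one-line appeal to known Ext tables; the $s=2$ anomaly is then handled by a direct check (both weight pieces have trivial $GL_4$-invariants), consistent with $h_0h_3^3=0$.
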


Let us sketch the proof of the theorem. Firstly, by Sum \cite{N.S1}, the dimensions of the $k$-vector spaces $Q^{\otimes 4}_{3(2^{s} -1) + 3.2^{s}}$ are computed as follows:
$$ \dim Q^{\otimes 4}_{3(2^{s} -1) + 3.2^{s}} = \left\{ \begin{array}{ll}
46 &\mbox{if $s = 1$},\\
94 &\mbox{if $s = 2$},\\
105 &\mbox{if $s \geq 3$}.
\end{array}\right.$$
Moreover, they have an admissible monomial basis, which are given in \cite{N.S}. Now, thanks to these results, for $s \in \{1, 2\},$ a direct computation indicates that 
$$ k\otimes_{GL_4(k)}P((P_4)_{3(2^{s}-1) + 3.2^{s}}^{*}) = \left\{\begin{array}{ll}
\langle [\zeta_1] \rangle &\mbox{if $s = 1$},\\
0 &\mbox{if $s = 2$},\\
\end{array}\right.
$$
where $\zeta_1 = a_1^{(1)}a_2^{(3)}a_3^{(3)}a_4^{(2)} + a_1^{(1)}a_2^{(3)}a_3^{(4)}a_4^{(1)} + a_1^{(1)}a_2^{(5)}a_3^{(2)}a_4^{(1)} +  a_1^{(1)}a_2^{(6)}a_3^{(1)}a_4^{(1)}$ belongs to $P((P_4)_{3(2^{1}-1) + 3.2^{1}}^{*}).$ By the unstable condition, to verify that $\zeta_1$ is $\widehat{A}$-annihilated, we only need to consider the effects of $Sq^{1}$ and $Sq^{2}$. 

For $s\geq 3,$ based on the basis of $Q^{\otimes 4}_{3(2^{s} -1) + 3.2^{s}},$ we obtain
\begin{equation}\label{pt3}
 \dim k\otimes_{GL_4(k)}P((P_4)_{3(2^{s}-1) + 3.2^{s}}^{*})\leq 1.
\end{equation}
On the other side, it is easy to check that the elements $ \zeta_s = a_1^{(0)}a_2^{(2^{s+1}-1)}a_3^{(2^{s+1}-1)}a_4^{(2^{s+1}-1)}$ in $(P_4)^{*}_{3(2^{s} -1) + 3.2^{s}}$ are $\widehat{A}$-annihilated. Moreover, notice that $\lambda_{2^{s}-1}\in \Lambda^{1, 2^{s}-1}$ are a cycle in the algebra $\Lambda$ and $h_s = [\lambda_{2^{s}-1}]\in {\rm Ext}_A^{1, 2^{s}}(k, k).$ Then, since $\zeta_s\in P((P_4)_{3(2^{s}-1) + 3.2^{s}}^{*})$,  by Theorem \ref{dlCH}, deduce that the cycles $\psi_4(\zeta_s) = \lambda_0\lambda_{2^{s+1}-1}^{3}$ in $\Lambda$ are representative of the non-zero elements $h_0h_{s+1}^{3}\in {\rm Ext}_A^{4, 6.2^{s}  +1}(k, k).$ These lead to $h_0h_{s+1}^{3}\in {\rm Im}(Tr_4^{A}).$ Invoking Theorem \ref{dlntg}, it follows that
 \begin{equation}\label{pt4}
{\rm Ext}_A^{4, 6.2^{s}  +1}(k, k)\\
=  \left\{\begin{array}{ll}
\langle h_0h_{2}^{3}, h_1c_0 \rangle = \langle h_1c_0\rangle &\mbox{if $s = 1$},\\
\langle h_0h_{s+1}^{3}\rangle  &\mbox{if $s\geq 2$},
\end{array}\right.
\end{equation}
with $h_0h_{s+1}^{3} = 0$ for $s = 2$ and $h_0h_{s+1}^{3} = h_0h_s^{2}h_{s+2}\neq 0$ for $s\geq 3.$ These data and the inequality \eqref{pt3} imply that the coinvariants space $k\otimes_{GL_4(k)}P((P_4)_{3(2^{s}-1) + 3.2^{s}}^{*})$ is $1$-dimensional. Moreover, by a direct computation using the monomial bases of $Q^{\otimes 4}_{3(2^{s} -1) + 3.2^{s}},$ one can obtain that the coinvariants $k\otimes_{GL_4(k)}P((P_4)_{3(2^{s}-1) + 3.2^{s}}^{*})$ are generated by the classes $[\zeta_s],$ for all $s\geq 3.$

\begin{rema}\label{nxt} Clearly, $\lambda_3^{2}\lambda_2$ is a representative of the non-zero element $c_0\in {\rm Ext}_A^{3, 11}(k, k).$ Then, since $\zeta_1$ is $\widehat{A}$-annihilated, by a direct computation using the representation of $Tr_4^{A}$ over $\Lambda$ and Theorem \ref{dlCH}, we deduce that 
$$\begin{array}{ll}
\medskip
\psi_4(a_1^{(1)}a_2^{(3)}a_3^{(3)}a_4^{(2)}) &= \lambda_1\lambda_3^{2}\lambda_2 + \lambda_1\lambda_3\lambda_4\lambda_1 +  \lambda_1\lambda_4\lambda_3\lambda_1,\\
\medskip
\psi_4(a_1^{(1)}a_2^{(3)}a_3^{(4)}a_4^{(1)}) &=\lambda_1\lambda_3\lambda_4\lambda_1  + \lambda_1\lambda_4\lambda_3\lambda_1+ \lambda_1\lambda_5\lambda_2\lambda_1,\\
\medskip
  \psi_4(a_1^{(1)}a_2^{(5)}a_3^{(2)}a_4^{(1)}) &= \lambda_1\lambda_5\lambda_2\lambda_1 + \lambda_1\lambda_6\lambda_1^{2},\\
\medskip
 \psi_4(a_1^{(1)}a_2^{(6)}a_3^{(1)}a_4^{(1)}) &=\lambda_1\lambda_6\lambda_1^{2},
\end{array}$$
and therefore the cycle $\psi_4(\zeta_1) = \lambda_1\lambda_3^{2}\lambda_2$ in $\Lambda^{4, 9}$ is a representative of the element $h_1c_0\in {\rm Ext}_A^{4, 13}(k, k).$ This fact and the equality \eqref{pt4} lead to the non-zero element  $h_1c_0$ being in the image of $Tr_4^{A}.$
\end{rema}

Combining Remark \ref{nxt} with Theorem \ref{dlc1} and the equality \eqref{pt4}, it may be concluded that

\begin{corl}\label{hqc1}
The Singer transfer $$ Tr_4^{A}: k\otimes_{GL_4(k)}P((P_4)_{6.2^{s}-3}^{*}) \to {\rm Ext}_A^{4, 6.2^{s}  +1}(k, k)$$ is an isomorphism for every positive integer $s.$
\end{corl}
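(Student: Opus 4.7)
The plan is to split into three cases according to the value of $s$ and simply compare dimensions and images using the data already assembled in Theorem \ref{dlc1}, equality \eqref{pt4}, and Remark \ref{nxt}. Since every map of $k$-vector spaces between spaces of the same finite dimension is injective iff it is surjective iff it carries a generator to a generator, the strategy reduces to verifying, in each case, that a specified generator of the domain is sent by $Tr_4^{A}$ to a non-zero element of the codomain (or that both spaces vanish).

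First I would handle $s = 1$. By Theorem \ref{dlc1}, $k\otimes_{GL_4(k)}P((P_4)_{9}^{*})$ is $1$-dimensional, generated by $[\zeta_1]$; by \eqref{pt4}, ${\rm Ext}_{A}^{4,13}(k,k) = \langle h_1c_0\rangle$ is also $1$-dimensional. Remark \ref{nxt} computed $\psi_4(\zeta_1) = \lambda_1\lambda_3^{2}\lambda_2$, which is a representative of $h_1c_0$, so by Theorem \ref{dlCH} we get $Tr_4^{A}([\zeta_1]) = h_1c_0 \neq 0$, and the transfer is an isomorphism in this case.

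Next I would dispose of $s = 2$ trivially: by Theorem \ref{dlc1} the domain is $0$, and by \eqref{pt4} together with the relation $h_0h_3^{3} = 0$ (as noted immediately after \eqref{pt4}), the codomain also vanishes, so $Tr_4^{A}$ is the zero map between zero spaces, hence an isomorphism.

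Finally, for $s \geq 3$, Theorem \ref{dlc1} gives $k\otimes_{GL_4(k)}P((P_4)_{6\cdot 2^{s}-3}^{*}) = \langle [\zeta_s]\rangle$, which is $1$-dimensional, while \eqref{pt4} gives ${\rm Ext}_{A}^{4,6\cdot 2^{s}+1}(k,k) = \langle h_0h_{s+1}^{3}\rangle$ with $h_0h_{s+1}^{3} = h_0h_s^{2}h_{s+2}\neq 0$. The sketch of Theorem \ref{dlc1} already evaluated $\psi_4(\zeta_s) = \lambda_0\lambda_{2^{s+1}-1}^{3}$, which represents $h_0h_{s+1}^{3}$ in $\Lambda$, so by Theorem \ref{dlCH}, $Tr_4^{A}([\zeta_s]) = h_0h_{s+1}^{3} \neq 0$, and $Tr_4^{A}$ is again an isomorphism. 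The main (essentially only) obstacle has been packaged away into the preceding theorem and remark, namely the verification that the cycles $\psi_4(\zeta_s)$ in $\Lambda$ genuinely represent the claimed non-zero cohomology classes; everything else is a direct dimension count.
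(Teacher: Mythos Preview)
Your proof is correct and follows essentially the same approach as the paper: the corollary is stated there as an immediate consequence of combining Theorem \ref{dlc1}, equality \eqref{pt4}, and Remark \ref{nxt}, and your case split on $s$ simply unpacks that combination explicitly.
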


{\bf The case \boldmath{$t = 2$}.} The structure of the coinvariant spaces $k\otimes_{GL_4(k)}P((P_4)_{3(2^{s}-1) + 7.2^{s}}^{*})$ are given as follows. 

\begin{thm}\label{dlc2}
With a positive integer $s,$ we have 
$$ k\otimes_{GL_4(k)}P((P_4)_{3(2^{s}-1) + 7.2^{s}}^{*}) = \left\{\begin{array}{ll}
\langle [\zeta] \rangle &\mbox{if $s = 1$},\\
0 &\mbox{if $s > 1$},\\
\end{array}\right.$$
where $\zeta$ is the following sum:
$$\begin{array}{ll}
&a_1^{(5)}a_2^{(5)}a_3^{(5)}a_4^{(2)}+
 a_1^{(5)}a_2^{(5)}a_3^{(6)}a_4^{(1)}+
 a_1^{(3)}a_2^{(5)}a_3^{(8)}a_4^{(1)}+
a_1^{(5)}a_2^{(3)}a_3^{(8)}a_4^{(1)} +
\medskip
 a_1^{(3)}a_2^{(6)}a_3^{(7)}a_4^{(1)}\\
&+ a_1^{(5)}a_2^{(7)}a_3^{(4)}a_4^{(1)}+
a_1^{(7)}a_2^{(5)}a_3^{(4)}a_4^{(1)}+
 a_1^{(3)}a_2^{(9)}a_3^{(4)}a_4^{(1)}+
 a_1^{(9)}a_2^{(3)}a_3^{(4)}a_4^{(1)}+
\medskip
a_1^{(3)}a_2^{(9)}a_3^{(3)}a_4^{(2)}\\
&+
a_1^{(9)}a_2^{(3)}a_3^{(3)}a_4^{(2)}+
a_1^{(5)}a_2^{(9)}a_3^{(2)}a_4^{(1)}+
a_1^{(9)}a_2^{(5)}a_3^{(2)}a_4^{(1)} +
a_1^{(5)}a_2^{(10)}a_3^{(1)}a_4^{(1)}+
\medskip
a_1^{(9)}a_2^{(6)}a_3^{(1)}a_4^{(1)}\\
&+
 a_1^{(3)}a_2^{(11)}a_3^{(2)}a_4^{(1)}+
 a_1^{(11)}a_2^{(3)}a_3^{(2)}a_4^{(1)} +
 a_1^{(5)}a_2^{(5)}a_3^{(3)}a_4^{(4)}+
 a_1^{(5)}a_2^{(3)}a_3^{(5)}a_4^{(4)}+
\medskip
 a_1^{(3)}a_2^{(5)}a_3^{(5)}a_4^{(4)}\\
&+
 a_1^{(3)}a_2^{(12)}a_3^{(1)}a_4^{(1)}+
  a_1^{(11)}a_2^{(4)}a_3^{(1)}a_4^{(1)}+
 a_1^{(7)}a_2^{(8)}a_3^{(1)}a_4^{(1)}+
 a_1^{(7)}a_2^{(7)}a_3^{(1)}a_4^{(2)}+
\medskip
a_1^{(13)}a_2^{(2)}a_3^{(1)}a_4^{(1)}\\
&+
 a_1^{(14)}a_2^{(1)}a_3^{(1)}a_4^{(1)}+
 a_1^{(6)}a_2^{(5)}a_3^{(3)}a_4^{(3)}+
 a_1^{(5)}a_2^{(3)}a_3^{(6)}a_4^{(3)}+
 a_1^{(3)}a_2^{(6)}a_3^{(5)}a_4^{(3)} +
\medskip
 a_1^{(6)}a_2^{(3)}a_3^{(3)}a_4^{(5)}\\
&+
 a_1^{(3)}a_2^{(3)}a_3^{(6)}a_4^{(5)}+
 a_1^{(3)}a_2^{(6)}a_3^{(3)}a_4^{(5)}+
  a_1^{(5)}a_2^{(3)}a_3^{(3)}a_4^{(6)}+
 a_1^{(3)}a_2^{(5)}a_3^{(3)}a_4^{(6)}+
\medskip
 a_1^{(3)}a_2^{(3)}a_3^{(5)}a_4^{(6)}\\
&+
 a_1^{(3)}a_2^{(3)}a_3^{(3)}a_4^{(8)}+
a_1^{(3)}a_2^{(3)}a_3^{(4)}a_4^{(7)} +
 a_1^{(3)}a_2^{(5)}a_3^{(2)}a_4^{(7)}+
 a_1^{(3)}a_2^{(6)}a_3^{(1)}a_4^{(7)}+
\medskip
 a_1^{(3)}a_2^{(3)}a_3^{(9)}a_4^{(2)}\\
&+
 a_1^{(3)}a_2^{(3)}a_3^{(10)}a_4^{(1)}+
a_1^{(5)}a_2^{(3)}a_3^{(7)}a_4^{(2)}+
 a_1^{(5)}a_2^{(7)}a_3^{(3)}a_4^{(2)}+
 a_1^{(7)}a_2^{(5)}a_3^{(3)}a_4^{(2)}.
\end{array}$$
\end{thm}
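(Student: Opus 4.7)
The plan is to follow the same template as the proof of Theorem \ref{dlc1}, but in the higher degree $n(s) := 3(2^{s}-1)+7\cdot 2^{s} = 10\cdot 2^{s}-3$. First, I would invoke Sum's admissible monomial basis of $Q^{\otimes 4}_{n(s)}$ from \cite{N.S, N.S1}, together with its natural splitting according to whether each admissible representative lies in the image of the up-Kameko map $\varphi$. Dualizing then yields an explicit spanning set for $(Q^{\otimes 4}_{n(s)})^{*} = P((P_4)_{n(s)}^{*})$ in terms of basic monomials $\prod_r a_r^{(j_r)}$ in the divided power algebra $\Gamma(a_1^{(1)},\ldots,a_4^{(1)})$.

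Next, I would cut out the subspace of primitive elements inside this dual. Using the right $A$-action $(a_i^{(m)})Sq^{t} = \binom{m-t}{t}\, a_i^{(m-t)}$ and Cartan's formula, the requirement that a dual class $\zeta = \sum_{\alpha}\prod_r a_r^{(j_r^{\alpha})}$ be annihilated by all positive $Sq^{t}$ reduces to a finite linear system. By the unstable-degree restriction only $Sq^{2^{i}}$ with $2^{i}\leq n(s)$ need be considered; for $s=1$ this amounts to just $Sq^{1}$ and $Sq^{2}$, and for larger $s$ to a handful of further squares.

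Third, I would pass to the $GL_4(k)$-coinvariants. Since $GL_4(k)$ is generated by $\Sigma_4$ together with the transvection $\tau_{1,2}\colon x_1\mapsto x_1+x_2$, the coinvariants are the quotient of $P((P_4)_{n(s)}^{*})$ by the relations $(1-\sigma)\eta$ coming from these generators. For $s=1$ this leaves exactly one surviving class, which a direct but lengthy computation identifies with the listed $44$-term sum $\zeta$; non-triviality of $[\zeta]$ I would confirm through the lambda-algebra representation of $Tr_4^{A}$ together with Theorem \ref{dlCH}, by computing $\psi_4(\zeta)\in\Lambda^{4,17}$ and showing it represents a non-zero class in ${\rm Ext}_A^{4,\,21}(k,k)$, thereby forcing $[\zeta]\neq 0$ in the coinvariants. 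For $s\geq 2$, every primitive should turn out to be a $GL_4(k)$-coboundary, forcing the coinvariant space to vanish.

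The principal obstacle will be the bookkeeping: the admissible basis of $Q^{\otimes 4}_{n(s)}$ is large, the primitivity linear system and the transvection relations produce matrices of substantial size, and explicitly writing down a $44$-term representative invariant under the relations is delicate. My intended safeguard is to stratify every calculation by the weight vector $\omega$ (the $\omega$-decomposition used in \cite{N.S, N.S1}), so that the Steenrod and $GL_4(k)$-actions respect the stratification and the whole computation breaks into smaller blocks, each handled by pattern-matching against the lower-degree analogue treated in Theorem \ref{dlc1} and, where needed, by symbolic computation.
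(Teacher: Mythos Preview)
Your plan is a dual reformulation of what the paper actually does, and in principle it could be made to work, but it carries real confusion and is less efficient than the route the paper takes.

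First, a conceptual slip: once you dualize Sum's admissible basis of $Q^{\otimes 4}_{n(s)}$ you already \emph{have} a basis of $P((P_4)_{n(s)}^{*})=(Q^{\otimes 4}_{n(s)})^{*}$; there is no further ``subspace of primitives'' to cut out. Your second step is therefore either redundant or you are implicitly proposing to work in the full dual $(P_4)_{n(s)}^{*}$ and solve the $Sq^{t}$-annihilation system from scratch, which discards the information in Sum's basis. Also, for $s=1$ the unstable condition does not stop at $Sq^{2}$: one must check $Sq^{1},Sq^{2},Sq^{4}$ (the paper verifies $(\zeta)Sq^{4}=0$ explicitly).

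The paper's proof stays entirely on the polynomial side and computes \emph{invariants} rather than coinvariants. It decomposes $Q^{\otimes 4}_{n_s}\cong \underline{Q^{\otimes 4}_{n_s}}\oplus \overline{Q^{\otimes 4}_{n_s}}$ and, for $s\geq 2$, further by weight vectors $\omega_{(s,1)},\omega_{(s,2)}$; on each piece it first determines the $\Sigma_4$-invariants (a small list of explicit sums $q_{s,j}$ and $\widehat{q_{s,j}}$), and then applies the single transvection $\sigma_4$ to see which combinations survive. This two-step ``$\Sigma_4$ then $\sigma_4$'' routine is much lighter than quotienting by all $(1-\sigma)\eta$: one checks equalities $\sigma_i(f)\equiv f$ rather than generating a relation submodule. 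For $s=1$ this yields $(Q^{\otimes 4}_{n_1})^{GL_4(k)}=\langle[\widetilde{\zeta}]\rangle$ for an explicit $8$-term polynomial $\widetilde{\zeta}$; the $44$-term $\zeta$ then enters only at the very end, where one checks directly that $\zeta$ is $\widehat{A}$-annihilated and that $\langle[\widetilde{\zeta}],[\zeta]\rangle=1$, so $[\zeta]$ generates the dual. For $s\geq 2$ the invariant space is shown to be zero on each weight piece, whence the coinvariant space is zero by duality, with no need to exhibit coboundaries. Your weight-vector stratification idea is right and is exactly the organizing principle the paper uses, but on the polynomial/invariant side rather than the divided-power/coinvariant side.
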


To check that $\zeta$ is $\widehat{A}$-annihilated, we need only to compute the actions of $Sq^{i}$ for $i\in \{1, 2, 4\}$ because of the unstable condition. The proof of the theorem is based upon the admissible bases for the $k$-vector spaces $Q^{\otimes 4}_{3(2^{s}-1)+7.2^{s}}$ in \cite{N.S}. 

It is apparently that the non-zero element $e_0$ in ${\rm Ext}_{A}^{4, 21}(k, k)$ is represented by the cycle
$$\overline{e}_0:= \lambda_3^{3}\lambda_8 + \lambda_3\lambda_5^{2}\lambda_4 + \lambda_3^{2}\lambda_7\lambda_4 + \lambda_7\lambda_5\lambda_3\lambda_2 + \lambda_3^{2}\lambda_5\lambda_6$$ in $\Lambda^{4, 17}.$ Then, since $\zeta\in P((P_4)_{3(2^{1}-1) + 7.2^{1}}^{*}),$ by a direct computation using the differential \eqref{pt2} and the representation of $Tr_4^{A}$ over $\Lambda,$ we conclude that
$$
\psi_4(\zeta) =  \overline{e}_0+ d(\lambda_3\lambda_5\lambda_{10} + \lambda_3\lambda_{12}\lambda_3+ \lambda_4\lambda_7^{2} + \lambda_0\lambda_{11}\lambda_7)
$$
 is a cycle in $\Lambda^{4, 3(2^{1}-1) + 7.2^{1}},$ from which one gets
\begin{equation}\label{pt6}
Tr_4^{A}([\zeta]) = [\psi_4(\zeta)] = [\overline{e}_0] = e_0.
\end{equation}
On the other side, using Theorem \ref{dlntg}, we may deduce that
 \begin{equation}\label{pt7}
{\rm Ext}_A^{4, 10.2^{s}  +1}(k, k)\\
 =  \left\{\begin{array}{ll}
\langle h_0h_{2}h_{3}^{2}, e_0 \rangle =  \langle e_0\rangle &\mbox{if $s = 1$},\\
 \langle h_0h_{s+1}h_{s+2}^{2} \rangle = 0 &\mbox{if $s >1$}.
\end{array}\right.
\end{equation}
Taking Theorem \ref{dlc2}, together with the equalities \eqref{pt6} and \eqref{pt7}, the readers can easily see that

\begin{corl}\label{hqc2}
$Tr_4^{A}$ is an isomorphism in the internal degree $10.2^{s}-3$ for any $s\geq 1.$
\end{corl}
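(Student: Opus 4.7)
The plan is to treat the statement as a direct corollary of three facts already assembled above: Theorem \ref{dlc2}, the Ext computation \eqref{pt7}, and the transfer computation \eqref{pt6}. First I would observe that the internal degree $10\cdot 2^{s} - 3$ is exactly $3(2^{s}-1) + 7\cdot 2^{s}$, so the domain of the transfer map in the corollary coincides verbatim with the coinvariant space whose structure is determined in Theorem \ref{dlc2}, and the target is ${\rm Ext}_A^{4,\, 10\cdot 2^{s}+1}(k,k)$, whose dimension is recorded in \eqref{pt7}.

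For $s > 1$, I would observe that both the domain and the codomain are zero: Theorem \ref{dlc2} gives $k\otimes_{GL_4(k)}P((P_4)^{*}_{3(2^{s}-1)+7\cdot 2^{s}}) = 0$, and \eqref{pt7} yields $\langle h_0h_{s+1}h_{s+2}^{2}\rangle = 0$, where the vanishing on the Ext side comes from the relation $h_ih_{i+1}=0$ recalled in Theorem \ref{dlntg}. In this range $Tr_4^{A}$ is therefore the zero map between zero spaces, hence trivially an isomorphism.

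For $s = 1$ the situation is the more interesting one. Theorem \ref{dlc2} tells me that the domain is one-dimensional, spanned by $[\zeta]$, and \eqref{pt7} tells me that the target $\mathrm{Ext}_A^{4,21}(k,k)$ is also one-dimensional, spanned by $e_0$. It then suffices to exhibit a non-zero image, and this is exactly what \eqref{pt6} delivers: $Tr_4^{A}([\zeta]) = e_0 \neq 0$. A non-zero linear map between one-dimensional $k$-vector spaces is an isomorphism, which concludes the case.

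The only step that carries any real content, and which has already been done above the corollary, is the verification of \eqref{pt6}: one must compute $\psi_4(\zeta)$ via the Ch\ohorn n--H\`a map and produce an explicit element of $\Lambda^{4,17}$ whose differential equals $\psi_4(\zeta) - \overline{e}_0$, namely $\lambda_3\lambda_5\lambda_{10} + \lambda_3\lambda_{12}\lambda_3 + \lambda_4\lambda_7^{2} + \lambda_0\lambda_{11}\lambda_7$. Assuming Theorem \ref{dlCH} and this lambda-algebra identity, the corollary is a purely formal consequence; no further work on the hit problem or on the structure of the coinvariants is required, since those inputs have already been packaged into Theorem \ref{dlc2} and \eqref{pt6}.
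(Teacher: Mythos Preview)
Your proposal is correct and matches the paper's own treatment: the paper presents Corollary~\ref{hqc2} as an immediate consequence of Theorem~\ref{dlc2} together with \eqref{pt6} and \eqref{pt7}, which is exactly the case split you spell out. No additional argument is needed beyond what you have written.
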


{\bf The cases \boldmath{$t\geq 4$}.} By Sum \cite{N.S}, for each $t\geq 4,$ the dimension of the $k$-vector spaces $Q^{\otimes 4}_{3(2^{s}-1) + 2^{s}(2^{t+1}-1)}$ are determined as follows:
$$ \dim Q^{\otimes 4}_{3(2^{s}-1) + 2^{s}(2^{t+1}-1)} =  \left\{\begin{array}{ll}
\medskip
150&\mbox{if $s = 1$},\\
\medskip
195&\mbox{if $s = 2$},\\
\medskip
210&\mbox{if $s\geq 3$},
\end{array}\right.$$
Thanks to these results, by direct calculations, we obtain the following.

\begin{thm}\label{dlct}
Let $s$ and $t$ be positive integers such that $t\geq 4.$ Then, 
$$ 
\dim k\otimes_{GL_4(k)}P((P_4)_{3(2^{s}-1) + 2^{s}(2^{t+1}-1)}^{*})  = \left\{\begin{array}{ll}
1&\mbox{if $s = 1, 2$},\\
2&\mbox{if $s \geq 3$}.
\end{array}\right.$$
Moreover, 
$$ \begin{array}{ll}
\medskip
&k\otimes_{GL_4(k)}P((P_4)_{3(2^{s}-1) + 2^{s}(2^{t+1}-1)}^{*})\\
&  = \left\{\begin{array}{ll}
\langle [a_1^{(0)}a_2^{(2^{s+1}-1)}a_3^{(2^{s+t}-1)}a_4^{(2^{s+t}-1)}] \rangle&\mbox{if $s = 1, 2$},\\[1mm]
\langle [a_1^{(0)}a_2^{(2^{s+1}-1)}a_3^{(2^{s+t}-1)}a_4^{(2^{s+t}-1)}] , [a_1^{(0)}a_2^{(2^{s}-1)}a_3^{(2^{s}-1)}a_4^{(2^{s+t+1}-1)}] \rangle&\mbox{if $s \geq 3$}.
\end{array}\right.
\end{array}$$
\end{thm}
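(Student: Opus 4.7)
The plan is to parallel the sketches of Theorems \ref{dlc1} and \ref{dlc2}. First I would use the identification $k \otimes_{GL_4(k)} P((P_4)_n^{*}) \cong (Q_n^{\otimes 4})^{GL_4(k)}$ recorded in the introduction to reduce the problem to computing $GL_4(k)$-invariants in the cohit module, where $n = 3(2^s - 1) + 2^s(2^{t+1} - 1)$. Sum's admissible monomial bases for $Q_n^{\otimes 4}$ (of dimensions $150, 195, 210$ for $s = 1, 2, \geq 3$ respectively, as recalled above) furnish the concrete ambient spaces on which $GL_4(k)$ acts. Because $GL_4(k)$ is generated by the symmetric group $\Sigma_4$ together with the single transvection $\sigma\colon x_1 \mapsto x_1 + x_2$, $x_i \mapsto x_i$ for $i \geq 2$, the computation breaks into two stages: first enumerate $\Sigma_4$-invariants by grouping classes of admissible monomials into orbits and summing, then cut down to the $\sigma$-fixed subspace.

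For the lower bound, I would verify $\widehat{A}$-annihilation of the claimed generators in $(P_4)_n^{*}$, namely $\zeta^{(1)}_{s,t} = a_1^{(0)}a_2^{(2^{s+1}-1)}a_3^{(2^{s+t}-1)}a_4^{(2^{s+t}-1)}$ and $\zeta^{(2)}_{s,t} = a_1^{(0)}a_2^{(2^s-1)}a_3^{(2^s-1)}a_4^{(2^{s+t+1}-1)}$. The key identity is $Sq_*^r(a_i^{(2^k-1)}) = \binom{2^k-1-r}{r}\, a_i^{(2^k-1-r)}$, which vanishes for every $r > 0$ by Lucas' theorem: within the first $k$ bit positions the pattern of $2^k-1-r$ is the complement of that of $r$, obstructing the required congruence. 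Cartan's formula for cross products then forces $Sq_*^{r}(\zeta^{(j)}_{s,t}) = 0$ for every $r > 0$, so both elements lie in $P((P_4)_n^{*})$. The unstable condition further restricts these checks to $r$ a power of $2$ in a bounded range.

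For the upper bound I would carry out the $\sigma$-invariance calculation inside the $\Sigma_4$-fixed subspace separately in the cases $s = 1$, $s = 2$, and $s \geq 3$, since Sum's basis is stratified differently in each. The expectation is that this fixed subspace collapses to one dimension when $s \in \{1,2\}$, leaving only the class of $\zeta^{(1)}_{s,t}$, and to two dimensions when $s \geq 3$, with $\zeta^{(2)}_{s,t}$ providing a second independent class. Linear independence in the coinvariant space follows from the distinct $(x_2,x_3,x_4)$-weight profiles of the two spikes together with the observation that neither profile can be eliminated by a hit equation over four variables in the relevant degree range.

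The hard part will be executing the upper bound: the $\Sigma_4$-orbit decomposition of Sum's basis contains many orbits, and for each orbit-sum the effect of $\sigma$ modulo the hit equations of four variables from \cite{N.S, N.S1} must be reduced back to the admissible basis and the ensuing large linear system solved. The most delicate step is the case $s \geq 3$, where one must confirm that \emph{exactly} two independent fixed classes appear; this requires a careful weight-vector analysis of how $\sigma$ mixes orbits containing $\zeta^{(2)}_{s,t}$ and its $\Sigma_4$-relatives. The jump from dimension one to two as $s$ crosses from $2$ to $3$ is precisely what makes the case split in the statement essential, and verifying it cleanly will be the principal obstacle.
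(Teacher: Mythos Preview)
Your plan is essentially the paper's own strategy: reduce to $(Q_n^{\otimes 4})^{GL_4(k)}$, use Sum's admissible bases, compute $\Sigma_4$-invariants first and then impose the transvection $\sigma_4$, while checking $\widehat{A}$-annihilation of the two explicit duals for the lower bound. Two refinements in the paper are worth adopting. First, for the upper bound the paper does not work with the whole $150$-, $195$-, or $210$-dimensional space at once; it uses the weight-vector decomposition
\[
Q^{\otimes 4}_{n_{s,t}}\;\cong\;(Q^{\otimes 4}_{n_{s,t}})^{\omega_{(s,t,1)}}\ \oplus\ (Q^{\otimes 4}_{n_{s,t}})^{\omega_{(s,t,2)}},
\qquad
\omega_{(s,t,1)}=(\underbrace{3,\dots,3}_{s},\underbrace{2,\dots,2}_{t+1}),\quad
\omega_{(s,t,2)}=(\underbrace{3,\dots,3}_{s+1},\underbrace{2,\dots,2}_{t-1}),
\]
and computes $GL_4(k)$-invariants in each piece separately (Proposition~\ref{mdt}), obtaining dimensions $(0,1)$ for $s=1,2$ and $(1,1)$ for $s\ge 3$. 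This is exactly the ``weight-profile'' separation you allude to, but made precise at the level of $GL_4(k)$-modules; it also shrinks the linear systems dramatically. Second, for the lower bound the paper does not argue non-hit directly: it pushes $\zeta^{(1)}_{s,t}$ and $\zeta^{(2)}_{s,t}$ through the transfer via Theorem~\ref{dlCH}, landing on $h_0h_{s+1}h_{s+t}^{2}$ and $h_0h_s^{2}h_{s+t+1}$, which are known (Theorem~\ref{dlntg}) to be linearly independent in ${\rm Ext}^{4,*}_A$ for $s\ge 3$ and with only the first nonzero for $s=1,2$. That gives both non-triviality and independence in the coinvariant space immediately, replacing your somewhat informal ``neither profile can be eliminated by a hit equation'' step.
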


Following Theorem \ref{dlntg}, one gets $${\rm Ext}_A^{4, 2^{s+t+1} + 2^{s+1}+1}(k, k) = \langle h_0h_{s+1}h_{s+t}^{2},\, h_0h_s^{2}h_{s+t+1}\rangle.$$ It should be noted that if $t = 3,$ then $h_0h_{s+1}h_{s+t}^{2} = 0.$ If $t = 1$ and $s > 2,$ then  $h_0h_{s+1}h_{s+t}^{2} = h_0h_s^{2}h_{s+t+1}$.  When $t = 2$ and $s > 1,$ we have $h_0h_{s+1}h_{s+t}^{2} = 0$ and $h_0h_s^{2}h_{s+t+1} = 0.$ In the case in which $s\in \{1, 2\}$, we have $h_0h_s^{2}h_{s+t+1} = 0.$ On the other hand, we observe that the elements $\lambda_{2^{i}-1}\in \Lambda^{1, 2^{i}-1}$ are representative of the non-zero elements $h_i\in {\rm Ext}_A^{1, 2^{i}}(k, k)$ for $i = 0,\, s,\, s+1,, s+t,\, s+t+1 $ ($t\geq 4$). So, since $a_1^{(0)}a_2^{(2^{s+1}-1)}a_3^{(2^{s+t}-1)}a_4^{(2^{s+t}-1)}$ and $a_1^{(0)}a_2^{(2^{s}-1)}a_3^{(2^{s}-1)}a_4^{(2^{s+t+1}-1)}$ are $\widehat{A}$-annihilated, by Theorem \ref{dlCH}, we claim that the cycles 
$$ \begin{array}{ll} 
\medskip
\lambda_0\lambda_{2^{s+1}-1}\lambda_{2^{s+t}-1}^2 &=\psi_4(a_1^{(0)}a_2^{(2^{s+1}-1)}a_3^{(2^{s+t}-1)}a_4^{(2^{s+t}-1)}),\\
\lambda_0\lambda^{2}_{2^{s}-1}\lambda_{2^{s+t+1}-1} &=\psi_4(a_1^{(0)}a_2^{(2^{s}-1)}a_3^{(2^{s}-1)}a_4^{(2^{s+t+1}-1)})
\end{array}$$ in $\Lambda^{4, 3(2^{s}-1) + 2^{s}(2^{t+1}-1)}$ are representative of the non-zero elements $h_0h_{s+1}h_{s+t}^{2}$ and $h_0h_{s}^{2}h_{s+t+1},$ respectively and so, the following is a direct consequence from these data and Theorem \ref{dlct}.

\begin{corl}\label{hqct}
The fourth transfer is an isomorphism when acting on the coinvariant spaces $k\otimes_{GL_4(k)}P((P_4)_{2^{s+t+1} + 2^{s+1} - 3}^{*})$ for all $s > 0$ and $t > 3.$
\end{corl}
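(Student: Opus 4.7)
The plan is to combine Theorem \ref{dlct} with Theorem \ref{dlCH} and Theorem \ref{dlntg}, essentially packaging the computation already sketched in the paragraph immediately preceding the statement. Set $n := 2^{s+t+1}+2^{s+1}-3 = 3(2^s-1)+2^s(2^{t+1}-1)$. By Theorem \ref{dlct}, the source $k\otimes_{GL_4(k)}P((P_4)_n^{*})$ has dimension $1$ for $s\in\{1,2\}$ and dimension $2$ for $s\geq 3$, with explicit divided-power generators
$$\omega_1 := a_1^{(0)}a_2^{(2^{s+1}-1)}a_3^{(2^{s+t}-1)}a_4^{(2^{s+t}-1)}$$
and, when $s\geq 3$,
$$\omega_2 := a_1^{(0)}a_2^{(2^{s}-1)}a_3^{(2^{s}-1)}a_4^{(2^{s+t+1}-1)}.$$

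First I would verify that each $\omega_i$ is primitive and then compute its image under $\psi_4$. Since $a_j^{(2^{a}-1)}Sq^{i}=\binom{(2^{a}-1)-i}{i}a_j^{(2^{a}-1-i)}$ vanishes for every $i>0$ by Lucas's theorem, and the Cartan formula for $Sq^{t}$ on a product of such factors forces every term with $t>0$ to vanish, we get $\omega_1,\omega_2\in P((P_4)_n^{*})$. For the same reason the recursive definition of $\psi_4$ telescopes at each stage, yielding
\begin{align*}
\psi_4(\omega_1) &= \lambda_0\lambda_{2^{s+1}-1}\lambda_{2^{s+t}-1}^{2},\\
\psi_4(\omega_2) &= \lambda_0\lambda_{2^{s}-1}^{2}\lambda_{2^{s+t+1}-1}.
\end{align*}
By Theorem \ref{dlCH} these expressions are cycles in $\Lambda$ representing $Tr_4^{A}([\omega_1])$ and $Tr_4^{A}([\omega_2])$; since $[\lambda_{2^{i}-1}]=h_i$ and concatenation in $\Lambda$ induces the Yoneda product in ${\rm Ext}$, they correspond to the classes $h_0h_{s+1}h_{s+t}^{2}$ and $h_0h_{s}^{2}h_{s+t+1}$ respectively.

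Finally I would compare source and target. By Theorem \ref{dlntg}, the only admissible products of four $h_i$'s lying in bidegree $(4,\, 2^{s+t+1}+2^{s+1}+1)$ are precisely these two monomials; the relations $h_ih_{i+1}=0$, $h_ih_{i+2}^{2}=0$, $h_i^{3}=h_{i-1}^{2}h_{i+1}$, and $h_i^{2}h_{i+3}^{2}=0$ force every other candidate in this bidegree either to vanish or to be rewritten as one of these two, and the hypothesis $t>3$ prevents any of them from collapsing our two monomials into each other. Consequently, for $s\in\{1,2\}$ the identity $h_0h_{s}^{2}h_{s+t+1}=0$ noted before the statement leaves a one-dimensional target matching the one-dimensional source, while for $s\geq 3$ both classes are non-zero and linearly independent, giving a two-dimensional target that matches the source. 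In each case $Tr_4^{A}$ sends the basis from Theorem \ref{dlct} to a basis of ${\rm Ext}_A^{4,\, 2^{s+t+1}+2^{s+1}+1}(k,k)$, hence is an isomorphism. The only delicate point is the linear independence of $h_0h_{s+1}h_{s+t}^{2}$ and $h_0h_{s}^{2}h_{s+t+1}$ in the range $s\geq 3,\, t\geq 4$; this reduces to direct inspection of the presentation given in Theorem \ref{dlntg}, since neither the quadratic nor the cubic relations listed there identify or annihilate these admissible monomials once $t>3$.
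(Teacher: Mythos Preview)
Your proposal is correct and follows essentially the same approach as the paper: apply Theorem~\ref{dlct} for the domain, verify the generators are $\widehat{A}$-annihilated, push them through $\psi_4$ and Theorem~\ref{dlCH} to land on $h_0h_{s+1}h_{s+t}^{2}$ and $h_0h_{s}^{2}h_{s+t+1}$, and match with the target via Theorem~\ref{dlntg}. You spell out the Lucas-theorem primitivity check and the telescoping of $\psi_4$, and you make explicit the linear-independence point for $s\geq 3$ that the paper treats as implicit in its citation of Theorem~\ref{dlntg}; otherwise the arguments coincide.
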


Next, consider the pair $(r, v) = (2, 2^{t} - 1)$ with $t = 1,$ we remark that 
$$ Q^{\otimes 4}_{2(2^{s} - 1) + 2^{s}}  \cong {\rm Ker}[\overline{Sq}^{0}]_{2(2^{s} - 1) + 2^{s}} \bigoplus Q^{\otimes 4}_{2^{s-1} + 2^{s} - 3},$$
because the Kameko homomorphism
$$[\overline{Sq}^{0}]_{2(2^{s} - 1) + 2^{s}}:= \overline{Sq}^{0}: Q^{\otimes 4}_{2(2^{s} - 1) + 2^{s}} \to Q^{\otimes 4}_{2^{s-1} + 2^{s} - 3}$$
is an epimorphism of $kGL_4(k)$-modules, from which we get 
\begin{equation}\label{pt8}
 \begin{array}{ll}
\dim k\otimes_{GL_4(k)}P((P_4)_{2(2^{s} - 1) + 2^{s}}^{*}) &\leq \dim ({\rm Ker}[\overline{Sq}^{0}]_{2(2^{s} - 1) + 2^{s}})^{GL_4(k)}\\
&\qquad  + \dim k\otimes_{GL_4(k)}P((P_4)_{2^{s-1} + 2^{s}-3}^{*}).
\end{array}
\end{equation}

The following assertion is based on a monomial basis of ${\rm Ker}[\overline{Sq}^{0}]_{2(2^{s} - 1) + 2^{s}}$ in \cite{N.S}.

\begin{thm}\label{dlc3}
The subspaces of $GL_4(k)$-invariants $({\rm Ker}[\overline{Sq}^{0}]_{2(2^{s} - 1) + 2^{s}})^{GL_4(k)}$ are trivial for all $s > 0.$
\end{thm}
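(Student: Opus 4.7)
The plan is to exploit Sum's explicit admissible monomial basis of $\text{Ker}[\overline{Sq}^0]_{2(2^s-1) + 2^s}$ together with the fact that $GL_4(k)$ is generated by the symmetric group $\Sigma_4$ and a single transvection, reducing the verification to a finite linear algebra problem.

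First, I dualize Sum's admissible basis and partition it into orbits $O_1, \ldots, O_m$ under the permutation action of $\Sigma_4 \subset GL_4(k)$ on the generators $a_1^{(1)}, \ldots, a_4^{(1)}$. Any $GL_4(k)$-invariant class $[\zeta] \in ({\rm Ker}[\overline{Sq}^0]_{2(2^s-1)+2^s})^{GL_4(k)}$ is in particular $\Sigma_4$-invariant, hence expressible as $[\zeta] = \sum_{j=1}^m \gamma_j [O_j]$ for some $\gamma_j \in k$. This already restricts $[\zeta]$ to a small explicit $k$-subspace of the kernel.

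Second, I apply the transvection $\rho \in GL_4(k)$ defined on $P_4$ by $x_1 \mapsto x_1 + x_2$, $x_i \mapsto x_i$ for $i \geq 2$ (equivalently, its dual action on the divided power algebra via the formula from the introduction). For each orbit-sum $O_j$, I expand $O_j \cdot \rho$ into a sum of (generally non-admissible) monomials in $(P_4)_{2(2^s-1)+2^s}^{*}$ and rewrite the result in Sum's admissible basis modulo hit elements, using the hit relations underlying that basis. The identity $[\zeta \cdot \rho] = [\zeta]$ then translates into a homogeneous linear system on the coefficients $\gamma_j$; showing this system admits only the trivial solution completes the proof.

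The main obstacle is bookkeeping: the number of orbits $m$, and the number of non-admissible monomials produced by $\rho$ that must be reduced, grows with $s$. I expect to handle this by observing that Sum's basis for this family of degrees is built from a finite collection of \emph{templates} in which only the parameter $s$ appears in the exponents (as $2^s - 1$, $2^s$, $2^{s+1} - 1$, etc.), so the rewriting pattern of $O_j \cdot \rho$ in the admissible basis is uniform in $s$ once $s$ is large enough. The small cases ($s = 1, 2$), where Sum's kernel basis is computed explicitly in \cite{N.S} and has genuinely different shape, are dispatched by direct calculation; the generic case $s \geq 3$ reduces to the same matrix of constraints whose unique solution is $\gamma_1 = \cdots = \gamma_m = 0$. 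Combining both cases yields the vanishing of $({\rm Ker}[\overline{Sq}^0]_{2(2^s-1)+2^s})^{GL_4(k)}$ for all $s > 0$.
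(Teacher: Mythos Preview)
Your high-level strategy matches the paper's exactly: use $GL_4(k) = \langle \Sigma_4, \sigma_4\rangle$ with $\sigma_4$ the transvection $x_1\mapsto x_1+x_2$, first determine the $\Sigma_4$-invariants of ${\rm Ker}[\overline{Sq}^0]_{n_s}$ in terms of Sum's admissible basis, then impose invariance under $\sigma_4$. The paper carries this out via Lemmas \ref{bdc31} and \ref{bdc32}, finding a $4$-dimensional $\Sigma_4$-invariant space spanned by explicit classes $[q_{s,1}],\ldots,[q_{s,4}]$, and then checks in one line that $\sigma_4$ forces all four coefficients to vanish. Your remark that the basis templates are uniform in $s$ for $s$ large, so that small $s$ are handled separately, is also exactly how the paper proceeds.

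There is, however, a genuine gap in your $\Sigma_4$ step. You assert that any $\Sigma_4$-invariant class is a combination $\sum_j \gamma_j[O_j]$ of orbit sums of the admissible basis under the permutation action. This would be correct if $\Sigma_4$ acted on Sum's admissible basis of the kernel by permutations, but it does not: permuting the variables of an admissible monomial frequently yields a non-admissible one, which must then be reduced modulo hit elements, so the induced $\Sigma_4$-action on the quotient is \emph{not} a permutation representation in that basis. The paper's own computation makes this visible. In the proof of Lemma \ref{bdc32} the coefficient of $c_{s,45}$ in $\sigma_1(g)+g$ is $\beta_{47}+\beta_{48}+\beta_{62}+\beta_{67}+\beta_{68}+\beta_{70}$, a six-term relation impossible in a permutation module; and two of the four $\Sigma_4$-cyclic summands in Lemma \ref{bdc31} (those generated by $c_{s,25}$ and $c_{s,37}$) are nonzero but have \emph{zero} $\Sigma_4$-invariants, which could not happen if the orbit sum of the generator were automatically a nonzero invariant. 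Consequently the reduction-to-admissible-basis step that you correctly flag for the transvection is already required at the $\Sigma_4$ stage: one must write a generic $f\equiv\sum_j\gamma_j c_{s,j}$, expand each $\sigma_i(f)$ for $i=1,2,3$, reduce every resulting monomial back to the admissible basis via the hit relations, and solve the linear system $\sigma_i(f)\equiv f$. Without this, your list of candidate $\Sigma_4$-invariants is not the correct one, and the final transvection check is being run on the wrong subspace.
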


This theorem together with the inequality \eqref{pt8} give the following results:\\[1mm]
For $s = 1,$ evidently $Q^{\otimes 4}_{2^{1-1} + 2^{1}-3} \cong k,$ and so 
\begin{equation}\label{ptp}
k\otimes_{GL_4(k)}P((P_4)_{2^{1-1} + 2^{1}-3}^{*}) = \langle [1]\rangle.
\end{equation}
Suppose that $[f]\in k\otimes_{GL_4(k)}P((P_4)_{2(2^{1} - 1) + 2^{1}}^{*}).$ Then, $[f]$ is dual to $[\widetilde{f}]\in (Q^{\otimes 4}_{2(2^{1} - 1) + 2^{1}})^{GL_4(k)}.$ Since the Kameko map $[\overline{Sq}^{0}]_{2(2^{1} - 1) + 2^{1}}$ is an epimorphism, the dual of $[\overline{Sq}^{0}]_{2(2^{1} - 1) + 2^{1}}([\widetilde{f}])$ belongs to $k\otimes_{GL_4(k)}P((P_4)_{2(2^{1} - 1) + 2^{1}}^{*}).$ So, due to Theorem \ref{dlc3} and the equality \eqref{ptp}, $[f]$ is dual to $(\gamma [\varphi(1)] + [\widetilde{f}']),$
where $\gamma\in k,$ $[\widetilde{f}']\in {\rm Ker}[\overline{Sq}^{0}]_{2(2^{1} - 1) + 2^{1}}$ and the up Kameko map $\varphi$ is determined by
$$ \begin{array}{ll}
\varphi: k &\longrightarrow (P_4)_{2(2^{1} - 1) + 2^{1}}\\
\ \ \ \ u&\longmapsto\left\{\begin{array}{ll}
0&\mbox{if $u = 0$},\\
\prod_{1\leq i\leq 4}x_i&\mbox{if $u = 1$}.
\end{array}\right.
\end{array}$$
By straightforward computations using Lemma \ref{bdc31}(i) in section four and an admissible monomial basis of $Q^{\otimes 4}_{2(2^{1} - 1) + 2^{1}},$ show that $[\widetilde{f}] = 0,$ and therefore $[f] = ([\widetilde{f}])^{*} = 0.$ This means that  $k\otimes_{GL_4(k)}P((P_4)_{2(2^{1} - 1) + 2^{1}}^{*})$ is trivial.\\[1mm]
For $s \in \{2, 4\},$ combining Theorems \ref{dlc1}, \ref{dlc3} with the inequality \eqref{pt8} and the fact that the invariant space $(Q^{\otimes 4}_{2^{2-1} + 2^{2}-3})^{GL_4(k)}$ is trivial (see Sum \cite{N.S2}), it may be concluded that the coinvariant spaces $k\otimes_{GL_4(k)}P((P_4)_{2(2^{s} - 1) + 2^{s}}^{*})$ are trivial, too.\\[1mm]
For $s \not\in \{1, 2, 4\},$ the following inequality is immediate from Theorems \ref{dlc1} and \ref{dlc3} and the inequality \eqref{pt8}:
\begin{equation}\label{pt10}
\dim k\otimes_{GL_4(k)}P((P_4)_{2(2^{s} - 1) + 2^{s}}^{*})\leq 1.
\end{equation}
On the other side, we observe that $\lambda_{2^{s}-1}\in \Lambda^{1, 2^{s}-1}$ and $\lambda_7^{2}\lambda_5 = Sq^{0}(\lambda_3^{2}\lambda_2)\in \Lambda^{3, 19}$ are cycles in $\Lambda,$ and are representative of $h_s\in {\rm Ext}_A^{1, 2^{s}}(k, k)$ and $c_1 = Sq^{0}(c_0)\in {\rm Ext}_A^{3, 22}(k, k)$ respectively. Moreover, it is routine to verify that the elements 
$$ \begin{array}{ll}
\medskip
\zeta_3 &= (a_1^{(3)}a_2^{(7)}a_3^{(7)}a_4^{(5)} + a_1^{(3)}a_2^{(7)}a_3^{(9)}a_4^{(3)} + a_1^{(3)}a_2^{(11)}a_3^{(5)}a_4^{(3)} +  a_1^{(3)}a_2^{(13)}a_3^{(3)}a_4^{(3)})\in (P_4)^{*}_{2(2^{3}-1) + 2^{3}},\\
\zeta_s&= a_1^{(1)}a_2^{(2^{s-1}-1)}a_3^{(2^{s-1}-1)}a_4^{(2^{s+1}-1)}\in (P_4)^{*}_{2(2^{s}-1) + 2^{s}},\ \mbox{for $s\geq 5$}
\end{array}$$
are $\widehat{A}$-annihilated. So, by Theorem \ref{dlCH}, it implies that the cycles $\psi_4(\zeta_3) = \lambda_3\lambda_7^{2}\lambda_5$ and $\psi_4(\zeta_s) = \lambda_1\lambda_{2^{s-1}-1}^{2}\lambda_{2^{s+1}-1}$ in $\Lambda$ are representative of the elements $h_2c_1\in {\rm Ext}_A^{4, 3.2^{3}+2}(k, k)$ and $h_1h_{s-1}^{2}h_{s+1}\in {\rm Ext}_A^{4, 3.2^{s}+2}(k, k),$ respectively. It should be noted that with the $\widehat{A}$-annihilated elements $a^{(2^{s}-1)}\in (P_1)_{2^{s}-1}^{*}$ and 
$$ \begin{array}{ll}
\medskip
\widehat{\zeta} &= (a_1^{(7)}a_2^{(7)}a_3^{(5)} + a_1^{(7)}a_2^{(9)}a_3^{(3)}+ a_1^{(11)}a_2^{(5)}a_3^{(3)} + a_1^{(13)}a_2^{(3)}a_3^{(3)})\in (P_3)_{19}^{*},
\end{array}$$
we have $h_s=Tr_1^{A}([a_1^{(2^{s}-1)}])$ and $c_1 = Sq^{0}(c_0) = Tr^{A}_3([\widehat{\zeta}])$  (since the classical $Sq^{0}$ commutes with the Kameko $Sq^{0}$ via the rank 3 algebraic transfer). These arguments and the inequality \eqref{pt10} imply that
\begin{equation}\label{pt11}
\dim k\otimes_{GL_4(k)}P((P_4)_{2(2^{s} - 1) + 2^{s}}^{*}) = 1,\ \mbox{for $s\not\in \{1, 2, 4\}$},
\end{equation} 
and that the cohomological transfer is an epimorphism in the bidegree $(4, 3.2^{s}+2).$  Moreover, according to Theorem \ref{dlntg}, we easily get
\begin{equation}\label{pt12}
{\rm Ext}_A^{4, 3.2^{s}  +2}(k, k)\\
=  \left\{\begin{array}{ll}
\langle h_1h_3^{3}, h_2c_1\rangle = \langle h_2c_1\rangle &\mbox{if $s = 3$},\\
 0 &\mbox{if $s\in \{1, 2, 4\}$},\\
\langle h_1h_s^{3}\rangle  &\mbox{if $s > 4$},\\
\end{array}\right.
\end{equation}
where $h_1h_s^{3}=h_1h_{s-1}^{2}h_{s+1}\neq 0,$ from which, by \eqref{pt11}, we have immediately

\begin{corl}\label{hqc3}
The Singer algebraic transfer is an isomorphism in bidegree $(4, 3.2^{s}+2)$ for every positive integer $s.$
\end{corl}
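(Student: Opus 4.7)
The plan is to assemble the ingredients already developed above into the three-case dichotomy $s \in \{1,2,4\}$, $s = 3$, $s \geq 5$, and then conclude by comparing dimensions on each side of $Tr_4^A$ in bidegree $(4, 3.2^s+2)$.

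First I would dispose of the degenerate cases $s \in \{1,2,4\}$. By \eqref{pt12} the Ext group ${\rm Ext}_A^{4, 3.2^s+2}(k,k)$ vanishes. On the domain side, the triviality of $k\otimes_{GL_4(k)}P((P_4)_{2(2^s-1)+2^s}^*)$ has already been established in the discussion preceding \eqref{pt11}: for $s=1$ by the Kameko splitting together with Lemma \ref{bdc31}(i) applied to an admissible basis of $Q^{\otimes 4}_{2(2^1-1)+2^1}$; for $s\in\{2,4\}$ by combining Theorems \ref{dlc1} and \ref{dlc3} via the inequality \eqref{pt8}, using that $(Q^{\otimes 4}_{2^{s-1}+2^s-3})^{GL_4(k)}$ is trivial (Sum \cite{N.S2}). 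Hence in all three of these cases $Tr_4^A$ is a map between zero spaces and is vacuously an isomorphism.

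Next, for $s = 3$ and for each $s \geq 5$, both sides are one-dimensional: the domain by \eqref{pt11}, and the codomain by \eqref{pt12}, which is generated by $h_2 c_1$ when $s = 3$ and by $h_1 h_s^3 = h_1 h_{s-1}^2 h_{s+1}$ when $s \geq 5$. It therefore suffices to exhibit a nonzero element in the image. This is precisely what the $\Lambda$-representation supplies: by the argument preceding \eqref{pt11}, the $\widehat{A}$-annihilated classes $\zeta_3$ and $\zeta_s$ (for $s \geq 5$) satisfy, via Theorem \ref{dlCH},
\[
Tr_4^A([\zeta_3]) = [\lambda_3 \lambda_7^2 \lambda_5] = h_2 c_1, \qquad Tr_4^A([\zeta_s]) = [\lambda_1 \lambda_{2^{s-1}-1}^2 \lambda_{2^{s+1}-1}] = h_1 h_{s-1}^2 h_{s+1}.
\]
Since each of these is nonzero in its respective Ext group (by Theorem \ref{dlntg}), $Tr_4^A$ is surjective, and hence by equality of finite dimensions it is bijective, which finishes the argument.

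The corollary itself is thus a bookkeeping exercise; the real work is upstream, in justifying \eqref{pt11}. The main obstacle there is to rule out any extra invariant classes beyond $[\zeta_s]$ in the coinvariant space, which requires explicit manipulations on the admissible monomial bases of $Q^{\otimes 4}_{2(2^s-1)+2^s}$ from \cite{N.S}, combined with the Kameko short exact sequence splitting. A secondary, but routine, obstacle is verifying that each $\zeta_s$ is $\widehat{A}$-annihilated; by the unstable condition this reduces to checking the action of $Sq^i$ for a finite range of small $i$ on an explicit finite sum of divided-power monomials.
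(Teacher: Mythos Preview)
Your proposal is correct and follows essentially the same approach as the paper: the corollary is stated there as an immediate consequence of \eqref{pt11} and \eqref{pt12}, with the surjectivity witnessed by the explicit $\widehat{A}$-annihilated classes $\zeta_3$ and $\zeta_s$ ($s\geq 5$) exactly as you describe, and the degenerate cases $s\in\{1,2,4\}$ handled by the preceding triviality arguments. You have simply made explicit the case split and the dimension comparison that the paper leaves implicit.
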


Finally, we consider the pair $(r, v) = (2, 2^{t}-1)$ for $t\geq 5$ and obtain the following theorem.

\begin{thm}\label{dlct2}
Let $s$ and $t$ be positive integers such that $t\geq 5.$ Then, 
$$k\otimes_{GL_4(k)}P((P_4)_{2(2^{s} - 1) + 2^{s}(2^{t}-1)}^{*}) \\
 = \left\{\begin{array}{ll}
\langle [\zeta_{s,\, t}] \rangle &\mbox{if $s = 1,\, 2$},\\[1mm]
\langle [\zeta_{s,\, t}], [\widetilde{\zeta}_{s,\, t}] \rangle &\mbox{if $s = 3,\, 4$},\\[1mm]
\langle [\zeta_{s,\, t}], [\widetilde{\zeta}_{s,\, t}], [\widehat{\zeta}_{s,\, t}] \rangle &\mbox{if $s \geq 5$},
\end{array}\right.$$
where $$ \begin{array}{lll}
\medskip
\zeta_{s,\, t}&:=a_1^{(1)}a_2^{(2^{s}-1)}a_3^{(2^{s+t-1}-1)}a_4^{(2^{s+t-1}-1)},\\
\medskip
 \widetilde{\zeta}_{s,\, t}&:= a_1^{(0)}a_2^{(0)}a_3^{(2^{s}-1)}a_4^{(2^{s+t}-1)},\\
\medskip
\widehat{\zeta}_{s,\, t}&:= a_1^{(1)}a_2^{(2^{s-1}-1)}a_3^{(2^{s-1}-1)}a_4^{(2^{s+t}-1)}.
\end{array}$$
\end{thm}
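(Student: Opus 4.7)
The plan follows the blueprint of Theorems \ref{dlc1}--\ref{dlct}. Set $n = 2(2^s-1) + 2^s(2^t-1) = 2^{s+t} + 2^s - 2$ and $m = (n-4)/2 = 2^{s+t-1} + 2^{s-1} - 3$. The down Kameko map $[\overline{Sq}^0]_n \colon Q^{\otimes 4}_n \twoheadrightarrow Q^{\otimes 4}_m$ is a surjection of $kGL_4(k)$-modules, which splits over $k$ through the up Kameko $\varphi$; dualising and taking $GL_4(k)$-coinvariants yields
\begin{equation}\label{ptkam}
\dim k \otimes_{GL_4(k)} P((P_4)_n^*) \leq \dim ({\rm Ker}[\overline{Sq}^0]_n)^{GL_4(k)} + \dim k \otimes_{GL_4(k)} P((P_4)_m^*).
\end{equation}

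For $s \geq 3$ we observe that $m = 3(2^{s-2}-1) + 2^{s-2}(2^{t+1}-1)$ with $s-2 \geq 1$ and $t \geq 5 > 3$, so Theorem \ref{dlct} applies directly and delivers $\dim k \otimes_{GL_4(k)} P((P_4)_m^*) = 1$ for $s \in \{3,4\}$ and $2$ for $s \geq 5$. Moreover the dual Kameko map $(\overline{Sq}^0)^* \colon P((P_4)_m^*) \hookrightarrow P((P_4)_n^*)$, which on the standard basis sends $\prod_i a_i^{(k_i)}$ to $\prod_i a_i^{(2k_i+1)}$, carries the generators supplied by Theorem \ref{dlct} precisely to $\zeta_{s,t}$ and (when present) $\widehat\zeta_{s,t}$. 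For $s \in \{1,2\}$ we handle the smaller degrees $m = 2^t - 2$ and $m = 2^{t+1} - 1$ directly, using Sum's admissible monomial bases of $(P_4)_m$; a straightforward invariance analysis yields $\dim k \otimes_{GL_4(k)} P((P_4)_m^*) = 1$ in each case, with generator lifting under $(\overline{Sq}^0)^*$ to $\zeta_{s,t}$.

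The technical core of the proof is the determination of $({\rm Ker}[\overline{Sq}^0]_n)^{GL_4(k)}$. Writing a putative invariant in Sum's admissible basis of ${\rm Ker}[\overline{Sq}^0]_n$, we first impose $\Sigma_4$-symmetry (collapsing the expansion to a combination of $\Sigma_4$-orbit sums), then invariance under a single transvection generating $GL_4(k)$ over $\Sigma_4$, working modulo Sum's hit relations in degree $n$. The resulting linear system has trivial solution for $s \in \{1,2\}$ and a one-dimensional solution space, spanned by the class dual to $\widetilde\zeta_{s,t}$, for $s \geq 3$. Combined with \eqref{ptkam}, this yields the upper bounds $1,2,3$ claimed in the three regimes. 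For the matching lower bound, we verify directly that the listed elements are $\widehat{A}$-annihilated (the unstable condition restricts the check to finitely many $Sq_*^{2^i}$) and certify linear independence of their classes by passing through Theorem \ref{dlCH}: the $\psi_4$-images are cycles in $\Lambda^{4,n}$; tracking them to ${\rm Ext}_A^{4, n+4}(k,k)$ via Theorem \ref{dlntg} and the Adem relations $h_ih_{i+1}=0$, $h_ih_{i+2}^2=0$, and $h_i^3 = h_{i-1}^2 h_{i+1}$ shows the surviving classes form exactly the listed basis: $\widehat\zeta_{s,t}$ drops out for $s \leq 4$ (since $h_1h_2 = 0$ forces $h_1 h_{s-1}^2 h_{s+t} = 0$), and $\widetilde\zeta_{s,t}$ drops out as well for $s \in \{1,2\}$.

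The main obstacle is the triviality (respectively one-dimensionality) of $({\rm Ker}[\overline{Sq}^0]_n)^{GL_4(k)}$: Sum's admissible basis of the kernel grows with both $s$ and $t$, so the invariance calculation must be organized uniformly in these parameters rather than as a case-by-case enumeration, paralleling but exceeding in scale the analogous step in the proof of Theorem \ref{dlc3}.
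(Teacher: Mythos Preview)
Your outline matches the paper's proof almost exactly: the Kameko splitting \eqref{ptkam}, the identification of the quotient piece via Theorem~\ref{dlct} (for $s\geq 3$) and Sum's results (for $s\in\{1,2\}$), the key Lemma that $({\rm Ker}[\overline{Sq}^0]_n)^{GL_4(k)}$ is trivial for $s\le 2$ and one-dimensional for $s\ge 3$, and the lifting of the generators through the dual Kameko map are all the same steps the paper carries out. Your observation that the dual Kameko carries the Theorem~\ref{dlct} generators to $\zeta_{s,t}$ and $\widehat\zeta_{s,t}$ is exactly how the paper assembles the final basis.

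There is one genuine soft spot in your lower-bound argument at $s=2$. You propose to certify that $[\zeta_{2,t}]\neq 0$ by pushing it through $\psi_4$ and Theorem~\ref{dlCH}; but $\psi_4(\zeta_{2,t})=\lambda_1\lambda_3\lambda_{2^{t+1}-1}^2$ represents $h_1h_2h_{t+1}^2$, which vanishes because $h_1h_2=0$. So the transfer detects nothing from $\zeta_{2,t}$ and your Ext argument does not establish the lower bound in this case. The paper avoids this by working on the invariant side: it exhibits the explicit $GL_4(k)$-invariant $[\varphi(\overline{p}_{4,t})]$ and pairs it directly against an $\widehat A$-annihilated element (in fact $a_1^{(1)}a_2^{(1)}a_3^{(1)}a_4^{(2^{t+2}-1)}=\widehat\zeta_{2,t}$, whose transfer image $h_1^3h_{t+2}=h_0^2h_2h_{t+2}$ is nonzero). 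Either replace your transfer check at $s=2$ by such a direct pairing, or use $\widehat\zeta_{2,t}$ (equivalently $\widetilde\zeta_{2,t}$) as the witness there; the one-dimensionality you have already bounded from above then forces $[\zeta_{2,t}]$ to coincide with that class.
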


The theorem indicates that the elements $\zeta_{s,\, t},$ $\widetilde{\zeta}_{s,\, t}$  and $\widehat{\zeta}_{s,\, t}$ belong to $P((P_4)_{2(2^{s} - 1) + 2^{s}(2^{t}-1)}^{*}).$ So, by Theorem \ref{dlCH}, $\psi_4(\zeta_{s,\, t})$ are cycles in $\Lambda;$ moreover and for each $t\geq 5,$ using the representation in $\Lambda$ of $Tr_4^{A},$ it may be concluded that
$$ \begin{array}{ll}
Tr_4^{A}([\zeta_{s,\, t}])  &= [\psi_4(\zeta_{s,\, t})] = [\lambda_1\lambda_{2^{s}-1}\lambda^{2}_{2^{s+t-1}-1}]\\
& = h_1h_{s}h_{s+t-1}^{2}\in {\rm Ext}_{A}^{4, 2^{s+t} + 2^{s} + 2}(k, k),\ \mbox{for $s\geq 1, \ s\neq 2$},\\[1mm]
Tr_4^{A}([\widetilde{\zeta}_{s,\, t}])  &= [\psi_4(\widetilde{\zeta}_{s,\, t})] = [\lambda_0^{2}\lambda_{2^{s}-1}\lambda_{2^{s+t}-1}]\\
& = h_0^{2}h_sh_{s+t}\in {\rm Ext}_{A}^{4, 2^{s+t} + 2^{s} + 2}(k, k), \ \mbox{for $s \geq 2$},\\[1mm]
Tr_4^{A}([\widehat{\zeta}_{s,\, t}])  &= [\psi_4(\widehat{\zeta}_{s,\, t})] = [\lambda_1\lambda^{2}_{2^{s-1}-1}\lambda_{2^{s+t}-1}]\\
& = h_1h^{2}_{s-1}h_{s+t}\in {\rm Ext}_{A}^{4, 2^{s+t} + 2^{s} + 2}(k, k),\ \mbox{for $s\geq 5$}.
\end{array}$$
On the other side, using Theorem \ref{dlntg}, one has that
$$ {\rm Ext}_{A}^{4, 2^{s+t} + 2^{s} + 2}(k, k) = \left\{\begin{array}{ll}
\langle h_1^{2}h_7^{2}, D_3(2) \rangle &\mbox{if $s = 1$ and $t = 7$},\\[1mm]
\langle h_1^{2}h_t^{2} \rangle &\mbox{if $s = 1$ and $t \geq 5,\, t\neq 7$},\\[1mm]
\langle h_0^{2}h_2h_{t+2} \rangle = \langle h_1^{3}h_{t+2} \rangle &\mbox{if $s = 2$ and $t\geq 5$},\\[1mm]
\langle h_1h_sh_{s+t-1}^{2}, h_0^{2}h_sh_{s+t}, h_1h_{s-1}^{2}h_{s+t} \rangle &\mbox{if $s \geq 3$ and $t\geq 5$},
\end{array}\right.$$
where $h_1h_{s-1}^{2}h_{s+t}  = 0$ if $s = 3,\, 4,$ and $t \geq  5.$ Combining these data with a fact in \cite{Hung} that the rank 4 transfer does not detect the element $D_3(2)$ in $Sq^{0}$-family $\{D_3(s):\, s\geq 0\}$ in ${\rm Ext}_{A}^{4, 2^{s}+2^{s+6}}(k,k)$ for all $s\geq 0,$ we are forced to conclude that

\begin{corl}\label{hqct2}
The transfer homomorphism $$ Tr_4^{A}: k\otimes_{GL_4(k)}P((P_4)_{2(2^{s} - 1) + 2^{s}(2^{t}-1)}^{*})  \to {\rm Ext}_{A}^{4, 2^{s+t} + 2^{s} + 2}(k, k)$$ is not an epimorphism if $s = 1$ and $t = 7$ and is an isomorphism if $s\geq 1$ and $t\geq 5,\, t\neq 7.$
\end{corl}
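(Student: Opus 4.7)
The plan is to combine the structural description of the coinvariants provided by Theorem \ref{dlct2} with the $\Lambda$-algebra representation of $Tr_4^{A}$ (Theorem \ref{dlCH}) and the list of generators for ${\rm Ext}_{A}^{4,*}(k,k)$ (Theorem \ref{dlntg}), splitting into cases according to $s$.

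First, I would verify that the representatives $\zeta_{s,\,t}$, $\widetilde{\zeta}_{s,\,t}$, $\widehat{\zeta}_{s,\,t}$ given in Theorem \ref{dlct2} are $\widehat A$-annihilated (so they lie in $P((P_4)^{*}_{2(2^{s}-1)+2^{s}(2^{t}-1)})$); by the unstable condition this reduces to checking the actions of $Sq^{i}$ for $i\in\{1,2,4\}$ on these simple products of divided powers, which is a direct computation using $(a^{(n)}_i)Sq^{t} = \binom{n-t}{t}a^{(n-t)}_i$ together with Cartan's formula. Once this is established, Theorem \ref{dlCH} guarantees that $\psi_4$ applied to each of these three elements is a cycle in $\Lambda$, and the defining recursion for $\psi_{q}$ collapses on monomials of this shape to give
\[
\psi_4(\zeta_{s,\,t})=\lambda_{1}\lambda_{2^{s}-1}\lambda^{2}_{2^{s+t-1}-1},\quad
\psi_4(\widetilde{\zeta}_{s,\,t})=\lambda_{0}^{2}\lambda_{2^{s}-1}\lambda_{2^{s+t}-1},\quad
\psi_4(\widehat{\zeta}_{s,\,t})=\lambda_{1}\lambda^{2}_{2^{s-1}-1}\lambda_{2^{s+t}-1}.
\]
Since $\lambda_{2^{i}-1}$ represents $h_{i}\in {\rm Ext}_A^{1,2^{i}}(k,k)$, these cycles represent $h_1h_{s}h_{s+t-1}^{2}$, $h_0^{2}h_sh_{s+t}$ and $h_1h_{s-1}^{2}h_{s+t}$ respectively.

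Next I would read off ${\rm Ext}_{A}^{4, 2^{s+t}+2^{s}+2}(k,k)$ from Theorem \ref{dlntg}, using the listed relations $h_{i}h_{i+1}=0$, $h_{i}h_{i+2}^{2}=0$, $h_i^{3}=h_{i-1}^{2}h_{i+1}$ and $h_{i}^{2}h_{i+3}^{2}=0$ to eliminate redundant or vanishing products and obtain exactly the four possibilities displayed before Corollary \ref{hqct2}. Matching the representatives above against these generating sets, in the cases $s=1$ with $t\geq 5,\,t\neq 7$, $s=2$ with $t\geq 5$, and $s\geq 3$ with $t\geq 5$, one verifies that the images of the generators of the coinvariants span the entire target Ext group; since Theorem \ref{dlct2} tells us the dimension of the source equals the dimension of the target in each of these cases, the transfer is forced to be an isomorphism (a comparison of dimensions gives simultaneously injectivity and surjectivity).

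The main obstacle is the exceptional case $(s,t)=(1,7)$: here the coinvariants are $1$-dimensional generated by $[\zeta_{1,7}]$, while ${\rm Ext}_{A}^{4,2^{8}+4}(k,k)=\langle h_{1}^{2}h_{7}^{2},\,D_{3}(2)\rangle$ is $2$-dimensional. The computation of $Tr_4^{A}$ above only detects $h_{1}^{2}h_{7}^{2}$, so to conclude that $Tr_4^{A}$ fails to be an epimorphism I must explain that $D_3(2)$ is not in the image of $Tr_4^{A}$. For this I invoke the result of H\uhorn ng cited in the paper, which asserts that the whole $Sq^{0}$-family $\{D_{3}(s):\,s\geq 0\}$ lies outside the image of $Tr_4^{A}$, so in particular $D_{3}(2)\notin {\rm Im}(Tr_{4}^{A})$. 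This gives the failure of surjectivity at $(s,t)=(1,7)$ while injectivity on the $1$-dimensional source still holds because $h_{1}^{2}h_{7}^{2}\neq 0$. Packaging the two scenarios yields the stated dichotomy in Corollary \ref{hqct2}.
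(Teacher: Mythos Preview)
Your approach mirrors the paper's: compute $\psi_4$ on the generators of Theorem~\ref{dlct2}, identify the resulting Ext classes via Theorem~\ref{dlntg}, and compare dimensions, invoking H\uhorn ng's result on $D_3(s)$ for the exceptional case. This is correct in outline, but there is a genuine gap at $s=2$.

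For $s=2$ the generator listed in Theorem~\ref{dlct2} is $\zeta_{2,t}=a_1^{(1)}a_2^{(3)}a_3^{(2^{t+1}-1)}a_4^{(2^{t+1}-1)}$, and your recipe gives $\psi_4(\zeta_{2,t})=\lambda_1\lambda_3\lambda_{2^{t+1}-1}^{2}$, which represents $h_1h_2h_{t+1}^{2}=0$ since $h_1h_2=0$. So the image of the \emph{stated} generator is zero, and your dimension-count argument (``images of the generators span the target'') fails here. The paper is aware of this: in the discussion preceding the corollary it explicitly restricts the formula $Tr_4^{A}([\zeta_{s,t}])=h_1h_sh_{s+t-1}^{2}$ to $s\neq 2$, and for $s=2$ instead uses $\widetilde{\zeta}_{2,t}=a_1^{(0)}a_2^{(0)}a_3^{(3)}a_4^{(2^{t+2}-1)}$, whose image $h_0^{2}h_2h_{t+2}=h_1^{3}h_{t+2}$ is the nonzero generator of the target. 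Since that image is nonzero, $[\widetilde{\zeta}_{2,t}]\neq 0$ in the $1$-dimensional coinvariants, hence generates it, and the transfer is an isomorphism. (In fact the proof of Theorem~\ref{dlct2} in Section~4.5 actually exhibits $a_1^{(1)}a_2^{(1)}a_3^{(1)}a_4^{(2^{t+2}-1)}$ as the generator for $s=2$, not the general formula.) You need to insert this step.

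A smaller point: your claim that ``the unstable condition reduces to checking $Sq^{i}$ for $i\in\{1,2,4\}$'' is not uniform in $s,t$; for large degrees one must in principle check higher $Sq^{2^{j}}$. The verification is nonetheless immediate because every exponent is of the form $2^{m}-1$ and $(a^{(2^{m}-1)})Sq^{t}=\binom{2^{m}-1-t}{t}a^{(2^{m}-1-t)}=0$ for all $t>0$, so each factor (hence each product) is $\widehat A$-annihilated.
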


Moreover, the following remark is useful.

\begin{rema}\label{nxqt}
Let us consider the generic degrees of the form \eqref{ct} with $r = 4$ and $v = 61,$ (i.e, $n = 2^{s+6} + 2^{s} - 4.$) Then, it is easily verified that $\mu(n) = 4$ for all $s > 2.$ This leads to the iterated Kameko map $(\overline{Sq}^{0})^{s-2}: Q^{\otimes 4}_{n} \to Q^{\otimes 4}_{2^{2+6} + 2^{2} - 4}$ being an isomorphism of $kGL_4(k)$-modules for any $s\geq 2.$ So, by Theorem \ref{dlct2} and the results in Sum \cite{N.S2},  deduce that the coinvariant space $k\otimes_{GL_4(k)}P((P_4)_{n}^{*})$ is trivial if $s = 0$ and has dimension $1$ if $s \geq 1.$ On the other hand, from Theorem \ref{dlntg}, it implies that
$${\rm Ext}_{A}^{4, 4+n}(k, k) = \left\{\begin{array}{ll}
 \langle D_3(0) \rangle &\mbox{if $s = 0$},\\
 \langle D_3(s), h_ {s-1}^{2}h_{s+5}^{2}\rangle &\mbox{if $s \geq 1$}.
\end{array}\right.
$$ 
Moreover, following H\uhorn ng \cite{Hung}, the elements $D_3(s)$ are not detected by $Tr_4^{A},$ and therefore the fourth algebraic transfer $Tr_4^{A}: k\otimes_{GL_4(k)}P((P_4)_{2^{s+6} + 2^{s} - 4}^{*})  \to {\rm Ext}_{A}^{4, 2^{s+6} + 2^{s}}(k, k)$ is a monomorphism, but not an isomorphism for every non-negative integer $s.$
\end{rema}

Thus, observing from Corollaries \ref{hqc1}, \ref{hqc2}, \ref{hqct}, \ref{hqc3} and \ref{hqct2}, we can assert that

\begin{corl}\label{hqc4}
Let $s$ and $t$ be two positive integers. Then, Singer's conjecture for the rank 4 transfer holds in degree $2^{s+t+1} + 2^{s+1} - 3$ for $t\neq 3$ and degree $2^{s+t} + 2^{s} - 2$ for $t = 1$ and arbitrary $t\geq 5.$
\end{corl}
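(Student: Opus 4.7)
The plan is to deduce Corollary \ref{hqc4} as an immediate consolidation of the five preceding corollaries (\ref{hqc1}, \ref{hqc2}, \ref{hqct}, \ref{hqc3}, \ref{hqct2}), since Singer's conjecture only asks for injectivity of $Tr_4^{A}$ and every isomorphism is \emph{a fortiori} a monomorphism. So the whole task reduces to checking that the two families of internal degrees listed in the statement are exactly the union of the internal degrees covered in those corollaries, and then verifying that in the one case where the preceding corollary fell short of an isomorphism one still has a monomorphism.

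First I would dispose of the family $n = 2^{s+t+1} + 2^{s+1} - 3$ with $t \neq 3$, arising from the generic pair $(r,v) = (3, 2^{t+1}-1)$ of \eqref{ct}. A trivial arithmetic check gives $6\cdot 2^{s} - 3 = 2^{s+2} + 2^{s+1} - 3$ and $10\cdot 2^{s} - 3 = 2^{s+3} + 2^{s+1} - 3$, so the cases $t = 1$ and $t = 2$ fall under Corollaries \ref{hqc1} and \ref{hqc2} respectively, while the case $t \geq 4$ is exactly Corollary \ref{hqct}. In each of these corollaries the rank $4$ transfer in the relevant bidegree is shown to be an isomorphism.

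Next I would treat the family $n = 2^{s+t} + 2^{s} - 2$ arising from $(r,v) = (2, 2^{t}-1)$ with $t \in \{1\} \cup \{t : t \geq 5\}$. For $t = 1$ the degree is $3\cdot 2^{s} - 2 = 2^{s+1} + 2^{s} - 2$, handled by Corollary \ref{hqc3} (again an isomorphism). For $t \geq 5$ one applies Corollary \ref{hqct2}, which asserts that $Tr_4^{A}$ is an isomorphism on the relevant coinvariant space whenever $(s,t) \neq (1,7)$, hence in particular a monomorphism in all of these cases.

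The only step that is not purely an appeal to a prior result is the exceptional pair $(s,t) = (1,7)$, where Corollary \ref{hqct2} guarantees only failure of surjectivity. I would settle injectivity by direct inspection of the data already collected: by Theorem \ref{dlct2} the domain $k \otimes_{GL_4(k)} P((P_4)^{*}_{2^{8} + 2^{1} - 2})$ is one-dimensional, spanned by the class $[\zeta_{1,7}]$, and the chain of computations immediately preceding Corollary \ref{hqct2} shows $Tr_4^{A}([\zeta_{1,7}]) = h_1^{2}h_7^{2}$, which is a nonzero generator of a summand of ${\rm Ext}_{A}^{4,\, 2^{8}+2^{1}+2}(k,k) = \langle h_1^{2}h_7^{2},\, D_3(2)\rangle$. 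A nonzero map from a one-dimensional space is automatically injective, so Singer's conjecture also holds in this remaining case. The main conceptual obstacle would be verifying that the domain is small enough for this bookkeeping argument to close, but that is precisely the content of Theorems \ref{dlc1}--\ref{dlct2}, so no new computation is needed.
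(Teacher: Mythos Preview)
Your proposal is correct and follows essentially the same approach as the paper, which simply states that the corollary is observed from Corollaries \ref{hqc1}, \ref{hqc2}, \ref{hqct}, \ref{hqc3} and \ref{hqct2}. In fact you are slightly more careful than the paper in explicitly verifying monomorphism in the exceptional case $(s,t)=(1,7)$, where Corollary \ref{hqct2} records only the failure of surjectivity; your argument from the one-dimensionality of the domain and the nontriviality of $Tr_4^{A}([\zeta_{1,7}])=h_1^{2}h_7^{2}$ fills that gap cleanly.
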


\section{Final remarks}

We conclude this article with some remarks. It may need to be recalled that $Q^{\otimes 4}_n$ is trivial if $\mu(n) > 4$ and that $Q^{\otimes 4}_n\cong Q^{\otimes 4}_{(n-4)/2}$ if $\mu(n) = 4.$ This means that we need only to study the cohit module $Q^{\otimes 4}_n$ in degrees $n$ with $\mu(n) < 4$ (or the generic degrees $n$ of the form \eqref{ct}). So, since the domain of the fourth Singer transfer is dual to the invariant $(Q^{\otimes 4}_n)^{GL_4(k)},$ it suffices to verify Singer's conjecture for $Tr_4^{A}$ in the following degrees $n$: 
$$ \begin{array}{lll}
i)  &n &= 2^{s+1} - \ell,\ \ell\in \{1, 2, 3\}, \\[1mm]
ii) &n &= 2^{s+t+1} +2^{s+1} - 3,\\[1mm]
iii) &n &= 2^{s+t} + 2^{s} - 2,\\[1mm]
iv) &n &= 2^{s+t+u} + 2^{s+t} + 2^{s} - 3,
\end{array}$$
whenever $s,\, t,\, u$ are the positive integers (see also \cite{N.S1}).  It is easy to check that the above degrees can be rewritten as \eqref{ct}. For example, consider i), we have $n = \ell(2^{s} - 1) + (2-\ell).2^{s}$ with $\ell\leq 2$ and $\mu(2-\ell) < \ell.$ When $\ell = 3$ and any $s\geq 1,$ we may rewrite $2^{s+1} -3$ as $2^{s+2}-3$ with $s\geq 0$ and so $n = 3(2^{s}-1) + 1.2^{s}$ where $\mu(1) = 1 < 3 < 4.$

The case i) is known by Sum \cite{N.S2}. It is remarkable that, by dualizing and Sum's work \cite{N.S2}, in this case when the pairs $(\ell, s) = (3, 5)$ and $(2, 6),$ we deduce that $k\otimes_{GL_4(k)}P((P_4)_{2^{5+1}-3}^{*})$ is trivial and that $k\otimes_{GL_4(k)}P((P_4)_{2^{6+1}-2}^{*})$ is 1-dimensional. On the other hand, following Theorem \ref{dlntg} and Remark \ref{nxqt}, we have seen that ${\rm Ext}_A^{4, 2^{5+1} + 1}(k, k) = \langle D_3(0)\rangle$ and ${\rm Ext}_A^{4, 2^{6+1} + 2}(k, k) = \langle h_0^{2}h_6^{2},  D_3(1)\rangle,$ where $h_0^{2}h_6^{2} = [\psi_4(a_1^{(0)}a_2^{(0)}a_3^{(63)}a_4^{(63)})] = Tr_4^{A}([a_1^{(0)}a_2^{(0)}a_3^{(63)}a_4^{(63)}])\in {\rm Im}(Tr_4^{A})$ and $D_3(s)\not\in {\rm Im}(Tr_4^{A})$ for every non-negative integer $s.$  Combining these data, we may state that $Tr_4^{A}$ is not an isomorphism when acting on the coinvariants $k\otimes_{GL_4(k)}P((P_4)_{2^{5+1}-3}^{*})$ and $k\otimes_{GL_4(k)}P((P_4)_{2^{6+1}-2}^{*}).$ 

The case iii) where $t = 2, 4$ and the case iv) have been investigated by us in \cite{D.P11, D.P12}.
The results have been computed in this paper for the case ii) with $t \neq 3$ and the case iii) with $t = 1$ and $t\geq 5.$ Thus, Singer's conjecture is undetermined for the cases ii) and iii) where $t = 3$. We now will discuss these remaining cases.

Firstly,  let us consider the case ii) in which $t = 3.$ Naturally, we are tempted to propose the following in viewpoint of Theorems \ref{dlc1}, \ref{dlc2}, \ref{dlct}, \ref{dlc3}, \ref{dlct2} and our previous works \cite{D.P11, D.P12}.

 \begin{conj}\label{gtP1}
For a positive integer $s,$ the coinvariant $k\otimes_{GL_4(k)}P((P_4)_{2^{s+4} +2^{s+1} - 3}^{*})$ is trivial if $s = 2$ and has dimension $1$ if $s\neq 2.$
\end{conj}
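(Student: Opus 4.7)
The plan follows the methodology used in Theorems \ref{dlc1}, \ref{dlc2} and \ref{dlct} for the companion cases $t\neq 3$, now applied to the generic degree $n = 3(2^s-1)+(2^4-1)\cdot 2^s = 2^{s+4}+2^{s+1}-3$. The first step is to produce an explicit admissible monomial basis of the cohit space $Q^{\otimes 4}_n$ following Sum's procedure in \cite{N.S, N.S1}; the three ranges $s = 1$, $s = 2$ and $s\geq 3$ must be treated separately, by analogy with the jumps in admissible dimensions seen in Theorems \ref{dlc1} and \ref{dlct}. Note that $n$ is odd and $\mu(n) = 3 < 4$, so no Kameko-style reduction is available, and the basis has to be built directly.

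With the admissible basis in hand, I would compute $(Q^{\otimes 4}_n)^{GL_4(k)}$, which is linearly dual to the target coinvariant $k\otimes_{GL_4(k)}P((P_4)_n^{*})$. The standard two-stage reduction applies: partition the basis into $\Sigma_4$-orbits and identify the orbit sums forming a basis of $(Q^{\otimes 4}_n)^{\Sigma_4}$; then impose invariance under a transvection $\sigma$ which together with $\Sigma_4$ generates $GL_4(k)$. The system of hit equations arising from $[f\cdot\sigma]=[f]$ is then solved by linear algebra. For $s = 2$ one expects that no nontrivial invariant survives, while for $s\neq 2$ exactly one class should survive.

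For the $s\neq 2$ lower bound one would construct an $\widehat{A}$-annihilated element of $(P_4)_n^{*}$ whose image under $\psi_4$ represents a nonzero class in ${\rm Ext}_A^{4,\,2^{s+4}+2^{s+1}+1}(k,k)$; the natural spike-type candidates $a_1^{(0)}a_2^{(2^s-1)}a_3^{(2^s-1)}a_4^{(2^{s+4}-1)}$ and $a_1^{(0)}a_2^{(2^{s+1}-1)}a_3^{(2^{s+3}-1)}a_4^{(2^{s+3}-1)}$ both fail for small $s$, because several of the relations $h_ih_{i+1}=0$ and $h_ih_{i+2}^2=0$ in Theorem \ref{dlntg}(iv) force their Ext images to vanish, so a more delicate sum of admissibles (compensating by additional terms in the spirit of $\zeta_1$ in Theorem \ref{dlc1}) is likely required, and the precise choice may depend on $s$. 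For $s = 2$, triviality is compatible with the simultaneous collapse $h_0h_2^2 = 0$ and $h_0h_3h_5^2 = 0$, eliminating the most obvious Ext targets.

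The principal obstacle is twofold. Computationally, extrapolating from the basis sizes in \cite{N.S, N.S1} and from Theorems \ref{dlct} and \ref{dlct2}, the admissible basis of $Q^{\otimes 4}_n$ is expected to contain on the order of two to three hundred monomials when $s\geq 3$, so the transvection-invariance system on $\Sigma_4$-orbit sums is correspondingly large and requires meticulous (or computer-assisted) bookkeeping. Conceptually, the delicate step is the sharpness of the upper bound: since $t = 3$ lies on the boundary where several ${\rm Ext}_A^{4,*}(k,k)$ relations collapse simultaneously, one must rule out a second independent invariant class and, conversely, verify that the single surviving class indeed detects a nonzero Ext element via Theorem \ref{dlCH} rather than being a boundary in $\Lambda$. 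This final cross-check is the step where the statement of Conjecture \ref{gtP1} is most sensitive to revision, and it is where I would expect the real work of the proof to lie.
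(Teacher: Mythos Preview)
The paper does not prove this statement: it is explicitly presented as Conjecture~\ref{gtP1}, and the author writes near the end of Section~3 that ``we believe that Conjectures \ref{gtP1}, \ref{gtP2} and \ref{gtP3} are true and verification will be soon communicated.'' So there is no proof in the paper against which to compare your proposal.

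That said, your plan is exactly the template the paper uses for the parallel cases $t\neq 3$ (Theorems~\ref{dlc1}, \ref{dlc2}, \ref{dlct}): list Sum's admissible monomial basis for $Q^{\otimes 4}_n$, split by weight vectors, compute $\Sigma_4$-invariants orbit by orbit, and then impose the extra transvection $\sigma_4$ to cut down to $GL_4(k)$-invariants. Your identification of the obstacle is also accurate: the paper's own evidence for the conjecture is precisely the pattern you describe --- the Ext side is known from Theorem~\ref{dlntg}, the element $a_1^{(0)}a_2^{(2^s-1)}a_3^{(2^s-1)}a_4^{(2^{s+4}-1)}$ detects $h_0h_s^2h_{s+4}$ for $s\geq 3$, and the $s=1$ case must hit $p_0$ (which the paper cites from H\uhorn ng--Qu\`ynh \cite{H.Q}) via a non-spike primitive. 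One small correction: you say ``no Kameko-style reduction is available,'' but in the paper's treatment of the analogous degrees the decomposition by weight vectors $\omega_{(s,1)}$, $\omega_{(s,2)}$ plays the structural role that the Kameko kernel plays elsewhere, and you should expect the same here. The genuine missing ingredient, which the paper does not supply either, is the explicit admissible basis for this particular degree and the resulting linear-algebra verification; without that, your outline remains a plan rather than a proof, which is consistent with the conjectural status in the paper.
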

On the other hand, by Theorem \ref{dlntg}, we have 
$$ {\rm Ext}_A^{4, 2^{s+4} +2^{s+1} +1}(k, k)\\
=  \left\{\begin{array}{ll}
\langle p_0\rangle &\mbox{if $s = 1$},\\[1mm]
 \langle p'_0\rangle&\mbox{if $s = 2$},\\[1mm]
\langle h_0h_{s+1}h_{s+3}^{2}, h_0h_s^2h_{s+4}\rangle  = \langle h_0h_s^2h_{s+4} \rangle &\mbox{if $s \geq 3$}.
\end{array}\right.$$
Moreover, it is straightforward to see that the elements $ a_1^{(0)}a_2^{(2^{s}-1)}a_3^{(2^{s}-1)}a_4^{(2^{s+4}-1)}$ belong to ${\rm Ext}_A^{0, 2^{s+4} +2^{s+1} - 3}(k, P_4).$ So, following Theorem \ref{dlCH}, the cycles   $$\lambda_0\lambda_{2^{s}-1}^{2}\lambda_{2^{s+4}-1}=\psi_4(a_1^{(0)}a_2^{(2^{s}-1)}a_3^{(2^{s}-1)}a_4^{(2^{s+4}-1)})$$ in $\Lambda$ are  representative of the non-zero elements $h_0h_{s}^{2}h_{s+4}$ for any $s \geq 3$. This fact and the previous results by H\uhorn ng-Qu\`ynh \cite{H.Q} show that the elements $p_0$ and $h_0h_s^2h_{s+4}$ are in the image of $Tr_4^{A},$ except $p'_0.$ And therefore, by Conjecture \ref{gtP1}, it follows that
\begin{corl}\label{hqnx1}
The transfer homomorphism $$Tr_4^A: k\otimes_{GL_4(k)}P((P_4)_{2^{s+4} +2^{s+1} - 3}^{*}) \to {\rm Ext}_A^{4, 2^{s+4} +2^{s+1} +1}(k, k)$$ is an isomorphism for $s\neq 2$, but it is not an epimorphism for $s = 2.$ 
\end{corl}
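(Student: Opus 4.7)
The plan is to take Conjecture \ref{gtP1} as the only non-trivial input and then assemble the remaining ingredients -- the Ext computation in Theorem \ref{dlntg}, the $\psi_q$-representation of $Tr_q^A$ provided by Theorem \ref{dlCH}, and the detection/non-detection results of H\uhorn ng-Qu\`ynh \cite{H.Q} -- to conclude separately in the three ranges $s=1$, $s=2$, and $s\geq 3$.

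First I would pin down the target. By Theorem \ref{dlntg}, together with the relation $h_0h_{s+1}h_{s+3}^{2}=0$ for $s\geq 3$ recorded just before the statement of the corollary, one has
$${\rm Ext}_A^{4,\, 2^{s+4}+2^{s+1}+1}(k,k) = \begin{cases} \langle p_0\rangle & \mbox{if } s=1, \\ \langle p'_0\rangle & \mbox{if } s=2, \\ \langle h_0h_s^{2}h_{s+4}\rangle & \mbox{if } s\geq 3. \end{cases}$$

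Next I would exhibit explicit preimages where they exist. A short check using the unstable condition -- only the squares $Sq^{1}$, $Sq^{2}$, $Sq^{4}$ need to be tested, by degree reasons -- shows that $\zeta_s := a_1^{(0)}a_2^{(2^{s}-1)}a_3^{(2^{s}-1)}a_4^{(2^{s+4}-1)}$ lies in $P((P_4)_{2^{s+4}+2^{s+1}-3}^{*})$. Applying Theorem \ref{dlCH} together with the recursive definition of $\psi_4$ then gives $\psi_4(\zeta_s)=\lambda_0\lambda_{2^{s}-1}^{2}\lambda_{2^{s+4}-1}$, which is a cycle in $\Lambda$ representing $h_0h_s^{2}h_{s+4}$; hence $h_0h_s^{2}h_{s+4}\in {\rm Im}(Tr_4^A)$ for every $s\geq 3$. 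For $s=1$, H\uhorn ng-Qu\`ynh \cite{H.Q} have shown that $p_0$ is detected by $Tr_4^A$, while for $s=2$ the same reference shows that $p'_0$ is \emph{not} in the image of $Tr_4^A$.

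Finally I would combine these facts with Conjecture \ref{gtP1}. For $s=1$ and for $s\geq 3$, both source and target are $1$-dimensional (by the conjecture and by the Ext calculation above, respectively); combined with the surjectivity established in the previous step, this forces $Tr_4^A$ to be an isomorphism. For $s=2$ the source is trivial by the conjecture, while the target is the $1$-dimensional space $\langle p'_0\rangle$ with $p'_0\notin {\rm Im}(Tr_4^A)$, so $Tr_4^A$ fails to be an epimorphism. The genuine difficulty does not sit inside this corollary but in Conjecture \ref{gtP1} itself: once the dimensions of the coinvariants are granted, the only computations still required are the $\widehat{A}$-annihilation check for $\zeta_s$ and the transcription of $\psi_4(\zeta_s)$, both of which are routine thanks to the extreme sparsity of the support of $\zeta_s$.
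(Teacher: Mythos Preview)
Your proposal is correct and follows essentially the same route as the paper: compute ${\rm Ext}_A^{4,\,2^{s+4}+2^{s+1}+1}(k,k)$ from Theorem \ref{dlntg}, produce the explicit $\widehat{A}$-annihilated element $\zeta_s=a_1^{(0)}a_2^{(2^{s}-1)}a_3^{(2^{s}-1)}a_4^{(2^{s+4}-1)}$ and read off $Tr_4^A([\zeta_s])=h_0h_s^{2}h_{s+4}$ via Theorem \ref{dlCH}, quote \cite{H.Q} for $p_0\in{\rm Im}(Tr_4^A)$, and then invoke Conjecture \ref{gtP1} for the dimension count. One small remark: the non-detection of $p'_0$ (which in any case is in \cite{Hung} rather than \cite{H.Q}) is not actually needed for the $s=2$ case, since the conjecture already gives a trivial source against a one-dimensional target.
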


Next, we discuss the case iii) with $t = 3.$ Because Kameko's squaring operation $$[\overline{Sq}^{0}]_{2^{s+3} + 2^{s}-2}:= \overline{Sq}^{0}: Q^{\otimes 4}_{2^{s+3} + 2^{s}-2} \to Q^{\otimes 4}_{2^{s+2} + 2^{s-1}-3}$$ is an epimorphism of $kGL_4(k)$-modules, we have an isomorphism
\begin{equation}\label{dcc}
 Q^{\otimes 4}_{2^{s+3} + 2^{s}-2}  \cong {\rm Ker}[\overline{Sq}^{0}]_{2^{s+3} + 2^{s}-2} \bigoplus Q^{\otimes 4}_{2^{s+2} + 2^{s-1} - 3}.
\end{equation}
Following Sum \cite{N.S2} and Conjecture \ref{gtP1}, the invariant spaces $(Q^{\otimes 4}_{2^{s+2} + 2^{s-1} - 3})^{GL_4}$ are trivial if $s = 1,\, 4$ and are 1-dimensional otherwise. On the other side, by Theorem \ref{dlntg}, it implies that
$$ 
{\rm Ext}_{A}^{4, 2^{s+3} + 2^{s}+2}(k, k) = \left\{\begin{array}{ll}
\langle h_1^{2}h_3^{2}  \rangle = 0 &\mbox{if $s = 1$},\\[1mm]
\langle h_0^{2}h_2h_5,  h_1h_2h_4^{2} \rangle = \langle h_1^{3}h_5 \rangle &\mbox{if $s = 2$},\\[1mm]
\langle h_0^{2}h_3h_6, h_1h_2^{2}h_6, h_1h_3h_5^{2}, p_1 \rangle = \langle  h_0^{2}h_3h_6, p_1\rangle &\mbox{if $s = 3$},\\[1mm]
\langle h_0^{2}h_4h_7, h_1h_3^{2}h_7, h_1h_4h_6^{2} , p'_1 \rangle  = \langle h_0^{2}h_4h_7, p'_1 \rangle &\mbox{if $s = 4$},\\[1mm]
\langle h_0^{2}h_sh_{s+3}, h_1h_{s-1}^{2}h_{s+3}, h_1h_sh_{s+2}^{2} \rangle = \langle h_0^{2}h_sh_{s+3}, h_1h_{s-1}^{2}h_{s+3} \rangle &\mbox{if $s \geq 5$}.
\end{array}\right.$$ 
Taking these data, we make the following additional conjecture.

\begin{conj}\label{gtP2}
The invariant spaces $({\rm Ker}[\overline{Sq}^{0}]_{2^{s+3} + 2^{s}-2})^{GL_4}$ are trivial if $s = 1,\, 2$ and have dimension $1$ if $s\geq 3.$ 
\end{conj}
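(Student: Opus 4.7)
The plan is to mimic verbatim the strategy used for Theorem \ref{dlc3}, exploiting the $kGL_4(k)$-module decomposition \eqref{dcc}. Since the admissible monomial basis of $Q^{\otimes 4}_{2^{s+3}+2^{s}-2}$ is already known from Sum's work on the hit problem, I would first extract the sub-basis of ${\rm Ker}[\overline{Sq}^{0}]_{2^{s+3}+2^{s}-2}$ by selecting those admissible monomials that are \emph{not} in the image of the up Kameko map $\varphi$, i.e., whose exponent vectors contain a zero or an even entry. Grouping this sub-basis into $\Sigma_4$-orbits, a candidate invariant $[\widetilde{f}]\in ({\rm Ker}[\overline{Sq}^{0}]_{2^{s+3}+2^{s}-2})^{GL_4(k)}$ can be written as a formal sum of orbit sums with unknown coefficients in $k$, since any $GL_4(k)$-invariant is \emph{a fortiori} $\Sigma_4$-invariant.

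The second step is to impose invariance under the Singer transvection $\rho\in GL_4(k)$ defined by $x_1\mapsto x_1+x_2$ (and fixing the remaining variables), together with its conjugates. Expanding $\rho([\widetilde{f}\,])-[\widetilde{f}\,]$ via Cartan's formula and reducing the resulting monomials modulo the hit relations produces a homogeneous linear system over $k$ in the orbit coefficients, whose solution space is precisely $({\rm Ker}[\overline{Sq}^{0}]_{2^{s+3}+2^{s}-2})^{GL_4(k)}$. For $s=1$ and $s=2$ the admissible basis is small enough that this system can be solved directly and shown to admit only the zero solution. For $s\geq 3$, I would propose as the unique (up to scalar) generating invariant the symmetrization of the monomial dual to
\begin{equation*}
\zeta_s := a_1^{(0)}a_2^{(0)}a_3^{(2^{s}-1)}a_4^{(2^{s+3}-1)}\in (P_4)^{*}_{2^{s+3}+2^{s}-2},
\end{equation*}
which, by the unstable condition, requires only checking the actions of $Sq^{1},\ Sq^{2}$ and $Sq^{4}$ to confirm that it lies in $P((P_4)^{*}_{2^{s+3}+2^{s}-2})$. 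Under the transfer $Tr_4^{A}$, Theorem \ref{dlCH} identifies $\psi_4(\zeta_s)=\lambda_0^{2}\lambda_{2^{s}-1}\lambda_{2^{s+3}-1}$ as a representative of $h_0^{2}h_sh_{s+3}\neq 0$, which gives an \emph{a priori} lower bound of $1$ on the dimension for $s\geq 3$ and forces the upper bound produced by the linear system to be attained.

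The main obstacle will be the second step for general $s\geq 3$: the admissible basis grows with $s$, and the hit relations invoked to compare $\rho$-translates with orbit sums involve intricate splittings of $2^{s+3}-1$ in base $2$. A uniform argument will require organizing the monomials stratum by stratum according to the Kameko weight vector $\omega$, in the spirit of Sum's treatment of $Q^{\otimes 4}_n$, and establishing that each stratum contributes at most one independent invariant which is either hit by $\varphi$-liftings (hence lies in the complementary summand of \eqref{dcc}) or collapses onto the class of $\zeta_s$. A possible shortcut, parallel to Remark \ref{nxqt}, would be to verify the conjecture first in a single base case $s=3$ by direct machine computation, and then to propagate it to all $s\geq 3$ by transporting invariants through the iterated Kameko isomorphism on the $\omega$-strata of maximal weight, leaving only the non-generic strata to be handled individually. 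The bookkeeping in the sub-generic strata, especially for $s=3,4$ where Ext contains the exceptional classes $p_1$ and $p'_1$, is where I expect the argument to be most delicate.
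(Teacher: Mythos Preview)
The first thing to note is that this statement is labeled a \emph{Conjecture} in the paper and is \emph{not} proved there; the author writes explicitly that ``verification will be soon communicated.'' So you are not being asked to compare with an existing proof but to assess whether your outline could become one.

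Your overall plan---extract Sum's admissible basis of ${\rm Ker}[\overline{Sq}^{0}]_{2^{s+3}+2^{s}-2}$, compute $\Sigma_4$-orbit sums, then impose invariance under the transvection $\sigma_4$---is exactly the template the paper uses for Theorems~\ref{dlc3} and~\ref{dlct2}, and for $s=1,2$ it should go through as a finite linear-algebra check. The serious gap is in your lower-bound argument for $s\geq 3$. Knowing that $Tr_4^{A}([\zeta_s])=h_0^{2}h_sh_{s+3}\neq 0$ tells you only that $[\zeta_s]\neq 0$ in the \emph{full} coinvariant $k\otimes_{GL_4}P((P_4)^{*}_n)$, dual to $(Q^{\otimes 4}_n)^{GL_4(k)}$. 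It does \emph{not} localize the corresponding invariant to $({\rm Ker}[\overline{Sq}^{0}]_n)^{GL_4(k)}$: the splitting $Q^{\otimes 4}_n\cong {\rm Ker}[\overline{Sq}^{0}]_n\oplus Q^{\otimes 4}_{(n-4)/2}$ via $\varphi$ is only $k$-linear, not $GL_4(k)$-equivariant (this is precisely why the paper writes \eqref{pt8} as an inequality rather than an equality). An invariant $[F]$ pairing nontrivially with $\zeta_s$ could well have $\overline{Sq}^{0}([F])\neq 0$ and thus fail to lie in the kernel. To get the lower bound you must exhibit an explicit nonzero $GL_4(k)$-invariant \emph{inside} ${\rm Ker}[\overline{Sq}^{0}]_n$, which is not a by-product of the transfer computation but part of the linear system you already need to solve for the upper bound.

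Two smaller points. First, your proposed shortcut via an iterated Kameko isomorphism does not apply: one checks $\mu(2^{s+3}+2^{s}-2)=2<4$ for all $s\geq 1$, so the down Kameko map is never an isomorphism in these degrees and there is no automatic propagation from a base case. Second, the claim that checking $Sq^{1},Sq^{2},Sq^{4}$ suffices for $\zeta_s$ is wrong for large $s$ (the unstable bound grows with the degree), though in fact $\zeta_s$ is trivially $\widehat{A}$-annihilated because each factor $a_i^{(2^{m}-1)}$ is.
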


As is well known, $p_1\in {\rm Im}(Tr_4^{A})$ (see H\uhorn ng-Qu\`ynh \cite{H.Q}) and $p_1'\not\in {\rm Im}(Tr_4^{A})$ (see H\uhorn ng \cite{Hung}). These, together with the equality \eqref{dcc} and Conjecture \ref{gtP2}, tempt us to propose:

\begin{conj}\label{gtP3}
For a positive integer $s,$ then 
$$ \dim k\otimes_{GL_4(k)}P((P_4)_{2^{s+3} + 2^{s}-2}^{*}) =\left\{\begin{array}{ll}
0&\mbox{if $s = 1$},\\[1mm]
1&\mbox{if $s = 2$},\\[1mm]
2&\mbox{if $s = 3$},\\[1mm]
1&\mbox{if $s = 4$},\\[1mm]
2&\mbox{if $s \geq 5$}.
\end{array}\right.$$ 
\end{conj}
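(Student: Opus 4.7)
The plan is to bootstrap the $kGL_4(k)$-module splitting \eqref{dcc} together with the two input Conjectures \ref{gtP1} and \ref{gtP2} to obtain the claimed upper bound, and then match it from below by exhibiting explicit representatives in the lambda algebra. Exactly as in the paper's derivation of \eqref{pt8}, dualizing \eqref{dcc} yields
\begin{equation*}
\dim k\otimes_{GL_4(k)}P((P_4)_{2^{s+3}+2^s-2}^{*}) \leq \dim ({\rm Ker}[\overline{Sq}^{0}]_{2^{s+3}+2^s-2})^{GL_4(k)} + \dim k\otimes_{GL_4(k)}P((P_4)_{2^{s+2}+2^{s-1}-3}^{*}),
\end{equation*}
which reduces the problem to evaluating the two summands on the right.

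For the second summand, when $s \geq 2$ the substitution $s' = s-2$ rewrites $2^{s+2}+2^{s-1}-3$ as $2^{s'+4}+2^{s'+1}-3$, placing it inside the family of Conjecture \ref{gtP1} with $t = 3$ (the boundary case $s' = 0$, i.e.\ degree $15$, is covered by existing rank-$4$ invariant tables); this contributes $0$ when $s = 4$ and $1$ when $s \in \{2, 3\}$ or $s \geq 5$. For $s = 1$ the Kameko target has degree $6$, whose coinvariant is trivial by Sum \cite{N.S2}. Invoking Conjecture \ref{gtP2} fixes the first summand at $0$ for $s \in \{1, 2\}$ and at $1$ for $s \geq 3$. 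Summing the two contributions reproduces exactly $0, 1, 2, 1, 2$ for $s = 1, 2, 3, 4, \geq 5$, which is the desired upper bound.

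For the matching lower bound I would exhibit explicit $\widehat{A}$-annihilated duals whose $\psi_4$-images detect distinct non-zero elements of ${\rm Ext}_A^{4, 2^{s+3}+2^s+2}(k,k)$ lying in ${\rm Im}(Tr_4^{A})$. On the Kameko-target side (contributing when $s \neq 1, 4$), the natural candidate -- modeled on Theorem \ref{dlct} -- is the lift through the Kameko splitting of the generator of Conjecture \ref{gtP1}; by compatibility of $Tr_4^{A}$ with the classical $Sq^{0}$, its image detects $h_0^{2}h_sh_{s+3}$, specializing to $h_1^{3}h_{s+2}$ when $s = 2$. On the kernel side (contributing when $s \geq 3$), following the pattern of the construction of $\zeta$ in Theorem \ref{dlc2}, one should assemble a carefully chosen admissible sum whose $\psi_4$-image represents $p_1$ for $s = 3$, $h_0^{2}h_4h_7$ for $s = 4$, and $h_1h_{s-1}^{2}h_{s+3}$ for $s \geq 5$; all three lie in ${\rm Im}(Tr_4^{A})$, respectively by H\uhorn ng--Qu\`ynh \cite{H.Q} and by a direct lambda-algebra check patterned on Remark \ref{nxt}. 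These kernel-side classes are linearly independent of the Kameko-target ones, yielding the required lower bound.

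The principal obstacle is supplying the two input conjectures themselves. Conjecture \ref{gtP2} is especially demanding: it requires a uniform-in-$s$ admissible-monomial basis of ${\rm Ker}[\overline{Sq}^{0}]_{2^{s+3}+2^s-2}$ together with a parametric analysis of the very large linear system carving out the $GL_4(k)$-invariants -- structurally the same kind of argument as, but combinatorially heavier than, the computations underpinning Theorems \ref{dlc1}--\ref{dlct2}. Once that input is secured, the decomposition reduction and the cycle-level verifications above close the proof routinely.
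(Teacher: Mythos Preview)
The statement is explicitly a \emph{Conjecture} in the paper, not a proved theorem; the paper offers no proof and in fact writes ``we believe that Conjectures \ref{gtP1}, \ref{gtP2} and \ref{gtP3} are true and verification will be soon communicated.'' Your outline is precisely the heuristic the paper itself uses to \emph{motivate} posing \ref{gtP3}: combine the Kameko splitting \eqref{dcc} with Conjectures \ref{gtP1} and \ref{gtP2} for the upper bound, and appeal to explicit preimages of the known ${\rm Ext}$-classes in ${\rm Im}(Tr_4^{A})$ for the lower bound. You correctly identify that all the actual content lies in supplying the two input conjectures, which the paper also leaves open.

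One correction to your details: you have the kernel-side and Kameko-target contributions swapped for $s=3$ and $s\geq 5$. The dual Kameko $Sq^{0}$ lifts the generator in degree $2^{s+2}+2^{s-1}-3$ to an element whose $Tr_4^{A}$-image is the \emph{classical} $Sq^{0}$ of the target generator's image: for $s=3$ the target hits $p_0$ (Corollary \ref{hqnx1} with $s'=1$), so the lift hits $p_1$, not $h_0^{2}h_3h_6$; for $s\geq 5$ the target hits $h_0h_{s-2}^{2}h_{s+2}$, so the lift hits $h_1h_{s-1}^{2}h_{s+3}$, not $h_0^{2}h_sh_{s+3}$. Conversely the obvious spike-type primitive $a_1^{(0)}a_2^{(0)}a_3^{(2^{s}-1)}a_4^{(2^{s+3}-1)}$, which pairs only against monomials in ${\rm Ker}[\overline{Sq}^{0}]$, hits $h_0^{2}h_sh_{s+3}$. (Also, for $s=2$ one has $h_0^{2}h_2h_5=h_1^{3}h_5$, not $h_1^{3}h_4$.) The swap does not affect the dimension count, but it would matter for anyone trying to verify your lower-bound witnesses concretely.
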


So, the following corollary is immediate.

\begin{corl}\label{hqnx2}
The cohomological transfer $$Tr_4^A: k\otimes_{GL_4(k)}P((P_4)_{2^{s+3} + 2^{s}-2}^{*}) \to {\rm Ext}_A^{4, 2^{s+3} +2^{s} +2}(k, k)$$ is an isomorphism for $s\neq 4$, but it is not an epimorphism for $s = 4.$ 
\end{corl}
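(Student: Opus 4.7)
The plan is to deduce the corollary by comparing, case by case, the dimension count provided by Conjecture \ref{gtP3} with the structure of ${\rm Ext}_A^{4, 2^{s+3} +2^{s} +2}(k, k)$ recalled just before the conjecture, and by producing explicit cycles in $\Lambda$ (through the map $\psi_4$ of Theorem \ref{dlCH}) that witness the image of $Tr_4^A$. The corollary itself is formally immediate once the dimensions and the images are known, so the proof is essentially a case-by-case bookkeeping argument.

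I would organize the work as follows. For $s = 1$, by Conjecture \ref{gtP3} the domain vanishes and ${\rm Ext}_A^{4, 18}(k,k) = \langle h_1^{2}h_{3}^{2}\rangle = 0$, so $Tr_4^A$ is trivially an isomorphism. For $s = 2$, the domain is $1$-dimensional while the target is $\langle h_1^{3}h_5\rangle$, and I would exhibit an $\widehat{A}$-annihilated representative of the generator of the coinvariants, apply $\psi_4$, and identify its class with $h_1^3h_5$ using the Adem relations in $\Lambda$; the fact that the class is non-zero forces $Tr_4^A$ to be an isomorphism. For $s \geq 5$, both sides have dimension $2$, and to conclude surjectivity it suffices to check that the two claimed generators of the domain map to the two non-zero classes $h_0^{2}h_{s}h_{s+3}$ and $h_1h_{s-1}^{2}h_{s+3}$; this is done by picking the obvious $\widehat{A}$-annihilated lifts $a_1^{(0)}a_2^{(0)}a_3^{(2^{s}-1)}a_4^{(2^{s+3}-1)}$ and $a_1^{(1)}a_2^{(2^{s-1}-1)}a_3^{(2^{s-1}-1)}a_4^{(2^{s+3}-1)}$ respectively and computing $\psi_4$ of each to obtain the cycles $\lambda_0^{2}\lambda_{2^{s}-1}\lambda_{2^{s+3}-1}$ and $\lambda_1\lambda_{2^{s-1}-1}^{2}\lambda_{2^{s+3}-1}$, both clearly non-zero in Ext.

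The case $s = 3$ is handled similarly: the domain is $2$-dimensional and the codomain is $\langle h_0^{2}h_3h_6,\, p_1\rangle$. The class $h_0^{2}h_3h_6$ is detected by $\psi_4$ applied to $a_1^{(0)}a_2^{(0)}a_3^{(7)}a_4^{(63)}$, exactly as in the general $s \geq 5$ case, while $p_1 \in {\rm Im}(Tr_4^A)$ is invoked from H\uhorn ng-Qu\`ynh \cite{H.Q}. Together these give surjectivity, hence (by dimension equality) an isomorphism.

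The crucial case is $s = 4$, and this is where the main obstacle of the argument lies. Conjecture \ref{gtP3} gives domain of dimension $1$, whereas the codomain $\langle h_0^{2}h_4h_7,\, p_1'\rangle$ has dimension $2$. I would first show that the unique non-zero coinvariant maps to $h_0^{2}h_4h_7$, again via $\psi_4$ applied to $a_1^{(0)}a_2^{(0)}a_3^{(15)}a_4^{(127)}$, giving the cycle $\lambda_0^{2}\lambda_{15}\lambda_{127}$; this ensures $Tr_4^A$ is injective with image exactly $\langle h_0^{2}h_4h_7\rangle$. The obstruction to surjectivity is then the non-detection of $p_1'$, which is precisely H\uhorn ng's result \cite{Hung} cited just before the corollary. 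The delicate point is that the image of $Tr_4^A$ must be entirely contained in the hyperplane $\langle h_0^{2}h_4h_7\rangle$; this is forced by dimension count together with the fact that any linear combination involving $p_1'$ with non-zero coefficient would contradict $p_1' \notin {\rm Im}(Tr_4^A)$. Hence $Tr_4^A$ is a monomorphism but not an epimorphism for $s = 4$, which completes the corollary.
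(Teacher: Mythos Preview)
Your proposal is correct and follows essentially the same approach as the paper, which simply declares the corollary ``immediate'' from Conjecture \ref{gtP3}, the listed ${\rm Ext}$ computations, and the cited facts $p_1\in{\rm Im}(Tr_4^A)$ (H\uhorn ng--Qu\`ynh) and $p_1'\notin{\rm Im}(Tr_4^A)$ (H\uhorn ng); you have merely fleshed out the case-by-case verification with explicit $\widehat{A}$-annihilated elements and their images under $\psi_4$. One small remark: in your $s=4$ paragraph the appeal to $p_1'\notin{\rm Im}(Tr_4^A)$ is redundant, since once you know the $1$-dimensional domain surjects onto $\langle h_0^2h_4h_7\rangle$, non-surjectivity onto the $2$-dimensional target is already forced by the dimension count alone.
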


Additionally, one has the following important observation: Consider the generic degree $2^{s+6} + 2^{s+3} + 2^{s} - 4$ with $s$ an arbitrary non-negative integer. It is not hard to check that $\mu(2^{s+6} + 2^{s+3} + 2^{s} - 4) = 4$ for all $s > 1$ and so by iteration of the Kameko homomorphism, we deduce that the map $ (\overline{Sq}^{0})^{s-1}: Q^{\otimes 4}_{2^{s+6} + 2^{s+3} + 2^{s} - 4} \to Q^{\otimes 4}_{2^{1+6} + 2^{1+3} + 2^{1} - 4}$ is an isomorphism of $kGL_4(k)$-modules, for arbitrary $s\geq 1.$ Note that $Q^{\otimes 4}_{2^{1+6} + 2^{1+3} + 2^{1} - 4} = Q^{\otimes 4}_{2^{4+3} + 2^{4}-2}.$ Then, due to a previous result in Sum \cite{N.S2} and Conjecture \ref{gtP1}, it can be easily seen that the coinvariant $k\otimes_{GL_4(k)}P((P_4)_{2^{s+6} + 2^{s+3} + 2^{s} - 4}^{*})$ is trivial if $s = 0$ and is 1-dimensional if $s\geq 1.$ On the other side, clearly, Theorem \ref{dlntg} implies that
$${\rm Ext}_{A}^{4, 2^{s+6} + 2^{s+3} + 2^{s}}(k, k) = \left\{\begin{array}{ll}
 \langle p'_0 \rangle &\mbox{if $s = 0$},\\
 \langle p'_s, h_{s-1}^{2}h_{s+3}h_{s+6}\rangle &\mbox{if $s \geq 1$},
\end{array}\right.
$$ 
and moreover, the non-zero elements $p'_s$ are not detected by $Tr_4^{A}$ (see H\uhorn ng \cite{Hung}). Therefore, the fourth transfer  $Tr_4^A: k\otimes_{GL_4(k)}P((P_4)_{2^{s+6} + 2^{s+3} + 2^{s} - 4}^{*}) \to {\rm Ext}_A^{4, 2^{s+6} + 2^{s+3} + 2^{s}}(k, k)$
is a monomorphism, but not an isomorphism for any $s\geq 0.$

Finally, we believe that Conjectures \ref{gtP1}, \ref{gtP2} and \ref{gtP3} are true and verification will be soon communicated.  Taking Corollaries \ref{hqc4}, \ref{hqnx1} and \ref{hqnx2}, together with the work in Sum \cite{N.S2} and our previous results \cite{D.P11, D.P12}, it may be concluded that

\begin{corl}\label{hqnx3}
Conjecture \ref{gtS} holds for $q = 4.$
\end{corl}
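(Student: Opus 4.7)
The plan is to assemble the corollaries accumulated throughout the paper and verify that in every relevant degree the fourth Singer transfer $Tr_4^{A}$ is a monomorphism. First I would invoke the preliminary facts recalled at the start of Section three: by Wood's theorem $Q^{\otimes 4}_n = 0$ whenever $\mu(n) > 4$, and when $\mu(n) = 4$ the down Kameko map induces an isomorphism $Q^{\otimes 4}_n \cong Q^{\otimes 4}_{(n-4)/2}$ of $kGL_4(k)$-modules. Since the domain of $Tr_4^{A}$ is dual to the $GL_4(k)$-invariants of $Q^{\otimes 4}_n$, Conjecture \ref{gtS} for $q=4$ reduces to checking injectivity in those degrees $n$ with $\mu(n) < 4$, which by the explicit rewrite given in Section three means $n$ of one of the four generic shapes (i), (ii), (iii), (iv) listed there.

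Next I would dispose of each shape by invoking a previously established result. Shape (i), $n = 2^{s+1} - \ell$ with $\ell \in \{1,2,3\}$, is covered by Sum \cite{N.S2}; in the exceptional subcases where $Tr_4^{A}$ fails to be surjective (for instance at $(\ell,s)=(3,5)$ and $(2,6)$, discussed via Remark \ref{nxqt}), the coinvariants are either zero or one-dimensional and are detected by explicit $\psi_4$-cycles, so injectivity still holds. Shape (ii), $n = 2^{s+t+1} + 2^{s+1} - 3$, is handled by Corollary \ref{hqc4} when $t \neq 3$ (combining Corollaries \ref{hqc1}, \ref{hqc2}, \ref{hqct}), and by Corollary \ref{hqnx1} when $t = 3$. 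Shape (iii), $n = 2^{s+t} + 2^s - 2$, splits into $t = 2,4$ (done in \cite{D.P11, D.P12}), $t = 1$ and $t \geq 5$ (Corollaries \ref{hqc3} and \ref{hqct2}), and $t = 3$ (Corollary \ref{hqnx2}). Shape (iv), $n = 2^{s+t+u} + 2^{s+t} + 2^s - 3$, is settled in \cite{D.P12}.

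In each of these cases the injectivity argument follows the same pattern: either the coinvariant space is zero, in which case the claim is vacuous; or one exhibits, using Theorem \ref{dlCH} together with the explicit $\psi_4$-computations of Theorems \ref{dlc1}--\ref{dlct2} and of Remark \ref{nxt}, a set of generators whose images in ${\rm Ext}_A^{4,4+n}(k,k)$ are known, linearly independent, nonzero classes. The crucial point is that injectivity of $Tr_4^{A}$ does not require surjectivity, so the presence of undetected classes such as $p'_s$ or $D_3(s)$ in the Ext group (established in H\uhorn ng \cite{Hung} and reflected in Corollaries \ref{hqnx1}, \ref{hqnx2} and Remark \ref{nxqt}) does not obstruct the conclusion, even though it does prevent $Tr_4^{A}$ from being an isomorphism in those bidegrees.

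The main obstacle is that the conclusion for shape (ii) with $t=3$ and shape (iii) with $t=3$ rests on Conjectures \ref{gtP1}, \ref{gtP2} and \ref{gtP3}, whose verification requires a full admissible-monomial analysis of $Q^{\otimes 4}_{2^{s+4}+2^{s+1}-3}$ and of ${\rm Ker}[\overline{Sq}^0]_{2^{s+3}+2^s-2}$, analogous to but considerably more involved than the proofs of Theorems \ref{dlc1}--\ref{dlct2}. These remaining conjectural cases are the only gap; once the relevant dimensions are bounded from above by the number of $\psi_4$-detectable generators exhibited in Section two, the reduction outlined above yields Conjecture \ref{gtS} in full for $q = 4$.
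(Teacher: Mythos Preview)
Your proposal is correct and follows essentially the same route as the paper: reduce to the four generic degree shapes (i)--(iv) listed in Section three, then dispatch each by citing Sum \cite{N.S2}, Corollaries \ref{hqc4}, \ref{hqnx1}, \ref{hqnx2}, and the author's prior work \cite{D.P11, D.P12}. You are also right to flag that the $t=3$ subcases of shapes (ii) and (iii) rest on Conjectures \ref{gtP1}--\ref{gtP3}; the paper itself acknowledges this just before stating Corollary \ref{hqnx3} (``we believe that Conjectures \ref{gtP1}, \ref{gtP2} and \ref{gtP3} are true and verification will be soon communicated''), so the corollary is, in both the paper and your write-up, conditional on those conjectures.
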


\begin{acknow}
This work has been supported in part by the NAFOSTED of Viet Nam under grant no. 101.04-2017.05.  
\end{acknow}

\section{Proofs of main results}

The whole section is devoted to prove Theorems \ref{dlc1}, \ref{dlc2}, \ref{dlct}, \ref{dlc3}, and \ref{dlct2}. Firstly, to make the paper self-contained, let us briefly review Singer's algebraic transfer and the relevant notions.

{\bf Singer's algebraic transfer.} Consider the polynomial ring of one variable, $P_1 = k[x_1].$ The canonical $A$-action on $P_1$ is extended to an $A$-action on $k[x_1, x_1^{-1}],$ the ring of finite Laurent series. Then, $\overline{\mathscr P} = \langle \{x_1^{t}|\ t\geq -1\}\rangle$ is $A$-submodule of $k[x_1, x_1^{-1}].$ One has a short-exact sequence:  
\begin{equation}\label{dkn1}
 0\to P_1\xrightarrow{j} \overline{\mathscr P}\xrightarrow{\pi} \Sigma^{-1}k,
\end{equation}
where $j$ is the inclusion and $\pi$ is given by $\pi(x_1^{t}) = 0$ if $t\neq -1$ and $\pi(x_1^{-1}) = 1.$ Denote by $e_1$ the element in ${\rm Ext}_{A}^1(\Sigma^{-1}k, P_1),$ which is represented by the cocyle associated to \eqref{dkn1}. For each $i\geq 1,$ the short-exact sequence
\begin{equation}\label{dkn2}
 0\to P_{i+1}\cong P_i\otimes_{k} P_1\xrightarrow{1\otimes_{k}j} P_i\otimes_{k}\overline{\mathscr P}\xrightarrow{1\otimes_{k}\pi} \Sigma^{-1}P_i,
\end{equation}
determines a class $(e_1\times P_i)\in {\rm Ext}_{A}^1(\Sigma^{-1}P_i, P_{i+1}).$ Then, using the cross product and Yoneda product, we have the element
$$ e_q = (e_1\times P_{q-1})\circ (e_1\times P_{q-2})\circ \cdots \circ (e_1\times P_{1})\circ e_1 \in {\rm Ext}_{A}^q(\Sigma^{-q}k, P_q).$$
Let $\Delta(e_1\times P_i): {\rm Tor}_{q-i}^{A}(k, \Sigma^{-q}P_i)\to {\rm Tor}_{q-i-1}^{A}(k, P_{i+1})$ be the connecting homomorphism associated to \eqref{dkn2}. Then, we have a composition of the connecting homomorphisms $$ \overline{\varphi}^{A}_q = \Delta(e_1\times  P_{q-1})\circ \Delta(e_1\times P_{q-2})\circ \cdots \circ \Delta(e_1\times P_{1})\circ \Delta(e_1)$$from ${\rm Tor}_q^{A}(k, \Sigma^{-q}k)$ to ${\rm Tor}_0^{A}(k, P_q) = k\otimes_{A} P_q$, determined by $\overline{\varphi}_q(z) = e_q\cap z$ for any $z\in {\rm Tor}_q^{A}(k, \Sigma^{-q}k).$ Here $\cap$ denotes the \textit{cap product} in homology with $k$-coefficients. The image of $\overline{\varphi}_q$ is a submodule of the invariant space $(k \otimes_{A} P_q)^{GL_q(k)}.$ Hence, $\overline{\varphi}^{A}_q$ induces homomorphism
$$ \varphi_q^{A}: {\rm Tor}_q^{A}(k, \Sigma^{-q}k)\to (k \otimes_{A} P_q)^{GL_q(k)}.$$
Since the supension $\Sigma^{-q}$ induces an isomorphism ${\rm Tor}_{q, n}^{A}(k, \Sigma^{-q}k)\cong {\rm Tor}_{q, q+n}^{A}(k, k),$ we have the homomorphism $\varphi_q^{A}: {\rm Tor}_{q, q+n}^{A}(k, k)\to (k \otimes_{A} P_q)_{n}^{GL_q(k)}.$ So its dual $$Tr_q^{A}: k\otimes_{GL_q(k)}P((P_q)_n^{*})\to {\rm Ext}_{A}^{q, q+n}(k, k)$$ is called the \textit{cohomological transfer} (or \textit{algebraic transfer}). 

{\bf Some definitions.} A sequence of non-negative integers $\omega = (\omega_1, \omega_2, \ldots, \omega_i,\ldots)$ is called a \textit{weight vector}, if $\omega_i  = 0$ for $i\gg 0.$ One defines $\deg(\omega) = \sum_{i\geq 1}2^{i-1}\omega_i,$ For a natural number $n,$ let us denote by $\alpha_j(n)$ the $j$-th coefficients in dyadic expansion of $n,$ from which we have  $\alpha(n) = \sum_{j\geq 0}\alpha_j(n)$ and $n = \sum_{j\geq 0}\alpha_j(n)2^j,$ where $\alpha_j(n)\in \{0, 1\},\ j = 0, 1, \ldots.$ Let $x = \prod_{1\leq i\leq 4}x_i^{a_i}$ be a monomial in $P_4,$ define two sequences associated with $x$ by $ \omega(x) :=(\omega_1(x), \omega_2(x), \ldots, \omega_j(x), \ldots)$ and $(a_1, a_2, \ldots, a_4),$ where $\omega_j(x)=\sum_{1\leq i\leq 4}\alpha_{j-1}(a_i),$ for $j\geq 1.$ These sequences are respectively called the {\it weight vector} and the \textit{exponent vector} of $x.$ By convention, the sets of all the weight vectors and the exponent vectors are given the left lexicographical order.

\begin{defn}
Assume that $x = \prod_{1\leq i\leq 4}x_i^{a_i}$ and $y = \prod_{1\leq i\leq 4}x_i^{b_i}$ are the monomials in $(P_4)_n.$ We write $a,\, b$ for the exponent vectors of $x$ and $y,$ respectively. We say that $x  < y$ if and only if one of the following holds:
\begin{enumerate}
\item[(i)] $\omega(x) < \omega(y);$
\item[(ii)] $\omega(x) = \omega(y)$ and $a < b.$
\end{enumerate}
\end{defn}

Let us recall that the algebra $A$ is generated by Steenrod squares $Sq^{i}$ for $i\geq 0$ and that the action of $A$ on the polynomial ring $P_4 = k[x_1, x_2, x_3, x_4]$ is  determined by $Sq^{i}(x_i^{n}) = \binom{n}{i}x^{n+i}$ for every $x_i\in P_4$ and Cartan's formula $Sq^{i}(fg) = \sum_{m+n = i}Sq^{i}(f)Sq^{i}(g),$ for all the homogeneous polynomials $f,\, g\in P_4.$ One should note that $\binom{n}{i} = 0$ if $i > n.$

\begin{defn} For a weight vector $\omega$ of degree $n,$ we denote two subspaces associated with $\omega$: 
$$ \begin{array}{ll}
\medskip
(P_4)_n^{\omega} &= \langle\{ x\in (P_4)_n|\, \deg(x) = \deg(\omega),\  \omega(x)\leq \omega\}\rangle,\\
(P_4)_n^{< \omega} &= \langle \{ x\in (P_4)_n|\, \deg(x) = \deg(\omega),\  \omega(x) < \omega\}\rangle.
\end{array}$$ 
For homogeneous polynomials $f$ and $g$ in $(P_4)_n.$ We define the equivalence relations "$\equiv$" and "$\equiv_{\omega}$" on $(P_4)_n$:    
\begin{enumerate}
\item [(i)]$f \equiv g $ if and only if $(f - g)\in \widehat{A}(P_4)_n.$ 
\item[(ii)] $f \equiv_{\omega} g$ if and only if $f, \, g\in (P_4)_n^{\omega}$ and $(f -g)\in ((\widehat{A}(P_4)_n \cap (P_4)_n^{\omega}) + (P_4)_n^{< \omega}).$\\[1mm]
Specifically, if $f\equiv 0$ (resp. $f\equiv_{\omega} 0$), then $f$ is called \textit{hit} (resp. \textit{$\omega$-hit}).
\end{enumerate}
\end{defn}
From the equivalence relation "$\equiv_\omega$", we have the $k$-quotient space:
$$(Q_n^{\otimes 4})^{\omega} := (P_4)_n^{\omega}/ (((P_4)_n^{\omega}\cap \widehat{A}(P_4)_n)+ (P_4)_n^{< \omega}).$$ 
According to Sum \cite{N.S4}, this space has the structure of an $GL_4$-module. Furthermore, by the definitions above, it is straightforward to show that
$$ \begin{array}{ll}
\medskip
\dim Q^{\otimes 4}_n &=\sum_{\deg(\omega) = n} \dim (Q_n^{\otimes 4})^{\omega},\\ 
 \dim (Q^{\otimes 4}_n)^{GL_4(k)}&\leq \sum_{\deg(\omega) = n}\dim ((Q_n^{\otimes 4})^{\omega})^{GL_4(k)}.
\end{array}$$

Let $\underline{(P_4)_n}$ and $\overline{(P_4)_n}$ denote the $k$-subspaces of $(P_4)_n$ spanned all the monomials $\prod_{1\leq i\leq 4}x_i^{a_i}$ such that $\prod_{1\leq i\leq 4}a_i = 0,$ and $\prod_{1\leq i\leq 4}a_i > 0,$ respectively. We put $$ \underline{Q^{\otimes 4}_n} = k\otimes_{A}\underline{(P_4)_n} ,\ \mbox{and}\ \overline{Q^{\otimes 4}_n}= k\otimes_{A}\overline{(P_4)_n} .$$ Then, we have $$Q^{\otimes 4}_n\cong \underline{Q^{\otimes 4}_n}\bigoplus \overline{Q^{\otimes 4}_n}.$$

\begin{defn}
A monomial $x\in (P_4)_n$ is said to be {\it inadmissible} if there exist monomials $y_1, y_2,\ldots, y_k$ in $(P_4)_n$ such that $y_j < x$ for $1\leq j\leq k$ and $x \equiv  \sum_{1\leq j\leq k} y_j.$ Then, $x$ is said to be {\it admissible} if it is not inadmissible.
\end{defn}

With the above definitions on hand, it is easily seen that $Q^{\otimes 4}_n$ is a $k$-vector space with a basis consisting of all the classes represent by the admissible monomials in $(P_4)_n.$

\begin{defn}\label{dnspike}
A monomial $z = x_1^{v_1}\ldots x_4^{v_4}$ in $(P_4)_n$ is called a {\it spike} if every exponent $v_j$ is of the form $2^{\xi_j} - 1.$ In particular, if the exponents $\xi_j$ can be arranged to satisfy \mbox{$\xi_1 > \xi_2 > \ldots > \xi_{m-1}\geq \xi_m \geq 1$} where only the last two smallest exponents can be equal, and $\xi_r = 0$ for $ m < r  \leq 4,$ then $z$ is called a {\it minimal spike}.
\end{defn}

According to Singer \cite{W.S1}, we have the following technical result on a hit monomial.

\begin{thm}\label{dlSi}
For each $q\geq 1$ and degree $n\geq 0,$ suppose that the monomial $x$ belongs to $(P_q)_n$ and $\mu(n)\leq q.$ Let $z$ be a minimal spike in $(P_q)_n.$ Therefrom, if $\omega(x) < \omega(z),$ then $x$ is hit.
\end{thm}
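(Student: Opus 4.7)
The plan is to induct on the left lexicographic order of weight vectors: reduce the claim to producing an explicit hit equation expressing $x$ as a sum of $Sq^{t}$-images (with $t > 0$) modulo monomials of strictly smaller weight vector, which are hit by the inductive hypothesis. The hypothesis $\mu(n)\leq q$ is precisely what guarantees existence of a minimal spike $z = x_1^{2^{\xi_1}-1}\cdots x_m^{2^{\xi_m}-1}$ in $(P_q)_n$ with $m\leq q$; a short calculation with binary expansions gives $\omega_j(z) = \#\{i : \xi_i \geq j\}$. The role of the assumption $\omega(x) < \omega(z)$ is to pinpoint, at the first position where the two weight vectors differ, a structural deficiency of $x$ relative to $z$ that one can exploit to produce the required hit equation.

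Concretely, I would decompose $x = \prod_{1\leq i\leq q} x_i^{a_i}$ into binary layers by writing $x = \prod_{j\geq 0} y_j^{2^j}$, where $y_j := \prod_{i : \alpha_j(a_i) = 1} x_i$ is the squarefree monomial recording which variables carry a $2^j$ in their exponent, so that $\deg(y_j) = \omega_{j+1}(x)$. If $j_0$ is the smallest index with $\omega_{j_0+1}(x) < \omega_{j_0+1}(z)$, then $y_{j_0}$ involves strictly fewer distinct variables than the corresponding layer of $z$, while $\omega_{j+1}(x) = \omega_{j+1}(z)$ for $j < j_0$. Hence there is a variable $x_\ell$ appearing in the $j_0$-th layer of $z$ but absent from $y_{j_0}$, and the equality of the lower layers with those of the spike leaves room in some higher layer $y_{j_1}^{2^{j_1}}$ (with $j_1 > j_0$) to furnish a compensating copy of $x_\ell$.

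I would then construct the hit equation by applying Cartan's formula to $Sq^{2^{j_0}}$ acting on the monomial obtained from $x$ by removing one copy of $x_\ell^{2^{j_0}}$ from $y_{j_1}^{2^{j_1}}$ and inserting $x_\ell$ into $y_{j_0}$; exploiting $Sq^{2^{j_0}}(x_\ell^{2^{j_0}}) = x_\ell^{2^{j_0+1}}$ together with the vanishing $Sq^{2^{j_0}}(x_\ell^{r}) = 0$ for $r < 2^{j_0}$, the Cartan expansion produces $x$ plus error monomials whose weight vectors are strictly less than $\omega(x)$ in left lex order. Rearranging then places $x$ in $\widehat{A}(P_q)_n$. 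The main obstacle is controlling the error terms, because the Cartan expansion generates many summands and some can a priori have the same weight vector as $x$ rather than a strictly smaller one; Singer's device for bypassing this is the $\chi$-trick — systematically replacing each $Sq^{t}$ by its antipode $\chi(Sq^{t})$ in the Steenrod algebra and invoking the Milnor identity $\sum_{i+j=t} Sq^{i}\chi(Sq^{j}) = 0$ for $t > 0$ — which redistributes binary digits among variables in a controlled fashion, ensuring that every error monomial either drops into the inductive hypothesis on $\omega$ or is already hit (for instance by Wood's theorem applied to a subpolynomial in fewer variables, whose degree has $\mu$-value exceeding that number of variables).
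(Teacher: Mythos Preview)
The paper does not give a proof of this statement: Theorem~\ref{dlSi} is quoted as a known result of Singer (the relevant reference is \cite{W.S2}, ``On the action of Steenrod squares on polynomial algebras''), and the paper uses it as a black box throughout the computations in Section~4. So there is no ``paper's own proof'' to compare against; what you are sketching is essentially a reconstruction of Singer's original argument.

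On the substance of your sketch: the overall architecture --- induction on weight vectors, layer decomposition $x=\prod_j y_j^{2^j}$, and the $\chi$-trick to control error terms --- is indeed the right framework and matches Singer's approach. However, there is a genuine gap in the middle step. You choose $x_\ell$ to be a variable ``appearing in the $j_0$-th layer of $z$ but absent from $y_{j_0}$'', and then assert that ``the equality of the lower layers with those of the spike leaves room in some higher layer $y_{j_1}^{2^{j_1}}$ \ldots\ to furnish a compensating copy of $x_\ell$''. But the weight vector only records the \emph{sizes} of the layers, not which variables occupy them: the layers of $x$ and of $z$ can involve entirely disjoint sets of variables, so there is no reason that your specific $x_\ell$ (chosen from the spike's layer) appears in any layer of $x$ at all. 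What degree-counting actually gives you is that some higher layer $y_{j_1}$ of $x$ is nonempty; you should take $x_\ell$ from \emph{that} layer of $x$, and the spike $z$ plays no role in the choice beyond providing the numerical inequality $\omega_{j_0+1}(x)<\omega_{j_0+1}(z)$. Once $x_\ell$ is chosen correctly from $y_{j_1}$, you still need to check that it is absent from $y_{j_0}$ (otherwise your modified monomial is not well-formed), and this may force a more careful choice or a further case analysis. The $\chi$-trick paragraph is accurate in spirit, but as written it is an appeal to authority rather than an argument; you would need to actually carry out the computation to see that all error terms have strictly smaller weight vector.
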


\begin{defn}
For each $1\leq j\leq 4,$ one defines $k$-homomorphism $$\sigma_j: k^{4}\cong \langle x_1, \ldots, x_4 \rangle\to k^{4}\cong \langle x_1, \ldots, x_4 \rangle,$$ which is given by 
$$ \left\{\begin{array}{ll}
\sigma_j(x_j) &= x_{j+1},\\
 \sigma_j(x_{j+1}) &= x_j,\\
 \sigma_j(x_t) &= x_t,
\end{array}\right.$$
 for $t\not\in \{j, j+1\},\ j = 1, 2, 3,$ and $\sigma_4(x_1) = x_1 + x_2,\ \sigma_4(x_t) = x_t$ for $2\leq t\leq 4.$ 
\end{defn}

The following remark is useful.

\begin{rema}
We denote by $\Sigma_4$ the symmetric group of rank $4.$ Then, $\Sigma_4$ is generated by the ones associated with $\sigma_j,\ 1\leq j\leq 3.$ For each permutation in $\Sigma_4$, consider corresponding permutation matrix; these form a group of matrices isomorphic to $\Sigma_4.$ 
So, 
$GL_4(k)\cong GL(k^{4})$ is generated by the matrices associated with $\sigma_j,\ 1\leq j\leq 4.$ Let $x = \prod_{1\leq i\leq 4}x_i^{a_i}$ be an monomial in $(P_4)_n.$ Then, the weight vector $\omega$ of $x$ is invariant under the permutation of the generators $x_i,\ i = 1, 2, \ldots, 4.$ Hence $(Q^{\otimes 4}_n)^{\omega}$ also has a $\Sigma_4$-module structure. 
\end{rema}

For a polynomial $f\in (P_4)_n,$ we denote by $[f]$ the classes in $Q^{\otimes 4}_n$ represented by $f.$ If $\omega$ is a weight vector and $f\in (P_4)_n^{\omega},$ then denote by $[f]_\omega$ the classes in $(Q^{\otimes 4}_n)^{\omega}$ represented by $f.$ By the above remark and $\sigma_j$ induces a homomorphism of $A$-algebras which is also denoted by $\sigma_j: P_4\to P_4,$  a class $[f]_{\omega}\in (Q^{\otimes 4}_n)^{\omega}$ is an $GL_4(k)$-invariant if and only if $\sigma_j(f)  \equiv_{\omega} f$ for $1\leq j\leq 4.$  It is an $\Sigma_4$-invariant if and only if $\sigma_j(f) \equiv_{\omega} f $ for $1\leq j\leq 3.$

In what follows, for any monomials $v_1, v_2, \ldots, v_s\in (P_4)_n$ and for a subgroup $G$ of $GL_4(k),$ we denote by $G(v_1; v_2; \ldots, v_s)$ the $G$-submodule of $(Q^{\otimes 4}_n)^{\omega}$ generated by the set $\{[v_i]_{\omega}:\, 1\leq i\leq s\}.$ Note that if $\omega$ is a weight vector of a minimal spike, then $[v_i]_{\omega} = [v_i]$ for all $i.$

\subsection{Proof of Theorem \ref{dlc1}}

For convenience, put $n_s:=3(2^{s}-1) + 3.2^{s},$  according to Sum \cite{N.S1},  we have
$$\dim Q^{\otimes 4}_{n_s} = \left\{ \begin{array}{ll}
46 &\mbox{if $s = 1$},\\
94 &\mbox{if $s = 2$},\\
105 &\mbox{if $s \geq 3$}.
\end{array}\right.$$
Recall that $Q^{\otimes 4}_{n_s} \cong \underline{Q^{\otimes 4}_{n_s}}\bigoplus \overline{Q^{\otimes 4}_{n_s}}.$ By Sum \cite{N.S1}, a monomial basis for $\underline{Q^{\otimes 4}_{n_s}}$ is the set consisting of all the classes represented monomials $a_{s, i}$ which are determined as follows:
\begin{center}
\begin{tabular}{lrr}
$a_{s,\,1}= x_2^{2^s-1}x_3^{2^s-1}x_4^{2^{s+2}-1}$, & \multicolumn{1}{l}{$a_{s,\,2}= x_2^{2^s-1}x_3^{2^{s+2}-1}x_4^{2^s-1}$,} & \multicolumn{1}{l}{$a_{s,\,3}= x_2^{2^{s+2}-1}x_3^{2^s-1}x_4^{2^s-1}$,} \\
$a_{s,\,4}= x_1^{2^s-1}x_3^{2^s-1}x_4^{2^{s+2}-1}$, & \multicolumn{1}{l}{$a_{s,\,5}= x_1^{2^s-1}x_3^{2^{s+2}-1}x_4^{2^s-1}$,} & \multicolumn{1}{l}{$a_{s,\,6}= x_1^{2^s-1}x_2^{2^s-1}x_4^{2^{s+2}-1}$,} \\
$a_{s,\,7}= x_1^{2^s-1}x_2^{2^s-1}x_3^{2^{s+2}-1}$, & \multicolumn{1}{l}{$a_{s,\,8}= x_1^{2^s-1}x_2^{2^{s+2}-1}x_4^{2^s-1}$,} & \multicolumn{1}{l}{$a_{s,\,9}= x_1^{2^s-1}x_2^{2^{s+2}-1}x_3^{2^s-1}$,} \\
$a_{s,\,10}= x_1^{2^{s+2}-1}x_3^{2^s-1}x_4^{2^s-1}$, & \multicolumn{1}{l}{$a_{s,\,11}= x_1^{2^{s+2}-1}x_2^{2^s-1}x_4^{2^s-1}$,} & \multicolumn{1}{l}{$a_{s,\,12}= x_1^{2^{s+2}-1}x_2^{2^s-1}x_3^{2^s-1}$,} \\
$a_{s,\,13}= x_2^{2^s-1}x_3^{2^{s+1}-1}x_4^{3.2^s-1}$, & \multicolumn{1}{l}{$a_{s,\,14}= x_2^{2^{s+1}-1}x_3^{2^s-1}x_4^{3.2^s-1}$,} & \multicolumn{1}{l}{$a_{s,\,15}= x_2^{2^{s+1}-1}x_3^{3.2^s-1}x_4^{2^s-1}$,} \\
$a_{s,\,16}= x_1^{2^s-1}x_3^{2^{s+1}-1}x_4^{3.2^s-1}$, & \multicolumn{1}{l}{$a_{s,\,17}= x_1^{2^s-1}x_2^{2^{s+1}-1}x_4^{3.2^s-1}$,} & \multicolumn{1}{l}{$a_{s,\,18}= x_1^{2^s-1}x_2^{2^{s+1}-1}x_3^{3.2^s-1}$,} \\
$a_{s,\,19}= x_1^{2^{s+1}-1}x_3^{2^s-1}x_4^{3.2^s-1}$, & \multicolumn{1}{l}{$a_{s,\,20}= x_1^{2^{s+1}-1}x_3^{3.2^s-1}x_4^{2^s-1}$,} & \multicolumn{1}{l}{$a_{s,\,21}= x_1^{2^{s+1}-1}x_2^{2^s-1}x_4^{3.2^s-1}$,} \\
$a_{s,\,22}= x_1^{2^{s+1}-1}x_2^{2^s-1}x_3^{3.2^s-1}$, & \multicolumn{1}{l}{$a_{s,\,23}= x_1^{2^{s+1}-1}x_2^{3.2^s-1}x_4^{2^s-1}$,} & \multicolumn{1}{l}{$a_{s,\,24}= x_1^{2^{s+1}-1}x_2^{3.2^s-1}x_3^{2^s-1}$,} \\
$a_{s,\,25}= x_2^{2^{s+1}-1}x_3^{2^{s+1}-1}x_4^{2^{s+1}-1}$, & \multicolumn{1}{l}{$a_{s,\,26}= x_1^{2^{s+1}-1}x_3^{2^{s+1}-1}x_4^{2^{s+1}-1}$,} & \multicolumn{1}{l}{$a_{s,\,27}= x_1^{2^{s+1}-1}x_2^{2^{s+1}-1}x_4^{2^{s+1}-1}$,} \\
$a_{s,\,28}= x_1^{2^{s+1}-1}x_2^{2^{s+1}-1}x_3^{2^{s+1}-1}$. &       &  
\end{tabular}%
\end{center}

Thanks to these, a direct computation shows that
$$ \begin{array}{ll}
\medskip
\Sigma_4(a_{s,\,1}) &= \langle \{[a_{s,\,j}]:\, 1\leq j\leq 12\} \rangle,\\
\medskip
\Sigma_4(a_{s,\,13}) &= \langle \{[a_{s,\,j}]:\, 13\leq j\leq 24\} \rangle,\\
\medskip
\Sigma_4(a_{s,\,25}) &= \langle \{[a_{s,\,j}]:\, 25\leq j\leq 28\} \rangle.
\end{array}$$ 
These imply that there is a direct summand decomposition of the $\Sigma_4$-modules:
$$ \underline{Q^{\otimes 4}_{n_s}} = \Sigma_4(a_{s,\,1}) \bigoplus \Sigma_4(a_{s,\,13}) \bigoplus \Sigma_4(a_{s,\,25}).$$
In the following, it will play a key role in the proof of the theorem.
\begin{lema}\label{bd1}
We have that
\begin{itemize}
\item[i)]  $\Sigma_4(a_{s,\,1})^{\Sigma_4} = \langle [q_{s,\,1}] \rangle,$ with $q_{s,\,1} = \sum_{1\leq j\leq 12}a_{s,\,j};$
\item[ii)]  $\Sigma_4(a_{s,\,13})^{\Sigma_4} = \langle [q_{s,\,2}] \rangle,$ with $q_{s,\,1} = \sum_{13\leq j\leq 24}a_{s,\,j};$
\item[iii)]  $\Sigma_4(a_{s,\,25})^{\Sigma_4} = \langle [q_{s,\,3}] \rangle,$ with $q_{s,\,1} = \sum_{25\leq j\leq 28}a_{s,\,j}.$
\end{itemize}
\end{lema}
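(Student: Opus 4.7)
\medskip

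\noindent\textbf{Proof proposal for Lemma \ref{bd1}.} The plan is to treat the three statements separately, exploiting the fact that $\Sigma_4$ acts on $(Q^{\otimes 4}_{n_s})^{\omega}$ by permutation of variables, together with the observation that the weight vector is preserved. In each case, we take a general element of the submodule, apply the generators $\sigma_1,\sigma_2,\sigma_3$ of $\Sigma_4$, and read off the resulting linear constraints on the coefficients.

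\medskip

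For part i), the exponent multiset of every monomial $a_{s,j}$ with $1\leq j\leq 12$ is $\{0,\, 2^{s}-1,\, 2^{s}-1,\, 2^{s+2}-1\}$. Because two entries coincide, the set of distinct permutations of this multiset has exactly $4!/2!=12$ elements, and these are in one-to-one correspondence with $a_{s,1},\ldots,a_{s,12}$. A straightforward inspection shows that each generator $\sigma_j$ ($j=1,2,3$) sends any admissible $a_{s,i}$ to another admissible $a_{s,i'}$ without producing inadmissible terms, so no hit reduction is needed; the action of $\Sigma_4$ on $\{[a_{s,j}]\}_{j=1}^{12}$ is the permutation representation on a single transitive orbit. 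The $\Sigma_4$-invariant subspace of such a representation is one-dimensional and spanned by the orbit sum $q_{s,1}=\sum_{j=1}^{12}a_{s,j}$. Part iii) is treated identically: here the exponent multiset is $\{0,\, 2^{s+1}-1,\, 2^{s+1}-1,\, 2^{s+1}-1\}$, giving $4!/3!=4$ distinct permutations, one for each of $a_{s,25},\ldots,a_{s,28}$; again $\Sigma_4$ acts transitively by honest variable-permutations, so the invariants are exactly $\langle [q_{s,3}]\rangle$.

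\medskip

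Part ii) requires more work. Here the exponents $\{0,\, 2^{s}-1,\, 2^{s+1}-1,\, 3\cdot 2^{s}-1\}$ are pairwise distinct, so there are $4!=24$ possible exponent-orderings, but only $12$ of them correspond to the admissibles $a_{s,13},\ldots,a_{s,24}$. Applying a transposition $\sigma_j$ to a given admissible $a_{s,i}$ therefore produces, in general, a monomial whose exponent pattern is one of the $12$ \emph{inadmissible} orderings; such a monomial must be rewritten, via an explicit hit equation (using the Cartan formula on terms of the shape $Sq^{2^{s}}(\,\cdot\,)$, where the identity $3\cdot 2^{s}-1=2^{s+1}+2^{s}-1$ is the key arithmetic feature), as a $k$-linear combination of the admissibles $a_{s,13},\ldots,a_{s,24}$ modulo $\widehat{A}(P_4)_{n_s}$ and $(P_4)_{n_s}^{<\omega}$. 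Writing a general element $f=\sum_{j=13}^{24}\gamma_j a_{s,j}$ and imposing $\sigma_j(f)\equiv_\omega f$ for $j=1,2,3$ then yields a finite homogeneous linear system over $k$ on the twelve unknowns $\gamma_{13},\ldots,\gamma_{24}$. Solving it one obtains that the solution space is one-dimensional, forcing all $\gamma_j$ to coincide, whence $\Sigma_4(a_{s,13})^{\Sigma_4}=\langle [q_{s,2}]\rangle$ with $q_{s,2}=\sum_{j=13}^{24}a_{s,j}$.

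\medskip

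The main obstacle is the explicit catalogue of hit relations used in part ii) to rewrite the $12$ inadmissible exponent patterns in terms of the admissible basis; this is a bookkeeping exercise rather than a conceptual difficulty, but it must be done uniformly in $s$ so that the system of constraints is independent of $s\geq 1$. Once the reduction table is in hand, the linear-algebra step collapses in each of the three cases to the observation that the associated permutation action (honest for parts i) and iii), twisted by hit relations for part ii)) admits a unique invariant vector, given by the stated orbit sum.
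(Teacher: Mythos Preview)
Your proposal is correct and follows essentially the same route as the paper: for i) and iii) the $\Sigma_4$-action is an honest transitive permutation of the admissible classes, so the invariant space is spanned by the orbit sum; for ii) some images of admissibles under $\sigma_j$ are inadmissible and must be rewritten via hit equations before solving the linear system.

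One technical point to correct: the paper does \emph{not} use $Sq^{2^{s}}$ for the reductions in part ii). Instead it uses $Sq^{2}$ together with Singer's criterion (Theorem~\ref{dlSi}). For example,
\[
x_1^{3\cdot 2^{s}-1}x_2^{2^{s+1}-1}x_4^{2^{s}-1}
= Sq^{2}\bigl(x_1^{3\cdot 2^{s}-3}x_2^{2^{s+1}-1}x_4^{2^{s}-1}\bigr) + a_{s,23}
\pmod{\widehat{A}(P_4)_{n_s}},
\]
where the Cartan expansion of the $Sq^{2}$-term produces, besides the two monomials of weight $\omega_{(s,1)}$, only monomials of strictly smaller weight vector, which are hit by Theorem~\ref{dlSi}. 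In fact each of the six inadmissible monomials arising in ii) reduces to a \emph{single} admissible, so the induced $\Sigma_4$-action on $\{[a_{s,j}]:13\le j\le 24\}$ is again an honest transitive permutation action, and the invariants collapse to $\langle[q_{s,2}]\rangle$ exactly as in the other two parts. Your suggested $Sq^{2^{s}}$ route might be made to work, but it generates many more Cartan cross terms whose weight vectors would all need checking; the $Sq^{2}$ approach is both simpler and uniform in $s\geq 1$.
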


\begin{proof}

We frist prove Part $i).$ Since the set $\{[a_{s,\,j}]:\, 1\leq j\leq 12 \}$ is a basis of $\Sigma_4(a_{s,\,1}),$ if $[f]\in \Sigma_4(a_{s,\,1})^{\Sigma_4},$ then 
\begin{equation}\label{dt1}
\begin{array}{ll}
f &\equiv \big(\gamma_1a_{s,\,1} + \gamma_2a_{s,\,2} + \gamma_3a_{s,\,3} + \gamma_4a_{s,\,4} + \gamma_5a_{s,\,5} + \gamma_6a_{s,\,6}\\
&\quad \gamma_7a_{s,\,7} + \gamma_8a_{s,\,8} + \gamma_9a_{s,\,9} + \gamma_{10}a_{s,\,10} + \gamma_{11}a_{s,\,11} + \gamma_{12}a_{s,\,12}\big),
\end{array}
\end{equation} in which $\gamma_j\in k$ for every $j.$ Acting the homomorphisms $\sigma_i: (P_4)_{n_s}\to (P_4)_{n_s},\, i = 1, 2, 3,$ on both sides of \eqref{dt1}, we get
\begin{equation}\label{dt2}
\begin{array}{ll}
\sigma_1(f) &\equiv \big(\gamma_4a_{s,\,1} +  \gamma_5a_{s,\,2} +  \gamma_{10}a_{s,\,3} +  \gamma_{1}a_{s,\,4} +  \gamma_{2}a_{s,\,5} +  \gamma_{6}a_{s,\,6} +  \gamma_{7}a_{s,\,7}\\
\medskip
&\quad +  \gamma_{11}a_{s,\,8} +  \gamma_{12}a_{s,\,9} +  \gamma_{3}a_{s,\,10} +  \gamma_{8}a_{s,\,11} +  \gamma_{9}a_{s,\,12}\big);\\
\sigma_2(f) &\equiv \big(\gamma_{1}a_{s,\,1} + \gamma_{3}a_{s,\,2} + \gamma_{2}a_{s,\,3} + \gamma_{6}a_{s,\,4} + \gamma_{8}a_{s,\,5} + \gamma_{4}a_{s,\,6} + \gamma_{9}a_{s,\,7}\\
\medskip
&\quad + \gamma_{5}a_{s,\,8} + \gamma_{7}a_{s,\,9} + \gamma_{11}a_{s,\,10} + \gamma_{10}a_{s,\,11} + \gamma_{12}a_{s,\,12}\big).\\
\sigma_3(f) &\equiv \big(\gamma_{2}a_{s,\,1} + \gamma_{1}a_{s,\,2} + \gamma_{3}a_{s,\,3} + \gamma_{5}a_{s,\,4} + \gamma_{4}a_{s,\,5} + \gamma_{7}a_{s,\,6} + \gamma_{6}a_{s,\,7}\\
&\quad + \gamma_{9}a_{s,\,8} + \gamma_{8}a_{s,\,9} + \gamma_{10}a_{s,\,10} + \gamma_{12}a_{s,\,11} + \gamma_{11}a_{s,\,12}\big).
\end{array}
\end{equation}

Since $[f]\in \Sigma_4(a_{s,\,1})^{\Sigma_4},$ $\sigma_i(f)\equiv f$ for $1\leq i\leq 3.$ Combining this and the equalities \eqref{dt1} and \eqref{dt2}, we find that
$$ \begin{array}{ll}
\medskip
(\gamma_1 + \gamma_4)a_{s,\,1} + (\gamma_2 + \gamma_5)a_{s,\,2} + (\gamma_3 + \gamma_{10})a_{s,\,3} + (\gamma_8 + \gamma_{11})a_{s,\,8} + (\gamma_9 + \gamma_{12})a_{s,\,9}& \equiv 0,\\
\medskip
(\gamma_2 + \gamma_3)a_{s,\,2} + (\gamma_4 + \gamma_6)a_{s,\,4} + (\gamma_5 + \gamma_{8})a_{s,\,5} + (\gamma_7 + \gamma_{9})a_{s,\,7} + (\gamma_{10} + \gamma_{11})a_{s,\,10}& \equiv 0,\\
(\gamma_1 + \gamma_2)a_{s,\,1} + (\gamma_4 + \gamma_5)a_{s,\,4} + (\gamma_6 + \gamma_{7})a_{s,\,6} + (\gamma_8 + \gamma_{9})a_{s,\,8} + (\gamma_{11} + \gamma_{12})a_{s,\,11}& \equiv 0.
\end{array}$$
This implies that $\gamma_1  = \gamma_j$ for all $j,\, 2\leq j\leq 12.$ By a similar computation, we also get Part $iii).$

We now prove Part $ii).$ We see that a basis for $\Sigma_4(a_{s,\,13})$ is the set $\{[a_{s,\,j}]:\, 13\leq j\leq 24\}.$ Then, assume that $[g]\in \Sigma_4(a_{s,\,13})^{\Sigma_4},$ then we have
\begin{equation}\label{dt3}
g\equiv \sum_{13\leq j\leq 24}\beta_ja_{s,\,j},
\end{equation} with $\beta_j\in k,\, j = 13, \ldots, 24. $ Acting the homomorphisms $\sigma_i: (P_4)_{n_s}\to (P_4)_{n_s},\, 1\leq i\leq 3,$ on both sides of \eqref{dt3}, we get
\begin{equation}\label{dt4}
\begin{array}{ll}
\sigma_1(g) &\equiv \big(\beta_{16}a_{s,\,13} + \beta_{19}a_{s,\,14} +  \beta_{20}a_{s,\,15} +  \beta_{13}a_{s,\,16} +  \beta_{21}a_{s,\,17} \\
&\quad +  \beta_{22}a_{s,\,18} +  \beta_{14}a_{s,\,19} +  \beta_{15}a_{s,\,20} +  \beta_{17}a_{s,\,21}+  \beta_{18}a_{s,\,22}\\
\medskip
&\quad  +  \beta_{23}x_1^{3.2^s-1}x_2^{2^{s+1}-1}x_4^{2^s-1} +  \beta_{24}x_1^{3.2^s-1}x_2^{2^{s+1}-1}x_3^{2^s-1}\big);\\
\sigma_2(g) &\equiv \big( \beta_{14}a_{s,\,13} +  \beta_{13}a_{s,\,14} +  \beta_{17}a_{s,\,16} +  \beta_{16}a_{s,\,17} +  \beta_{21}a_{s,\,19}  \\
&\quad  +  \beta_{23}a_{s,\,20}+  \beta_{19}a_{s,\,21}+  \beta_{24}a_{s,\,22} +  \beta_{20}a_{s,\,23} +  \beta_{22}a_{s,\,24}\\
\medskip
&\quad  +  \beta_{15}x_2^{3.2^s-1}x_3^{2^{s+1}-1}x_4^{2^s-1} +   \beta_{18}x_1^{2^s-1}x_2^{3.2^s-1}x_3^{2^{s+1}-1}\big);\\
\sigma_3(g) &\equiv \big( \beta_{15}a_{s,\,14} +  \beta_{14}a_{s,\,15} +  \beta_{18}a_{s,\,17} +  \beta_{17}a_{s,\,18} +  \beta_{20}a_{s,\,19} \\
&\quad +  \beta_{19}a_{s,\,20}+   \beta_{22}a_{s,\,21}  +  \beta_{21}a_{s,\,22} +  \beta_{24}a_{s,\,23} +  \beta_{23}a_{s,\,24}\\
&\quad  +  \beta_{13}x_2^{2^s-1}x_3^{3.2^s-1}x_4^{2^{s+1}-1}+  \beta_{16}x_1^{2^s-1}x_3^{3.2^s-1}x_4^{2^{s+1}-1} \big).
\end{array}
\end{equation}
Using Cartan's formula and Theorem \ref{dlSi}, we have
\begin{equation}\label{dt5}
 \begin{array}{ll}
\medskip
x_1^{3.2^s-1}x_2^{2^{s+1}-1}x_4^{2^s-1} &= Sq^{2}(x_1^{3.2^s-3}x_2^{2^{s+1}-1}x_4^{2^s-1}) + a_{s,\,23} \mod (\widehat{A}(P_4)_{n_s});\\
\medskip
 x_1^{3.2^s-1}x_2^{2^{s+1}-1}x_3^{2^s-1} &= Sq^{2}(x_1^{3.2^s-3}x_2^{2^{s+1}-1}x_3^{2^s-1} ) + a_{s,\,24} \mod (\widehat{A}(P_4)_{n_s});\\
\medskip
x_2^{3.2^s-1}x_3^{2^{s+1}-1}x_4^{2^s-1} &= Sq^{2}(x_2^{3.2^s-3}x_3^{2^{s+1}-1}x_4^{2^s-1}) + a_{s,\,15} \mod (\widehat{A}(P_4)_{n_s});\\
\medskip
x_1^{2^s-1}x_2^{3.2^s-1}x_3^{2^{s+1}-1} &= Sq^{2}(x_1^{2^s-1}x_2^{3.2^s-3}x_3^{2^{s+1}-1}) + a_{s,\,18} \mod (\widehat{A}(P_4)_{n_s});\\
\medskip
x_2^{2^s-1}x_3^{3.2^s-1}x_4^{2^{s+1}-1} &= Sq^{2}(x_2^{2^s-1}x_3^{3.2^s-3}x_4^{2^{s+1}-1} ) + a_{s,\,13} \mod (\widehat{A}(P_4)_{n_s});\\
\medskip
x_1^{2^s-1}x_3^{3.2^s-1}x_4^{2^{s+1}-1} &= Sq^{2}(x_1^{2^s-1}x_3^{3.2^s-3}x_4^{2^{s+1}-1}) + a_{s,\,16} \mod (\widehat{A}(P_4)_{n_s}).
\end{array}
\end{equation}
Then, using the equalities \eqref{dt3}, \eqref{dt4} and \eqref{dt5} and the relations $\sigma_i(g)\equiv g,$ for $i\in \{1, 2, 3\},$ deduce that $\beta_{13} = \beta_j$ for all $j,\, 14\leq j\leq 24.$ The lemma is completely proved.
\end{proof}

\begin{proof}[{\it Proof of Theorem \ref{dlc1}}]
According to Sum \cite{N.S1}, a basis for $\overline{Q^{\otimes 4}_{n_s}}$ is the set consisting of all the classes represented monomials $a_{s, i}$ which are determined as follows:

For $s\geq 1,$
\begin{center}
\begin{tabular}{lll}
$a_{s,\,29}= x_1x_2^{2^s-1}x_3^{2^s-1}x_4^{2^{s+2}-2}$, & $a_{s,\,30}= x_1x_2^{2^s-1}x_3^{2^{s+2}-2}x_4^{2^s-1}$, & $a_{s,\,31}= x_1x_2^{2^{s+2}-2}x_3^{2^s-1}x_4^{2^s-1}$, \\
$a_{s,\,32}= x_1x_2^{2^s-1}x_3^{2^{s+1}-2}x_4^{3.2^{s}-1}$, & $a_{s,\,33}= x_1x_2^{2^{s+1}-2}x_3^{2^s-1}x_4^{3.2^{s}-1}$, & $a_{s,\,34}= x_1x_2^{2^{s+1}-2}x_3^{3.2^{s}-1}x_4^{2^s-1}$, \\
$a_{s,\,35}= x_1x_2^{2^{s+1}-2}x_3^{2^{s+1}-1}x_4^{2^{s+1}-1}$, & $a_{s,\,36}= x_1x_2^{2^{s+1}-1}x_3^{2^{s+1}-2}x_4^{2^{s+1}-1}$, & $a_{s,\,37}= x_1x_2^{2^{s+1}-1}x_3^{2^{s+1}-1}x_4^{2^{s+1}-2}$, \\
$a_{s,\,38}= x_1^{2^{s+1}-1}x_2x_3^{2^{s+1}-2}x_4^{2^{s+1}-1}$, & $a_{s,\,39}= x_1^{2^{s+1}-1}x_2x_3^{2^{s+1}-1}x_4^{2^{s+1}-2}$, & $a_{s,\,40}= x_1^{2^{s+1}-1}x_2^{2^{s+1}-1}x_3x_4^{2^{s+1}-2}$.\\
\end{tabular}%
\end{center}
For $s = 1,$
\begin{center}
\begin{tabular}{lll}
$a_{1,\,41}= x_1x_2x_3^{3}x_4^{4}$, & $a_{1,\,42}= x_1x_2^{3}x_3x_4^{4}$, & $a_{1,\,43}= x_1x_2^{3}x_3^{4}x_4$, \\
$a_{1,\,44}= x_1^{3}x_2x_3x_4^{4}$, & $a_{1,\,45}= x_1^{3}x_2x_3^{4}x_4$, & $a_{1,\,46}= x_1^{3}x_2^{4}x_3x_4$.\\
\end{tabular}%
\end{center}

For $s\geq 2,$
\begin{center}
\begin{tabular}{clrr}
$a_{s,\,41}= x_1x_2^{2^s-2}x_3^{2^s-1}x_4^{2^{s+2}-1}$, & $a_{s,\,42}= x_1x_2^{2^s-2}x_3^{2^{s+2}-1}x_4^{2^s-1}$, & \multicolumn{1}{l}{$a_{s,\,43}= x_1x_2^{2^s-1}x_3^{2^s-2}x_4^{2^{s+2}-1}$,} &  \\
$a_{s,\,44}= x_1x_2^{2^s-1}x_3^{2^{s+2}-1}x_4^{2^s-2}$, & $a_{s,\,45}= x_1x_2^{2^{s+2}-1}x_3^{2^s-2}x_4^{2^s-1}$, & \multicolumn{1}{l}{$a_{s,\,46}= x_1x_2^{2^{s+2}-1}x_3^{2^s-1}x_4^{2^s-2}$,} &  \\
$a_{s,\,47}= x_1^{2^s-1}x_2x_3^{2^s-2}x_4^{2^{s+2}-1}$, & $a_{s,\,48}= x_1^{2^s-1}x_2x_3^{2^{s+2}-1}x_4^{2^s-2}$, & \multicolumn{1}{l}{$a_{s,\,49}= x_1^{2^s-1}x_2^{2^{s+2}-1}x_3x_4^{2^s-2}$,} &  \\
$a_{s,\,50}= x_1^{2^{s+2}-1}x_2x_3^{2^s-2}x_4^{2^s-1}$, & $a_{s,\,51}= x_1^{2^{s+2}-1}x_2x_3^{2^s-1}x_4^{2^s-2}$, & \multicolumn{1}{l}{$a_{s,\,52}= x_1^{2^{s+2}-1}x_2^{2^s-1}x_3x_4^{2^s-2}$,} &  \\
$a_{s,\,53}= x_1x_2^{2^s-2}x_3^{2^{s+1}-1}x_4^{3.2^s-1}$, & $a_{s,\,54}= x_1x_2^{2^{s+1}-1}x_3^{2^s-2}x_4^{3.2^s-1}$, & \multicolumn{1}{l}{$a_{s,\,55}= x_1x_2^{2^{s+1}-1}x_3^{3.2^s-1}x_4^{2^s-2}$,} &  \\
$a_{s,\,56}= x_1^{2^{s+1}-1}x_2x_3^{2^s-2}x_4^{3.2^s-1}$, & $a_{s,\,57}= x_1^{2^{s+1}-1}x_2x_3^{3.2^s-1}x_4^{2^s-2}$, & \multicolumn{1}{l}{$a_{s,\,58}= x_1^{2^{s+1}-1}x_2^{3.2^s-1}x_3x_4^{2^s-2}$,} &  \\
$a_{s,\,59}= x_1^{2^s-1}x_2x_3^{2^s-1}x_4^{2^{s+2}-2}$, & $a_{s,\,60}= x_1^{2^s-1}x_2x_3^{2^{s+2}-2}x_4^{2^s-1}$, & \multicolumn{1}{l}{$a_{s,\,61}= x_1^{2^s-1}x_2^{2^s-1}x_3x_4^{2^{s+2}-2}$,} &  \\
$a_{s,\,62}= x_1^{2^s-1}x_2x_3^{2^{s+1}-2}x_4^{3.2^s-1}$, & $a_{s,\,63}= x_1x_2^{2^s-1}x_3^{2^{s+1}-1}x_4^{3.2^s-2}$, & \multicolumn{1}{l}{$a_{s,\,64}= x_1x_2^{2^{s+1}-1}x_3^{2^s-1}x_4^{3.2^s-2}$,} &  \\
$a_{s,\,65}= x_1x_2^{2^{s+1}-1}x_3^{3.2^s-2}x_4^{2^s-1}$, & $a_{s,\,66}= x_1^{2^s-1}x_2x_3^{2^{s+1}-1}x_4^{3.2^s-2}$, & \multicolumn{1}{l}{$a_{s,\,67}= x_1^{2^s-1}x_2^{2^{s+1}-1}x_3x_4^{3.2^s-2}$,} &  \\
$a_{s,\,68}= x_1^{2^{s+1}-1}x_2x_3^{2^s-1}x_4^{3.2^s-2}$, & $a_{s,\,69}= x_1^{2^{s+1}-1}x_2x_3^{3.2^s-2}x_4^{2^s-1}$, & \multicolumn{1}{l}{$a_{s,\,70}= x_1^{2^{s+1}-1}x_2^{2^s-1}x_3x_4^{3.2^s-2}$,} &  \\
$a_{s,\,71}= x_1^{3}x_2^{2^s-1}x_3^{2^{s+2}-3}x_4^{2^s-2}$, & $a_{s,\,72}= x_1^{3}x_2^{2^{s+2}-3}x_3^{2^s-2}x_4^{2^s-1}$, & \multicolumn{1}{l}{$a_{s,\,73}= x_1^{3}x_2^{2^{s+2}-3}x_3^{2^s-1}x_4^{2^s-2}$,} &  \\
$a_{s,\,74}= x_1^{3}x_2^{2^{s+1}-3}x_3^{2^s-2}x_4^{3.2^s-1}$, & $a_{s,\,75}= x_1^{3}x_2^{2^{s+1}-3}x_3^{3.2^s-1}x_4^{2^s-2}$, & \multicolumn{1}{l}{$a_{s,\,76}= x_1^{3}x_2^{2^{s+1}-1}x_3^{3.2^s-3}x_4^{2^s-2}$,} &  \\
$a_{s,\,77}= x_1^{2^{s+1}-1}x_2^{3}x_3^{3.2^s-3}x_4^{2^s-2}$, & $a_{s,\,78}= x_1^{3}x_2^{2^s-1}x_3^{2^{s+1}-3}x_4^{3.2^s-2}$, & \multicolumn{1}{l}{$a_{s,\,79}= x_1^{3}x_2^{2^{s+1}-3}x_3^{2^s-1}x_4^{3.2^s-2}$,} &  \\
$a_{s,\,80}= x_1^{3}x_2^{2^{s+1}-3}x_3^{3.2^s-2}x_4^{2^s-1}$, & $a_{s,\,81}= x_1^{3}x_2^{2^{s+1}-3}x_3^{2^{s+1}-2}x_4^{2^{s+1}-1}$, & \multicolumn{1}{l}{$a_{s,\,82}= x_1^{3}x_2^{2^{s+1}-3}x_3^{2^{s+1}-1}x_4^{2^{s+1}-2}$,} &  \\
$a_{s,\,83}= x_1^{3}x_2^{2^{s+1}-1}x_3^{2^{s+1}-3}x_4^{2^{s+1}-2}$, & $a_{s,\,84}= x_1^{2^{s+1}-1}x_2^{3}x_3^{2^{s+1}-3}x_4^{2^{s+1}-2}$.\\
&       &  
\end{tabular}
\end{center}

For $s = 2,$
\begin{center}
\begin{tabular}{lrr}
$a_{2,\,85}= x_1^{3}x_2^{3}x_3^{3}x_4^{12}$, & \multicolumn{1}{l}{$a_{2,\,86}= x_1^{3}x_2^{3}x_3^{12}x_4^{3}$,} & \multicolumn{1}{l}{$a_{2,\,87}= x_1^{7}x_2^{9}x_3^{2}x_4^{3}$,} \\
$a_{2,\,88}= x_1^{7}x_2^{9}x_3^{3}x_4^{2}$, & \multicolumn{1}{l}{$a_{2,\,89}= x_1^{3}x_2^{3}x_3^{4}x_4^{11}$,} & \multicolumn{1}{l}{$a_{2,\,90}= x_1^{3}x_2^{3}x_3^{7}x_4^{8}$,} \\
$a_{2,\,91}= x_1^{3}x_2^{7}x_3^{3}x_4^{8}$, & \multicolumn{1}{l}{$a_{2,\,92}= x_1^{3}x_2^{7}x_3^{8}x_4^{3}$,} & \multicolumn{1}{l}{$a_{2,\,93}= x_1^{7}x_2^{3}x_3^{3}x_4^{8}$,} \\
$a_{2,\,94}= x_1^{7}x_2^{3}x_3^{8}x_4^{3}$.\\ &       &  
\end{tabular}%
\end{center}

\newpage
For $s\geq 3,$
\begin{center}
\begin{tabular}{lcl}
$a_{s,\,85}= x_1^{3}x_2^{2^s-3}x_3^{2^s-2}x_4^{2^{s+2}-1}$, & $a_{s,\,86}= x_1^{3}x_2^{2^s-3}x_3^{2^{s+2}-1}x_4^{2^s-2}$, & $a_{s,\,87}= x_1^{3}x_2^{2^{s+2}-1}x_3^{2^s-3}x_4^{2^s-2}$, \\
$a_{s,\,88}= x_1^{2^{s+2}-1}x_2^{3}x_3^{2^s-3}x_4^{2^s-2}$, & $a_{s,\,89}= x_1^{3}x_2^{2^s-3}x_3^{2^s-1}x_4^{2^{s+2}-2}$, & $a_{s,\,90}= x_1^{3}x_2^{2^s-3}x_3^{2^{s+2}-2}x_4^{2^s-1}$, \\
$a_{s,\,91}= x_1^{3}x_2^{2^s-1}x_3^{2^s-3}x_4^{2^{s+2}-2}$, & $a_{s,\,92}= x_1^{2^s-1}x_2^{3}x_3^{2^s-3}x_4^{2^{s+2}-2}$, & $a_{s,\,93}= x_1^{2^s-1}x_2^{3}x_3^{2^{s+2}-3}x_4^{2^s-2}$, \\
$a_{s,\,94}= x_1^{3}x_2^{2^s-3}x_3^{2^{s+1}-2}x_4^{3.2^s-1}$, & $a_{s,\,95}= x_1^{3}x_2^{2^s-3}x_3^{2^{s+1}-1}x_4^{3.2^s-2}$, & $a_{s,\,96}= x_1^{3}x_2^{2^{s+1}-1}x_3^{2^s-3}x_4^{3.2^s-2}$, \\
$a_{s,\,97}= x_1^{2^{s+1}-1}x_2^{3}x_3^{2^s-3}x_4^{3.2^s-2}$, & $a_{s,\,98}= x_1^{2^s-1}x_2^{3}x_3^{2^{s+1}-3}x_4^{3.2^s-2}$, & $a_{s,\,99}= x_1^{7}x_2^{2^{s+2}-5}x_3^{2^s-3}x_4^{2^s-2}$, \\
$a_{s,\,100}= x_1^{7}x_2^{2^{s+1}-5}x_3^{2^s-3}x_4^{3.2^s-2}$, & $a_{s,\,101}= x_1^{7}x_2^{2^{s+1}-5}x_3^{3.2^s-3}x_4^{2^s-2}$, & $a_{s,\,102}= x_1^{7}x_2^{2^{s+1}-5}x_3^{2^{s+1}-3}x_4^{2^{s+1}-2}$.\\
\end{tabular}%
\end{center}
For $s = 3,$
\begin{center}
\begin{tabular}{lcl}
$a_{3,\,103}= x_1^{7}x_2^{7}x_3^{7}x_4^{24}$, & $a_{3,\,104}= x_1^{7}x_2^{7}x_3^{9}x_4^{22}$, & $a_{3,\,105}= x_1^{7}x_2^{7}x_3^{25}x_4^{6}$.\\
\end{tabular}%
\end{center}

For $s\geq 4,$
\begin{center}
\begin{tabular}{lcl}
$a_{s,\,103}= x_1^{7}x_2^{2^s-5}x_3^{2^s-3}x_4^{2^{s+2}-2}$, & $a_{s,\,104}= x_1^{7}x_2^{2^s-5}x_3^{2^{s+1}-3}x_4^{3.2^s-2}$, & $a_{s,\,105}= x_1^{7}x_2^{2^s-5}x_3^{2^{s+2}-3}x_4^{2^s-2}$.
\end{tabular}%
\end{center}

We now consider the following cases.

\underline{\it Case $s = 1$}. We have a direct summand decomposition of the $\Sigma_4$-modules:
$$ \overline{Q^{\otimes 4}_{n_1}} = \Sigma_4(a_{1,\,29}, a_{1,\,32}) \bigoplus \Sigma_4(a_{1,\,35}),$$
where $ \Sigma_4(a_{1,\,29}; a_{1,\,32}) = \langle \{[a_{1,\,j}]:\, j = 29, 30, \ldots, 34, 41, 42, \ldots, 46\} \rangle,$ and 
$$\Sigma_4(a_{1,\,35}) = \langle \{[a_{1,\,j}]:\, 35\leq j\leq 40\} \rangle.$$

\begin{lema}\label{bd2}
The following statements are true:
\begin{itemize}
\item[i)]  $\Sigma_4(a_{1,\,29}; a_{1,\,32})^{\Sigma_4} = \langle [q_{1,\,4}] \rangle,$ with $q_{1,\,4} = \sum_{29\leq j\leq 31}a_{1,\,j} + \sum_{44\leq j\leq 46}a_{1,\,j};$
\item[ii)]  $\Sigma_4(a_{1,\,35})^{\Sigma_4} = 0.$
\end{itemize}
\end{lema}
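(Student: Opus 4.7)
The proof parallels the analysis in Lemma \ref{bd1}. For Part (i), I start by writing a general invariant as
$$f \equiv \sum_{j\in J} \gamma_j a_{1,j}, \qquad J = \{29,30,\ldots,34,41,42,\ldots,46\},\ \gamma_j \in k,$$
over the $12$-element basis of $\Sigma_4(a_{1,29}; a_{1,32})$. The strategy is to apply each generator $\sigma_i$ for $i = 1, 2, 3$, reduce every non-admissible monomial that appears to a sum of admissibles modulo hit equations, and then impose $\sigma_i(f) \equiv f$ to extract constraints on the $\gamma_j$. Since each $\sigma_i$ permutes two of the variables, its effect on a monomial is simply to swap the corresponding exponents, so each $\sigma_i(a_{1,j})$ is a monomial in $(P_4)_9$.

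Many of the resulting monomials land back in the basis. For instance, $\sigma_3(a_{1,29}) = a_{1,30}$ and $\sigma_1(a_{1,44}) = a_{1,42}$ are direct substitutions. However, several permuted monomials, most notably $x_1^6 x_2 x_3 x_4 = \sigma_1(a_{1,31})$, are non-admissible and must be reduced to admissibles via Cartan's formula with $Sq^1$ and $Sq^2$, exactly as in the identities \eqref{dt5}. The crucial point is that these reductions \emph{cross orbits}: specifically, $x_1^6 x_2 x_3 x_4$ reduces modulo $\widehat{A}(P_4)_9$ to a sum of admissibles of exponent type $(3, 4, 1, 1)$, namely some combination of $a_{1,41}, \ldots, a_{1,46}$. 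Analogous mixed reductions must be derived for the other non-admissibles in the $\sigma_i$-images of $a_{1,32}, a_{1,33}, a_{1,34}$. Equating coefficients of each admissible in $\sigma_i(f) \equiv f$ then yields a homogeneous linear system in the twelve $\gamma_j$'s whose solution space is one-dimensional, forcing $\gamma_{29}=\gamma_{30}=\gamma_{31}=\gamma_{44}=\gamma_{45}=\gamma_{46}$ with the remaining $\gamma_j$ vanishing, and thereby exhibiting the invariant $q_{1,4}$.

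For Part (ii), I apply the same procedure to a general $g \equiv \sum_{j=35}^{40} \beta_j a_{1,j}$ in the $6$-dimensional module $\Sigma_4(a_{1,35})$, whose basis monomials are admissible permutations of the exponent pattern $(1, 2, 3, 3)$. Here the symmetric structure of the pattern is different: applying $\sigma_1, \sigma_2, \sigma_3$ to the $a_{1,j}$ for $35 \leq j \leq 40$ will produce, alongside permutations within the basis, certain non-admissibles (such as $x_1^3 x_2^3 x_3^2 x_4$) that must again be reduced via $Sq^2$-relations. Crucially, in this case the reductions keep us inside the $(1,2,3,3)$-orbit but introduce compensating sign-like constraints among the $\beta_j$'s that can only be simultaneously satisfied by $\beta_{35} = \cdots = \beta_{40} = 0$, yielding $\Sigma_4(a_{1,35})^{\Sigma_4} = 0$.

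The main obstacle in both parts is bookkeeping of the hit reductions: each non-admissible monomial produced by $\sigma_i$ must be carefully rewritten in the admissible basis listed for $\overline{Q^{\otimes 4}_{n_1}}$, and it is precisely the \emph{cross-orbit} nature of these reductions in Part (i)---versus the \emph{within-orbit} cancellations in Part (ii)---that accounts for the one-dimensional invariant in the first case and its triviality in the second.
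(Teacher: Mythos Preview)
Your proposal is correct and follows essentially the same approach as the paper: write a general element in the admissible basis, apply each $\sigma_i$, reduce the resulting non-admissible monomials (such as $x_1^{6}x_2x_3x_4$, $x_1^{2}x_2x_3x_4^{5}$, $x_1^{4}x_2^{3}x_3x_4$, \ldots) via explicit $Sq^{1}$/$Sq^{2}$ Cartan identities, and solve the resulting linear system. The paper carries out these reductions explicitly and obtains exactly the constraints you anticipate, namely $\gamma_j=0$ for $j\in\{32,33,34,41,42,43\}$ and $\gamma_{29}=\gamma_{30}=\gamma_{31}=\gamma_{44}=\gamma_{45}=\gamma_{46}$, while Part~(ii) is dismissed there as straightforward.
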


Combining Lemmas \ref{bd1} and \ref{bd2} gives

\begin{hq}\label{hq1}
The space of $\Sigma_4$-invariants $(Q^{\otimes 4}_{n_1})^{\Sigma_4}$ is generated by the classes $[q_{1,\,t}]$ for all $t,\, 1\leq t\leq 4.$
\end{hq}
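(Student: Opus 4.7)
The plan is to combine the direct sum decompositions already set up with the per-summand invariant computations from Lemmas~\ref{bd1} and \ref{bd2}, since the corollary is essentially a bookkeeping assembly of those results. First I would note that the decomposition $Q^{\otimes 4}_{n_1}\cong \underline{Q^{\otimes 4}_{n_1}}\bigoplus \overline{Q^{\otimes 4}_{n_1}}$ is one of $k\Sigma_4$-modules: the condition $\prod_{1\leq i\leq 4}a_i = 0$ (some exponent vanishes) versus $\prod_{1\leq i\leq 4}a_i > 0$ is clearly preserved by permutation of the variables $x_1,\ldots,x_4$, so the $\Sigma_4$-action respects this splitting.

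Next, I would record the two finer $\Sigma_4$-stable refinements already produced in the excerpt, namely
$$\underline{Q^{\otimes 4}_{n_1}} = \Sigma_4(a_{1,1}) \bigoplus \Sigma_4(a_{1,13}) \bigoplus \Sigma_4(a_{1,25}),\qquad \overline{Q^{\otimes 4}_{n_1}} = \Sigma_4(a_{1,29};a_{1,32}) \bigoplus \Sigma_4(a_{1,35}),$$
where each summand is, by construction, the $k\Sigma_4$-submodule spanned by a $\Sigma_4$-orbit of admissible monomial classes. Since the functor of $\Sigma_4$-invariants commutes with finite direct sums (valid in any characteristic, as $(V_1\oplus V_2)^{\Sigma_4} = V_1^{\Sigma_4}\oplus V_2^{\Sigma_4}$), we obtain
$$(Q^{\otimes 4}_{n_1})^{\Sigma_4} \;=\; \Sigma_4(a_{1,1})^{\Sigma_4}\bigoplus \Sigma_4(a_{1,13})^{\Sigma_4}\bigoplus \Sigma_4(a_{1,25})^{\Sigma_4}\bigoplus \Sigma_4(a_{1,29};a_{1,32})^{\Sigma_4}\bigoplus \Sigma_4(a_{1,35})^{\Sigma_4}.$$

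Finally, I would invoke Lemma~\ref{bd1} to identify the first three summands with $\langle [q_{1,1}]\rangle$, $\langle [q_{1,2}]\rangle$, $\langle [q_{1,3}]\rangle$ respectively, and Lemma~\ref{bd2} to identify the fourth with $\langle [q_{1,4}]\rangle$ and to note that the fifth is trivial. Assembling these identifications gives precisely that $(Q^{\otimes 4}_{n_1})^{\Sigma_4}$ is generated by $\{[q_{1,t}] : 1\leq t\leq 4\}$, as claimed.

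There is essentially no obstacle here beyond verifying that the direct sum decompositions used above are genuinely $\Sigma_4$-stable: the serious work — constructing the monomial bases, checking $\Sigma_4$-invariance of each coefficient system via the hit equations in \eqref{dt5}, and solving the resulting linear systems — is already absorbed into Lemmas~\ref{bd1} and \ref{bd2}. So the corollary is a short assembly step, and the only thing to be careful about is that the orbit-type submodules indexed by the admissible basis monomials really do form a $\Sigma_4$-stable direct sum inside each of $\underline{Q^{\otimes 4}_{n_1}}$ and $\overline{Q^{\otimes 4}_{n_1}}$; once this is observed the result is immediate.
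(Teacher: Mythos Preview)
Your proposal is correct and matches the paper's approach exactly: the paper simply states that the corollary follows by ``Combining Lemmas~\ref{bd1} and~\ref{bd2},'' and your argument spells out precisely this combination via the $\Sigma_4$-stable direct sum decomposition and the fact that taking invariants commutes with finite direct sums.
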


\begin{proof}[{\it Proof of Lemma \ref{bd2}}]

The proof of $ii)$ is straightforward. We prove $i)$ in detail. Suppose that $[g]\in \Sigma_4(a_{1,\,29}; a_{1,\,32})^{\Sigma_4}.$ Then, we have 
\begin{equation}\label{dt6}
g\equiv \big(\sum_{29\leq j\leq 34}\gamma_ja_{1,\,j} + \sum_{41\leq j\leq 46}\gamma_ja_{1,\,j}\big),
\end{equation}
with $\gamma_j\in k.$ Acting the homomorphisms $\sigma_i: (P_4)_{n_1}\to (P_4)_{n_1},\, 1\leq i\leq 3,$ on both sides of \eqref{dt6}, we get
\begin{equation}\label{dt7}
\begin{array}{ll}
\sigma_1(g)&\equiv \big(\gamma_{29}a_{1,\,29} + \gamma_{30}a_{1,\,30} + \gamma_{32}a_{1,\,32} + \gamma_{41}a_{1,\,41} + \gamma_{44}a_{1,\,42}\\
    &\quad +\gamma_{45}a_{1,\,43}  + \gamma_{42}a_{1,\,44} + \gamma_{43}a_{1,\,45} + \gamma_{31}x_1^{6}x_2x_3x_4\\   
\medskip
&\quad + \gamma_{33}x_1^{2}x_2x_3x_4^{5} + \gamma_{34}x_1^{2}x_2x_3^{5}x_4 + \gamma_{46}x_1^{4}x_2^{3}x_3x_4\big)\\
\sigma_2(g)&\equiv \big(\gamma_{29}a_{1,\,29} + \gamma_{31}a_{1,\,30} + \gamma_{30}a_{1,\,31} + \gamma_{33}a_{1,\,32} + \gamma_{42}a_{1,\,41}+\gamma_{41}a_{1,\,42} \\
\medskip
&\quad  + \gamma_{44}a_{1,\,44}+ \gamma_{46}a_{1,\,45} + \gamma_{45}a_{1,\,46} + \gamma_{34}x_1x_2^{5}x_3^{2}x_4 + \gamma_{43}x_1x_2^{4}x_3^{3}x_4\big)\\
\sigma_3(g)&\equiv \big(\gamma_{30}a_{1,\,29} + \gamma_{29}a_{1,\,30} + \gamma_{31}a_{1,\,31} + \gamma_{34}a_{1,\,43} + \gamma_{33}a_{1,\,34} + \gamma_{43}a_{1,\,42}+\gamma_{42}a_{1,\,43}\\
&\quad  + \gamma_{45}a_{1,\,44} + \gamma_{44}a_{1,\,45} + \gamma_{46}a_{1,\,46} + \gamma_{32}x_1x_2x_3^{5}x_4^{2} + \gamma_{41}x_1x_2x_3^{4}x_4^{3}\big).
\end{array}
\end{equation}
Using the Cartan formula and Theorem \ref{dlSi}, we have
\begin{equation}\label{dt8}
\begin{array}{ll}
x_1^{6}x_2x_3x_4 &= Sq^{1}(x_1^{5}x_2x_3x_4) + Sq^{2}(x_1^{3}x_2^{2}x_3x_4 + x_1^{3}x_2x_3^{2}x_4 + x_1^{3}x_2x_3x_4^{2})\\
\medskip
&\quad  +  a_{1,\,44} +  a_{1,\,45} +  a_{1,\,46} \mod (\widehat{A}(P_4)_{n_1});  \\
\medskip
x_1^{2}x_2x_3x_4^{5} &= Sq^{1}(x_1x_2x_3x_4^{5}) + a_{1,\,29} +  a_{1,\,32} +  a_{1,\,33} \mod (\widehat{A}(P_4)_{n_1});  \\
\medskip
x_1^{2}x_2x_3^{5}x_4 &= Sq^{1}(x_1x_2x_3^{5}x_4) + Sq^{2}(x_1x_2x_3^{3}x_4^{2}) + a_{1,\,30} +  a_{1,\,34} +  a_{1,\,41} \mod (\widehat{A}(P_4)_{n_1});  \\
x_1^{4}x_2^{3}x_3x_4 &= Sq^{1}(x_1x_2^{5}x_3x_4) + Sq^{2}(x_1x_2^{3}x_3x_4^{2} + x_1x_2^{3}x_3^{2}x_4 + x_1^{2}x_2^{3}x_3x_4)\\
\medskip
&\quad  + a_{1,\,31} +  a_{1,\,42} +  a_{1,\,43} \mod (\widehat{A}(P_4)_{n_1});  \\
\medskip
x_1x_2^{5}x_3^{2}x_4 &= Sq^{2}(x_1x_2^{3}x_3^{2}x_4) +  a_{1,\,43} \mod (\widehat{A}(P_4)_{n_1});  \\
\medskip
x_1x_2^{4}x_3^{3}x_4 &=Sq^{2}(x_1x_2^{2}x_3^{3}x_4) +  a_{1,\,34} \mod (\widehat{A}(P_4)_{n_1});  \\
\medskip
x_1x_2x_3^{5}x_4^{2} &=Sq^{2}(x_1x_2x_3^{3}x_4^{2}) +  a_{1,\,41} \mod (\widehat{A}(P_4)_{n_1});  \\
\medskip
x_1x_2x_3^{4}x_4^{3} &=Sq^{2}(x_1x_2x_3^{2}x_4^{3}) +  a_{1,\,32} \mod (\widehat{A}(P_4)_{n_1}).
\end{array}
\end{equation}
Combining \eqref{dt6}, \eqref{dt7} and \eqref{dt8}, we get
$$ \begin{array}{ll}
(\sigma_1(g) + g) &\equiv \big( \gamma_{33}(a_{1,\,29} +a_{1,\,32}) +  \gamma_{34}(a_{1,\,30} +a_{1,\,41})+ (\gamma_{31} + \gamma_{46})a_{1,\,31}+(\gamma_{42} + \gamma_{44} + \gamma_{46})a_{1,\,42} \\
\medskip
&\quad + (\gamma_{43} + \gamma_{45} + \gamma_{46})a_{1,\,43} + (\gamma_{31} + \gamma_{42} + \gamma_{44})a_{1,\,44} + (\gamma_{31} + \gamma_{43} + \gamma_{45})a_{1,\,45}\big),\\
(\sigma_2(g) + g) &\equiv \big((\gamma_{30} + \gamma_{31})a_{1,\,30}+ (\gamma_{32} + \gamma_{33})a_{1,\,32} + (\gamma_{34} + \gamma_{43})a_{1,\,34}\\
\medskip
&\quad + (\gamma_{41} + \gamma_{42})a_{1,\,41} + (\gamma_{45} + \gamma_{46})a_{1,\,45}\big)\\
(\sigma_3(g) + g) &\equiv \big((\gamma_{29} + \gamma_{30})a_{1,\,29} + (\gamma_{32} + \gamma_{41})a_{1,\,32} + (\gamma_{33} + \gamma_{34})a_{1,\,33} \\
\medskip
&\quad +(\gamma_{42} + \gamma_{43})a_{1,\,42} + (\gamma_{44} + \gamma_{45})a_{1,\,44}\big).
\end{array}$$
From these equalities and the relations $\sigma_j(g) \equiv g,\, 1\leq j\leq 3,$ we obtain $\gamma_j = 0$ for $j\in \{32, 33, 34, 41, 42, 43\},$ and $\gamma_{29} = \gamma_{30} = \gamma_{31} = \gamma_{44} = \gamma_{45} = \gamma_{46}.$ This completes the proof of the lemma.
\end{proof}

The following assertion is useful.

\begin{md}\label{md1}
We have $(Q^{\otimes 4}_{n_1})^{GL_4(k)} = \langle [q_{1,\, 4}] \rangle.$
\end{md}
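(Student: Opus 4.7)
The plan is to combine Corollary \ref{hq1} with the observation, already recorded in the excerpt, that $GL_4(k)$ is generated by the matrices associated with $\sigma_1,\sigma_2,\sigma_3$ (generating $\Sigma_4$) together with $\sigma_4.$ Hence a class $[f]\in (Q^{\otimes 4}_{n_1})^{GL_4(k)}$ is precisely a $\Sigma_4$-invariant satisfying the extra relation $\sigma_4(f)\equiv f.$ Starting from an arbitrary $[f]\in (Q^{\otimes 4}_{n_1})^{GL_4(k)}\subseteq (Q^{\otimes 4}_{n_1})^{\Sigma_4},$ Corollary \ref{hq1} allows me to write
$$ f \;\equiv\; \gamma_1 q_{1,\,1} + \gamma_2 q_{1,\,2} + \gamma_3 q_{1,\,3} + \gamma_4 q_{1,\,4},\qquad \gamma_t\in k,$$
so the whole problem reduces to determining which linear combinations of $q_{1,\,1},q_{1,\,2},q_{1,\,3},q_{1,\,4}$ are also $\sigma_4$-invariant modulo $\widehat{A}(P_4)_{n_1}.$

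Next I would evaluate $\sigma_4(f)$ explicitly. Applying the rule $\sigma_4(x_1)=x_1+x_2,\ \sigma_4(x_i)=x_i\ (i=2,3,4)$ to each term of $q_{1,\,t},$ and expanding by the mod $2$ binomial theorem, produces a large sum of monomials of degree $n_1=9$ in $P_4.$ These monomials are not all admissible; I would rewrite every non-admissible one as an admissible combination modulo $\widehat{A}(P_4)_{n_1}$ by means of Cartan-type hit equations of exactly the same flavor as those already displayed in \eqref{dt5} and \eqref{dt8}, together with Theorem \ref{dlSi} to discard any monomial whose weight vector lies strictly below that of the minimal spike. Once $\sigma_4(f)$ has been expressed in the admissible basis $\{a_{1,\,j}:1\le j\le 46\}$ of $Q^{\otimes 4}_{n_1},$ equating it with $f$ (also written in that basis) yields a homogeneous linear system in $\gamma_1,\gamma_2,\gamma_3,\gamma_4.$ I expect the system to force $\gamma_1=\gamma_2=\gamma_3=0$ while leaving $\gamma_4$ free, giving $(Q^{\otimes 4}_{n_1})^{GL_4(k)}\subseteq \langle [q_{1,\,4}]\rangle.$ The reverse inclusion will follow from a direct check that $\sigma_4(q_{1,\,4})\equiv q_{1,\,4},$ which is a finite and localised version of the same calculation.

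The main obstacle is bookkeeping rather than conceptual difficulty: $q_{1,\,1}$ and $q_{1,\,2}$ each contain twelve terms, and a single application of $\sigma_4$ to a monomial of the form $x_1^{a_1}x_2^{a_2}x_3^{a_3}x_4^{a_4}$ (with $a_1\in\{1,3,7\}$) produces up to $2^{\alpha(a_1)}$ new monomials, many of which are inadmissible and must be reduced by nontrivial hit equations. To keep this tractable, I would exploit the weight-vector filtration $(P_4)_n^{<\omega}\subseteq (P_4)_n^{\omega}$ from Section~4 to reduce terms one stratum at a time, and I would use the $\Sigma_4$-invariance of $f$ (already encoded in the choice of the $q_{1,\,t}$) to batch-process monomials lying in the same $\Sigma_4$-orbit. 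With this organisation the required identities become a manageable finite list of relations among the forty-six admissible monomials, and reading off the linear constraints on $\gamma_1,\ldots,\gamma_4$ from that list completes the proof.
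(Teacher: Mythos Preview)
Your proposal is correct and follows essentially the same approach as the paper's own proof: start from Corollary \ref{hq1} to write an arbitrary $GL_4(k)$-invariant as $\sum_t \gamma_t q_{1,\,t}$, then impose the single extra relation $\sigma_4(f)\equiv f$ and read off the linear constraints on the $\gamma_t$ in the admissible basis. The paper records the outcome as $(\sigma_4(h)+h)\equiv m_1 a_{1,1}+(m_1+m_2)a_{1,8}+(m_2+m_3)a_{1,17}+\text{other terms}\equiv 0$, forcing $m_1=m_2=m_3=0$; your plan is the same computation spelled out in more detail, with the reverse inclusion made explicit.
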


\begin{proof}
Let $h\in (P_4)_{n_1}$ such that $[h]\in (Q^{\otimes 4}_{n_1})^{GL_4(k)}.$ Since $\Sigma_4\subset GL_4(k),$ $[h]\in (Q^{\otimes 4}_{n_1})^{\Sigma_4}$ and therefore by Corollary \ref{hq1}, we have $ h\equiv \sum_{1\leq t\leq 4}m_tq_{1,\, t},$ where $m_t\in k$ for every $t.$ Since $[h]\in (Q^{\otimes 4}_{n_1})^{GL_4(k)},$ $(\sigma_4(h) + h)\equiv 0$ with $\sigma_4: (P_4)_{n_1}\to (P_4)_{n_1}.$ From Theorem \ref{dlSi} and a direct computation, we obtain
$$ (\sigma_4(h) + h) \equiv (m_1a_{1,\, 1} + (m_1 + m_2)a_{1,\, 8} + (m_2 + m_3)a_{1,\, 17} + \mbox{other terms})\equiv 0.$$
and therefore $h\equiv q_{1,\, 4}.$ The proposition is proved.
\end{proof}

Now, we consider the element
$$ \zeta = (a_1^{(1)}a_2^{(3)}a_3^{(3)}a_4^{(2)} + a_1^{(1)}a_2^{(3)}a_3^{(4)}a_4^{(1)} + a_1^{(1)}a_2^{(5)}a_3^{(2)}a_4^{(1)} +  a_1^{(1)}a_2^{(6)}a_3^{(1)}a_4^{(1)})\in (P_4)_{3(2^{1}-1) + 3.2^{1}}^{*}.$$
Then, it is $\widehat{A}$-annihilated. Indeed, by the unstable condition, we need only to consider the effects of $Sq^{1}$ and $Sq^{2}$. By a direct computation, we get
$$ \begin{array}{ll}
\medskip
(\zeta)Sq^{1} &= a_1^{(1)}a_2^{(3)}a_3^{(3)}a_4^{(1)} + a_1^{(1)}a_2^{(3)}a_3^{(3)}a_4^{(1)} + a_1^{(1)}a_2^{(5)}a_3^{(1)}a_4^{(1)} +a_1^{(1)}a_2^{(5)}a_3^{(1)}a_4^{(1)} = 0,\\
\medskip
(\zeta)Sq^{2} &=  0 + a_1^{(1)}a_2^{(3)}a_3^{(2)}a_4^{(1)} + a_1^{(1)}a_2^{(3)}a_3^{(2)}a_4^{(1)} + 0 = 0.
\end{array}$$
On the other side, since $\langle [q_{1,\, 4}], [\zeta] \rangle = 1,$ by Proposition \ref{md1}, implies that $$ k\otimes_{GL_4(k)}P((P_4)_{n_1}^{*}) = ((Q^{\otimes 4}_{n_1})^{GL_4(k)})^{*} = \langle ([q_{1,\, 4}])^{*}\rangle  = \langle[\zeta]\rangle.$$

\underline{Case $s = 2$}. We first have the following remark.

\begin{rem} Based on the bases of $\underline{Q^{\otimes 4}_{n_s}}$ and $\overline{Q^{\otimes 4}_{n_s}}$ above, the weight vector of $a_{s,\, j}$ is one of the following sequences:
$$\omega_{(s, 1)}:=   \underset{\mbox{{$s$ times of $3$}}}{\underbrace{(3, 3, \ldots, 3}}, 1, 1),\ \ \omega_{(s, 2)}:= \underset{\mbox{{$s+1$ times of $3$}}}{\underbrace{(3, 3, \ldots, 3)}}.$$ Hence, we have the following isomorphisms:
$$ \begin{array}{ll}
\medskip
Q^{\otimes 4}_{n_s} &\cong (Q^{\otimes 4}_{n_s})^{\omega_{(s, 1)}} \bigoplus (Q^{\otimes 4}_{n_s})^{\omega_{(s, 2)}},\\
(Q^{\otimes 4}_{n_s})^{\omega_{(s, j)}} &\cong \underline{(Q^{\otimes 4}_{n_s})}^{\omega_{(s, j)}}\bigoplus \overline{(Q^{\otimes 4}_{n_s})}^{\omega_{(s, j)}},\ j = 1,\, 2,
\end{array}$$
where $$ \begin{array}{ll}
\medskip
\underline{(Q^{\otimes 4}_{n_s})}^{\omega_{(s, j)}} &:= \langle \{[x = \prod_{1\leq i\leq 4}x_i^{a_i}]\in Q^{\otimes 4}_{n_s}:\,\prod_{1\leq i\leq 4}a_i = 0, \omega(x) = \omega_{(s, j)}\} \rangle,\\
\overline{(Q^{\otimes 4}_{n_s})}^{\omega_{(s, j)}} &:= \langle\{ [x = \prod_{1\leq i\leq 4}x_i^{a_i}]\in Q^{\otimes 4}_{n_s}:\,\prod_{1\leq i\leq 4}a_i >0 , \omega(x) = \omega_{(s, j)} \}\rangle.
\end{array}$$
\end{rem}

Then, for $s = 2,$ we have $Q^{\otimes 4}_{n_2} \cong (Q^{\otimes 4}_{n_2})^{\omega_{(2, 1)}} \bigoplus (Q^{\otimes 4}_{n_2})^{\omega_{(2, 2)}}$ and the following.

\begin{md}\label{md2}
The spaces of invariants $((Q^{\otimes 4}_{n_2})^{\omega_{(2, 1)}})^{GL_4(k)}$ and $((Q^{\otimes 4}_{n_2})^{\omega_{(2, 2)}})^{GL_4(k)}$ are trivial.
\end{md}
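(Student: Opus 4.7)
The plan mirrors the argument given above for the case $s=1$, but carried out separately on each of the two weight-vector pieces appearing in the decomposition $Q^{\otimes 4}_{n_2}\cong (Q^{\otimes 4}_{n_2})^{\omega_{(2,1)}}\bigoplus (Q^{\otimes 4}_{n_2})^{\omega_{(2,2)}}$. From Sum's admissible basis for $Q^{\otimes 4}_{n_2}$, first I would sort the 94 admissible monomials $a_{2,j}$ by weight vector: the "spike-type" monomials $a_{2,1},\ldots,a_{2,24}$ together with the relevant portion of the $a_{2,j}$ with $j\geq 29$ fall into $\omega_{(2,1)}=(3,3,1,1)$, and the remaining ones (including $a_{2,25},\ldots,a_{2,28}$ and the monomials with weight $(3,3,3)$) make up the $\omega_{(2,2)}=(3,3,3)$ piece. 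Within each weighted piece, I would further decompose as a direct sum of $\Sigma_4$-submodules generated by orbit representatives, exactly as was done in Lemmas \ref{bd1} and \ref{bd2}.

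The second step is to compute the $\Sigma_4$-invariants of each $\Sigma_4$-submodule separately. As in the proof of Lemma \ref{bd2}, this amounts to: (i) writing an arbitrary element as a $k$-linear combination of the admissible basis vectors with unknown coefficients $\gamma_j$; (ii) applying $\sigma_1,\sigma_2,\sigma_3$ to this combination, where non-admissible monomials appearing in $\sigma_i(g)$ are reduced to admissible form using the Cartan formula together with Theorem \ref{dlSi}; and (iii) equating $\sigma_i(g)\equiv_{\omega_{(2,j)}} g$ and reading off the resulting linear system on the $\gamma_j$. This produces an explicit, finite list of $\Sigma_4$-invariant candidates $[q]$ in each of $((Q^{\otimes 4}_{n_2})^{\omega_{(2,1)}})^{\Sigma_4}$ and $((Q^{\otimes 4}_{n_2})^{\omega_{(2,2)}})^{\Sigma_4}$.

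The final step is to apply $\sigma_4$. For each $\Sigma_4$-invariant candidate $q$, I would compute $\sigma_4(q)+q$ and again reduce to the admissible basis using Theorem \ref{dlSi}. The goal is to exhibit, in each resulting expression, at least one admissible monomial whose coefficient is a nonzero linear form in the $\gamma_j$ parameters, thereby forcing every $\gamma_j=0$. Because $\Sigma_4\subset GL_4(k)$ and $GL_4(k)$ is generated by $\sigma_1,\sigma_2,\sigma_3,\sigma_4$, an element is $GL_4(k)$-invariant if and only if it is both $\Sigma_4$-invariant and $\sigma_4$-invariant, so this kills every candidate and yields the desired triviality.

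The main obstacle is not conceptual but combinatorial: the $\omega_{(2,1)}$ piece contains many admissible monomials with mixed exponent patterns $(2^s-1,2^{s+2}-1)$, and expanding $\sigma_4$ on these produces a large number of non-admissible terms whose reduction via the Cartan formula and Theorem \ref{dlSi} requires careful, systematic bookkeeping. To keep the calculation tractable I would exploit the direct-sum decomposition into $\Sigma_4$-orbits so that $\sigma_4$-analysis need only be performed on the small list of $\Sigma_4$-invariant candidates rather than on the full 94-dimensional space, and I would isolate, for each candidate, a single "witness" admissible monomial in $\sigma_4(q)+q$ whose coefficient already forces all parameters to vanish.
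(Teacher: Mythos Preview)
Your proposal is correct and follows essentially the same approach as the paper: split by weight vector, decompose each weighted piece into $\Sigma_4$-submodules (the paper uses $\Sigma_4(a_{2,1})\oplus\Sigma_4(a_{2,13})$ together with $\Sigma_4(a_{2,41})\oplus\Sigma_4(a_{2,53})\oplus\mathbb V$ for $\omega_{(2,1)}$, and $\Sigma_4(a_{2,25})$, $\Sigma_4(a_{2,35})$, $\Sigma_4(a_{2,81})$ for $\omega_{(2,2)}$), compute the $\Sigma_4$-invariants of each summand via Lemma~\ref{bd1} and direct calculation, and then apply $\sigma_4$ to the resulting short list of candidates, reading off enough admissible-monomial coefficients to force all parameters to vanish. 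The only caveat is phrasing: a single ``witness'' monomial per candidate is generally not enough---you need a collection of witnesses whose coefficient forms are jointly of full rank, exactly as the paper does when it records four coefficients $(\xi_1+\xi_3,\ \xi_1,\ \xi_1+\xi_2,\ \xi_2+\xi_4)$ in the $\omega_{(2,1)}$ case.
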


\begin{proof}
We first prove $((Q^{\otimes 4}_{n_2})^{\omega_{(2, 1)}})^{GL_4(k)} = 0.$ By Definition \ref{dnspike}, the monomial $a_{s,\, 12} = x_1^{2^{s+2}-1}x_2^{2^s-1}x_3^{2^s-1}$ is minimal spike in $(P_4)_{n_s}$ and $\omega(a_{s,\, 12}) = \omega_{(s, 1)}$ for any $s\geq 1.$ Hence, $[a_{s,\, j}]_{\omega_{(s, 1)}} = [a_{s,\, j}].$  Recall that $(Q^{\otimes 4}_{n_2})^{\omega_{(2, 1)}} \cong \underline{(Q^{\otimes 4}_{n_2})}^{\omega_{(2, 1)}}\bigoplus \overline{(Q^{\otimes 4}_{n_2})}^{\omega_{(2, 1)}}.$ Then, by Lemma \ref{bd1}, we get
$\underline{(Q^{\otimes 4}_{n_2})}^{\omega_{(2, 1)}}  =  \Sigma_4(a_{2,\,1}) \bigoplus \Sigma_4(a_{2,\,13}),$ and
\begin{equation}\label{dt9}
(\underline{(Q^{\otimes 4}_{n_2})}^{\omega_{(2, 1)}})^{\Sigma_4} = \langle [q_{2,\, 1}], [q_{2,\, 2}]\rangle.
\end{equation}

Using an admissible basis of $\overline{Q^{\otimes 4}_{n_2}}$ above, we have a direct summand decomposition of the $\Sigma_4$-modules:
$$  \overline{(Q^{\otimes 4}_{n_2})}^{\omega_{(2, 1)}} = \Sigma_4(a_{2,\,41}) \bigoplus \Sigma_4(a_{2,\,53}) \bigoplus \mathbb V,$$
where $$ \begin{array}{ll}
\medskip
& \Sigma_4(a_{2,\,41}) = \langle \{[a_{2,\,j}]:\, 41\leq j\leq 52\} \rangle,\ \Sigma_4(a_{2,\,53}) = \langle \{[a_{2,\,j}]:\, 53\leq j\leq 58\} \rangle,\\
\medskip
&\mathbb V = \langle \{[a_{2,\,j}]:\, j = 29, \ldots, 34, 59, \ldots, 80, 85, \ldots, 94\} \rangle.
\end{array}$$
Then, by a similar technique as in the proof of Lemma \ref{bd2}, we find that
\begin{equation}\label{dt10}
\begin{array}{ll}
\medskip
&\Sigma_4(a_{2,\,41})^{\Sigma_4} = \langle [\widehat{q_{2,\, 1}}] \rangle\ \mbox{with}\ \widehat{q_{2,\, 1}}:= \sum_{41\leq j\leq 52}a_{2,\,j};\\
\medskip
&\Sigma_4(a_{2,\,53})^{\Sigma_4} = \langle [\widehat{q_{2,\, 2}}] \rangle\ \mbox{with}\ \widehat{q_{2,\, 2}}:= \sum_{53\leq j\leq 58}a_{2,\,j};\\
\medskip
&\mathbb V^{\Sigma_4} = 0.
\end{array}
\end{equation}
Thus, from the equalities \eqref{dt9} and \eqref{dt10}, we get 
\begin{equation}\label{dt11}
((Q^{\otimes 4}_{n_2})^{\omega_{(2, 1)}})^{\Sigma_4} = \langle [q_{2,\, 1}], [q_{2,\, 2}], [\widehat{q_{2,\, 1}}], [\widehat{q_{2,\, 2}}] \rangle.
\end{equation}

Now, let $h\in (P_4)_{n_2}$ such that $[h]\in ((Q^{\otimes 4}_{n_2})^{\omega_{(2, 1)}})^{GL_4(k)}.$ Since  $[h]\in ((Q^{\otimes 4}_{n_2})^{\omega_{(2, 1)}})^{\Sigma_4},$ by the equality \eqref{dt11}, we have 
\begin{equation}\label{dt12}
h\equiv (\xi_1q_{2,\, 1} + \xi_2q_{2,\, 2} + \xi_3\widehat{q_{2,\, 1}} + \xi_4\widehat{q_{2,\, 2}}),
\end{equation}
where $\xi_t\in k$ for $1\leq t\leq 4.$ Acting the homomorphism $\sigma_4: (P_4)_{n_2}\to (P_4)_{n_2}$ on both sides of \eqref{dt12}, and using Theorem \ref{dlSi} with the relation $(\sigma_4(h) + h)\equiv 0,$ we obtain
$$ (\sigma_4(h) + h) \equiv ((\xi_1 + \xi_3)a_{2,\, 1} + \xi_1a_{2,\, 3} + (\xi_1 + \xi_2)a_{2,\, 8} + (\xi_2 + \xi_4)a_{2,\, 13} + \mbox{other terms})  \equiv 0.$$
The last equality implies $\xi_1 = \xi_2 = \xi_3 = \xi_4.$ This means that $((Q^{\otimes 4}_{n_2})^{\omega_{(2, 1)}})^{GL_4(k)} = 0.$ 

Next, we have $(Q^{\otimes 4}_{n_2})^{\omega_{(2, 2)}} \cong \underline{(Q^{\otimes 4}_{n_2})}^{\omega_{(2, 2)}}\bigoplus \overline{(Q^{\otimes 4}_{n_2})}^{\omega_{(2, 2)}}.$ According to Lemma \ref{bd1}, we get $$(\underline{(Q^{\otimes 4}_{n_2})}^{\omega_{(2, 2)}})^{\Sigma_4} = \langle [q_{2,\, 3}]_{\omega_{(2, 2)}}\rangle.$$
A basis of $\overline{(Q^{\otimes 4}_{n_2})}^{\omega_{(2, 2)}}$ is the set $\{[a_{2,\,j}]_{\omega_{(2, 2)}}:\, j = 35, 36, \ldots, 40, 81, \ldots, 84\}$ and we have a direct summand decomposition of the $\Sigma_4$-modules:
$$  \overline{(Q^{\otimes 4}_{n_2})}^{\omega_{(2, 2)}}  = \Sigma_4(a_{2,\,35})\bigoplus \Sigma_4(a_{2,\,81}),$$
where $\Sigma_4(a_{2,\,35}) = \langle \{[[a_{2,\,j}]_{\omega_{(2, 2)}}:\, j = 35, \ldots, 40]\} \rangle$ and $\Sigma_4(a_{2,\,81}) = \langle \{[[a_{2,\,j}]_{\omega_{(2, 2)}}:\, j = 81, \ldots, 84]\} \rangle.$ By using Theorem \ref{dlSi} and a similar computation as above, we also obtain 
$$  \Sigma_4(a_{2,\,35})^{\Sigma_4} = \langle [\widehat{q_{2,\, 3}}]_{\omega_{(2,\, 2)}}\rangle,\ \  \Sigma_4(a_{2,\,81})^{\Sigma_4} = \langle [\widehat{q_{2,\, 4}}]_{\omega_{(2,\, 2)}} \rangle,$$
where $\widehat{q_{2,\, 3}} = \sum_{35\leq j\leq 40}a_{2,\, j}$ and $\widehat{q_{2,\, 4}} = \sum_{81\leq j\leq 84}a_{2,\, j}.$ Then, $$ ((Q^{\otimes 4}_{n_2})^{\omega_{(2, 2)}})^{\Sigma_4} = \langle [q_{2,\, 3}]_{\omega_{(2,\, 2)}}, \langle [\widehat{q_{2,\, 3}}]_{\omega_{(2,\, 2)}}, \langle [\widehat{q_{2,\, 4}}]_{\omega_{(2,\, 2)}} \rangle.$$
Let $[g]_{_{\omega_{(2,\, 2)}}}\in ((Q^{\otimes 4}_{n_2})^{\omega_{(2, 2)}})^{GL_4(k)},$ we have $g\equiv_{_{\omega_{(2,\, 2)}}} (\ell_1q_{2,\, 3} + \ell_2\widehat{q_{2,\, 3}} + \ell_3\widehat{q_{2,\, 4}})$ with $\ell_i\in k$ for every $i.$ By a direct computation using the relation $(\sigma_4(g) + g)\equiv_{\omega_{(2,\, 2)}} 0,$ we obtain
$$  (\sigma_4(g) + g)\equiv_{_{\omega_{(2,\, 2)}}} ( (\ell_1 + \ell_2)a_{2,\, 25} + (\ell_2 + \ell_3)a_{2,\, 36} + \ell_3a_{2,\, 83}+ \mbox{other terms})  \equiv_{\omega_{(2,\, 2)}} 0.$$
This means that $\ell_1 = \ell_2 = \ell_3 = 0.$ The proposition follows.
\end{proof}

Now, since $\dim (Q^{\otimes 4}_{n_2})^{GL_4(k)} \leq \dim ((Q^{\otimes 4}_{n_2})^{\omega_{(2, 1)}})^{GL_4(k)} +\dim ((Q^{\otimes 4}_{n_2})^{\omega_{(2, 2)}})^{GL_4(k)},$ by Proposition \ref{md2},  we get
$$ k\otimes_{GL_4(k)}P((P_4)_{n_2}^{*}) = ((Q^{\otimes 4}_{n_2})^{GL_4(k)})^{*}  = 0.$$

\underline{Case $s \geq 3$}. For simplicity, we prove the cases $s\geq 4.$ Calculations of the case $s = 3$ use similar techniques. Recall that 
$$ 
Q^{\otimes 4}_{n_s} \cong (Q^{\otimes 4}_{n_s})^{\omega_{(s, 1)}} \bigoplus (Q^{\otimes 4}_{n_s})^{\omega_{(s, 2)}},
$$
where $\dim (Q^{\otimes 4}_{n_s})^{\omega_{(s, 1)}} = 90$ and $\dim (Q^{\otimes 4}_{n_s})^{\omega_{(s, 2)}} = 15$ for all $s\geq 4.$ 

\begin{md}\label{md3}
The following hold:
\begin{itemize}
\item [i)] $((Q^{\otimes 4}_{n_s})^{\omega_{(s, 1)}})^{GL_4(k)}$ is trivial.

\item[ii)] $((Q^{\otimes 4}_{n_s})^{\omega_{(s, 2)}})^{GL_4(k)}  = \langle [\zeta_s^{*}]_{\omega_{(s, 2)}} \rangle,$
where $$\zeta_s^{*} = \sum_{25\leq j\leq 28}a_{s,\, j} +  \sum_{35\leq j\leq 40}a_{s,\, j} + \sum_{81\leq j\leq 84}a_{s,\, j} + a_{s,\, 102}$$
\end{itemize}
\end{md}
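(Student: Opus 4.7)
The plan is to mirror the strategy used in Proposition \ref{md2}: decompose each weight-graded piece $(Q^{\otimes 4}_{n_s})^{\omega_{(s,j)}}$ into its underline/overline summands, further break each summand into $\Sigma_4$-orbits, compute the $\Sigma_4$-invariant of each orbit by the coefficient-matching technique of Lemmas \ref{bd1} and \ref{bd2}, collect all $\Sigma_4$-invariants, and finally impose the single extra relation $\sigma_4(h)\equiv_{\omega_{(s,j)}} h$ to cut down to the $GL_4(k)$-invariants. Throughout, non-admissible monomials in the $\sigma_i$-images must be normalized into the admissible basis of \cite{N.S1} via the Cartan formula and Theorem \ref{dlSi}, exactly as in \eqref{dt5} and \eqref{dt8}.

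For part (i), I start from $(Q^{\otimes 4}_{n_s})^{\omega_{(s,1)}} \cong \underline{(Q^{\otimes 4}_{n_s})}^{\omega_{(s,1)}}\oplus \overline{(Q^{\otimes 4}_{n_s})}^{\omega_{(s,1)}}$, where $a_{s,12}$ is a minimal spike so $[a_{s,j}]_{\omega_{(s,1)}}=[a_{s,j}]$. Lemma \ref{bd1}(i),(ii) already identify the underline $\Sigma_4$-invariants as $[q_{s,1}]$ and $[q_{s,2}]$. For the overline part I would split the weight-$\omega_{(s,1)}$ admissibles into the $\Sigma_4$-orbits generated by $a_{s,29}$, $a_{s,32}$, $a_{s,41}$, $a_{s,53}$, $a_{s,59}$, $a_{s,62}$, $a_{s,71}$, $a_{s,78}$, $a_{s,85}$, $a_{s,89}$, $a_{s,94}$, $a_{s,99}$, $a_{s,100}$, and (for $s\ge 4$) $a_{s,103}$, $a_{s,104}$, $a_{s,105}$; the Lemma \ref{bd2}(i) coefficient calculation on each orbit then pins down a finite list of invariant orbit sums $\widehat q_{s,\alpha}$. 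A general $\Sigma_4$-invariant $h$ is a $k$-linear combination $\xi_1 q_{s,1}+\xi_2 q_{s,2}+\sum_\alpha \xi_\alpha \widehat q_{s,\alpha}$, and expanding $\sigma_4(h)+h$ modulo $\widehat A(P_4)_{n_s}+(P_4)_{n_s}^{<\omega_{(s,1)}}$, the coefficients of independent admissibles such as $a_{s,1},a_{s,3},a_{s,8},a_{s,13}$ together with distinguished overline admissibles of type $a_{s,\ge 41}$ should force every $\xi_\alpha=0$ and $\xi_1=\xi_2=0$.

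For part (ii), the underline $\Sigma_4$-invariant is $[q_{s,3}]_{\omega_{(s,2)}}$ by Lemma \ref{bd1}(iii). The overline part $\overline{(Q^{\otimes 4}_{n_s})}^{\omega_{(s,2)}}$ decomposes into the $\Sigma_4$-orbits of $a_{s,35}$, $a_{s,81}$ and (for $s\ge 4$) $a_{s,102}$; the identification of $\omega$-hits follows because the exponent vector of $a_{s,102}$ is fixed under $\sigma_1,\sigma_2,\sigma_3$ only modulo strictly lower-weight terms. The analogue of Lemma \ref{bd2}(i) gives
\[
((Q^{\otimes 4}_{n_s})^{\omega_{(s,2)}})^{\Sigma_4}=\bigl\langle [q_{s,3}]_{\omega_{(s,2)}},[\widehat q_{s,3}]_{\omega_{(s,2)}},[\widehat q_{s,4}]_{\omega_{(s,2)}},[a_{s,102}]_{\omega_{(s,2)}}\bigr\rangle,
\]
with $\widehat q_{s,3}=\sum_{35\le j\le 40}a_{s,j}$ and $\widehat q_{s,4}=\sum_{81\le j\le 84}a_{s,j}$. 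Writing a $\Sigma_4$-invariant as $\eta_1 q_{s,3}+\eta_2\widehat q_{s,3}+\eta_3\widehat q_{s,4}+\eta_4 a_{s,102}$ and imposing $\sigma_4\equiv_{\omega_{(s,2)}}\mathrm{id}$, the expansion of $\sigma_4(\cdot)+\mathrm{id}$ picks out coefficients of admissibles like $a_{s,25},a_{s,36},a_{s,83}$ and one further witness coming from $\sigma_4(a_{s,102})$, yielding the single linear constraint $\eta_1=\eta_2=\eta_3=\eta_4$ and hence the unique invariant $[\zeta_s^{*}]_{\omega_{(s,2)}}$.

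The main obstacle is the sheer amount of bookkeeping: correctly partitioning the long admissible lists of \cite{N.S1} into $\Sigma_4$-orbits, producing the explicit hit equations (as in \eqref{dt5} and \eqref{dt8}) that rewrite each $\sigma_i$-image of a non-admissible monomial in the admissible basis, and tracking the coefficients of enough independent admissibles in $\sigma_4(h)+h$ to close off every $\xi_\alpha$ (respectively $\eta_i$). The case $s=3$ is handled separately because $a_{3,103},a_{3,104},a_{3,105}$ differ from $a_{s,103},a_{s,104},a_{s,105}$ of the $s\ge 4$ family, but after replacing the three orbit sums in the overline analysis the same linear algebra goes through verbatim.
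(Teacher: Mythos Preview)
Your proposal is correct and follows essentially the same approach as the paper: decompose $(Q^{\otimes 4}_{n_s})^{\omega_{(s,j)}}$ into underline and overline pieces, compute the $\Sigma_4$-invariants orbit by orbit (obtaining for $\omega_{(s,2)}$ exactly the four generators $q_{s,3},\,\sum_{35\le j\le 40}a_{s,j},\,\sum_{81\le j\le 84}a_{s,j},\,a_{s,102}$), and then impose the single relation $\sigma_4(h)+h\equiv_{\omega_{(s,j)}}0$ to force the coefficients equal (respectively zero). The paper in fact only writes out part~(ii) in detail, using precisely the witnesses $a_{s,25},a_{s,36},a_{s,37},a_{s,83}$ to obtain $\beta_1=\beta_2=\beta_3=\beta_4$, and refers part~(i) to the same method, so your sketch is a faithful expansion of the paper's outline.
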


\begin{proof}[{\it Outline of the proof}]

We will give the sketch of proof of ii). The proofs of i) use similar idea. 

We have an isomorphism of the $k$-vector spaces
$$ (Q^{\otimes 4}_{n_s})^{\omega_{(s, 2)}}  \cong \underline{(Q^{\otimes 4}_{n_s}})^{\omega_{(s, 2)}} \bigoplus \overline{(Q^{\otimes 4}_{n_s}})^{\omega_{(s, 2)}},$$
where $$ \begin{array}{ll}
\medskip
\underline{(Q^{\otimes 4}_{n_s}})^{\omega_{(s, 2)}} &= \langle \{[a_{s,\, j}]_{\omega_{(s, 2)}}:\, 25\leq j\leq 28\}\rangle,\\
 \overline{(Q^{\otimes 4}_{n_s}})^{\omega_{(s, 2)}} &= \langle \{[a_{s,\, j}]_{\omega_{(s, 2)}}:\, j = 35, \ldots, 40, 81, \ldots, 84, 102\} \rangle.
\end{array}
$$
Then, by Lemma \ref{bd1}, $$ (\underline{(Q^{\otimes 4}_{n_s}})^{\omega_{(s, 2)}})^{\Sigma_4} = \langle  [q_{s,\, 3}]_{\omega_{(s, 2)}} \rangle.$$
Now, if $[f]\in (\overline{(Q^{\otimes 4}_{n_s}})^{\omega_{(s, 2)}})^{\Sigma_4},$ then we have $$f\equiv_{\omega_{(s, 2)}} \big(\sum_{35\leq j\leq 40} \gamma_ja_{s,\, j} + \sum_{81\leq j\leq 84} \gamma_ja_{s,\, j} + \gamma_{102}a_{s,\, 102}\big)$$ in which $\gamma_j\in k.$ So, by a simple computation using the relations $\sigma_i(f)\equiv_{\omega_{(s, 2)}} f$ for $i\in \{1, 2, 3\},$ where $\sigma_i: (P_4)_{n_s}\to (P_4)_{n_s},$ we get $\gamma_{35} = \gamma_{36} = \cdots = \gamma_{40},$ and $\gamma_{81} = \gamma_{82} = \gamma_{83} = \gamma_{84}.$ This leads to $$ (\overline{(Q^{\otimes 4}_{n_s}})^{\omega_{(s, 2)}})^{\Sigma_4} = \langle  [q_{s,\, 4}]_{\omega_{(s, 2)}},\, [q_{s,\, 5}]_{\omega_{(s, 5)}}, [a_{s,\, 102}]_{\omega_{(s, 2)}}  \rangle,$$
with $q_{s,\, 4}:= \sum_{35\leq j\leq 40}a_{s,\, j}$ and $q_{s,\, 5}:= \sum_{81\leq j\leq 84}a_{s,\, j}.$

Let $h\in (P_4)_{n_s}$ such that $[h]\in ((Q^{\otimes 4}_{n_s})^{\omega_{(s, 2)}})^{GL_4(k)}.$ Since $\Sigma_4\subset GL_4(k),$ from the above calculations, we have
$$ h\equiv_{\omega_{(s, 2)}} \big( \beta_1q_{s,\, 3} + \beta_2q_{s,\, 4} + \beta_3q_{s,\, 5} +\beta_4a_{s,\, 102}\big),$$
with $\beta_i\in k$ for every $i.$ Direct computing from the relation $(\sigma_4(h) + h)\equiv 0,$ we obtain
$$ \begin{array}{ll}
 (\sigma_4(h) + h) &\equiv_{\omega_{(s, 2)}}  \big((\beta_1 + \beta_2)a_{s,\, 25} + (\beta_2 + \beta_3)(a_{s,\, 36} + a_{s,\, 37})  + (\beta_3 + \beta_4)a_{s,\, 83}\big) \equiv 0,
\end{array}$$
and this is apparently that $\beta_1 = \beta_2 = \beta_3 = \beta_4.$ The proposition is proved.
\end{proof}

The following inequality is immediate from Proposition \ref{md3}. 
$$ 
 \dim k\otimes_{GL_4(k)}P((P_4)_{n_s}^{*}) \leq \dim ((Q^{\otimes 4}_{n_s})^{\omega_{(s, 1)}})^{GL_4(k)} + \dim ((Q^{\otimes 4}_{n_s})^{\omega_{(s, s)}})^{GL_4(k)} = 1.$$
On the other hand, as indicated in the introduction, we can verify that the element $$ \zeta_s = a_1^{(0)}a_2^{(2^{s+1}-1)}a_3^{(2^{s+1}-1)}a_4^{(2^{s+1}-1)}\in (P_4)^{*}_{3(2^{s} -1) + 3.2^{s}}\in P((P_4)_{n_s}^{*})$$ and that the cycle $\psi_4(\zeta_s) = \lambda_0\lambda_{s+1}^{3}$ in $\Lambda$ is a representative of the non-zero element $h_0h_{s+1}^{3}\in {\rm Ext}_A^{4, 6.2^{s}  +1}(k, k).$ Hence, $h_0h_{s+1}^{3}$ is in the image of $Tr_4^{A}.$ Moreover, by Theorem \ref{dlntg}, we have $ {\rm Ext}_A^{4, 6.2^{s}  +1}(k, k) = \langle h_0h_{s+1}^{3}\rangle $ with $h_0h_{s+1}^{3} = h_0h_s^{2}h_{s+2}\neq 0$ for any $s\geq 3.$ Combining these data gives $ \dim k\otimes_{GL_4(k)}P((P_4)_{n_s}^{*}) =  1.$ Moreover, by a simple computation, we find that $ (Q^{\otimes 4}_{n_s})^{GL_4(k)} = \langle [\zeta_s^{*}]\rangle.$ From this and the fact that $\langle [\zeta_s^*], [\zeta_s] \rangle = 1,$ it may be concluded that $ k\otimes_{GL_4(k)}P((P_4)_{n_s}^{*}) = \langle [\zeta_s]\rangle.$ The theorem is proved.
\end{proof}

\subsection{Proof of Theorem \ref{dlc2}}

Let $n_s:=3(2^{s}-1) + 7.2^{s},$ in order to prove the theorem,  we first have the following.

\begin{md}\label{mdbsc2}
For a positive integer $s,$ the invariant spaces $(Q^{\otimes 4}_{n_s})^{GL_4(k)}$ are determined as follows:
$$ (Q^{\otimes 4}_{n_s})^{GL_4(k)} = \left\{\begin{array}{ll}
\langle [\widetilde{\zeta}] \rangle &\mbox{for $s = 1$},\\
0 &\mbox{otherwise},
\end{array}\right.$$
where $$ \begin{array}{ll}
\widetilde{\zeta}&=
 x_1x_2x_3x_4^{14}+
 x_1x_2x_3^{14}x_4+
 x_1x_2^{3}x_3x_4^{12}+
\medskip
 x_1x_2^{3}x_3^{12}x_4\\
&\quad +
 x_1^{3}x_2x_3x_4^{12}+
 x_1^{3}x_2x_3^{12}x_4+
 x_1^{3}x_2^{5}x_3x_4^{8}+
 x_1^{3}x_2^{5}x_3^{8}x_4.
\end{array}$$
\end{md}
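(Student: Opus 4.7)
The plan is to mirror the three-step strategy already used for Theorem \ref{dlc1}. First, using the admissible monomial basis of $Q^{\otimes 4}_{n_s}$ provided by Sum \cite{N.S} (where $n_s = 3(2^s-1) + 7\cdot 2^s = 10\cdot 2^s - 3$ and $\mu(n_s) = 3 < 4$), I would decompose
$$ (Q^{\otimes 4}_{n_s})^{GL_4(k)} \hookrightarrow \bigoplus_{\deg(\omega) = n_s} ((Q^{\otimes 4}_{n_s})^{\omega})^{GL_4(k)}, $$
by grouping admissible basis monomials according to their weight vectors. For each admissible weight $\omega$ appearing in $(P_4)_{n_s}$, further split $(Q^{\otimes 4}_{n_s})^{\omega} \cong \underline{(Q^{\otimes 4}_{n_s})}^{\omega} \oplus \overline{(Q^{\otimes 4}_{n_s})}^{\omega}$, and then partition each summand into the $\Sigma_4$-orbits of its basis monomials. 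The minimal spike in $(P_4)_{n_s}$ (computable from the binary expansion of $n_s$) identifies the admissible weight vectors and pins down the generators of the $\Sigma_4$-orbits.

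The second step is the computation of the $\Sigma_4$-invariants in each orbit submodule. For a generic $\Sigma_4$-invariant class $[f]_\omega = \sum_j \gamma_j [a_{s,j}]_\omega$, I would apply each of the generators $\sigma_1, \sigma_2, \sigma_3$ in turn. The difficulty is that $\sigma_i$ typically carries admissible monomials to inadmissible ones; each such image must be rewritten in the admissible basis modulo $\widehat A(P_4)_{n_s} + (P_4)_{n_s}^{<\omega}$ using the Cartan formula together with Theorem \ref{dlSi} to discard monomials whose weight vector is strictly smaller than that of a minimal spike. Matching the coefficients of admissible basis elements in $\sigma_i(f) \equiv_\omega f$ for $i = 1,2,3$ gives a linear system in the $\gamma_j$ whose solution space describes $((Q^{\otimes 4}_{n_s})^{\omega})^{\Sigma_4}$. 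This is analogous to Lemmas \ref{bd1}--\ref{bd2} but with the longer list of generators and more weight strata dictated by the dimensions $\dim Q^{\otimes 4}_{n_s}$ of $150,\ 195,\ 210$ for $s = 1,\ 2,\ s\geq 3$ respectively.

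The third step is to pass from $\Sigma_4$- to $GL_4(k)$-invariance via $\sigma_4$. Starting from the basis of $\Sigma_4$-invariants produced in the second step, I would write an arbitrary $GL_4(k)$-invariant as a linear combination $\sum_\ell \beta_\ell [q_{s,\ell}]_\omega$, apply $\sigma_4$, again reduce to admissible form using Theorem \ref{dlSi}, and impose $\sigma_4(f) \equiv_\omega f$. For $s \geq 2$, by inspecting the coefficients of a few well-chosen spike-like admissibles (e.g. terms of the shape $x_1^{2^{s+2}-1}x_2^{2^s-1}\cdots$ or $x_1^{2^{s+3}-1}\cdots$), each $\beta_\ell$ is forced to vanish. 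For $s = 1$, precisely one combination of the $\Sigma_4$-invariants survives, and a direct verification that the explicit polynomial $\widetilde\zeta$ in the statement is $\sigma_4$-invariant (together with a check that $[\widetilde\zeta]$ is nonzero in $Q^{\otimes 4}_{17}$) identifies $\langle [\widetilde\zeta]\rangle$ with $(Q^{\otimes 4}_{n_1})^{GL_4(k)}$.

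The main obstacle is the purely combinatorial bookkeeping inside the second step: several hundred admissible monomials must be tracked across the three weight strata, and every action $\sigma_i(a_{s,j})$ produces inadmissible monomials that must be expanded via $Sq^1, Sq^2, Sq^4$ and Theorem \ref{dlSi} into admissible ones modulo lower-weight terms. The $s = 1$ case is especially delicate because the surviving $\Sigma_4$-invariants are numerous enough that narrowing them down to the single class $[\widetilde\zeta]$ requires a careful choice of which coefficients in $\sigma_4(f) \equiv f$ to compare first; a convenient simplification is to note that $\widetilde\zeta$ lies in $\overline{(P_4)_{17}}$ so that the "underline" part of any $GL_4(k)$-invariant must be hit, which significantly cuts down the case analysis before $\sigma_4$ is applied.
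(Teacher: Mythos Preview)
Your strategy is exactly the paper's: split $Q^{\otimes 4}_{n_s}$ by weight vector and by the $\underline{(\cdot)}/\overline{(\cdot)}$ decomposition, break each piece into $\Sigma_4$-submodules generated by admissible monomials, solve the linear system coming from $\sigma_1,\sigma_2,\sigma_3$ (after rewriting inadmissible images via the Cartan formula and Theorem~\ref{dlSi}) to find the $\Sigma_4$-invariants, and then apply $\sigma_4$ to cut down to $GL_4(k)$-invariants. Two factual corrections: for $n_s=10\cdot 2^s-3$ the dimensions of $Q^{\otimes 4}_{n_s}$ are $87$, $135$, $150$ for $s=1$, $s=2$, $s\geq 3$ respectively (the numbers $150$, $195$, $210$ you quote belong to the case $t\geq 4$ of Theorem~\ref{dlct}), and only two weight vectors $\omega_{(s,1)}=(\underbrace{3,\ldots,3}_{s+1},2)$ and $\omega_{(s,2)}=(\underbrace{3,\ldots,3}_{s},1,1,1)$ occur, not three.
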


\begin{proof}

The proof of the proposition is based on an admissible monomial basis of the $k$-vector space $Q^{\otimes 4}_{n_s}.$ We have an isomorphism of $k$-vector spaces: $ Q^{\otimes 4}_{n_s} \cong \underline{Q^{\otimes 4}_{n_s}}\bigoplus \overline{Q^{\otimes 4}_{n_s}}.$ According to Sum \cite{N.S}, $\underline{Q^{\otimes 4}_{n_s}} = \langle \{[b_{s,\, j}]:\, 1\leq j\leq 40 \}\rangle,$ for all $s\geq 1,$ where the admissible monomials $b_{s,\, j}$ are determined as follows:\\[1mm]

\begin{tabular}{lrr}
$b_{s,\,1}= x_2^{2^{s+1}-1}x_3^{2^{s+2}-1}x_4^{2^{s+2}-1}$, & \multicolumn{1}{l}{$b_{s,\,2}= x_2^{2^{s+2}-1}x_3^{2^{s+1}-1}x_4^{2^{s+2}-1}$,} & \multicolumn{1}{l}{$b_{s,\,3}= x_2^{2^{s+2}-1}x_3^{2^{s+2}-1}x_4^{2^{s+1}-1}$,} \\
$b_{s,\,4}= x_1^{2^{s+1}-1}x_3^{2^{s+2}-1}x_4^{2^{s+2}-1}$, & \multicolumn{1}{l}{$b_{s,\,5}= x_1^{2^{s+2}-1}x_3^{2^{s+1}-1}x_4^{2^{s+2}-1}$,} & \multicolumn{1}{l}{$b_{s,\,6}= x_1^{2^{s+2}-1}x_3^{2^{s+2}-1}x_4^{2^{s+1}-1}$,} \\
$b_{s,\,7}= x_1^{2^{s+1}-1}x_2^{2^{s+2}-1}x_4^{2^{s+2}-1}$, & \multicolumn{1}{l}{$b_{s,\,8}= x_1^{2^{s+2}-1}x_2^{2^{s+1}-1}x_4^{2^{s+2}-1}$,} & \multicolumn{1}{l}{$b_{s,\,9}= x_1^{2^{s+2}-1}x_2^{2^{s+2}-1}x_4^{2^{s+1}-1}$,} \\
$b_{s,\,10}= x_1^{2^{s+1}-1}x_2^{2^{s+2}-1}x_3^{2^{s+2}-1}$, & \multicolumn{1}{l}{$b_{s,\,11}= x_1^{2^{s+2}-1}x_2^{2^{s+1}-1}x_3^{2^{s+2}-1}$,} & \multicolumn{1}{l}{$b_{s,\,12}= x_1^{2^{s+2}-1}x_2^{2^{s+2}-1}x_3^{2^{s+1}-1}$,} \\
$b_{s,\,13}= x_1^{2^{s}-1}x_2^{2^{s}-1}x_3^{2^{s+3}-1}$, & \multicolumn{1}{l}{$b_{s,\,14}= x_1^{2^{s}-1}x_2^{2^{s+3}-1}x_3^{2^{s+3}-1}$,} & \multicolumn{1}{l}{$b_{s,\,15}= x_1^{2^{s+3}-1}x_2^{2^{s}-1}x_3^{2^{s}-1}$,} \\
$b_{s,\,16}= x_2^{2^{s}-1}x_3^{2^{s+3}-1}$, & \multicolumn{1}{l}{$b_{s,\,17}= x_2^{2^{s+3}-1}x_3^{2^{s+3}-1}$,} & \multicolumn{1}{l}{$b_{s,\,18}= x_2^{2^{s}-1}x_3^{2^{s}-1}$,} \\
$b_{s,\,19}= x_1^{2^{s}-1}x_3^{2^{s+3}-1}$, & \multicolumn{1}{l}{$b_{s,\,20}= x_1^{2^{s+3}-1}x_3^{2^{s+3}-1}$,} & \multicolumn{1}{l}{$b_{s,\,21}= x_1^{2^{s}-1}x_3^{2^{s}-1}$,} \\
$b_{s,\,22}= x_1^{2^{s}-1}x_2^{2^{s+3}-1}$, & \multicolumn{1}{l}{$b_{s,\,23}= x_1^{2^{s+3}-1}x_2^{2^{s+3}-1}$,} & \multicolumn{1}{l}{$b_{s,\,24}= x_1^{2^{s}-1}x_2^{2^{s}-1}$,} \\
$b_{s,\,25}= x_1^{2^{s}-1}x_2^{2^{s+1}-1}x_3^{7.2^{s}-1}$, & \multicolumn{1}{l}{$b_{s,\,26}= x_1^{2^{s+1}-1}x_2^{2^{s}-1}x_3^{7.2^{s}-1}$,} & \multicolumn{1}{l}{$b_{s,\,27}= x_1^{2^{s+1}-1}x_2^{7.2^{s}-1}x_3^{2^{s}-1}$,} \\
$b_{s,\,28}= x_2^{2^{s+1}-1}x_3^{7.2^{s}-1}$, & \multicolumn{1}{l}{$b_{s,\,29}= x_2^{2^{s}-1}x_3^{7.2^{s}-1}$,} & \multicolumn{1}{l}{$b_{s,\,30}= x_2^{7.2^{s}-1}x_3^{2^{s}-1}$,} \\
$b_{s,\,31}= x_1^{2^{s+1}-1}x_3^{7.2^{s}-1}$, & \multicolumn{1}{l}{$b_{s,\,32}= x_1^{2^{s}-1}x_3^{7.2^{s}-1}$,} & \multicolumn{1}{l}{$b_{s,\,33}= x_1^{7.2^{s}-1}x_3^{2^{s}-1}$,} \\
$b_{s,\,34}= x_1^{2^{s+1}-1}x_2^{7.2^{s}-1}$, & \multicolumn{1}{l}{$b_{s,\,35}= x_1^{2^{s}-1}x_2^{7.2^{s}-1}$,} & \multicolumn{1}{l}{$b_{s,\,36}= x_1^{7.2^{s}-1}x_2^{2^{s}-1}$,} \\
$b_{s,\,37}= x_1^{2^{s+1}-1}x_2^{3.2^{s}-1}x_3^{5.2^{s}-1}$, & \multicolumn{1}{l}{$b_{s,\,38}= x_2^{3.2^{s}-1}x_3^{5.2^{s}-1}$,} & \multicolumn{1}{l}{$b_{s,\,39}= x_1^{3.2^{s}-1}x_3^{5.2^{s}-1}$,} \\
$b_{s,\,40}= x_1^{3.2^{s}-1}x_2^{5.2^{s}-1}$. &       &  
\end{tabular}%

\medskip

Then, we have a direct summand decomposition of $\Sigma_4$-modules:
$$ \underline{Q^{\otimes 4}_{n_s}} = \Sigma_4(b_{s,\, 1}) \bigoplus \Sigma_4(b_{s,\, 13}) \bigoplus \Sigma_4(b_{s,\, 25}) \bigoplus \Sigma_4(b_{s,\, 37}),$$
where $$ \begin{array}{ll}
\medskip
\Sigma_4(b_{s,\, 1}) &= \langle \{[b_{s,\, j}]:\, 1\leq j\leq 12\} \rangle;\ \ \ \Sigma_4(b_{s,\, 13}) = \langle \{[b_{s,\, j}]:\, 13\leq j\leq 24\} \rangle \\
\Sigma_4(b_{s,\, 25}) &= \langle \{[b_{s,\, j}]:\, 25\leq j\leq 36\} \rangle;\ \ \ \Sigma_4(b_{s,\, 37}) = \langle \{[b_{s,\, j}]:\, 37\leq j\leq 40\} \rangle.
\end{array}$$
Denote the bases of $\Sigma_4(b_{s,\, j})$ by the sets $[\mathscr B(b_{s,\, j})]$ for $j \in \{1, 13, 25, 37\},$ where $\mathscr B(b_{s,\, j})  = \{b_{s,\, j}\}.$ Suppose that $f\in (P_4)_{n_s}$ such that $[f]\in \Sigma_4(b_{s,\, j})^{\Sigma_4}$. Then, $f\equiv \sum_{x \in \mathscr B(b_{s,\, j})}\gamma_x.x$ in which $\gamma_x\in k.$ By a direct computation, we can see that the action of $\Sigma_4$ on $\underline{Q^{\otimes 4}_{n_s}}$ induces the one of it on $[\mathscr B(b_{s,\, j})]$ and this action is transitive. So, $\gamma_x = \gamma_{x'} = \gamma\in k$ for all $x,\, x'\in \mathscr B(b_{s,\, j}),$ and therefore, the spaces of $\Sigma_4$-invariants $\Sigma_4(b_{s,\, j})^{\Sigma_4}$ are determined as follows.

\begin{lema}\label{bdc21}
We find that
\begin{itemize}
\item[i)] $\Sigma_4(b_{s,\, 1})^{\Sigma_4} = \langle [q_{s,\, 1}] \rangle$ with $q_{s,\, 1}:= \sum_{1\leq j\leq 12}b_{s,\, j};$

\item[ii)] $\Sigma_4(b_{s,\, 13})^{\Sigma_4} = \langle [q_{s,\, 2}] \rangle$ with $q_{s,\, 2}:= \sum_{13\leq j\leq 24}b_{s,\, j};$

\item[iii)] $\Sigma_4(b_{s,\, 25})^{\Sigma_4} = \langle [q_{s,\, 3}] \rangle$ with $q_{s,\, 3}:= \sum_{25\leq j\leq 36}b_{s,\, j};$

\item[iv)] $\Sigma_4(b_{s,\, 37})^{\Sigma_4} = \langle [q_{s,\, 4}] \rangle$ with $q_{s,\, 4}:= \sum_{37\leq j\leq 40}b_{s,\, j}.$

\end{itemize}
\end{lema}

By Sum \cite{N.S}, the admissible bases of the spaces $\overline{Q^{\otimes 4}_{n_s}}$ are represented by the following admissible monomials:

For $s\geq 1,$

\begin{center}
\begin{tabular}{ll}
$b_{s,\,41}= x_1x_2^{2^{s}-1}x_3^{2^{s}-1}x_4^{2^{s+3}-2}$, & $b_{s,\,42}= x_1x_2^{2^{s}-1}x_3^{2^{s+3}-2}x_4^{2^{s}-1}$, \\
$b_{s,\,43}= x_1x_2^{2^{s+3}-2}x_3^{2^{s}-1}x_4^{2^{s}-1}$, & $b_{s,\,44}= x_1x_2^{2^{s}-1}x_3^{2^{s+1}-2}x_4^{7.2^{s}-1}$, \\
$b_{s,\,45}= x_1x_2^{2^{s+1}-2}x_3^{2^{s}-1}x_4^{7.2^{s}-1}$, & $b_{s,\,46}= x_1x_2^{2^{s+1}-2}x_3^{7.2^{s}-1}x_4^{2^{s}-1}$, \\
$b_{s,\,47}= x_1x_2^{2^{s+1}-2}x_3^{3.2^{s}-1}x_4^{5.2^{s}-1}$, & $b_{s,\,48}= x_1x_2^{2^{s+1}-2}x_3^{2^{s+2}-1}x_4^{2^{s+2}-1}$, \\
$b_{s,\,49}= x_1x_2^{2^{s+2}-1}x_3^{2^{s+1}-2}x_4^{2^{s+2}-1}$, & $b_{s,\,50}= x_1x_2^{2^{s+2}-1}x_3^{2^{s+2}-1}x_4^{2^{s+1}-2}$, \\
$b_{s,\,51}= x_1^{2^{s+2}-1}x_2x_3^{2^{s+1}-2}x_4^{2^{s+2}-1}$, & $b_{s,\,52}= x_1^{2^{s+2}-1}x_2x_3^{2^{s+2}-1}x_4^{2^{s+1}-2}$, \\
$b_{s,\,53}= x_1^{2^{s+2}-1}x_2^{2^{s+2}-1}x_3x_4^{2^{s+1}-2}$, & $b_{s,\,54}= x_1x_2^{2^{s+1}-1}x_3^{2^{s+2}-2}x_4^{2^{s+2}-1}$, \\
\end{tabular}
\end{center}

\newpage

\begin{center}
\begin{tabular}{ll}
$b_{s,\,55}= x_1x_2^{2^{s+1}-1}x_3^{2^{s+2}-1}x_4^{2^{s+2}-2}$, & $b_{s,\,56}= x_1x_2^{2^{s+2}-2}x_3^{2^{s+1}-1}x_4^{2^{s+2}-1}$, \\
$b_{s,\,57}= x_1x_2^{2^{s+2}-2}x_3^{2^{s+2}-1}x_4^{2^{s+1}-1}$, & $b_{s,\,58}= x_1x_2^{2^{s+2}-1}x_3^{2^{s+1}-1}x_4^{2^{s+2}-2}$, \\
$b_{s,\,59}= x_1x_2^{2^{s+2}-1}x_3^{2^{s+2}-2}x_4^{2^{s+1}-1}$, & $b_{s,\,60}= x_1^{2^{s+1}-1}x_2x_3^{2^{s+2}-2}x_4^{2^{s+2}-1}$, \\
$b_{s,\,61}= x_1^{2^{s+1}-1}x_2x_3^{2^{s+2}-1}x_4^{2^{s+2}-2}$, & $b_{s,\,62}= x_1^{2^{s+1}-1}x_2^{2^{s+2}-1}x_3x_4^{2^{s+2}-2}$, \\
$b_{s,\,63}= x_1^{2^{s+2}-1}x_2x_3^{2^{s+1}-1}x_4^{2^{s+2}-2}$, & $b_{s,\,64}= x_1^{2^{s+2}-1}x_2x_3^{2^{s+2}-2}x_4^{2^{s+1}-1}$, \\
$b_{s,\,65}= x_1^{2^{s+2}-1}x_2^{2^{s+1}-1}x_3x_4^{2^{s+2}-2}$, & $b_{s,\,66}= x_1^{3}x_2^{2^{s+2}-3}x_3^{2^{s+1}-2}x_4^{2^{s+2}-1}$, \\
$b_{s,\,67}= x_1^{3}x_2^{2^{s+2}-3}x_3^{2^{s+2}-1}x_4^{2^{s+1}-2}$, & $b_{s,\,68}= x_1^{3}x_2^{2^{s+2}-1}x_3^{2^{s+2}-3}x_4^{2^{s+1}-2}$, \\
$b_{s,\,69}= x_1^{2^{s+2}-1}x_2^{3}x_3^{2^{s+2}-3}x_4^{2^{s+1}-2}$, & $b_{s,\,70}= x_1^{3}x_2^{2^{s+1}-1}x_3^{2^{s+2}-3}x_4^{2^{s+2}-2}$, \\
$b_{s,\,71}= x_1^{3}x_2^{2^{s+2}-3}x_3^{2^{s+1}-1}x_4^{2^{s+2}-2}$, & $b_{s,\,72}= x_1^{3}x_2^{2^{s+2}-3}x_3^{2^{s+2}-2}x_4^{2^{s+1}-1}$.
\end{tabular}%
\end{center}

For $s = 1,$

\begin{center}
\begin{tabular}{lll}
$b_{1,\,73}= x_1^{3}x_2^{3}x_3^{4}x_4^{7}$, & $b_{1,\,74}= x_1^{3}x_2^{3}x_3^{7}x_4^{4}$, & $b_{1,\,75}= x_1^{3}x_2^{7}x_3^{3}x_4^{4}$, \\
$b_{1,\,76}= x_1^{7}x_2^{3}x_3^{3}x_4^{4}$, & $b_{1,\,77}= x_1x_2x_3^{3}x_4^{12}$, & $b_{1,\,78}= x_1x_2^{3}x_3x_4^{12}$, \\
$b_{1,\,79}= x_1x_2^{3}x_3^{12}x_4$, & $b_{1,\,80}= x_1^{3}x_2x_3x_4^{12}$, & $b_{1,\,81}= x_1^{3}x_2x_3^{12}x_4$, \\
$b_{1,\,82}= x_1x_2^{3}x_3^{4}x_4^{9}$, & $b_{1,\,83}= x_1^{3}x_2x_3^{4}x_4^{9}$, & $b_{1,\,84}= x_1x_2^{3}x_3^{5}x_4^{8}$, \\
$b_{1,\,85}= x_1^{3}x_2x_3^{5}x_4^{8}$, & $b_{1,\,86}= x_1^{3}x_2^{5}x_3x_4^{8}$, & $b_{1,\,87}= x_1^{3}x_2^{5}x_3^{8}x_4$.
\end{tabular}%
\end{center}

For $s\geq 2,$

\begin{center}
\begin{tabular}{ll}
$b_{s,\,73}= x_1^{2^{s+1}-1}x_2^{3}x_3^{2^{s+2}-3}x_4^{2^{s+2}-2}$, & $b_{s,\,74}= x_1^{3}x_2^{2^{s+1}-3}x_3^{2^{s+2}-2}x_4^{2^{s+2}-1}$, \\
$b_{s,\,75}= x_1^{3}x_2^{2^{s+1}-3}x_3^{2^{s+2}-1}x_4^{2^{s+2}-2}$, & $b_{s,\,76}= x_1^{3}x_2^{2^{s+2}-1}x_3^{2^{s+1}-3}x_4^{2^{s+2}-2}$, \\
$b_{s,\,77}= x_1^{2^{s+2}-1}x_2^{3}x_3^{2^{s+1}-3}x_4^{2^{s+2}-2}$, & $b_{s,\,78}= x_1^{7}x_2^{2^{s+2}-5}x_3^{2^{s+1}-3}x_4^{2^{s+2}-2}$, \\
$b_{s,\,79}= x_1^{7}x_2^{2^{s+2}-5}x_3^{2^{s+2}-3}x_4^{2^{s+1}-2}$, & $b_{s,\,80}= x_1^{7}x_2^{2^{s+1}-5}x_3^{2^{s+2}-3}x_4^{2^{s+2}-2}$, \\
$b_{s,\,81}= x_1x_2^{2^{s}-2}x_3^{2^{s}-1}x_4^{2^{s+3}-1}$, & $b_{s,\,82}= x_1x_2^{2^{s}-2}x_3^{2^{s+3}-1}x_4^{2^{s}-1}$, \\
$b_{s,\,83}= x_1x_2^{2^{s}-1}x_3^{2^{s}-2}x_4^{2^{s+3}-1}$, & $b_{s,\,84}= x_1x_2^{2^{s}-1}x_3^{2^{s+3}-1}x_4^{2^{s}-2}$, \\
$b_{s,\,85}= x_1x_2^{2^{s+3}-1}x_3^{2^{s}-2}x_4^{2^{s}-1}$, & $b_{s,\,86}= x_1x_2^{2^{s+3}-1}x_3^{2^{s}-1}x_4^{2^{s}-2}$, \\
$b_{s,\,87}= x_1^{2^{s}-1}x_2x_3^{2^{s}-2}x_4^{2^{s+3}-1}$, & $b_{s,\,88}= x_1^{2^{s}-1}x_2x_3^{2^{s+3}-1}x_4^{2^{s}-2}$, \\
$b_{s,\,89}= x_1^{2^{s}-1}x_2^{2^{s+3}-1}x_3x_4^{2^{s}-2}$, & $b_{s,\,90}= x_1^{2^{s+3}-1}x_2x_3^{2^{s}-2}x_4^{2^{s}-1}$, \\
$b_{s,\,91}= x_1^{2^{s+3}-1}x_2x_3^{2^{s}-1}x_4^{2^{s}-2}$, & $b_{s,\,92}= x_1^{2^{s+3}-1}x_2^{2^{s}-1}x_3x_4^{2^{s}-2}$, \\
$b_{s,\,93}= x_1^{2^{s}-1}x_2x_3^{2^{s}-1}x_4^{2^{s+3}-2}$, & $b_{s,\,94}= x_1^{2^{s}-1}x_2x_3^{2^{s+3}-2}x_4^{2^{s}-1}$, \\
$b_{s,\,95}= x_1^{2^{s}-1}x_2^{2^{s}-1}x_3x_4^{2^{s+3}-2}$, & $b_{s,\,96}= x_1x_2^{2^{s}-2}x_3^{2^{s+1}-1}x_4^{7.2^{s}-1}$, \\
$b_{s,\,97}= x_1x_2^{2^{s+1}-1}x_3^{2^{s}-2}x_4^{7.2^{s}-1}$, & $b_{s,\,98}= x_1x_2^{2^{s+1}-1}x_3^{7.2^{s}-1}x_4^{2^{s}-2}$, \\
$b_{s,\,99}= x_1^{2^{s+1}-1}x_2x_3^{2^{s}-2}x_4^{7.2^{s}-1}$, & $b_{s,\,100}= x_1^{2^{s+1}-1}x_2x_3^{7.2^{s}-1}x_4^{2^{s}-2}$, \\
$b_{s,\,101}= x_1^{2^{s+1}-1}x_2^{7.2^{s}-1}x_3x_4^{2^{s}-2}$, & $b_{s,\,102}= x_1^{2^{s}-1}x_2x_3^{2^{s+1}-2}x_4^{7.2^{s}-1}$, \\
$b_{s,\,103}= x_1x_2^{2^{s}-1}x_3^{2^{s+1}-1}x_4^{7.2^{s}-2}$, & $b_{s,\,104}= x_1x_2^{2^{s+1}-1}x_3^{2^{s}-1}x_4^{7.2^{s}-2}$, \\
$b_{s,\,105}= x_1x_2^{2^{s+1}-1}x_3^{7.2^{s}-2}x_4^{2^{s}-1}$, & $b_{s,\,106}= x_1^{2^{s}-1}x_2x_3^{2^{s+1}-1}x_4^{7.2^{s}-2}$, \\
$b_{s,\,107}= x_1^{2^{s}-1}x_2^{2^{s+1}-1}x_3x_4^{7.2^{s}-2}$, & $b_{s,\,108}= x_1^{2^{s+1}-1}x_2x_3^{2^{s}-1}x_4^{7.2^{s}-2}$, \\
$b_{s,\,109}= x_1^{2^{s+1}-1}x_2x_3^{7.2^{s}-2}x_4^{2^{s}-1}$, & $b_{s,\,110}= x_1^{2^{s+1}-1}x_2^{2^{s}-1}x_3x_4^{7.2^{s}-2}$, \\
$b_{s,\,111}= x_1x_2^{2^{s+1}-1}x_3^{3.2^{s}-2}x_4^{5.2^{s}-1}$, & $b_{s,\,112}= x_1^{2^{s+1}-1}x_2x_3^{3.2^{s}-2}x_4^{5.2^{s}-1}$, \\
$b_{s,\,113}= x_1x_2^{2^{s+1}-1}x_3^{3.2^{s}-1}x_4^{5.2^{s}-2}$, & $b_{s,\,114}= x_1^{2^{s+1}-1}x_2x_3^{3.2^{s}-1}x_4^{5.2^{s}-2}$, \\
$b_{s,\,115}= x_1^{2^{s+1}-1}x_2^{3.2^{s}-1}x_3x_4^{5.2^{s}-2}$, & $b_{s,\,116}= x_1^{3}x_2^{2^{s+1}-3}x_3^{2^{s}-2}x_4^{7.2^{s}-1}$, \\
$b_{s,\,117}= x_1^{3}x_2^{2^{s+1}-3}x_3^{7.2^{s}-1}x_4^{2^{s}-2}$, & $b_{s,\,118}= x_1^{3}x_2^{2^{s+1}-1}x_3^{7.2^{s}-3}x_4^{2^{s}-2}$, \\
$b_{s,\,119}= x_1^{2^{s+1}-1}x_2^{3}x_3^{7.2^{s}-3}x_4^{2^{s}-2}$, & $b_{s,\,120}= x_1^{3}x_2^{2^{s+1}-3}x_3^{3.2^{s}-2}x_4^{5.2^{s}-1}$, \\
$b_{s,\,121}= x_1^{3}x_2^{2^{s+1}-3}x_3^{3.2^{s}-1}x_4^{5.2^{s}-2}$, & $b_{s,\,122}= x_1^{3}x_2^{2^{s+1}-1}x_3^{3.2^{s}-3}x_4^{5.2^{s}-2}$, \\
$b_{s,\,123}= x_1^{2^{s+1}-1}x_2^{3}x_3^{3.2^{s}-3}x_4^{5.2^{s}-2}$, & $b_{s,\,124}= x_1^{3}x_2^{2^{s}-1}x_3^{2^{s+3}-3}x_4^{2^{s}-2}$, \\
$b_{s,\,125}= x_1^{3}x_2^{2^{s+3}-3}x_3^{2^{s}-2}x_4^{2^{s}-1}$, & $b_{s,\,126}= x_1^{3}x_2^{2^{s+3}-3}x_3^{2^{s}-1}x_4^{2^{s}-2}$, \\
$b_{s,\,127}= x_1^{3}x_2^{2^{s}-1}x_3^{2^{s+1}-3}x_4^{7.2^{s}-2}$, & $b_{s,\,128}= x_1^{3}x_2^{2^{s+1}-3}x_3^{2^{s}-1}x_4^{7.2^{s}-2}$.
\end{tabular}%
\end{center}

For $s =2,$

\begin{center}
\begin{tabular}{lrc}
$b_{2,\,129}= x_1^{3}x_2^{3}x_3^{3}x_4^{28}$, & \multicolumn{1}{l}{$b_{2,\,130}= x_1^{3}x_2^{3}x_3^{28}x_4^{3}$,} & $b_{2,\,131}= x_1^{3}x_2^{3}x_3^{4}x_4^{27}$, \\
$b_{2,\,132}= x_1^{3}x_2^{3}x_3^{7}x_4^{24}$, & \multicolumn{1}{l}{$b_{2,\,133}= x_1^{3}x_2^{7}x_3^{3}x_4^{24}$,} & $b_{2,\,134}= x_1^{7}x_2^{3}x_3^{3}x_4^{24}$, \\
$b_{2,\,135}= x_1^{7}x_2^{7}x_3^{9}x_4^{14}$. &       &  
\end{tabular}%
\end{center}

For $s\geq 3,$

\begin{center}
\begin{tabular}{lr}
$b_{s,\,129}= x_1^{3}x_2^{2^{s+1}-3}x_3^{7.2^{s}-2}x_4^{2^{s}-1}$, & \multicolumn{1}{l}{$b_{s,\,130}= x_1^{2^{s}-1}x_2^{3}x_3^{2^{s+1}-3}x_4^{7.2^{s}-2}$,} \\
$b_{s,\,131}= x_1^{2^{s}-1}x_2^{3}x_3^{2^{s+3}-3}x_4^{2^{s}-2}$, & \multicolumn{1}{l}{$b_{s,\,132}= x_1^{3}x_2^{2^{s}-3}x_3^{2^{s}-2}x_4^{2^{s+3}-1}$,} \\
$b_{s,\,133}= x_1^{3}x_2^{2^{s}-3}x_3^{2^{s+3}-1}x_4^{2^{s}-2}$, & \multicolumn{1}{l}{$b_{s,\,134}= x_1^{3}x_2^{2^{s+3}-1}x_3^{2^{s}-3}x_4^{2^{s}-2}$,} \\
$b_{s,\,135}= x_1^{2^{s+3}-1}x_2^{3}x_3^{2^{s}-3}x_4^{2^{s}-2}$, & \multicolumn{1}{l}{$b_{s,\,136}= x_1^{3}x_2^{2^{s}-3}x_3^{2^{s}-1}x_4^{2^{s+3}-2}$,} \\
$b_{s,\,137}= x_1^{3}x_2^{2^{s}-3}x_3^{2^{s+3}-2}x_4^{2^{s}-1}$, & \multicolumn{1}{l}{$b_{s,\,138}= x_1^{3}x_2^{2^{s}-1}x_3^{2^{s}-3}x_4^{2^{s+3}-2}$,} \\
$b_{s,\,139}= x_1^{2^{s}-1}x_2^{3}x_3^{2^{s}-3}x_4^{2^{s+3}-2}$, & \multicolumn{1}{l}{$b_{s,\,140}= x_1^{3}x_2^{2^{s}-3}x_3^{2^{s+1}-2}x_4^{7.2^{s}-1}$,} \\
$b_{s,\,141}= x_1^{3}x_2^{2^{s}-3}x_3^{2^{s+1}-1}x_4^{7.2^{s}-2}$, & \multicolumn{1}{l}{$b_{s,\,142}= x_1^{3}x_2^{2^{s+1}-1}x_3^{2^{s}-3}x_4^{7.2^{s}-2}$,} \\
$b_{s,\,143}= x_1^{2^{s+1}-1}x_2^{3}x_3^{2^{s}-3}x_4^{7.2^{s}-2}$, & \multicolumn{1}{l}{$b_{s,\,144}= x_1^{7}x_2^{2^{s+3}-5}x_3^{2^{s}-3}x_4^{2^{s}-2}$,} \\
$b_{s,\,145}= x_1^{7}x_2^{2^{s+1}-5}x_3^{2^{s}-3}x_4^{7.2^{s}-2}$, & \multicolumn{1}{l}{$b_{s,\,146}= x_1^{7}x_2^{2^{s+1}-5}x_3^{3.2^{s}-3}x_4^{5.2^{s}-2}$,} \\
$b_{s,\,147}= x_1^{7}x_2^{2^{s+1}-5}x_3^{7.2^{s}-3}x_4^{2^{s}-2}$. &  
\end{tabular}%
\end{center}

For $s = 3,$

\begin{center}
\begin{tabular}{lll}
$b_{3,\,148}= x_1^{7}x_2^{7}x_3^{7}x_4^{56}$, & $b_{3,\,149}= x_1^{7}x_2^{7}x_3^{9}x_4^{54}$, & $b_{3,\,150}= x_1^{7}x_2^{7}x_3^{57}x_4^{6}$.
\end{tabular}
\end{center}%

For $s\geq 4,$

\begin{center}
\begin{tabular}{lll}
$b_{s,\,148}= x_1^{7}x_2^{2^{s}-5}x_3^{2^{s}-3}x_4^{2^{s+3}-2}$, & $b_{s,\,149}= x_1^{7}x_2^{2^{s}-5}x_3^{2^{s+1}-3}x_4^{7.2^{s}-2}$, & $b_{s,\,150}= x_1^{7}x_2^{2^{s}-5}x_3^{2^{s+3}-3}x_4^{2^{s}-2}$.
\end{tabular}%
\end{center}

\begin{rem} Based upon the bases of the spaces $Q^{\otimes 4}_{n_s}$ above, $ \omega(b_{s,\, j})$ is one of the following sequences: 
$$  \omega_{(s, 1)}:=   \underset{\mbox{{$(s+1)$ times of $3$}}}{\underbrace{(3, 3, \ldots, 3}},2),\ \ \omega_{(s, 2)}:=   \underset{\mbox{{$s$ times of $3$}}}{\underbrace{(3, 3, \ldots, 3}}, 1,1,1).$$
Moreover, since $ \omega_{(s, 2)}$ is the weight vector of the minimal spike $b_{s,\, 15},$ $[x]_{ \omega_{(s, 2)}} = [x]$ for all monomials $x$ in $(P_4)_{n_s}.$
\end{rem}

\underline{Case $s =1$}. From the admissible basis of $ \overline{Q^{\otimes 4}_{n_1}}$ above, we have a direct summand decomposition of $\Sigma_4$-modules: $\overline{Q^{\otimes 4}_{n_1}} = \mathcal M_1\bigoplus \mathcal M_2,$ where $\mathcal M_1 = \langle \{[b_{1,\, j}]:\, j = 41, \ldots, 47, 77, \ldots, 87 \} \rangle$ and $\mathcal M_2 = \langle \{[b_{1,\, j}]:\, j = 48, \ldots, 76\} \rangle.$

\begin{lema}\label{bdc22}
The following assertions are true:
\begin{itemize}

\item[i)] $\mathcal M_1^{\Sigma_4} = \langle [\widetilde{\zeta}] \rangle$ with $\widetilde{\zeta} = b_{1,\, 41} + b_{1,\, 42} + b_{1,\, 78} + b_{1,\, 79}+b_{1,\, 80}+b_{1,\, 81}  + b_{1,\, 86} + b_{1,\, 87};$

\item[ii)] $\mathcal M_2^{\Sigma_4} = 0.$

\end{itemize}
\end{lema}

\begin{proof}
We prove i) in detail. By a similar technique, we also get ii). Suppose that $f\in (P_4)_{n_1}$ such that $[f]\in \mathcal M_1^{\Sigma_4}.$ Then, we have $$f\equiv \big(\sum_{41\leq j\leq 47}\gamma_jb_{1,\, j} + \sum
_{77\leq j\leq 87}\gamma_jb_{1,\, j}\big).$$
Direct calculating $\sigma_i(f)$ in terms of $b_{1,\, j},\, j\in \{41, \ldots, 47, 77, \ldots, 87\}$ modulo ($\widehat{A}(P_4)_{n_1}$) and using the relations $(\sigma_i(f) + f) \equiv 0,$ for $i\in \{1, 2, 3\},$ we get
$$ \begin{array}{ll}
\medskip
(\sigma_1(f) + f)&\equiv \big(\gamma_{45}b_{1,\, 41} +  \gamma_{45}b_{1,\, 46} + (\gamma_{45} + \gamma_{47})b_{1,\, 44} + (\gamma_{46} + \gamma_{47})b_{1,\, 77}\\
\medskip
&\quad + (\gamma_{43} + \gamma_{78} + \gamma_{80})b_{1,\, 78}  + (\gamma_{43} + \gamma_{79} + \gamma_{81})b_{1,\, 79}\\
\medskip
&\quad +(\gamma_{82} + \gamma_{83})b_{1,\, 82} + (\gamma_{84} + \gamma_{85})b_{1,\, 84} + \gamma_{43}b_{1,\, 86} + \mbox{others terms}\big)\equiv 0,\\
\medskip
(\sigma_2(f) + f)&\equiv \big(\gamma_{43}b_{1,\, 41} + (\gamma_{42} + \gamma_{43} + \gamma_{87})b_{1,\, 42} +  (\gamma_{42} + \gamma_{43} + \gamma_{81})b_{1,\, 43}\\
\medskip
&\quad + (\gamma_{44} + \gamma_{45} + \gamma_{83})b_{1,\, 44} + (\gamma_{46} + \gamma_{79} + \gamma_{87})b_{1,\, 46} + (\gamma_{47} + \gamma_{82})b_{1,\, 47}\\
\medskip
&\quad +(\gamma_{77} + \gamma_{78} + \gamma_{87})b_{1,\, 77} + (\gamma_{77} + \gamma_{78} + \gamma_{81})b_{1,\, 78} + (\gamma_{46} + \gamma_{79} + \gamma_{81})b_{1,\, 79}\\
\medskip
&\quad + \gamma_{83}b_{1,\, 80} + (\gamma_{81} + \gamma_{87})b_{1,\, 81} + (\gamma_{81} + \gamma_{87})b_{1,\, 81} + (\gamma_{85} +\gamma_{86}+  \gamma_{87})b_{1,\, 85}\\
\medskip
&\quad + (\gamma_{81} + \gamma_{85} + \gamma_{86})b_{1,\, 86} + \mbox{others terms}\big)\equiv 0,\\
   \medskip
(\sigma_3(f) + f)&\equiv \big((\gamma_{41} + \gamma_{42})b_{1,\, 41} + (\gamma_{44} + \gamma_{47})b_{1,\, 44} + (\gamma_{45} + \gamma_{46})b_{1,\, 45} + (\gamma_{78} + \gamma_{79})b_{1,\, 78}\\
\medskip
&\quad + (\gamma_{80} + \gamma_{81})b_{1,\, 80} + (\gamma_{82} + \gamma_{84})b_{1,\, 82} + (\gamma_{83} + \gamma_{85})b_{1,\, 83}\\
&\quad + (\gamma_{86} + \gamma_{87})b_{1,\, 86} + \mbox{others terms}\big)\equiv 0.
\end{array}$$
These equalities imply that $\gamma_{41} = \gamma_{42} = \gamma_{78} = \cdots  = \gamma_{81} = \gamma_{86} = \gamma_{87}$ and $\gamma_j = 0$ with $j\not\in \{41, 42, 78,79,80, 81, 86, 87\}.$ The lemma follows.
\end{proof}

Now , assume that $[h]\in (Q^{\otimes 4}_{n_1})^{GL_4(k)},$ then since $\Sigma_4\subset GL_4(k),$ by Lemmas \ref{bdc21} and \ref{bdc22}, we have
$$ h\equiv \big(\beta_1q_{2,2} + \beta_2q_{2,3} + \beta_3q_{2,4} + \beta_4\widetilde{\zeta}\big),$$
where $\beta_i\in k$ for every $i.$ Computing $\sigma_4(h)$ in terms of $b_{1,\, j},\, 1\leq i\leq 87$ modulo ($\widehat{A}(P_4)_{n_1}$) and using the relation $(\sigma_4(f) + f) \equiv 0,$ we obtain
$$ (\sigma_4(f) + f) \equiv \big(\beta_1b_{1,\, 13} + (\beta_1+\beta_2)b_{1,\, 20} + (\beta_2+\beta_3)b_{1,\, 31} + \mbox{others terms}\big)\equiv 0.$$
This leads to $\beta_1 = \beta_2 = \beta_3 = 0.$ 

\medskip

\underline{Case $s\geq 2.$} It is sufficient to show that the spaces  $((Q^{\otimes 4}_{n_s})^{\omega_{(s, 1)}})^{GL_4(k)}$ and $((Q^{\otimes 4}_{n_s})^{\omega_{(s, 2)}})^{GL_4(k)}$  are trivial for every $s\geq 2.$ Indeed, we prove this statement for the invariant $((Q^{\otimes 4}_{n_s})^{\omega_{(s, 1)}})^{GL_4(k)}$ with $s\geq 4.$ The other cases can be proved by similar computations.

\medskip

We have $(Q^{\otimes 4}_{n_s})^{\omega_{(s, 1)}}\cong \underline{(Q^{\otimes 4}_{n_s})}^{\omega_{(s, 1)}}\bigoplus \overline{(Q^{\otimes 4}_{n_s})}^{\omega_{(s, 1)}},$ where $$\underline{(Q^{\otimes 4}_{n_s})}^{\omega_{(s, 1)}} = \langle \{[b_{s, j}]_{\omega_{(s, 1)}}:\, 1\leq j\leq 12\}\rangle,\ \mbox{and}\ \overline{(Q^{\otimes 4}_{n_s})}^{\omega_{(s, 1)}} = \langle \{[b_{s, j}]_{\omega_{(s, 1)}}:\, 48\leq j\leq 80\}\rangle.$$
Then, it is easy to check that 
$$ \overline{(Q^{\otimes 4}_{n_s})}^{\omega_{(s, 1)}}  \cong \Sigma_4(b_{s,\, 48})\bigoplus  \Sigma_4(b_{s,\, 54}, b_{s,\, 66})\bigoplus \mathcal M,$$
where 
$$ \begin{array}{ll}
\medskip
\Sigma_4(b_{s,\, 48}) &= \langle \{[b_{s,\, j}]_{\omega_{(s, 1)}}:\, j = 48, \ldots, 53 \} \rangle, \\
\medskip
  \Sigma_4(b_{s,\, 54}, b_{s,\, 66}) &= \langle \{[b_{s,\, j}]_{\omega_{(s, 1)}}:\, j = 54, \ldots, 69, 74, 75, 76, 77 \} \rangle,\\
\medskip
\mathcal M &= \langle \{[b_{s,\, j}]_{\omega_{(s, 1)}}:\, j = 70, 71, 72, 73, 78, 79, 80\} \rangle.
\end{array}$$

\begin{lema}\label{bdc23}
The following hold:
\item[i)] $\Sigma_4(b_{s,\, 48})^{\Sigma_4} = \langle [\widehat{q_{s,\, 2}}]_{\omega_{(s, 1)}}\rangle$ with $\widehat{q_{s,\, 2}}:= \sum_{48\leq j\leq 53}b_{s,\, j}.$

\item[ii)] $\Sigma_4(b_{s,\, 54}, b_{s,\, 66})^{\Sigma_4} = \langle [\widehat{q_{s,\, 3}}]_{\omega_{(s, 1)}}\rangle$ with $\widehat{q_{s,\, 3}}:= \sum_{54\leq j\leq 69}b_{s,\, j} + \sum_{74\leq j\leq 77}b_{s,\, j}.$

\item[iii)] $\mathcal M^{\Sigma_4} = \langle [\widehat{q_{s,\, 4}}]_{\omega_{(s, 1)}}, [\widehat{q_{s,\, 5}}]_{\omega_{(s, 1)}}\rangle$ with $$\widehat{q_{s,\, 4}}:= \sum_{70\leq j\leq 73}b_{s,\, j} + b_{s,\, 78} + b_{s,\, 79}, \ \  \widehat{q_{s,\, 5}}:= \sum_{70\leq j\leq 73}b_{s,\, j} + b_{s,\, 80}.$$
\end{lema}

\begin{proof}
We prove Part iii) in detail. The others parts can be computed by a similar idea. Computing the general case is very technical. Indeed, we have the set $\{[b_{s,\, j}]_{\omega_{(s, 1)}}:\, j = 70, 71, 72, 73, 78, 79, 80\},$ which is an admissible basis of $\mathcal M.$ Suppose that $[f]\in \mathcal M^{\Sigma_4},$ then $$ f\equiv_{\omega_{(s, 1)}}\big(\sum_{70\leq j\leq 73}\gamma_jb_{s,\, j} + \sum_{78\leq j\leq 80}\gamma_jb_{s,\, j}\big),$$
with $\gamma_j\in k$ for every $j.$ Using the homomorphisms $\sigma_i: (P_4)_{n_s}\to (P_4)_{n_s},\, i = 1, 2, 3,$ we have
$$ \begin{array}{ll}
\medskip
\sigma_1(f)&\equiv_{\omega_{(s, 1)}} \big(\gamma_{73}b_{s,\, 70} + \gamma_{70}b_{s,\, 73} + \gamma_{71}x_1^{2^{s+2}-3}x_2^{3}x_3^{2^{s+1}-1}x_4^{2^{s+2}-2}\\
\medskip
&\quad + \gamma_{72}x_1^{2^{s+2}-3}x_2^{3}x_3^{2^{s+2}-2}x_4^{2^{s+1}-1} + \gamma_{78}x_1^{2^{s+2}-5}x_2^{7}x_3^{2^{s+1}-3}x_4^{2^{s+2}-2}\\
\medskip
&\quad + \gamma_{79}x_1^{2^{s+2}-5}x_2^{7}x_3^{2^{s+2}-3}x_4^{2^{s+1}-2} + \gamma_{80}x_1^{2^{s+1}-5}x_2^{7}x_3^{2^{s+2}-3}x_4^{2^{s+2}-2}\big),\\
\medskip
\sigma_2(f)&\equiv_{\omega_{(s, 1)}} \big(\gamma_{71}b_{s,\, 70} + \gamma_{70}b_{s,\, 71} + \gamma_{72}x_1^{3}x_2^{2^{s+2}-2}x_3^{2^{s+2}-3}x_4^{2^{s+1}-1}\\
\medskip
&\quad +  \gamma_{73}x_1^{2^{s+1}-1}x_2^{2^{s+2}-3}x_3^{3}x_4^{2^{s+2}-2} +\gamma_{78}x_1^{7}x_2^{2^{s+1}-3}x_3^{2^{s+2}-5}x_4^{2^{s+2}-2}\\
\medskip
&\quad + \gamma_{79}x_1^{7}x_2^{2^{s+2}-3}x_3^{2^{s+2}-5}x_4^{2^{s+1}-2} +  \gamma_{80}x_1^{7}x_2^{2^{s+2}-3}x_3^{2^{s+1}-5}x_4^{2^{s+2}-2}\big),\\
\medskip
\sigma_3(f)&\equiv_{\omega_{(s, 1)}} \big(\gamma_{72}b_{s,\, 71} + \gamma_{71}b_{s,\, 72} + \gamma_{70}x_1^{3}x_2^{2^{s+1}-1}x_3^{2^{s+2}-2}x_4^{2^{s+2}-3}\\
\medskip
&\quad +  \gamma_{73}x_1^{2^{s+1}-1}x_2^{3}x_3^{2^{s+2}-2}x_4^{2^{s+2}-3} +\gamma_{78}x_1^{7}x_2^{2^{s+2}-5}x_3^{2^{s+2}-2}x_4^{2^{s+1}-3}\\
\medskip
&\quad + \gamma_{79}x_1^{7}x_2^{2^{s+2}-5}x_3^{2^{s+1}-2}x_4^{2^{s+2}-3} +  \gamma_{80}x_1^{7}x_2^{2^{s+1}-5}x_3^{2^{s+2}-2}x_4^{2^{s+2}-3}\big).
\end{array}$$
Using the actions of the Steenrod squares $Sq^{i}$ for $i\in \{1,2,4,8\}$ and the Cartan formula, we get
$$  \begin{array}{ll}
x_1^{3}x_2^{2^{s+1}-1}x_3^{2^{s+2}-2}x_4^{2^{s+2}-3}&\equiv_{\omega_{(s, 1)}} b_{s,\, 70},\bigg|\ x_1^{3}x_2^{2^{s+2}-2}x_3^{2^{s+2}-3}x_4^{2^{s+1}-1}\equiv_{\omega_{(s, 1)}} b_{s,\, 72},\\
x_1^{7}x_2^{2^{s+1}-5}x_3^{2^{s+2}-2}x_4^{2^{s+2}-3}&\equiv_{\omega_{(s, 1)}} b_{s,\, 80},\bigg|\ x_1^{7}x_2^{2^{s+1}-3}x_3^{2^{s+2}-5}x_4^{2^{s+2}-2}\equiv_{\omega_{(s, 1)}} b_{s,\, 80},\\
x_1^{7}x_2^{2^{s+2}-5}x_3^{2^{s+1}-2}x_4^{2^{s+2}-3}&\equiv_{\omega_{(s, 1)}} b_{s,\, 78},\bigg|\ x_1^{7}x_2^{2^{s+2}-5}x_3^{2^{s+2}-2}x_4^{2^{s+1}-3}\equiv_{\omega_{(s, 1)}} b_{s,\, 79},\\
x_1^{7}x_2^{2^{s+2}-3}x_3^{2^{s+1}-5}x_4^{2^{s+2}-2}&\equiv_{\omega_{(s, 1)}} b_{s,\, 78},\bigg|\ x_1^{7}x_2^{2^{s+2}-3}x_3^{2^{s+2}-5}x_4^{2^{s+1}-2}\equiv_{\omega_{(s, 1)}} b_{s,\, 79},\\
x_1^{2^{s+1}-5}x_2^{7}x_3^{2^{s+2}-3}x_4^{2^{s+2}-2}&\equiv_{\omega_{(s, 1)}} b_{s,\, 80}, \bigg|\ x_1^{2^{s+1}-1}x_2^{3}x_3^{2^{s+2}-2}x_4^{2^{s+2}-3}\equiv_{\omega_{(s, 1)}} b_{s,\, 73},\\
\medskip
x_1^{2^{s+1}-1}x_2^{2^{s+2}-3}x_3^{3}x_4^{2^{s+2}-2}&\equiv_{\omega_{(s, 1)}} \big(b_{s,\, 73} + b_{s,\, 78} + b_{s,\, 80}\big),  \\
\medskip
x_1^{2^{s+2}-5}x_2^{7}x_3^{2^{s+1}-3}x_4^{2^{s+2}-2}&\equiv_{\omega_{(s, 1)}}  \big(b_{s,\, 78} + b_{s,\, 80}\big),\\
\medskip
x_1^{2^{s+2}-5}x_2^{7}x_3^{2^{s+2}-3}x_4^{2^{s+1}-2}&\equiv_{\omega_{(s, 1)}}  \big(b_{s,\, 79} + b_{s,\, 80}\big),\\
\medskip
x_1^{2^{s+2}-3}x_2^{3}x_3^{2^{s+1}-1}x_4^{2^{s+2}-2}&\equiv_{\omega_{(s, 1)}}  \big(b_{s,\, 71} + b_{s,\, 80}\big),\\
\medskip
x_1^{2^{s+2}-3}x_2^{3}x_3^{2^{s+2}-2}x_4^{2^{s+1}-1}&\equiv_{\omega_{(s, 1)}}  \big(b_{s,\, 72} + b_{s,\, 80}\big).
\end{array}$$
Combining the above computations and the relations $\sigma_i(f)\equiv_{\omega_{(s,\, 1)}} f$ for $1\leq i\leq 3,$ we obtain
$\gamma_{78} = \gamma_{79}$ and $\gamma_{j} + \gamma_{79} + \gamma_{80} = 0,$ for $70\leq j\leq 73.$  The lemma is proved.
\end{proof}

Now, let $[g]\in (Q^{\otimes 4}_{n_s})^{\omega_{(s, 1)}})^{GL_4(k)},$ by Lemmas \ref{bdc21} and \ref{bdc23}, we have
$$ g\equiv_{\omega_{(s, 1)}}\big(\rho_1q_{s,\, 1} + \rho_2\widehat{q_{s,\, 2}} + \rho_3\widehat{q_{s,\, 3}} + \rho_4\widehat{q_{s,\, 4}}+ \rho_5\widehat{q_{s,\, 5}}\big),$$
with $\rho_i\in k$ for all $i,\, 1\leq i\leq 5.$ Direct computing from the relation $(\sigma_4(g) + g)\equiv_{\omega_{(s, 1)}} 0,$ we get
$$ \begin{array}{ll}
\medskip
(\sigma_4(g) + g) &\equiv_{\omega_{(s, 1)}} \big((\rho_1 + \rho_2)b_{s,\, 1} + (\rho_1 + \rho_3)b_{s,\, 2} + \rho_1b_{s,\, 7} + (\rho_4 + \rho_5)b_{s,\, 58}\\
&\quad\quad\quad + \rho_4b_{s,\, 76} + \mbox{others terms}\big) \equiv_{\omega_{(s, 1)}} 0,
\end{array}$$
and therefore $\rho_i = 0$ for all $i.$ This completes the proof of the proposition.
\end{proof}

Now, we consider the element $\zeta\in (P_4)^{*}_{n_1}$, which is the following sum:\\[2mm]
$\begin{array}{ll}
&a_1^{(5)}a_2^{(5)}a_3^{(5)}a_4^{(2)}+
 a_1^{(5)}a_2^{(5)}a_3^{(6)}a_4^{(1)}+
 a_1^{(3)}a_2^{(5)}a_3^{(8)}a_4^{(1)}+
a_1^{(5)}a_2^{(3)}a_3^{(8)}a_4^{(1)} +
\medskip
 a_1^{(3)}a_2^{(6)}a_3^{(7)}a_4^{(1)}\\
&+ a_1^{(5)}a_2^{(7)}a_3^{(4)}a_4^{(1)}+
a_1^{(7)}a_2^{(5)}a_3^{(4)}a_4^{(1)}+
 a_1^{(3)}a_2^{(9)}a_3^{(4)}a_4^{(1)}+
 a_1^{(9)}a_2^{(3)}a_3^{(4)}a_4^{(1)}+
\medskip
a_1^{(3)}a_2^{(9)}a_3^{(3)}a_4^{(2)}\\
&+
a_1^{(9)}a_2^{(3)}a_3^{(3)}a_4^{(2)}+
a_1^{(5)}a_2^{(9)}a_3^{(2)}a_4^{(1)}+
a_1^{(9)}a_2^{(5)}a_3^{(2)}a_4^{(1)} +
a_1^{(5)}a_2^{(10)}a_3^{(1)}a_4^{(1)}+
\medskip
a_1^{(9)}a_2^{(6)}a_3^{(1)}a_4^{(1)}\\
&+
 a_1^{(3)}a_2^{(11)}a_3^{(2)}a_4^{(1)}+
 a_1^{(11)}a_2^{(3)}a_3^{(2)}a_4^{(1)} +
 a_1^{(5)}a_2^{(5)}a_3^{(3)}a_4^{(4)}+
 a_1^{(5)}a_2^{(3)}a_3^{(5)}a_4^{(4)}+
\medskip
 a_1^{(3)}a_2^{(5)}a_3^{(5)}a_4^{(4)}\\
&+
 a_1^{(3)}a_2^{(12)}a_3^{(1)}a_4^{(1)}+
  a_1^{(11)}a_2^{(4)}a_3^{(1)}a_4^{(1)}+
 a_1^{(7)}a_2^{(8)}a_3^{(1)}a_4^{(1)}+
 a_1^{(7)}a_2^{(7)}a_3^{(1)}a_4^{(2)}+
\medskip
a_1^{(13)}a_2^{(2)}a_3^{(1)}a_4^{(1)}\\
&+
 a_1^{(14)}a_2^{(1)}a_3^{(1)}a_4^{(1)}+
 a_1^{(6)}a_2^{(5)}a_3^{(3)}a_4^{(3)}+
 a_1^{(5)}a_2^{(3)}a_3^{(6)}a_4^{(3)}+
 a_1^{(3)}a_2^{(6)}a_3^{(5)}a_4^{(3)} +
\medskip
 a_1^{(6)}a_2^{(3)}a_3^{(3)}a_4^{(5)}\\
&+
 a_1^{(3)}a_2^{(3)}a_3^{(6)}a_4^{(5)}+
 a_1^{(3)}a_2^{(6)}a_3^{(3)}a_4^{(5)}+
  a_1^{(5)}a_2^{(3)}a_3^{(3)}a_4^{(6)}+
 a_1^{(3)}a_2^{(5)}a_3^{(3)}a_4^{(6)}+
\medskip
 a_1^{(3)}a_2^{(3)}a_3^{(5)}a_4^{(6)}\\
&+
 a_1^{(3)}a_2^{(3)}a_3^{(3)}a_4^{(8)}+
a_1^{(3)}a_2^{(3)}a_3^{(4)}a_4^{(7)} +
 a_1^{(3)}a_2^{(5)}a_3^{(2)}a_4^{(7)}+
 a_1^{(3)}a_2^{(6)}a_3^{(1)}a_4^{(7)}+
\medskip
 a_1^{(3)}a_2^{(3)}a_3^{(9)}a_4^{(2)}\\
&+
 a_1^{(3)}a_2^{(3)}a_3^{(10)}a_4^{(1)}+
a_1^{(5)}a_2^{(3)}a_3^{(7)}a_4^{(2)}+
 a_1^{(5)}a_2^{(7)}a_3^{(3)}a_4^{(2)}+
 a_1^{(7)}a_2^{(5)}a_3^{(3)}a_4^{(2)}.
\end{array}$\\[2mm]
Then, $\zeta$ is $\widehat{A}$-annihilated (or $\zeta\in P((P_4)^{*}_{n_1})$). Indeed, notice that by the unstable condition, we need only to compute the (right) action of $Sq^{i}$ for $i = 1, 2, 4.$ It is easy to check that $(\zeta)Sq^{i} = 0$ for $i = 1, 2.$ A direct calculation shows that $(\zeta)Sq^{4}$ is the following sum:\\[2mm]
$\begin{array}{ll}
& a_1^{(3)}a_2^{(3)}a_3^{(5)}a_4^{(2)}+
 a_1^{(3)}a_2^{(5)}a_3^{(3)}a_4^{(2)}+
 a_1^{(5)}a_2^{(3)}a_3^{(3)}a_4^{(2)}+
 a_1^{(3)}a_2^{(3)}a_3^{(6)}a_4^{(1)}+
\medskip
 a_1^{(3)}a_2^{(5)}a_3^{(4)}a_4^{(1)}\\
&+
 a_1^{(3)}a_2^{(3)}a_3^{(6)}a_4^{(1)}+
 a_1^{(5)}a_2^{(3)}a_3^{(4)}a_4^{(1)}+
 a_1^{(3)}a_2^{(3)}a_3^{(6)}a_4^{(1)}+
 a_1^{(3)}a_2^{(7)}a_3^{(2)}a_4^{(1)}+
\medskip
 a_1^{(7)}a_2^{(3)}a_3^{(2)}a_4^{(1)}\\
&+
 a_1^{(3)}a_2^{(5)}a_3^{(4)}a_4^{(1)}+
 a_1^{(3)}a_2^{(7)}a_3^{(2)}a_4^{(1)}+
 a_1^{(5)}a_2^{(3)}a_3^{(4)}a_4^{(1)}+
 a_1^{(7)}a_2^{(3)}a_3^{(2)}a_4^{(1)}+
\medskip
 a_1^{(3)}a_2^{(5)}a_3^{(3)}a_4^{(2)}\\
&+
 a_1^{(5)}a_2^{(3)}a_3^{(3)}a_4^{(2)}+
 a_1^{(5)}a_2^{(5)}a_3^{(2)}a_4^{(1)}+
 a_1^{(3)}a_2^{(7)}a_3^{(2)}a_4^{(1)}+
 a_1^{(5)}a_2^{(5)}a_3^{(2)}a_4^{(1)}+
\medskip
 a_1^{(7)}a_2^{(3)}a_3^{(2)}a_4^{(1)}\\
&+
 a_1^{(5)}a_2^{(6)}a_3^{(1)}a_4^{(1)}+
 a_1^{(5)}a_2^{(6)}a_3^{(1)}a_4^{(1)}+
 a_1^{(3)}a_2^{(7)}a_3^{(2)}a_4^{(1)}+
 a_1^{(7)}a_2^{(3)}a_3^{(2)}a_4^{(1)}+
\medskip
 a_1^{(3)}a_2^{(3)}a_3^{(3)}a_4^{(4)}\\
&+
 a_1^{(3)}a_2^{(5)}a_3^{(3)}a_4^{(2)}+
 a_1^{(5)}a_2^{(3)}a_3^{(3)}a_4^{(2)}+
 a_1^{(3)}a_2^{(3)}a_3^{(3)}a_4^{(4)}+
 a_1^{(3)}a_2^{(3)}a_3^{(5)}a_4^{(2)}+
\medskip
 a_1^{(5)}a_2^{(3)}a_3^{(3)}a_4^{(2)}\\
&+
 a_1^{(3)}a_2^{(3)}a_3^{(3)}a_4^{(4)}+
 a_1^{(3)}a_2^{(3)}a_3^{(5)}a_4^{(2)}+
 a_1^{(3)}a_2^{(5)}a_3^{(3)}a_4^{(2)}+
 a_1^{(7)}a_2^{(4)}a_3^{(1)}a_4^{(1)}+
\medskip
 a_1^{(7)}a_2^{(4)}a_3^{(1)}a_4^{(1)}\\
&+
 a_1^{(3)}a_2^{(3)}a_3^{(3)}a_4^{(4)}+
 a_1^{(3)}a_2^{(3)}a_3^{(5)}a_4^{(2)}+
 a_1^{(3)}a_2^{(3)}a_3^{(6)}a_4^{(1)}.
\end{array}$\\[1mm]
Thus, $(\zeta)Sq^{4} = 0.$ So, combining Proposition \ref{mdbsc2} and the fact that $\langle [\widetilde{\zeta}], [\zeta] \rangle  =1$ gives
$$ k\otimes_{GL_4(k)}P((P_4)_{n_s}^{*}) = ((Q^{\otimes 4}_{n_s})^{GL_4(k)})^{*}= \left\{\begin{array}{ll}
\langle ([\widetilde{\zeta}])^*\rangle = \langle [\zeta] \rangle &\mbox{for $s = 1$},\\
0 &\mbox{otherwise}.
\end{array}\right.$$
The theorem is proved.

\subsection{Proof of Theorem \ref{dlct}}

We put $n_{s,t}:= 3(2^{s}-1) + 2^{s}(2^{t+1}-1),$ then from a result in Sum \cite{N.S},  we have an isomorphism $ Q^{\otimes 4}_{n_{s, t}} \cong (Q^{\otimes 4}_{n_{s,t}})^{\omega_{(s, t, 1)}}\bigoplus (Q^{\otimes 4}_{n_{s,t}})^{\omega_{(s, t, 2)}},$ where $\omega_{(s, t, 1)}:= \underset{\mbox{{$s$ times }}}{\underbrace{(3, 3, \ldots, 3}}, \underset{\mbox{{$(t+1)$ times }}}{\underbrace{2, 2, \ldots, 2)}}$ and $\omega_{(s, t, 2)}:= \underset{\mbox{{$(s+1)$ times }}}{\underbrace{(3, 3, \ldots, 3}}, \underset{\mbox{{$(t-1)$ times }}}{\underbrace{2, 2, \ldots, 2)}}.$ By using an admissible basis of $(Q^{\otimes 4}_{n_{s,t}})^{\omega_{(s, t, j)}}$ in \cite{N.S} and similar techniques as in the proof of Proposition \ref{md3}, we have the following.

\begin{md}\label{mdt}
Let $s$ and $t$ be positive integers such that $t\geq 4.$ Then, 
$$ \dim((Q^{\otimes 4}_{n_{s,t}})^{\omega_{(s, t, j)}})^{GL_4(k)} =\left\{\begin{array}{ll}
0&\mbox{for $j = 1$ and $s\in \{1, 2\}$},\\
1&\mbox{for $j = 2$ and $s\in \{1, 2\}$},\\
1&\mbox{for $j\in \{1, 2\}$ and $s\geq 3$}.
\end{array}\right.$$
\end{md}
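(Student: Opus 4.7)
The plan is to follow the blueprint of Proposition \ref{md3} in the previous subsection, upgraded to handle the weight vectors $\omega_{(s,t,1)}$ and $\omega_{(s,t,2)}$ that arise for $t\geq 4$. Since the spike $x_1^{2^{s+t+1}-1}x_2^{2^{s+1}-1}x_3^{2^{s}-1}x_4^{2^{s}-1}$ is a minimal spike in $(P_4)_{n_{s,t}}$ whose weight vector is $\omega_{(s,t,1)}$, every admissible basis element of $(Q^{\otimes 4}_{n_{s,t}})^{\omega_{(s,t,1)}}$ is equal to its class modulo $(P_4)_{n_{s,t}}^{<\omega_{(s,t,1)}}$, while the classes in $(Q^{\otimes 4}_{n_{s,t}})^{\omega_{(s,t,2)}}$ must be considered modulo the lower stratum. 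In both cases I first decompose
\[
(Q^{\otimes 4}_{n_{s,t}})^{\omega_{(s,t,j)}}\cong \underline{(Q^{\otimes 4}_{n_{s,t}})}^{\omega_{(s,t,j)}}\bigoplus\overline{(Q^{\otimes 4}_{n_{s,t}})}^{\omega_{(s,t,j)}}
\]
and, inside each summand, split into $\Sigma_4$-submodules of the form $\Sigma_4(x)$ generated by a single admissible monomial, exactly as done for $\Sigma_4(a_{s,1})$, $\Sigma_4(a_{s,13})$, $\Sigma_4(a_{s,25})$ in Lemma \ref{bd1} and for $\Sigma_4(b_{s,48})$, $\Sigma_4(b_{s,54},b_{s,66})$, $\mathcal M$ in Lemma \ref{bdc23}.

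First I would work out the $\Sigma_4$-invariants. On the spike-type submodules living in $\underline{(Q^{\otimes 4}_{n_{s,t}})}^{\omega_{(s,t,j)}}$, the action of $\Sigma_4$ on the admissible basis is transitive, so the unique $\Sigma_4$-invariant is the sum of all basis elements of that orbit, giving invariants $[q_{s,t,1}]$, $[q_{s,t,2}]$, $[q_{s,t,3}]$ analogous to the $q_{s,i}$ in Lemma \ref{bd1}. On the $\overline{}$-part I expand a putative invariant $f\equiv_{\omega}\sum \gamma_x x$ and impose $\sigma_i(f)\equiv_\omega f$ for $i=1,2,3$, using the Cartan formula together with Theorem \ref{dlSi} to reduce every non-admissible image back to the admissible basis modulo $\widehat{A}(P_4)_{n_{s,t}}+(P_4)_{n_{s,t}}^{<\omega}$. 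This produces a homogeneous linear system in the $\gamma_x$ whose solution space I expect to be generated by a small explicit list of invariants, one summand per orbit, in perfect analogy with Lemmas \ref{bd2}, \ref{bdc22}, \ref{bdc23}.

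Second I would impose the extra relation $\sigma_4(h)\equiv_\omega h$. Writing the most general $\Sigma_4$-invariant as a linear combination of the invariants produced in step one with coefficients $\rho_1,\dots,\rho_N\in k$, the computation of $\sigma_4(h)+h$ in the admissible basis yields a system of linear equations in the $\rho_i$. For $j=1$ and $s\in\{1,2\}$ I expect the system to force all $\rho_i=0$, as was the case for $((Q^{\otimes 4}_{n_2})^{\omega_{(2,1)}})^{GL_4(k)}$ in Proposition \ref{md2}. For $j=2$, or for $j=1$ with $s\geq 3$, the $\sigma_4$-condition should collapse all the $\rho_i$ to a single common value, yielding a one-dimensional invariant space. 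The two distinguished generators predicted by Theorem \ref{dlct}, namely the duals of $a_1^{(0)}a_2^{(2^{s+1}-1)}a_3^{(2^{s+t}-1)}a_4^{(2^{s+t}-1)}$ and $a_1^{(0)}a_2^{(2^{s}-1)}a_3^{(2^{s}-1)}a_4^{(2^{s+t+1}-1)}$, live in weight vectors $\omega_{(s,t,2)}$ and $\omega_{(s,t,1)}$ respectively (by reading the dyadic expansions of their exponents), which exactly accounts for the dimension $2$ when $s\geq 3$ and dimension $1$ when $s\in\{1,2\}$ because only the $\omega_{(s,t,2)}$-generator survives in the small-$s$ regime.

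Finally, to close the argument I only need to exhibit $\widehat{A}$-annihilated representatives in $(P_4)_{n_{s,t}}^*$ whose pairing with the candidate $GL_4(k)$-invariants in $Q^{\otimes 4}_{n_{s,t}}$ is non-trivial; the elements $a_1^{(0)}a_2^{(2^{s+1}-1)}a_3^{(2^{s+t}-1)}a_4^{(2^{s+t}-1)}$ and $a_1^{(0)}a_2^{(2^{s}-1)}a_3^{(2^{s}-1)}a_4^{(2^{s+t+1}-1)}$ are primitive (check $Sq^{2^i}_*$ for $0\leq i\leq t$ directly via $Sq^{2^i}_*(a_j^{(2^k-1)})=a_j^{(2^k-1-2^i)}$, which vanishes whenever $2^i<2^k$ modulo the dyadic test on binomial coefficients), so they provide the required dual basis and force the dimension to be exactly as stated. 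The main obstacle I anticipate is the sheer bookkeeping: for $t\geq 4$ the admissible basis of $\overline{(Q^{\otimes 4}_{n_{s,t}})}^{\omega_{(s,t,j)}}$ contains dozens of monomials, and the hit-reduction of $\sigma_i(f)$ back to admissible form requires repeated applications of Cartan's formula together with Theorem \ref{dlSi}; separating the cases $s=1$, $s=2$, $s=3$, $s\geq 4$ is unavoidable because the shapes of the admissible basis (and hence the resulting linear systems) genuinely differ between these ranges, just as in Sum \cite{N.S}.
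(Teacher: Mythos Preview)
Your plan is exactly the paper's: it states Proposition \ref{mdt} with no detailed argument, saying only that it follows ``by using an admissible basis of $(Q^{\otimes 4}_{n_{s,t}})^{\omega_{(s,t,j)}}$ in \cite{N.S} and similar techniques as in the proof of Proposition \ref{md3}'', and then combines the resulting upper bound with the primitive elements $\zeta_{s,t,1},\zeta_{s,t,2}$ to pin down the dimension. Your $\Sigma_4$-orbit decomposition followed by the $\sigma_4$-relation is precisely that template.

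Two bookkeeping slips are worth fixing before you execute. First, your ``minimal spike'' $x_1^{2^{s+t+1}-1}x_2^{2^{s+1}-1}x_3^{2^{s}-1}x_4^{2^{s}-1}$ has degree $2^{s+t+1}+2^{s+2}-4\neq n_{s,t}$; the actual minimal spike in degree $n_{s,t}=2^{s+t+1}+2^{s+1}-3$ is $x_1^{2^{s+t+1}-1}x_2^{2^{s}-1}x_3^{2^{s}-1}$, whose weight vector is $(3^{s},1^{t+1})$, strictly below $\omega_{(s,t,1)}$. So your assertion that ``$[x]_{\omega_{(s,t,1)}}=[x]$'' for admissible $x$ is not justified that way; both weight pieces here are genuine subquotients and you must work modulo the lower stratum in each. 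Second, and relatedly, the dual monomial to $\zeta_{s,t,2}=a_1^{(0)}a_2^{(2^{s}-1)}a_3^{(2^{s}-1)}a_4^{(2^{s+t+1}-1)}$ has weight vector $(3^{s},1^{t+1})$, not $\omega_{(s,t,1)}$, so it does \emph{not} literally ``live in'' the $\omega_{(s,t,1)}$-piece. The correct logic, which the paper uses implicitly, is only the inequality $\dim(Q^{\otimes 4}_{n_{s,t}})^{GL_4(k)}\le \sum_j\dim((Q^{\otimes 4}_{n_{s,t}})^{\omega_{(s,t,j)}})^{GL_4(k)}$ together with the lower bound coming from the non-vanishing of $Tr_4^{A}([\zeta_{s,t,i}])$ in ${\rm Ext}$; you do not need to match each primitive to a specific weight stratum.
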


On the other hand, we see that the elements $\zeta_{s, t, 1} = a_1^{(0)}a_2^{(2^{s+1}-1)}a_3^{(2^{s+t}-1)}a_4^{(2^{s+t}-1)}$ and $\zeta_{s, t, 2} = a_1^{(0)}a_2^{(2^{s}-1)}a_3^{(2^{s}-1)}a_4^{(2^{s+t+1}-1)}$ belong to ${\rm Ext}_A^{0, n_{s,t}}(k, P_4).$ So, by Theorem \ref{dlCH}, $\zeta_{s, t, 1}$ and $\zeta_{s, t, 2}$ are representative of the non-zero elements $h_0h_{s+1}h_{s+t}^{2}$ and $h_0h_{s}^{2}h_{s+t+1}\in {\rm Ext}_A^{4, n_{s,t}+4}(k, k)$  respectively. Moreover, by Theorem \ref{dlntg},  ${\rm Ext}_A^{4, n_{s,t}+4}(k, k)$ has dimension $2$ for all $s > 0$ and $t > 3.$ In addition, by a direct calculation, we can show that $k\otimes_{GL_4(k)}P((P_4)_{n_{s, t}}^{*})  = \langle [\zeta_{s, t, 1}]\rangle$ for $s = 1, 2$ and that $k\otimes_{GL_4(k)}P((P_4)_{n_{s, t}}^{*})  = \langle [\zeta_{s, t, 1}],\, [\zeta_{s, t, 2}]\rangle$ for all $s\geq 3.$  Now, the theorem follows from these data and Proposition \ref{mdt}. 

\subsection{Proof of Theorem \ref{dlc3}}

For simplicity, we prove the theorem for $s\geq 4.$ The others cases use a similar technique. We denote by $n_s = 2(2^{s} - 1) + 2^{s}.$ Based upon a result in Sum \cite{N.S}, we have that
$$ {\rm Ker}[\overline{Sq}^{0}]_{n_s} = (Q^{\otimes 4}_{n_s})^{\omega_{(s)}} = \underline{(Q^{\otimes 4}_{n_s})}^{\omega_{(s)}}\bigoplus \overline{(Q^{\otimes 4}_{n_s})}^{\omega_{(s)}},\ \ \omega_{(s)}:= \underset{\mbox{{$s$ times of $2$}}}{\underbrace{(2, 2, \ldots, 2}},1).$$
and that an admissible monomial basis of $\underline{(Q^{\otimes 4}_{n_s})}^{\omega_{(s)}}$ is determined as follows:

For $s = 1,$

\begin{center}
\begin{tabular}{llll}
$c_{1,\,1}= x_3x_4^{3}$, & $c_{1,\,2}= x_3^{3}x_4$, & $c_{1,\,3}= x_2x_4^{3}$, & $c_{1,\,4}= x_2x_3^{3}$, \\
$c_{1,\,5}= x_2^{3}x_4$, & $c_{1,\,6}= x_2^{3}x_3$, & $c_{1,\,7}= x_1x_4^{3}$, & $c_{1,\,8}= x_1x_3^{3}$, \\
$c_{1,\,9}= x_1x_2^{3}$, & $c_{1,\,10}= x_1^{3}x_4$, & $c_{1,\,11}= x_1^{3}x_3$, & $c_{1,\,12}= x_1^{3}x_2$, \\
$c_{1,\,13}= x_2x_3x_4^{2}$, & $c_{1,\,14}= x_2x_3^{2}x_4$, & $c_{1,\,15}= x_1x_3x_4^{2}$, & $c_{1,\,16}= x_1x_3^{2}x_4$, \\
$c_{1,\,17}= x_1x_2x_4^{2}$, & $c_{1,\,18}= x_1x_2x_3^{2}$, & $c_{1,\,19}= x_1x_2^{2}x_4$, & $c_{1,\,20}= x_1x_2^{2}x_3$.
\end{tabular}%
\end{center}

For $s\geq 2,$

\begin{center}
\begin{tabular}{llr}
$c_{s,\,1}= x_2x_3^{2^{s}-2}x_4^{2^{s+1}-1}$, & $c_{s,\,2}= x_2x_3^{2^{s+1}-1}x_4^{2^{s}-2}$, & \multicolumn{1}{l}{$c_{s,\,3}= x_2^{2^{s+1}-1}x_3x_4^{2^{s}-2}$,} \\
$c_{s,\,4}= x_1x_3^{2^{s}-2}x_4^{2^{s+1}-1}$, & $c_{s,\,5}= x_1x_3^{2^{s+1}-1}x_4^{2^{s}-2}$, & \multicolumn{1}{l}{$c_{s,\,6}= x_1^{2^{s+1}-1}x_3x_4^{2^{s}-2}$,} \\
$c_{s,\,7}= x_1x_2^{2^{s}-2}x_4^{2^{s+1}-1}$, & $c_{s,\,8}= x_1x_2^{2^{s+1}-1}x_4^{2^{s}-2}$, & \multicolumn{1}{l}{$c_{s,\,9}= x_1^{2^{s+1}-1}x_2x_4^{2^{s}-2}$,} \\
$c_{s,\,10}= x_1x_2^{2^{s}-2}x_3^{2^{s+1}-1}$, & $c_{s,\,11}= x_1x_2^{2^{s+1}-1}x_3^{2^{s}-2}$, & \multicolumn{1}{l}{$c_{s,\,12}= x_1^{2^{s+1}-1}x_2x_3^{2^{s}-2}$,} \\
$c_{s,\,13}= x_3^{2^{s}-1}x_4^{2^{s+1}-1}$, & $c_{s,\,14}= x_3^{2^{s+1}-1}x_4^{2^{s}-1}$, & \multicolumn{1}{l}{$c_{s,\,15}= x_2^{2^s-1}x_4^{2^{s+1}-1}$,} \\
$c_{s,\,16}= x_2^{2^s-1}x_3^{2^{s+1}-1}$, & $c_{s,\,17}= x_2^{2^{s+1}-1}x_4^{2^{s}-1}$, & \multicolumn{1}{l}{$c_{s,\,18}= x_2^{2^{s+1}-1}x_3^{2^{s}-1}$,} \\
$c_{s,\,19}= x_1^{2^s-1}x_4^{2^{s+1}-1}$, & $c_{s,\,20}= x_1^{2^s-1}x_3^{2^{s+1}-1}$, & \multicolumn{1}{l}{$c_{s,\,21}= x_1^{2^{s+1}-1}x_4^{2^{s}-1}$,} \\
$c_{s,\,22}= x_1^{2^{s+1}-1}x_3^{2^{s}-1}$, & $c_{s,\,23}= x_1^{2^s-1}x_2^{2^{s+1}-1}$, & \multicolumn{1}{l}{$c_{s,\,24}= x_1^{2^{s+1}-1}x_2^{2^{s}-1}$,} \\
$c_{s,\,25}= x_2x_3^{2^{s}-1}x_4^{2^{s+1}-2}$, & $c_{s,\,26}= x_2x_3^{2^{s+1}-2}x_4^{2^{s}-1}$, & \multicolumn{1}{l}{$c_{s,\,27}= x_2^{2^s-1}x_3x_4^{2^{s+1}-2}$,} \\
$c_{s,\,28}= x_1x_3^{2^{s}-1}x_4^{2^{s+1}-2}$, & $c_{s,\,29}= x_1x_3^{2^{s+1}-2}x_4^{2^{s}-1}$, & \multicolumn{1}{l}{$c_{s,\,30}= x_1^{2^s-1}x_3x_4^{2^{s+1}-2}$,} \\
$c_{s,\,31}= x_1x_2^{2^{s}-1}x_4^{2^{s+1}-2}$, & $c_{s,\,32}= x_1x_2^{2^{s+1}-2}x_4^{2^{s}-1}$, & \multicolumn{1}{l}{$c_{s,\,33}= x_1^{2^s-1}x_2x_4^{2^{s+1}-2}$,} \\
$c_{s,\,34}= x_1x_2^{2^{s}-1}x_3^{2^{s+1}-2}$, & $c_{s,\,35}= x_1x_2^{2^{s+1}-2}x_3^{2^{s}-1}$, & \multicolumn{1}{l}{$c_{s,\,36}= x_1^{2^s-1}x_2x_3^{2^{s+1}-2}$,} \\
$c_{s,\,37}= x_2^{3}x_3^{2^{s+1}-3}x_4^{2^{s}-2}$, & $c_{s,\,38}= x_1^{3}x_3^{2^{s+1}-3}x_4^{2^{s}-2}$, & \multicolumn{1}{l}{$c_{s,\,39}= x_1^{3}x_2^{2^{s+1}-3}x_4^{2^{s}-2}$,} \\
$c_{s,\,40}= x_1^{3}x_2^{2^{s+1}-3}x_3^{2^{s}-2}$. &  &  
\end{tabular}%
\end{center}

For $s = 2,$

\begin{center}
\begin{tabular}{lllc}
$c_{2,\,41}= x_2^{3}x_3^{3}x_4^{4}$, & $c_{2,\,42}= x_1^{3}x_3^{3}x_4^{4}$, & $c_{2,\,43}= x_1^{3}x_2^{3}x_4^{4}$, & $c_{2,\,44}= x_1^{3}x_2^{3}x_3^{4}$.
\end{tabular}%
\end{center}

For $s\geq 3,$

\begin{center}
\begin{tabular}{lcr}
$c_{s,\,41}= x_2^{3}x_3^{2^{s}-3}x_4^{2^{s+1}-2}$, & $c_{s,\,42}= x_1^{3}x_3^{2^{s}-3}x_4^{2^{s+1}-2}$, & \multicolumn{1}{l}{$c_{s,\,43}= x_1^{3}x_2^{2^{s}-3}x_4^{2^{s+1}-2}$,} \\
$c_{s,\,44}= x_1^{3}x_2^{2^{s}-3}x_3^{2^{s+1}-2}$. &       &  
\end{tabular}%
\end{center}

\newpage
\begin{lema}\label{bdc31}
The following statements are true:

\begin{itemize}

\item[i)]
For $s = 1,$ we have a direct summand decomposition of $\Sigma_4$-modules:
$$  \underline{(Q^{\otimes 4}_{n_1})}^{\omega_{(1)}} = \Sigma_4(c_{1,\, 1})\bigoplus \Sigma_4(c_{1,\, 13}),$$
where $ \Sigma_4(c_{1,\, 1}) = \langle \{[c_{1,\, j}]:\, 1\leq j\leq 12\} \rangle$ and $ \Sigma_4(c_{1,\, 13}) = \langle \{[c_{1,\, j}]:\, 13\leq j\leq 20\} \rangle.$ Moreover, 
$$ \Sigma_4(c_{1,\, 1})^{\Sigma_4} = \big\langle [\sum_{1\leq j\leq 12}c_{1,\, j}]\big \rangle,\ \ \Sigma_4(c_{1,\, 13})^{\Sigma_4} = 0.$$

\item[ii)] For $s\geq 4,$ we have a direct summand decomposition of $\Sigma_4$-modules:
$$ \underline{(Q^{\otimes 4}_{n_s})}^{\omega_{(s)}} = \Sigma_4(c_{s,\, 1})\bigoplus \Sigma_4(c_{s,\, 13})\bigoplus \Sigma_4(c_{s,\, 25})\bigoplus \Sigma_4(c_{s,\, 37}),$$
where $$ \begin{array}{ll}
\medskip
\Sigma_4(c_{s,\, 1})& = \langle \{[c_{s,\, j}]:\, 1\leq j\leq 12\} \rangle, \ \ \Sigma_4(c_{s,\, 13}) = \langle \{[c_{s,\, j}]:\, 13\leq j\leq 24\} \rangle,\\
\Sigma_4(c_{s,\, 25})& = \langle \{[c_{s,\, j}]:\, 25\leq j\leq 36\} \rangle, \ \ \Sigma_4(c_{s,\, 37}) = \langle \{[c_{s,\, j}]:\, 37\leq j\leq 44\} \rangle.
\end{array}$$

\item[iii)] $\Sigma_4(c_{s,\, j})^{\Sigma_4} = 0$ for $j = 25, 37.$

\item[iv)]  $\Sigma_4(c_{s,\, 1})^{\Sigma_4} = \langle [q_{s,\, 1}]\rangle$ with $q_{s,\, 1} := \sum_{1\leq j\leq 12}.$

\item[v)] $\Sigma_4(c_{s,\, 13})^{\Sigma_4} = \langle [q_{s,\, 2}]\rangle$ with $q_{s,\, 2} := \sum_{13\leq j\leq 24}.$

\end{itemize}
\end{lema}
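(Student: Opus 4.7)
The plan is to argue by direct case analysis on the $\Sigma_4$-orbits in the admissible basis, following closely the template already successfully applied in Lemmas \ref{bd1} and \ref{bdc21}.

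First, I would verify the direct-summand decompositions in (i) and (ii). Using the listed admissible monomials $c_{s,j}$, one checks that for each of the clusters $\{c_{s,j}\}$ with $j$ in $\{1,\ldots,12\}$, $\{13,\ldots,24\}$, $\{25,\ldots,36\}$, $\{37,\ldots,44\}$, the action of each $\sigma_i$ ($i=1,2,3$) on a basis element in a given cluster produces a polynomial whose admissible reduction stays inside the same cluster. Since these clusters exhaust the basis of $\underline{(Q^{\otimes 4}_{n_s})}^{\omega_{(s)}}$ given above, this yields the claimed direct-sum decomposition of $\Sigma_4$-modules. The checks are purely combinatorial, amounting to inspecting exponent vectors.

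Next, for parts (iv) and (v) (and the corresponding $s=1$ statement in (i)), I would take a generic element $f\equiv \sum_j \gamma_j c_{s,j}$ in the given cluster and impose the three relations $\sigma_i(f)\equiv f$ for $i=1,2,3$. Applying each $\sigma_i$ produces monomials some of which are non-admissible; I would rewrite them modulo $\widehat{A}(P_4)_{n_s}$ using Cartan's formula together with Theorem \ref{dlSi}, obtaining identities of exactly the flavor displayed in \eqref{dt5} and \eqref{dt8}. Comparing coefficients of admissible monomials in $\sigma_i(f)+f\equiv 0$ yields a linear system in the $\gamma_j$, and the expected conclusion is that the system forces $\gamma_1 = \gamma_2 = \cdots$, producing the single invariant $q_{s,1}$ or $q_{s,2}$.

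For the vanishing statements in (iii) (and the $\Sigma_4(c_{1,13})^{\Sigma_4}=0$ half of (i)), the same procedure is carried out, but now the resulting linear system should be over-determined, forcing every coefficient $\beta_j=0$. The main obstacle will be the bookkeeping for the two "mixed-exponent" clusters $\Sigma_4(c_{s,25})$ and $\Sigma_4(c_{s,37})$: the $\sigma_i$ action generates quite a few non-admissible monomials (of shape $x_1^{a_1}x_2^{a_2}x_3^{a_3}x_4^{a_4}$ with small odd exponents mixed with near-maximal ones), each of which must be expanded into an admissible sum via a careful Cartan decomposition, and one must avoid inadvertently producing hit relations that leave a spurious degree of freedom. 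To control this, I would first handle $\Sigma_4(c_{s,37})$, whose four basis elements admit the cleanest Cartan reductions (they involve only the single Cartan identity on $x_i^{2^{s+1}-3}$ factors), extract the vanishing of all $\beta_j$ there, and then reuse those reduction formulas when analyzing the larger $\Sigma_4(c_{s,25})$ cluster. The $s\in\{2,3\}$ cases are handled by the same method with the slightly modified basis lists above.
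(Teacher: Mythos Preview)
Your plan is correct and matches the paper's approach: the paper itself omits the proof entirely, remarking only that ``it is rather straightforward to prove this lemma directly,'' i.e.\ by exactly the orbit-by-orbit coefficient comparison (with Cartan reductions and Theorem \ref{dlSi}) that you describe and that was already used in Lemmas \ref{bd1}, \ref{bd2}, \ref{bdc21}. One small slip: $\Sigma_4(c_{s,37})$ has eight basis elements ($j=37,\ldots,44$), not four, but this does not affect the method.
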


 It is rather straightforward to prove this lemma directly,  so we omit the details by leaving them to the interested reader for a direct check.  Now, according to \cite{N.S}, the monomial bases of $\overline{(Q^{\otimes 4}_{n_s})}^{\omega_{(s)}}$ are given as follows:

For $s\geq 2,$

\begin{center}
\begin{tabular}{lcr}
$c_{s,\,45}= x_1x_2^{2^{s}-2}x_3x_4^{2^{s+1}-2}$, & $c_{s,\,46}= x_1x_2^{2^{s+1}-2}x_3x_4^{2^{s}-2}$, & \multicolumn{1}{l}{$c_{s,\,47}= x_1x_2^{2}x_3^{2^{s}-3}x_4^{2^{s+1}-2}$,} \\
$c_{s,\,48}= x_1x_2^{3}x_3^{2^{s}-4}x_4^{2^{s+1}-2}$, & $c_{s,\,49}= x_1x_2^{3}x_3^{2^{s+1}-2}x_4^{2^{s}-4}$, & \multicolumn{1}{l}{$c_{s,\,50}= x_1^{3}x_2x_3^{2^{s}-4}x_4^{2^{s+1}-2}$,} \\
$c_{s,\,51}= x_1^{3}x_2x_3^{2^{s+1}-2}x_4^{2^{s}-4}$, & $c_{s,\,52}= x_1^{3}x_2^{2^{s+1}-3}x_3^{2}x_4^{2^{s}-4}$, & \multicolumn{1}{l}{$c_{s,\,53}= x_1x_2x_3^{2^{s}-2}x_4^{2^{s+1}-2}$,} \\
$c_{s,\,54}= x_1x_2x_3^{2^{s+1}-2}x_4^{2^{s}-2}$, & $c_{s,\,55}= x_1x_2^{2}x_3^{2^{s+1}-3}x_4^{2^{s}-2}$. &  
\end{tabular}%
\end{center}

For $s = 2,$ $c_{2,\, 56} =  x_1^{3}x_2^{4}x_3x_4^{2}.$

For $s\geq 3,$

\begin{center}
\begin{tabular}{lcr}
$c_{s,\,56}= x_1^{3}x_2^{5}x_3^{2^{s+1}-6}x_4^{2^{s}-4}$, & $c_{s,\,57}= x_1^{3}x_2^{5}x_3^{2^{s}-6}x_4^{2^{s+1}-4}$, & \multicolumn{1}{l}{$c_{s,\,58}= x_1x_2^{3}x_3^{2^{s+1}-4}x_4^{2^{s}-2}$,} \\
$c_{s,\,59}= x_1^{3}x_2x_3^{2^{s+1}-4}x_4^{2^{s}-2}$, & $c_{s,\,60}= x_1x_2^{3}x_3^{2^{s}-2}x_4^{2^{s+1}-4}$, & \multicolumn{1}{l}{$c_{s,\,61}= x_1^{3}x_2x_3^{2^{s}-2}x_4^{2^{s+1}-4}$,} \\
$c_{s,\,62}= x_1^{3}x_2^{2^{s}-3}x_3^{2}x_4^{2^{s+1}-4}$, & $c_{s,\,63}= x_1x_2^{2}x_3^{2^{s}-4}x_4^{2^{s+1}-1}$, & \multicolumn{1}{l}{$c_{s,\,64}= x_1x_2^{2}x_3^{2^{s+1}-1}x_4^{2^{s}-4}$,} \\
$c_{s,\,65}= x_1x_2^{2^{s+1}-1}x_3^{2}x_4^{2^{s}-4}$, & $c_{s,\,66}= x_1^{2^{s+1}-1}x_2x_3^{2}x_4^{2^{s}-4}$, & \multicolumn{1}{l}{$c_{s,\,67}= x_1x_2^{2}x_3^{2^{s}-1}x_4^{2^{s+1}-4}$,} \\
$c_{s,\,68}= x_1x_2^{2}x_3^{2^{s+1}-4}x_4^{2^{s}-1}$, & $c_{s,\,69}= x_1x_2^{2^{s}-1}x_3^{2}x_4^{2^{s+1}-4}$. &  
\end{tabular}%
\end{center}

For $s = 3,$ $c_{3,\, 70} = x_1^{3}x_2^{5}x_3^{6}x_4^{8}.$

For $s\geq 4,$ $c_{s,\,70}= x_1^{2^{s}-1}x_2x_3^{2}x_4^{2^{s+1}-4}.$\\[2mm] 
Then, by direct computations, that

\begin{lema}\label{bdc32}
The space of $\Sigma_4$-invariants $(\overline{(Q^{\otimes 4}_{n_s})}^{\omega_{(s)}})^{\Sigma_4}$ is generated by the classes $[q_{s,\, 3}]$  and $[q_{s,\, 4}],$ where  
$$q_{s,\, 3} = c_{s,\, 47} + c_{s,\, 48} + \sum_{62\leq j\leq 67}c_{s,\, j}, \ \ q_{s,\, 4} = \sum_{58\leq j\leq 61}c_{s,\, j}.$$
\end{lema}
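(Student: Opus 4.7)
The plan is to follow the same strategy used in the proofs of Lemmas \ref{bdc21}, \ref{bdc22}, and \ref{bdc23}. First I would partition the admissible basis $\{[c_{s,\,j}]:\,45\le j\le 70\}$ of $\overline{(Q^{\otimes 4}_{n_s})}^{\omega_{(s)}}$ into its $\Sigma_4$-orbits and obtain a direct sum decomposition
$$\overline{(Q^{\otimes 4}_{n_s})}^{\omega_{(s)}}=\bigoplus_{i}\Sigma_4(c_{s,\,j_i}),$$
where each summand is the $\Sigma_4$-submodule generated by a chosen orbit representative. Inspection of the exponent vectors suggests grouping together the orbit containing $c_{s,\,47}$ with those reached from it by the action of $\sigma_1,\sigma_2,\sigma_3$ (which, after reduction modulo hit polynomials, sweeps through indices $47,48$ and $62,\ldots,67$), grouping $c_{s,\,58}$ with $c_{s,\,59},c_{s,\,60},c_{s,\,61}$, and treating the remaining indices ($45,46,49\text{--}57,68,69,70$) as a separate collection of small orbits.

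Next, for each summand $\mathcal{N}_i$, I would write a candidate invariant $[f]\in\mathcal{N}_i^{\Sigma_4}$ in the form $f\equiv_{\omega_{(s)}}\sum_j\gamma_j c_{s,\,j}$ with $\gamma_j\in k$, apply the three generators $\sigma_i:(P_4)_{n_s}\to(P_4)_{n_s}$, $i=1,2,3$, and rewrite each $\sigma_i(c_{s,\,j})$ as a sum of admissible monomials in $\overline{(Q^{\otimes 4}_{n_s})}^{\omega_{(s)}}$ modulo $\widehat{A}(P_4)_{n_s}+(P_4)_{n_s}^{<\omega_{(s)}}$. The key reductions use Cartan's formula together with Theorem \ref{dlSi}, exactly as in the displayed hit equations \eqref{dt5} and \eqref{dt8}. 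Imposing the three relations $\sigma_i(f)\equiv_{\omega_{(s)}} f$ gives a linear system on the $\gamma_j$; solving it determines the invariants of each $\mathcal{N}_i$. On the orbit containing $c_{s,\,47}$ I expect the system to force all eight coefficients $\gamma_{47},\gamma_{48},\gamma_{62},\ldots,\gamma_{67}$ to coincide, producing $[q_{s,\,3}]$; on the orbit of $c_{s,\,58}$ the four coefficients $\gamma_{58},\ldots,\gamma_{61}$ should likewise be forced equal, producing $[q_{s,\,4}]$; and on every other orbit the system should force all coefficients to vanish. Summing over the summands then gives $(\overline{(Q^{\otimes 4}_{n_s})}^{\omega_{(s)}})^{\Sigma_4}=\langle[q_{s,\,3}],[q_{s,\,4}]\rangle$.

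The main obstacle is the bookkeeping: after applying a permutation $\sigma_i$, many of the resulting monomials are not admissible, and each must be reduced to a combination of basis elements of $\overline{(Q^{\otimes 4}_{n_s})}^{\omega_{(s)}}$ using explicit hit equations of the type $y=\sum_{t>0}Sq^{t}(g_t)+\sum\text{(admissible)}$, where the hit equations must respect the weight vector $\omega_{(s)}$. In particular, monomials of the shape $x_1^ax_2^bx_3^cx_4^d$ with one of the exponents equal to a power of two minus $2$ or $3$ tend to expand under the $\sigma_i$ into long combinations that mix across the proposed orbit decomposition, so care is needed to show that the off-diagonal contributions cancel correctly. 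A separate, simpler verification must be carried out for the low cases $s=2,3$, where several of the indices $45$–$70$ either collapse or are replaced by the exceptional monomials $c_{2,\,56}$, $c_{3,\,70}$; this is handled by running the same procedure on the modified bases listed in the statement.

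Finally, combining Lemma \ref{bdc31} (describing $\underline{(Q^{\otimes 4}_{n_s})}^{\omega_{(s)}\,\Sigma_4}$) with the result just established for $\overline{(Q^{\otimes 4}_{n_s})}^{\omega_{(s)}\,\Sigma_4}$ yields the complete list of $\Sigma_4$-invariants in $(Q^{\otimes 4}_{n_s})^{\omega_{(s)}}={\rm Ker}[\overline{Sq}^{0}]_{n_s}$, which then feeds into the $\sigma_4$-computation needed to prove Theorem \ref{dlc3}.
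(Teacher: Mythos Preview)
The proposed direct-sum decomposition of $\overline{(Q^{\otimes 4}_{n_s})}^{\omega_{(s)}}$ into $\Sigma_4$-submodules does not exist in the way you describe, and this is a genuine gap rather than a bookkeeping obstacle. After reducing modulo hit polynomials the $\Sigma_4$-action mixes your proposed groupings: for instance, in the paper's computation the coefficient of $c_{s,\,62}$ in $\sigma_2(g)+g$ is $\beta_{51}+\beta_{54}+\beta_{62}+\beta_{63}$, and the coefficient of $c_{s,\,45}$ in $\sigma_2(g)+g$ is $\beta_{45}+\beta_{47}+\beta_{49}+\beta_{54}+\beta_{55}+\beta_{56}+\beta_{65}$. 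Thus indices from all three of your proposed blocks interact; the $\Sigma_4$-submodule generated by $\{[c_{s,\,j}]:\,j=47,48,62,\ldots,67\}$ is strictly larger than the span of those eight classes, so one cannot compute the invariants block by block and then sum.

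The paper's proof avoids this entirely by not attempting any preliminary decomposition. It writes a general element $g\equiv\sum_{45\le j\le 70}\beta_j c_{s,\,j}$ of the full $26$-dimensional space, computes $\sigma_i(g)+g$ for $i=1,2,3$ directly (reducing each non-admissible monomial via Cartan's formula), and solves the resulting single linear system in the $\beta_j$. The outcome is exactly what you predicted---$\beta_j=\beta_{47}$ for $j\in\{48,62,\ldots,67\}$, $\beta_j=\beta_{58}$ for $j\in\{59,60,61\}$, and $\beta_j=0$ otherwise---but the derivation has to pass through the full coupled system rather than an orbit-by-orbit analysis.
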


\begin{proof}
It should be noticed that $\dim \overline{(Q^{\otimes 4}_{n_s})}^{\omega_{(s)}} = 26$ with the basis $\{[c_{s,\, j}]:\, 45\leq j\leq 70\}.$ Suppose  $[g]\in (\overline{(Q^{\otimes 4}_{n_s})}^{\omega_{(s)}})^{\Sigma_4},$ then we have $g\equiv \big(\sum_{45\leq j\leq 70}\beta_jc_{s,\, j}\big)$ with $\beta_j\in k.$ Using Cartan's formula and the relations $(\sigma_i(g) + g)\equiv 0,$ for $1\leq i\leq 3,$ we get
$$ \begin{array}{ll}
\medskip
\sigma_1(g) + g &\equiv \big((\beta_{47} + \beta_{48} + \beta_{62} + \beta_{67} + \beta_{68} + \beta_{70})c_{s,\, 45} + (\beta_{48} + \beta_{49} + \beta_{67} + \beta_{69})c_{s,\, 46}\\
\medskip
&\quad + (\beta_{52} + \beta_{53})(c_{s,\, 52} + c_{s,\, 53}) + (\beta_{54} + \beta_{55})(c_{s,\, 54} + c_{s,\, 55})\\ 
\medskip
&\quad + (\beta_{48} + \beta_{56} + \beta_{57} + \beta_{67})(c_{s,\, 56} + c_{s,\, 57}) + (\beta_{60} + \beta_{61})(c_{s,\, 60} + c_{s,\, 61})\\
\medskip
&\quad + (\beta_{63} + \beta_{65})(c_{s,\, 63}+c_{s,\, 65}) + (\beta_{64} + \beta_{66})(c_{s,\, 64} +  c_{s,\, 66}) + (\beta_{48}\\
\medskip
 &\quad + \beta_{67})(c_{s,\, 68}+c_{s,\, 70})\big)\equiv 0,\\
\medskip
\sigma_2(g) + g &\equiv \big((\beta_{45} + \beta_{47} + \beta_{49} + \beta_{54} + \beta_{55} + \beta_{56} +\beta_{65})c_{s,\, 45} + (\beta_{46} + \beta_{48} + \beta_{55} + \beta_{64}  +\beta_{69})c_{s,\, 46}\\
\medskip
&\quad + (\beta_{45} + \beta_{47} + \beta_{56} + \beta_{65})c_{s,\, 47} +  (\beta_{46} + \beta_{48} + \beta_{55} + \beta_{64})c_{s,\, 48} + (\beta_{49} + \beta_{54})c_{s,\, 49}\\
\medskip
&\quad + (\beta_{50} + \beta_{52})(c_{s,\, 50} + c_{s,\, 52}) + (\beta_{49} + \beta_{54} + \beta_{69})c_{s,\, 47} + \beta_{69}c_{s,\, 55}\\
\medskip
&\quad +  (\beta_{49} + \beta_{54} + \beta_{55} + \beta_{69})c_{s,\, 56} +  (\beta_{57} + \beta_{68})c_{s,\, 57} + (\beta_{59}+ \beta_{60})(c_{s,\, 59} + c_{s,\, 60})\\
\medskip
&\quad +(\beta_{57} + \beta_{68})(c_{s,\, 57} + c_{s,\, 68}) + (\beta_{51} + \beta_{54} + \beta_{62} + \beta_{63})c_{s,\, 62} \\
\medskip
&\quad +  (\beta_{49} + \beta_{51} + \beta_{62} + \beta_{63})c_{s,\, 63}+ (\beta_{66} + \beta_{67})(c_{s,\, 66} + c_{s,\, 67}) + \beta_{55}c_{s,\, 70}\big)\equiv 0,\\
\medskip
\sigma_3(g) + g &\equiv \big((\beta_{45} + \beta_{46} + \beta_{49} + \beta_{52} + \beta_{53} + \beta_{68} +\beta_{69})c_{s,\, 45}\\
\medskip
 &\quad + (\beta_{45} + \beta_{46} + \beta_{47} + \beta_{52}  +\beta_{53} + \beta_{62} + \beta_{70})c_{s,\, 46}\\
  \medskip
&\quad + (\beta_{47} + \beta_{49} + \beta_{62})c_{s,\, 49} + (\beta_{50} + \beta_{51})(c_{s,\, 50} + c_{s,\, 51})\\
\medskip
&\quad +(\beta_{48} + \beta_{52} + \beta_{54} + \beta_{56} + \beta_{67} + \beta_{68})c_{s,\, 54}+ (\beta_{53} + \beta_{55} + \beta_{57} + \beta_{68})c_{s,\, 55}\\
\medskip
&\quad +  (\beta_{48} + \beta_{52} + \beta_{54} + \beta_{56} + \beta_{67})c_{s,\, 56} +   (\beta_{53} + \beta_{55} + \beta_{57})c_{s,\, 57} \\
\medskip
&\quad+ (\beta_{58} + \beta_{59})(c_{s,\, 58} + c_{s,\, 59}) +  (\beta_{47} + \beta_{49} + \beta_{62})c_{s,\, 62}\\
\medskip
&\quad + (\beta_{52} + \beta_{63} + \beta_{64})(c_{s,\, 63} + c_{s,\, 64})  + (\beta_{53} + \beta_{65} + \beta_{66})(c_{s,\, 65} + c_{s,\, 66})\\
\medskip
&\quad +  (\beta_{68} + \beta_{69} + \beta_{70})c_{s,\, 69} +  (\beta_{69} + \beta_{70})c_{s,\, 70}\big)\equiv 0.
 \end{array}$$
The above equalities show that 
$$ \beta_{j} = \left\{\begin{array}{ll}
\beta_{47}&\mbox{for $j \in \{48, 62, \ldots, 67\}$},\\[2mm]
\beta_{58}&\mbox{for $j \in \{59, 60, 61\}$},\\[2mm]
0&\mbox{otherwise}.
\end{array}\right.$$
The lemma follows.
\end{proof}

Now, assume that $[f]$ belongs to the invariant space $({\rm Ker}[\overline{Sq}^{0}]_{n_s})^{GL_4(k)}.$ Since $\Sigma_4\subset GL_4(k),$ following Lemmas \ref{bdc31} and \ref{bdc32}, we have
$$ f\equiv \big(\gamma_1q_{s,\, 1} + \gamma_2q_{s,\, 2} + \gamma_3q_{s,\, 3} + \gamma_4q_{s,\, 4}\big)$$
where $\gamma_i\in k$ for all $i.$ Direct computing based on the relation $(\sigma_4(f) + f)\equiv 0,$ we obtain
$$ 
(\sigma_4(f) + f)\equiv \big((\gamma_1 + \gamma_4)c_{s,\, 1} + (\gamma_1 + \gamma_3)c_{s,\, 3} + \gamma_1c_{s,\, 8}+ (\gamma_1 + \gamma_2)c_{s,\, 15} + \mbox{others terms}\big)\equiv 0.
$$
This equality implies $\gamma_i = 0$ for $1\leq i\leq 4.$ The proof of the theorem is finished.

\subsection{Proof of Theorem \ref{dlct2}}

Let $n_{s,\, t}: = 2(2^{s} - 1) + 2^{s}(2^{t}-1).$ We have seen that the Kameko map
$$[\overline{Sq}^{0}]_{n_{s,\, t}}:= \overline{Sq}^{0}: Q^{\otimes 4}_{n_{s,\, t}} \to Q^{\otimes 4}_{2^{s+t - 1} + 2^{s-1} - 3}$$
is an epimorphism of $kGL_4(k)$-modules and so 
\begin{equation}\label{bdtt2}
 Q^{\otimes 4}_{n_{s,\, t}}  \cong {\rm Ker}[\overline{Sq}^{0}]_{n_{s,\, t}} \bigoplus Q^{\otimes 4}_{2^{s+t - 1} + 2^{s-1} - 3}.
\end{equation}
Using the calculations in Sum \cite{N.S2} and Theorem \ref{dlct}, it follows that, for each $t\geq 4,$ the coinvariants $k\otimes_{GL_4(k)}P((P_4)_{2^{s+t - 1} + 2^{s-1} - 3}^{*})$ are 1-dimensional if $1\leq s\leq 4$ and are 2-dimensional if $s\geq 5.$ Thus, due to \eqref{bdtt2}, we need compute $({\rm Ker}[\overline{Sq}^{0}]_{n_{s,\, t}})^{GL_4}.$ We are going to prove the following lemma, which is crucial in the proof of the theorem.

\begin{lema}\label{bdct2}
Let $s$ and $t$ be two positive integers such that $t\geq 5.$ Then, the invariant spaces $({\rm Ker}[\overline{Sq}^{0}]_{n_{s,\, t}})^{GL_4}$ is trivial if $s = 1, 2$ and has dimension $1$ if $s\geq 3.$
\end{lema}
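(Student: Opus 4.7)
The plan is to adapt the technique used in Theorem \ref{dlc3} (the $t=1$ case of the same family) to the range $t \geq 5$. Since $n_{s,t} = 2^{s+t} + 2^s - 2$, the kernel of $\overline{Sq}^{0}$ on $Q^{\otimes 4}_{n_{s,t}}$ is precisely the weight-graded summand $(Q^{\otimes 4}_{n_{s,t}})^{\omega_{(s,t)}}$ for the weight vector $\omega_{(s,t)}$ sitting strictly below that of any minimal spike in $(P_4)_{n_{s,t}}$. Split this space as $\underline{(Q^{\otimes 4}_{n_{s,t}})}^{\omega_{(s,t)}} \oplus \overline{(Q^{\otimes 4}_{n_{s,t}})}^{\omega_{(s,t)}}$ and invoke Sum's admissible monomial basis from \cite{N.S} for each summand.

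Next, partition the admissible basis into $\Sigma_4$-orbits $\Sigma_4(c_{s,t,j})$, paralleling the orbit decomposition of Lemma \ref{bdc31}. On each orbit, after a finite list of hit-equation reductions via Cartan's formula and Theorem \ref{dlSi}, the $\Sigma_4$-action on the class-basis is transitive, so $\Sigma_4(c_{s,t,j})^{\Sigma_4}$ is at most one-dimensional and is generated by the total orbit-sum $[q_{s,t,j}]$. This produces an explicit finite list of candidate generators for $({\rm Ker}[\overline{Sq}^{0}]_{n_{s,t}})^{\Sigma_4}$, exactly as Lemmas \ref{bdc31}--\ref{bdc32} did for $t = 1$.

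Finally, impose the $\sigma_4$-action to cut $\Sigma_4$-invariants down to $GL_4$-invariants. Writing a generic $\Sigma_4$-invariant as $f \equiv \sum_{j} \gamma_j\, q_{s,t,j}$ and expanding $\sigma_4(f)$ in the admissible basis modulo $\widehat{A}(P_4)_{n_{s,t}}$ via the usual chain of Cartan-formula rewrites (the analogues of \eqref{dt5} and \eqref{dt8}), the constraint $\sigma_4(f) + f \equiv 0$ becomes a homogeneous linear system in the $\gamma_j$'s. For $s = 1, 2$ the coefficient matrix will be of full rank, forcing every $\gamma_j = 0$ and hence triviality; for $s \geq 3$ the same computation will leave exactly a one-dimensional solution space, cut out by the identification that $\sigma_4$ induces between two of the orbit-sum generators.

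The main obstacle will be the sheer volume of explicit monomial bookkeeping: Sum's admissible basis contains over a hundred monomials for large $s$ and $t$, every $\sigma_i$-image must be reduced to admissible form by an explicit sequence of hit equations, and the qualitative shape of the basis changes at the thresholds $s = 3$ and $s = 5$ (mirroring the case splits in Theorem \ref{dlc3}, and explaining the split in the statement of Theorem \ref{dlct2} between the generators $\zeta_{s,t}$, $\widetilde{\zeta}_{s,t}$, $\widehat{\zeta}_{s,t}$). No new conceptual input beyond the machinery already developed in Section 4 is required, but the case-by-case linear algebra is long and must be carried out carefully to confirm the predicted ranks across the ranges $s = 1, 2$, $s = 3, 4$, and $s \geq 5$.
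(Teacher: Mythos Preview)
Your approach is correct and matches the paper's outline proof: identify ${\rm Ker}[\overline{Sq}^{0}]_{n_{s,t}}$ with the weight-space $(Q^{\otimes 4}_{n_{s,t}})^{\omega_{(s,t)}}$ via Sum's admissible basis, compute the $\Sigma_4$-invariants by orbit-sum analysis as in Lemmas \ref{bdc31}--\ref{bdc32}, then impose $\sigma_4$ to pass to $GL_4(k)$-invariants.

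Two small corrections. First, $\omega_{(s,t)} = (\underbrace{2,\ldots,2}_{s},\underbrace{1,\ldots,1}_{t})$ is \emph{exactly} the weight vector of the minimal spike $x_1^{2^{s+t}-1}x_2^{2^{s}-1}$ in $(P_4)_{n_{s,t}}$, not strictly below it; this is what lets you work with ordinary hit-equivalence $\equiv$ rather than $\equiv_{\omega}$. Second, the threshold at $s=5$ that you anticipate does not occur in this lemma: the paper finds the kernel invariant to be $0$ for $s=1,2$ and exactly $\langle[\sum_{1\leq j\leq 105} b_{t,s,j}]\rangle$ for all $s\geq 3$, with no further case split. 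The $s=5$ split in Theorem~\ref{dlct2} comes entirely from the \emph{other} summand $Q^{\otimes 4}_{(n_{s,t}-4)/2}$ via Theorem~\ref{dlct}, not from the kernel.
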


\begin{proof}[{\it Outline of the proof}]
As well known, from the work of Sum \cite{N.S}, if $x$ is an admissible monomial in $(P_4)_{n_{s,\, t}}$ such that $[x]\in {\rm Ker}[\overline{Sq}^{0}]_{n_{s,\, t}},$ then $\omega_{(s,\, t)}:=\omega(x)= {\underset{\mbox{{$s$ times }}}{\underbrace{(2, 2, \ldots, 2}}, {\underset{\mbox{{$t$ times }}}{\underbrace{1, 1, \ldots, 1})}}}.$ The admissible monomial bases of $(Q_{n_{s,\, t}}^{\otimes 4})^{\omega^{2}_{(s,\, t)}}$ are the sets $[\{b_{t,\,1,\, j}|\, 1\leq j\leq 138\}\cup \{c_{t,\,1,\, j}|\, 1\leq j\leq 7\}]$ for $s  =1,$ and $[\{b_{t,\,s,\, j}|\, 1\leq j\leq 105\}]$ for $s\geq 2,$ where the monomials $b_{t,\, 1,\, j},$ $c_{t,\, 1,\, j}$ and $b_{t,\, s,\, j}$ are given in \cite{N.S}. Using these data and our previous results in \cite{D.P11}, by similar computations as in the proof of Theorem \ref{dlc3}, we find that the invariant spaces $((Q^{\otimes 4}_{n_{1,\, t}})^{\omega_{(1,\, t)}})^{GL_4}$ and $((Q^{\otimes 4}_{n_{2,\, t}})^{\omega_{(2,\, t)}})^{GL_4}$  are trivial and that $((Q^{\otimes 4}_{n_{s,\, t}})^{\omega_{(s,\, t)}})^{GL_4} = \langle [\sum_{1\leq j\leq 105}b_{t,\,s,\, j}] \rangle$ for $s\geq 3.$ These imply that 
$$ ({\rm Ker}[\overline{Sq}^{0}]_{n_{s,\, t}})^{GL_4} = \left\{\begin{array}{ll}
0 &\mbox{if $s = 1,2$},\\[1mm]
\langle [\sum_{1\leq j\leq 105}b_{t,\,s,\, j}] \rangle& \mbox{if $s \geq 3$}.
\end{array}\right.$$
The lemma follows.
\end{proof}

Now, for $s = 1, 2,$ let $\rho\in (P_4)_{n_{1,\, t}}$ and $\overline{\rho}\in (P_4)_{n_{2,\, t}}$ such that $[\rho]\in (Q^{\otimes 4}_{n_{1,\, t}})^{GL_4}$ and $[\overline{\rho}]\in (Q^{\otimes 4}_{n_{2,\, t}})^{GL_4},$ respectively. Since Kameko's homomorphism $[\overline{Sq}^{0}]_{n_{s,\, t}}: Q^{\otimes 4}_{n_{s,\, t}}  \longrightarrow Q^{\otimes 4}_{\frac{n_{s,\, t}}{2}}$ is an epimorphism of $kGL_4(k)$-modules, $[\overline{Sq}^{0}]_{n_{1,\, t}}([\rho])\in (Q^{\otimes 4}_{\frac{n_{1,\, t}}{2}})^{GL_4}$ and $[\overline{Sq}^{0}]_{n_{2,\, t}}([\overline{\rho}])\in (Q^{\otimes 4}_{\frac{n_{2,\, t}}{2}})^{GL_4}$. Following \cite{N.S2}, we have $(Q^{\otimes 4}_{\frac{n_{1,\, t}}{2}})^{GL_4} \subseteq \langle [p_{4,\, t}:=\sum_{1\leq j\leq 35}d_{t,\, j}] \rangle,$ and $(Q^{\otimes 4}_{\frac{n_{2,\, t}}{2}})^{GL_4} \subseteq \langle [\overline{p}_{4,\, t}] \rangle,$ where $\overline{p}_{4,\, t} = \sum_{1\leq \ell\leq 3}\sum_{1\leq i_1\leq \ldots\leq i_{\ell}\leq 4}x_{i_1}x_{i_2}^{2}\ldots x^{2^{\ell-2}}_{i_{\ell-1}}x^{2^{t+1}-2^{\ell-1}}_{i_{\ell}} + x_1x_2^{2}x_3^{4}x_4^{2^{t+1}-8},$ and the elements $d_{t,\, j}$ are listed as follows: 

\begin{center}
\begin{tabular}{llr}
$d_{t,\, 1}=x_3^{2^{t}-1}x_4^{2^{t}-1}$, & $d_{t,\, 2}=x_2^{2^{t}-1}x_4^{2^{t}-1}$, & \multicolumn{1}{l}{$d_{t,\, 3}=x_2^{2^{t}-1}x_3^{2^{t}-1}$,} \\
$d_{t,\, 4}=x_1^{2^{t}-1}x_4^{2^{t}-1}$, & $d_{t,\, 5}=x_1^{2^{t}-1}x_3^{2^{t}-1}$, & \multicolumn{1}{l}{$d_{t,\, 6}=x_1^{2^{t}-1}x_2^{2^{t}-1}$,} \\
$d_{t,\, 7}=x_2x_3^{2^{t}-2}x_4^{2^{t}-1}$, & $d_{t,\, 8}=x_2x_3^{2^{t}-1}x_4^{2^{t}-2}$, & \multicolumn{1}{l}{$d_{t,\, 9}=x_2^{2^{t}-1}x_3x_4^{2^{t}-2}$,} \\
$d_{t,\, 10}=x_1x_3^{2^{t}-2}x_4^{2^{t}-1}$, & $d_{t,\, 11}=x_1x_3^{2^{t}-1}x_4^{2^{t}-2}$, & \multicolumn{1}{l}{$d_{t,\, 12}=x_1x_2^{2^{t}-2}x_4^{2^{t}-1}$,} \\
$d_{t,\, 13}=x_1x_2^{2^{t}-2}x_3^{2^{t}-1}$, & $d_{t,\, 14}=x_1x_2^{2^{t}-1}x_4^{2^{t}-2}$, & \multicolumn{1}{l}{$d_{t,\, 15}=x_1x_2^{2^{t}-1}x_3^{2^{t}-2}$,} \\
$d_{t,\, 16}=x_1^{2^{t}-1}x_3x_4^{2^{t}-2}$, & $d_{t,\, 17}=x_1^{2^{t}-1}x_2x_4^{2^{t}-2}$, & \multicolumn{1}{l}{$d_{t,\, 18}=x_1^{2^{t}-1}x_2x_3^{2^{t}-2}$,} \\
$d_{t,\, 19}=x_2^{3}x_3^{2^{t}-3}x_4^{2^{t}-2}$, & $d_{t,\, 20}=x_1^{3}x_3^{2^{t}-3}x_4^{2^{t}-2}$, & \multicolumn{1}{l}{$d_{t,\, 21}=x_1^{3}x_2^{2^{t}-3}x_4^{2^{t}-2}$,} \\
$d_{t,\, 22}=x_1^{3}x_2^{2^{t}-3}x_3^{2^{t}-2}$, & $d_{t,\, 23}=x_1x_2^{2}x_3^{2^{t}-4}x_4^{2^{t}-1}$, & \multicolumn{1}{l}{$d_{t,\, 24}=x_1x_2^{2}x_3^{2^{t}-1}x_4^{2^{t}-4}$,} \\
$d_{t,\, 25}=x_1x_2^{2^{t}-1}x_3^{2}x_4^{2^{t}-4}$, & $d_{t,\, 26}=x_1^{2^{t}-1}x_2x_3^{2}x_4^{2^{t}-4}$, & \multicolumn{1}{l}{$d_{t,\, 27}=x_1x_2x_3^{2^{t}-2}x_4^{2^{t}-2}$,} \\
$d_{t,\, 28}=x_1x_2^{2^{t}-2}x_3x_4^{2^{t}-2}$, & $d_{t,\, 29}=x_1^{3}x_2^{5}x_3^{2^{t}-6}x_4^{2^{t}-4}$, & \multicolumn{1}{l}{$d_{t,\, 30}=x_1x_2^{2}x_3^{2^{t}-3}x_4^{2^{t}-2}$,} \\
$d_{t,\, 31}=x_1x_2^{3}x_3^{2^{t}-4}x_4^{2^{t}-2}$, & $d_{t,\, 32}=x_1x_2^{3}x_3^{2^{t}-2}x_4^{2^{t}-4}$, & \multicolumn{1}{l}{$d_{t,\, 33}=x_1^{3}x_2x_3^{2^{t}-4}x_4^{2^{t}-2}$,} \\
$d_{t,\, 34}=x_1^{3}x_2x_3^{2^{t}-2}x_4^{2^{t}-4}$, & $d_{t,\, 35}=x_1^{3}x_2^{2^{t}-3}x_3^{2}x_4^{2^{t}-4}$. &  
\end{tabular}%

\end{center}

Hence, we have  $[\overline{Sq}^{0}]_{n_{1,\, t}}([\rho]) = \gamma[\varphi(p_{4,\, t})]$ and $[\overline{Sq}^{0}]_{n_{2,\, t}}([\overline{\rho}]) = \beta[\varphi(\overline{p}_{4,\, t})]$ where $\gamma,\, \beta\in k$ and $\varphi$ is the linear transformation $\varphi: (P_4)_{\frac{n_{s,\, t}}{2}}\to (P_4)_{n_{s,\, t}},$ which is determined by $\varphi(u) = x_1x_2x_3x_4u^{2}$ for any $u\in (P_4)_{\frac{n_{s,\, t}}{2}}.$ Because $[\rho]\in [Q^{\otimes 4}_{n_{1,\, t}}]^{GL_4}$ and $[\overline{\rho}]\in [Q^{\otimes 4}_{n_{2,\, t}}]^{GL_4},$ we have $\rho \equiv \gamma\varphi(p_{4,\, t}) + \rho^{*}$ and $\overline{\rho} \equiv \beta\varphi(\overline{p}_{4,\, t}) + \overline{\rho}^{*}$ where $\rho^{*}\in (P_4)_{n_{1,\, t}}$ and $\overline{\rho}^{*}\in (P_4)_{n_{2,\, t}}$ such that $[\rho^{*}]\in {\rm Ker}[\overline{Sq}^{0}]_{n_{1,\, t}}$ and $[\overline{\rho}^{*}]\in {\rm Ker}[\overline{Sq}^{0}]_{n_{2,\, t}},$ respectively. Then, by direct calculations using Lemma \ref{bdct2} and the relations $\sigma_i(\rho) + \rho \equiv 0,$ and $\sigma_i(\overline{\rho}) + \overline{\rho} \equiv 0$ for $1\leq i\leq 4$, we get $(Q^{\otimes 4}_{n_{1,\, t}})^{GL_4} = \langle [\varphi(p_{4,\, t})]\rangle$ and $(Q^{\otimes 4}_{n_{2,\, t}})^{GL_4} = \langle [\varphi(\overline{p}_{4,\, t})]\rangle.$ Now, consider the elements $\zeta_{1,\, t}:=a_1^{(1)}a_2^{(1)}a_3^{(2^{t}-1)}a_4^{(2^{t}-1)}\in (P_4)_{n_{1,\, t}}^{*}$ and  $\zeta_{2,\, t}:=a_1^{(1)}a_2^{(1)}a_3^{(1)}a_4^{(2^{t+2}-1)}\in (P_4)_{n_{2,\, t}}^{*}.$ Straightforward calculations show that they are $\widehat{A}$-annihilated. Furthermore, $\langle [\zeta_{1,\, t}], [\varphi(p_{4,\, t})] \rangle = 1$ and $\langle [\zeta_{2,\, t}], [\varphi(\overline{p}_{4,\, t})] \rangle = 1.$ So, it may be concluded that $k\otimes_{GL_4(k)}P((P_4)_{n_{1,\, t}}^{*}) = \langle [\zeta_{1,\, t}] \rangle$ and $k\otimes_{GL_4(k)}P((P_4)_{n_{2,\, t}}^{*}) = \langle [\zeta_{2,\, t}] \rangle.$ 

By similar arguments using Theorem \ref{dlct} and Lemma \ref{bdct2}, it is not too difficult to verify that
$$k\otimes_{GL_4(k)}P((P_4)_{n_{s,\, t}}^{*}) \\
 = \left\{\begin{array}{ll}
\langle [\zeta_{s,\, t}], [\widetilde{\zeta}_{s,\, t}] \rangle &\mbox{if $s = 3,\, 4$},\\[1mm]
\langle [\zeta_{s,\, t}], [\widetilde{\zeta}_{s,\, t}],  [\widehat{\zeta}_{s,\, t}] \rangle &\mbox{if $s \geq 5$},
\end{array}\right.$$
where the elements $$ \begin{array}{lll}
\medskip
\zeta_{s,\, t}&:=a_1^{(1)}a_2^{(2^{s}-1)}a_3^{(2^{s+t-1}-1)}a_4^{(2^{s+t-1}-1)}, \\
\medskip
 \widetilde{\zeta}_{s,\, t}&:= a_1^{(0)}a_2^{(0)}a_3^{(2^{s}-1)}a_4^{(2^{s+t}-1)},\\
\medskip
\widehat{\zeta}_{s,\, t}&:= a_1^{(1)}a_2^{(2^{s-1}-1)}a_3^{(2^{s-1}-1)}a_4^{(2^{s+t}-1)}
\end{array}$$
belong to $P((P_4)_{n_{s,\, t}}^{*}).$ And therefore it leads to the desired conclusion. The proof of Theorem \ref{dlct2} is completed.

\end{document}